\tikzset{middlearrow/.style={
        decoration={markings,
            mark= at position 0.5 with {\arrow{#1}} ,
        },
        postaction={decorate}
    }
}
\newtheoremstyle{break}
 {} %space above
 {} %space below
 {\itshape} %body font
 {} %indent amount
 {\bfseries} %theorem head font
 {} %punctuation
 {\newline} %space after theorem head
 {\thmname{#1}\thmnumber{ #2}\thmnote{ (#3)}} %theorem head spec
\newtheoremstyle{breakdef}
 {} %space above
 {} %space below
 {} %body font
 {} %indent amount
 {\bfseries} %theorem head font
 {} %punctuation
 {\newline} %space after theorem head
 {\thmname{#1}\thmnumber{ #2}\thmnote{ (#3)}} %theorem head spec
\newtheoremstyle{remark}
 {} %space above
 {} %space below
 {} %body font
 {} %indent amount
 {\itshape} %theorem head font
 {.} %punctuation
 {0.5em} %space after theorem head
 {\thmname{#1}\thmnumber{ #2}\thmnote{ {\normalfont (}#3{\normalfont )}}} %theorem head spec
\theoremstyle{breakdef}
\newtheorem{definition}{Definition}[section]
\theoremstyle{remark}
\newaliascnt{rem}{definition}  
\newtheorem{rem}[rem]{Remark}
\newaliascnt{exa}{definition}  
\newtheorem{exa}[exa]{Example}
\newaliascnt{lem}{definition}  
\newtheorem{lem}[lem]{Lemma}
\theoremstyle{break}
\newtheorem{thm}{Theorem}
\newaliascnt{prop}{definition}  
\newtheorem{prop}[prop]{Proposition}
\newaliascnt{cor}{definition}  
\newtheorem{cor}[cor]{Corollary}
\newcommand{\fakeenv}{} %%% \fakeenv prints the emptystring
\newcounter{dummycounter}
\renewcommand{\thedummycounter}{\kern-0em}
\newenvironment{restate}[1]  %%% restate takes one argument
{
	\renewcommand{\fakeenv}{#1} %%% Now \fakeenv prints #1
	\newtheorem{\fakeenv}[dummycounter]{\cref{#1}} %%% so now #1 is the name of a theorem-like environment.
	\begin{\fakeenv}
}
{
	\end{\fakeenv}
}
\numberwithin{figure}{section}			% make section number before figure number
\renewcommand{\Re}{\mathrm{Re}}
\renewcommand{\Im}{\mathrm{Im}}
\newcommand{\G}{\mathcal{G}}
\newcommand{\K}{\mathcal{K}}
\newcommand{\A}{\mathcal{A}}
\newcommand{\C}{\mathcal{C}}
\newcommand{\F}{\mathcal{F}}
\newcommand{\J}{\mathcal{J}}
\newcommand{\vis}{\hat\J}
\newcommand{\TV}{\overrightarrow{T}}
\newcommand{\QD}{\mathcal{QD}}
\DeclareMathOperator{\SL}{SL}
\DeclareMathOperator{\GL}{GL}
\DeclareMathOperator{\Aff}{Aff}
\DeclareMathOperator{\cyl}{cyl}
\DeclareMathOperator{\Mod}{Mod}
\DeclareMathOperator{\lk}{lk}
\DeclareMathOperator{\IL}{IL}
\DeclareMathOperator{\MIL}{MIL}
\newcommand{\T}{\mathcal{T}}
\DeclareMathOperator{\area}{area}
\DeclareMathOperator{\height}{height}
\DeclareMathOperator{\width}{width}
\DeclareMathOperator{\length}{length}
\DeclareMathOperator{\FP}{FP}
\DeclareMathOperator{\B}{B}
\DeclareMathOperator{\FB}{FB}
\DeclareMathOperator{\Ft}{F_\tau}
\DeclareMathOperator{\KB}{KB}
\DeclareMathOperator{\KFB}{KFB}
\renewcommand{\t}{\mathbf{t}}
\renewcommand{\i}{\mathbf{i}}
\DeclareMathOperator{\Aut}{Aut}
\DeclareMathOperator{\codim}{codim}
\DeclareMathOperator{\id}{id}
\DeclareMathOperator{\Homeo}{Homeo}
\newcommand{\PG}{\mathbf{P}}
\newcommand{\CG}{\mathbf{C}}
\renewcommand{\NG}{\mathbf{N}}
\newcommand{\R}{\mathbb{R}}
\newcommand{\CC}{\mathbb{C}}
\newcommand{\RP}{\mathbb{RP}}
\renewcommand{\P}{\mathcal{Z}}%\mathbb{P}} %marked points
\newcommand{\imagi}{\mathrm{i}}
\crefname{rem}{Remark}{Remarks}
\crefname{definition}{Definition}{Definitions}
\crefname{exa}{Example}{Examples}
\crefname{lem}{Lemma}{Lemmas}
\crefname{thm}{Theorem}{Theorems}
\crefname{prop}{Proposition}{Propositions}
\crefname{cor}{Corollary}{Corollaries}
\crefname{section}{Section}{Sections}
\crefname{subsection}{Subsection}{Subsections}
\crefname{figure}{Figure}{Figures}
\author{
Valentina Disarlo
\thanks{Mathematisches Institut, Heidelberg Universit\"at, Im Neuenheimer Feld 205, 69120 Heidelberg, Germany, 
{\href{mailto:vdisarlo@mathi.uni-heidelberg.de}{\texttt{vdisarlo@mathi.uni-heidelberg.de}}}}
\and Anja Randecker
\thanks{Department of Mathematics, University of Toronto, 40 St.~George St., Toronto, ON, M5S 2E4, Canada,
{\href{mailto:randecker@mathi.uni-heidelberg.de}{\texttt{randecker@mathi.uni-heidelberg.de}}}}
\and Robert Tang
\thanks{Department of Pure Mathematics, Xi'an Jiaotong--Liverpool University, 111 Ren'ai Road, Suzhou Industrial Park, Suzhou, Jiangsu, 215123, China,
{\href{mailto:robert.tang@xjtlu.edu.cn}{\texttt{robert.tang@xjtlu.edu.cn}}}}
}
\title{Rigidity of the saddle connection complex}
\date{March 18, 2022}
\begin{document}
 
\maketitle

% MSC: 57M50, 20F65, 05C60, 32G15
% keywords: saddle connection complex; combinatorial rigidity; affine diffeomorphisms; half-translation surfaces

\begin{abstract}
 For a half-translation surface $(S,q)$, the associated \emph{saddle connection complex}
 $\A(S,q)$ is the simplicial complex where vertices are the saddle connections
 on~$(S,q)$, with simplices spanned by sets of pairwise disjoint saddle connections.
 This complex can be naturally regarded as an induced subcomplex of the arc complex.
 We prove that any simplicial isomorphism $\phi \colon \A(S,q) \to \A(S',q')$
 between saddle connection complexes is induced by an affine diffeomorphism
 $F \colon (S,q) \to (S',q')$. In particular, this shows that the saddle connection complex
 is a complete invariant of affine equivalence classes of half-translation surfaces.
 Throughout our proof, we develop several combinatorial criteria of independent interest for detecting various geometric objects on a half-translation surface.
\end{abstract}

\section{Introduction}

For a closed topological surface $S$ of finite type with a non-empty finite set $\P$
of marked points, the associated \emph{arc complex} $\A(S,\P)$ has as vertices the essential arcs on $(S,\P)$ (considered up to proper homotopy),
with simplices spanned by sets of arcs that can be realised pairwise disjointly.
The \emph{(extended) mapping class group} $\Mod(S,\P)$ acts naturally on $\A(S,\P)$
by simplicial isomorphisms. In \cite{irmak_mccarthy10}, Irmak and McCarthy proved a combinatorial rigidity theorem for arc complexes:
any simplicial isomorphism $\phi : \A(S,\P) \to \A(S', \P')$ between
arc complexes is induced by a homeomorphism ${F: (S,\P) \to (S',\P')}$
between the associated surfaces.
In particular, this implies that, except for finitely many cases, the automorphism group of $\A(S,\P)$ is isomorphic to $\Mod(S,\P)$. 

In this paper, we explore an analogous combinatorial rigidity phenomenon
for a complex associated to \emph{half-translation surfaces}.
Let $(S,q)$ be a half-translation surface, with $\P$ taken to
be the set of singularities.

The \emph{saddle connection complex} $\A(S,q)$ is the simplicial complex 
where vertices are saddle connections on~$(S,q)$, with simplices spanned by
sets of pairwise disjoint saddle connections. In particular,
$\A(S,q)$ can be regarded as the induced subcomplex of $\A(S,\P)$ spanned by
the arcs that can be realised as saddle connections on $(S,q)$.
Any affine diffeomorphism between half-translation
surfaces naturally induces a simplicial isomorphism between the respective
saddle connection complexes.
In particular, the \emph{affine diffeomorphism group} $\Aff(S,q)$ acts naturally
on $\A(S,q)$ by simplicial automorphisms.

The main result of this paper is the converse, that is, any simplicial isomorphism
between saddle connection complexes arises from an affine diffeomorphism.
This gives a rigidity theorem analogous to that of Irmak--McCarthy for the
arc complex.

\begin{thm}[Rigidity of the saddle connection complex] \label{thm:main}
 Let $(S,q)$ and $(S',q')$ be half-translation surfaces, neither of which are flat tori with exactly one removable singularity. Suppose $\phi : \A(S,q) \to \A(S',q')$ is a simplicial isomorphism.
 Then there exists a unique affine diffeomorphism $F: (S,q) \to (S',q')$ inducing $\phi$.
\end{thm}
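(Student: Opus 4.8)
The plan is to reconstruct the affine structure from the purely combinatorial data of $\A(S,q)$ in stages, moving from topological information to geometric information. The first stage is to recover the underlying topological picture: since $\A(S,q)$ sits inside the arc complex $\A(S,\P)$ as an induced subcomplex, I would first show that a simplicial isomorphism $\phi$ respects enough combinatorial structure to identify the topological type of the surface and the behaviour of arcs. The key invariants to extract combinatorially are the dimension of top simplices (which records an ideal triangulation count, hence the topological Euler characteristic and number of singularities), and local combinatorial patterns around a vertex that distinguish, e.g., saddle connections bounding once-punctured monogons or connecting two distinct singularities. I expect this stage to rely heavily on the ``combinatorial criteria'' advertised in the abstract for detecting geometric objects (cylinders, parallel classes, etc.).

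The second stage is to recover the flat geometry up to the $\GL^+(2,\R)$-action, or rather up to affine equivalence. The heart of this is detecting \emph{parallelism} of saddle connections combinatorially: two saddle connections are parallel precisely when a certain link/disjointness pattern holds — for instance, being part of a common cylinder or sharing a combinatorially recognizable family of disjoint saddle connections crossing them in a controlled way. Once parallelism classes and, within them, the cyclic/linear order of directions are combinatorially characterized, the isomorphism $\phi$ must send parallelism classes to parallelism classes and preserve their circular order, which pins down the direction data up to an element of $\mathrm{PGL}^+(2,\R)$ (the ambiguity that an affine diffeomorphism is allowed to have). Simultaneously one must control lengths: the holonomy vector of a saddle connection should be recoverable up to scale from how it interacts with nearby saddle connections (e.g. via subdividing a cylinder or comparing saddle connections that together bound a parallelogram), giving the full holonomy data up to the $\GL^+(2,\R)$ ambiguity.

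The third stage is to assemble these local recoveries into an actual affine diffeomorphism $F$. Here I would fix a combinatorially-detected triangulation-like decomposition of $(S,q)$ by saddle connections, transport it via $\phi$ to $(S',q')$, and use the reconstructed holonomy/direction data to check that corresponding cells are affinely isometric and glue compatibly; a Alexander-method / cocycle argument then produces a globally well-defined affine diffeomorphism $F:(S,q)\to(S',q')$ whose induced map on $\A$ agrees with $\phi$ on a generating set of vertices, hence everywhere. Uniqueness of $F$ follows because an affine diffeomorphism is determined by its action on enough saddle connections (indeed by the linear part plus the image of one cell), and two affine diffeomorphisms inducing the same simplicial map would differ by an affine automorphism acting trivially on $\A(S,q)$, which one shows is the identity (this is where the excluded case of the torus with a single removable singularity genuinely fails, since there the translations act trivially on the complex).

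The main obstacle I anticipate is the second stage — giving a robust \emph{combinatorial} characterization of parallelism and of holonomy length ratios that works uniformly across all half-translation surfaces, including degenerate or highly symmetric ones, and is visibly preserved by $\phi$. Detecting topological data from the arc complex is essentially Irmak--McCarthy, and the final assembly is a (delicate but) standard gluing argument; but isolating the flat-geometric content inside a complex that \emph{a priori} only remembers disjointness requires genuinely new combinatorial criteria, and controlling all the special cases (surfaces with few saddle connections, surfaces with large $\Aff$, removable singularities) is where the real work lies.
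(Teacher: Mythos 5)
Your proposal shares the paper's broad shape (topological $\to$ geometric $\to$ global assembly), but at the geometric stage you genuinely diverge, and the divergence is where the gap lies. You plan to combinatorially detect \emph{parallelism classes} of saddle connections, the cyclic order of directions, and holonomy ratios, and then to match up cells directly. The paper never attempts this, and I do not believe it can be done cleanly: two parallel saddle connections need not cobound (or even both lie on the boundary of) any cylinder, and in a non-periodic direction there may be no cylinder at all, so there is no obvious local combinatorial pattern in $\A(S,q)$ that certifies parallelism. Likewise, there is no candidate in your sketch for comparing lengths of saddle connections that are not transversals of a common cylinder. You flagged this stage as your ``main obstacle'', and indeed it is precisely the step your argument does not supply.

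The paper sidesteps this entirely. From a \textbf{Triangle Test} (\cref{tri_test}) and an \textbf{Orientation Test} (\cref{prop:orient}) — each a substantial combinatorial criterion, not a routine transfer from Irmak--McCarthy, since in $\A(S,q)$ there are no folded triangles, not all diagonals of a polygonal region are realisable as saddle connections, and cordons break the clean correspondence between link factors and regions — one recovers only the \emph{gluing pattern} of a saddle triangulation, hence a \emph{piecewise} affine homeomorphism $F$ with no a priori control on directions or lengths. Affineness of $F$ is then not proved directly but imported: the set $\MIL(\A(S,q))$ combinatorially detects cylinder curves (\cref{MIL_cyl}), so $F$ carries $\cyl(q)$ to $\cyl(q')$, and the Cylinder Rigidity Theorem of Duchin--Leininger--Rafi (\cref{thm:cyl_rigid}) does all the remaining geometric work, placing $q$ and $F^*(q')$ in the same $\GL(2,\R)$-orbit. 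Your Stage~3 and your uniqueness argument are essentially right and align with the paper's final step, but without the Cylinder Rigidity input (or an unproven direct parallelism criterion) the middle of your argument does not close. One further small point: the linear ambiguity for half-translation surfaces is all of $\GL(2,\R)$ modulo scaling, not $\mathrm{PGL}^+(2,\R)$, since orientation-reversing affine diffeomorphisms are allowed throughout.
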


In the case where at least one of $(S,q)$ or $(S',q')$ is a flat torus with exactly one removable singularity, and $\phi : \A(S,q) \to \A(S',q')$ is a simplicial isomorphism, there are exactly two affine diffeomorphisms $F_1, F_2: (S,q) \to (S',q')$ inducing $\phi$. In particular, both $(S,q)$ and $(S',q')$ are flat tori, and the maps $F_1$ and $F_2$ differ by a hyperelliptic involution.

Consequently, the natural map $\Aff(S,q) \to \Aut(\A(S,q))$ is an isomorphism (unless $(S,q)$ is a flat torus with one removable singularity, in which case the kernel is the order--$2$ subgroup generated by the hyperelliptic involution).
In particular, if $[F] \in \Mod(S,\P)$ is a mapping class that preserves the subcomplex $\A(S,q) \subset \A(S,\P)$ (setwise), then $[F]$ has an affine representative $F : (S,q) \to (S,q)$. 

\cref{thm:main} shows that the combinatorial structure of the saddle connection complex
completely governs the half-translation structure on the surface up to
affine equivalence. In particular, the saddle connection complex is a complete
invariant of $\SL(2,\R)$--orbits in the moduli space of quadratic differentials,
taken over all surfaces.
Consequently, there are uncountably many isomorphism classes of saddle connection
complexes.
Finding explicit relations between the combinatorial
properties of saddle connection complexes and the dynamical aspects of 
half-translation surfaces and their $\SL(2,\R)$--orbit closures
would provide an interesting direction for future research.

\paragraph{Overview of the paper}
In \cref{sec:arcs}, we review some results
about arc complexes and flip graphs.
While not directly necessary for proving our main theorem,
we shall give proofs of some standard properties concerning arc complexes to
provide context for the analogous results for
the saddle connection complex. These also serve
to point out the differences between the topological
and the Euclidean settings.
In \cref{sec:flat}, we give some basic definitions
and properties of half-translation surfaces
and saddle connection complexes.
We then prove some elementary results regarding links
in \cref{sec:links}, followed by a detailed examination
of simplices with infinite links in \cref{sec:cylinder}.
In Sections \ref{sec:flow}, \ref{sec:extend}, and \ref{sec:flip_pairs_bad_kites}, 
we develop some technical results in order to establish the 
key combinatorial criteria: the Triangle Test (\cref{sec:triangle})
and the Orientation Test (\cref{sec:orient}).
Finally, we give a proof of \cref{thm:main} in \cref{sec:rigid}
using the following~strategy.

\begin{enumerate}
 \item \textbf{Triangle Test (\cref{tri_test}):}
 There is a combinatorial criterion that can detect 
 the simplices in $\A(S,q)$ that bound triangles on $(S,q)$.
 \item \textbf{Orientation Test (\cref{prop:orient}):} 
 There is a combinatorial criterion that can detect whether two given oriented triangles
 on $(S,q)$ are consistently oriented.
 \item Using the previous two tests, we recover the gluing pattern of a triangulation
 and hence the underlying surface $(S,\P)$. By construction, we show that any
 simplicial isomorphism $\phi : \A(S,q) \to \A(S',q')$
 is induced by a piecewise affine diffeomorphism $F : (S,q) \to (S',q')$.
 \item \textbf{Cylinder Test (\cref{MIL_cyl}):}
 There is a combinatorial criterion that can detect the set of cylinder curves on $(S,q)$.
 \item Finally, we apply the Cylinder Rigidity Theorem (\cref{thm:cyl_rigid})
 of Duchin, Leininger, and Rafi \cite{duchin_leininger_rafi_10} to deduce that
 $F$ is an affine diffeomorphism.
\end{enumerate}

Throughout this paper, we develop several other combinatorial criteria to detect various features of a half-translation surface via its saddle connection complex.
We expect these tools to be of independent interest, with potential applications to future work.

As readers might have background in combinatorial complexes or in translation surfaces but not both, we give some basics for both topics. These parts can be skipped by experts in the respective topics.
A list of notation is given in \Cref{app:notation}.

\paragraph{Combinatorial rigidity results in the topological setting}
The combinatorics of simplicial complexes associated to a topological surface play an
important role in several groundbreaking papers on the algebraic properties of
the mapping class group in the~1980's. For instance, the arc complex was defined
by Harer in his study of the cohomology of the mapping class group
(see \cite{harer_annals} and \cite{harer_inventiones}); the Hatcher--Thurston complex
was used to find the first explicit presentation of the mapping class group
\cite{hatcher_thurston1980}; the curve complex was introduced by Harvey
in order to study the action of the mapping class group
on the boundary of Teichm\"uller space~\cite{harvey_1978}.

Rigidity phenomena for combinatorial complexes associated to surfaces were first
studied by Ivanov in the case of the curve complex \cite{ivanov_97, ivanov_02}.
Ivanov showed that, when the surface has genus at least $2$, any automorphism
of the curve complex is induced by a homeomorphism of the surface.
He applied this result to study the isometry group of Teichm\"uller space,
giving a new proof of a celebrated result of Royden \cite{royden_71}.
Korkmaz then extended Ivanov's result to the remaining low-genus cases \cite{korkmaz}. 
In \cite{Luo}, Luo gave a new proof of the curve complex rigidity theorem,
showing also that the isomorphism type of the curve complex is a complete
invariant of the topological type of the surface. 

Subsequently, \emph{Ivanov-style rigidity} has been established for other
complexes such as the arc complex (Irmak--McCarthy \cite{irmak_mccarthy10},
Disarlo \cite{disarlo_15}), 
the pants complex (Margalit \cite{margalit}), the Hatcher--Thurston complex
(Irmak--Korkmaz \cite{irmak_korkmaz}), the flip graph
(Korkmaz--Papadopoulos \cite{korkmaz_papadopoulos}), the polygonalisation complex (Bell--Disarlo--Tang \cite{bell_disarlo_tang_18}),
and many other complexes built from topological objects on surfaces
(for a survey, see McCarthy--Papadopoulos \cite{pap_mccarthy}). 
In response to the multitude of rigidity results, Ivanov \cite{ivanov06}
formulated a \emph{metaconjecture}, stating that every object naturally associated 
to a surface and having a sufficiently rich structure has the mapping class group
as its group of automorphisms. At present, Ivanov's metaconjecture
remains unresolved, however, Brendle and Margalit in \cite{brendle_margalit} have
recently verified it for a large class of complexes
by proving general results related to the complex of domains, introduced by McCarthy and Papadopoulos \cite{pap_mccarthy}.

\paragraph{Relations to other work}
In light of Ivanov's metaconjecture,
\cref{thm:main} demonstrates that the saddle connection complex
also enjoys combinatorial rigidity properties. In particular,
its automorphism group is the affine diffeomorphism group of 
the half-translation structure on the surface. To the best of our knowledge, this is the first combinatorial complex known to exhibit Ivanov-style rigidity for a \emph{geometric structure} on surfaces. 
(Here, we are using only the isomorphism type of $\A(S,q)$, with no a priori
information regarding the underlying surface~$(S,\P)$ or arc complex $\A(S,\P)$.)
It would be interesting to see if other combinatorial complexes associated
to half-translation structures also exhibit analogous rigidity properties.

An independent proof of \cref{thm:main} in the special case of translation surfaces was given simultaneously by Huiping Pan \cite{pan_22}.
The overall strategy of Pan's proof is similar to ours.
He gives a direct proof that any simplicial isomorphism $\phi : \A(S,q) \to \A(S',q')$ must send triangles on $(S,q)$ to triangles on $(S',q')$,
and that consistently oriented triangles remain consistently oriented.
This shows that $\phi$ is induced by a piecewise affine homeomorphism.
He then applies a standard argument of Duchin, Leininger, and Rafi
\cite{duchin_leininger_rafi_10} (see also \cite{nguyen_15}) to prove that
the resulting homeomorphism is affine.

There have been several other recent works on combinatorial complexes
associated to (half-) translation surfaces.
Minsky and Taylor define the saddle connection complex in \cite{minsky_taylor_17},
as well as a subcomplex of the arc complex defined using Veering triangulations,
and relate their geometry to that of an associated fibred hyperbolic 3--manifold.
Nguyen defines a graph of (degenerate) cylinders for genus $2$ translation surfaces
as a subgraph of the curve complex, and proves that any mapping class that
stabilises it must have an affine representative \cite{nguyen_15}.
He also defines a graph of periodic directions in \cite{nguyen_18},
and uses this to give an algorithm to compute a coarse fundamental domain
for the associated Veech group.
In \cite{tang_webb_18}, Tang and Webb consider the multi-arc graph and
filling multi-arc graph associated to a half-translation surface, and
use these to obtain a bounded geodesic image theorem for
a ``straightening operation'' (see \cref{sec:sccomplex}) defined on the curve complex.

Another rigidity result for singular Euclidean surfaces was recently established by
Duchin, Erlandsson, Leininger, and Sadanand in \cite{duchin_erlandsson_leininger_sadanand_21}.
They show that the ``bounce spectrum'' associated to a general polygonal billiard
table determines the shape of the table up to affine equivalence
(for right-angled tables) or up to similarity.
Their techniques deal with flat surfaces with arbitrary cone angles, and so it would be interesting to see whether these methods can be adapted to obtain combinatorial rigidity results for complexes associated to this broader class of geometric structures.

\paragraph{Acknowledgements}
The first and third author are grateful to Mark Bell for many helpful discussions
during the Redbud Topology Conference at the University of Oklahoma in April~2016,
which led to the formulation of the main question of this paper, as well as 
the classification of links in \cref{sec:classify}.
The first substantial progress on this project was carried out
while the authors were in-residence during the
Fall 2016 semester program on ``Geometric group theory'' at 
the Mathematical Sciences Research Institute, Berkeley. The authors were supported by the National Science Foundation under Grant No. DMS
1440140, administered by the Mathematical Sciences Research Institute, during their research stay.
Subsequent work was carried out at the Shanks Conference on
``Low-dimensional topology and geometry'' at Vanderbilt University in May 2017,
at Heidelberg University in December 2017,
and at the ``Geometry of Teichm\"uller space and mapping class groups'' workshop
at the University of Warwick in April 2018. We thank these venues for their kind hospitality. 
The authors acknowledge support from U.S. National Science Foundation grants
DMS 1107452, 1107263, 1107367 "RNMS: GEometric structures And Representation
varieties" (the GEAR Network). The first author acknowledges support from the Olympia Morata Habilitation Programme of Universit\"at Heidelberg. She is currently funded by the Priority Program 2026 ``Geometry at Infinity'' of the Deutsche Forschungsgemeinschaft (DFG, German Research Foundation) DI 2610/2-
1. She also acknowledges support by the European Research Council under Anna Wienhard's ERC-Consolidator grant 614733 (GEOMETRICSTRUCTURES) and by the Deutsche
Forschungsgemeinschaft (DFG, German Research Foundation) under Germany's Excellence Strategy EXC-2181/1 - 390900948 (the Heidelberg STRUCTURES Cluster of Excellence).
The second author acknowledges support from a DFG Forschungsstipendium
and from the National Science Foundation under Grant No. DMS 1607512.
The third author acknowledges support from a JSPS KAKENHI Grant-in-Aid for Early-Career Scientists (No.~19K14541).
The authors also thank Sam Taylor for helpful comments, and Kasra Rafi, Chris Judge,
and Matt Bainbridge for helpful conversations and encouragement. Finally, the authors thank the anonymous referee for their thorough comments and suggestions for improving the exposition of the paper.

\section{Surfaces and arcs}\label{sec:arcs}

In this section, we review some background regarding topological surfaces
and the arc complex, and also give proofs of some standard results.
While they are not strictly necessary for the proof of \cref{thm:main},
these results will provide context for the analogous results
for the saddle connection complex.

Let $(S, \P)$ denote a compact surface $S$, possibly with boundary,
equipped with a finite, non-empty set $\P \subset S$ of marked points
such that each boundary component contains at least one marked point.
A surface homeomorphic to a closed disc with $n$ marked points on its boundary and having no interior marked points is called an \emph{$n$--gon}, or simply a \emph{polygon}.
In particular, we call an $n$--gon a monogon, bigon, triangle, or quadrilateral
in the cases when $n=1,2,3,4$ respectively.
A surface homeomorphic to a closed annulus with $m$ marked points on one boundary
component, $n$ marked points on the other boundary component, and with no interior marked points
is called an \emph{$(m,n)$--annulus}.

\subsection{Arc complexes}

A simplicial complex is called \emph{flag} if for every complete subgraph in the $1$--skeleton, there exists a simplex in the complex which has the subgraph as its $1$--skeleton.
All simplicial complexes considered in this paper shall be flag,
and so we may instead work with their $1$--skeleta whenever convenient.
Distances between vertices of the complex shall always be measured using the
combinatorial metric in the $1$--skeleton.
Given a simplicial complex $\K$,
we shall write $\sigma \in \K$ to refer to both
a simplex of $\K$, and the vertex set of that simplex.
We write $\# \K$ for the number of vertices of $\K$.
The \emph{link} $\lk_\K(\sigma)$ of a simplex $\sigma \in \K$
is the subcomplex of $\K$ consisting
of all simplices $\sigma' \in \K$ such that
$\sigma \cap \sigma' = \emptyset$ and $\sigma \cup \sigma'$ is a simplex.

\begin{definition}[Essential arc]
 An \emph{arc} on $(S,\P)$ is a map $\alpha \colon [0,1] \to S$ such that
 $\{\alpha(0), \alpha(1)\} \subseteq \P$ and the restriction 
 $\alpha|_{(0,1)}$ is an embedding into $S \setminus \P$.
 An arc whose image lies in $\partial S$ is called a \emph{boundary arc}.
 An arc is called \emph{essential} if it is not homotopic (relative to $\P$) to a point or a boundary arc.
\end{definition}

All arcs considered in this paper will be essential unless stated otherwise.
We usually consider arcs up to reversal of orientation and proper isotopy.
Furthermore, we shall usually consider arcs up to \emph{proper homotopy}, that
is, homotopy relative to $\P$. 
Two arcs $\alpha,\beta$ are said to be \emph{disjoint} if they have disjoint interiors;
otherwise we say they \emph{intersect} or \emph{cross} and write~$\alpha \pitchfork \beta$. 

A \emph{multi-arc} on $(S,\P)$ is a non-empty set of pairwise disjoint and non-homotopic arcs (see \cref{fig:multiarc} for an example),
and a \emph{triangulation} of $(S,\P)$ is a maximal multi-arc. Indeed, cutting~$S$ along a maximal multi-arc as described in the next section decomposes it into a finite union of triangles.

\begin{figure}
 \begin{center}
\begin{tikzpicture}[scale=2]
\tikzset{dot/.style={draw,shape=circle,fill=black,scale=0.3}}

\draw [red, dotted] (0.4,0.37) to [out=180+45,in=180-45] (0.4,-0.37);
\draw [red, dotted] (1.3,-0.1) to [out=-60,in=180] (2,-0.1) to [out=0,in=240] (2.7,-0.1);

\draw [dotted, thick] (4,-0.3) to [out=180,in=180,looseness=0.7]
	(4,0.3);

\draw
	(0,0.3) to [out=0,in=180]
	(1,0.5) to [out=0,in=180]
	(2,0.5-0.1) to [out=0,in=180]
	(2,0.5-0.1) to [out=0,in=180]
	(3,0.5) to [out=0,in=180]
	(4,0.3);
\draw (4,-0.3) to [out=180,in=0]
	(3,-0.5) to [out=180,in=0]
	(2,-0.5+0.1) to [out=180,in=0]
	(1,-0.5) to [out=180,in=0]
	(0,-0.3);

\draw [thick] (0,0.3) to [out=0,in=0,looseness=0.7]
	(0,-0.3) to [out=180,in=180,looseness=0.7]
	(0,0.3);

\draw [thick] (4,0.3) to [out=0,in=0,looseness=0.7]
	(4,-0.3) ;

\draw [red] (0.125,0) to [out=0,in=0] (0.4,0.37);
\draw [red] (0.125,0) to [out=0,in=0] (0.4,-0.37);

\draw [red] (0.125,0) to [out=0,in=180] (1,0.3);

\draw [red] (1,0.3) to [out=0,in=180] (2,0);
\draw [red] (2,0) to [out=180,in=60] (1.3,-0.1);
\draw [red] (2,0) to [out=0,in=120] (2.7,-0.1);

\draw [red] (2,0) to [out=240,in=180] (2,-0.3) to [out=0,in=300] (2,0);

\draw [red] (3,0.3) to [out=0,in=90] (3.6,0) to [out=270,in=0] (3,-0.3);

\draw [red] (4.1,0.25) to [out=180,in=180] (4.1,-0.25);

\draw (0.5,0) to [out=-30,in=180+30] (1.5,0);
\draw (0.7,-0.1) to [out=30,in=180-30] (1.3,-0.1);
\draw (2.5,0) to [out=-30,in=180+30] (3.5,0);
\draw (2.7,-0.1) to [out=30,in=180-30] (3.3,-0.1);

\node [dot] at (0.125,0) {};
\node [dot] at (-0.125,0) {};
\node [dot] at (1,0.3) {};
\node [dot] at (2,0) {};
\node [dot] at (2.3,0.3) {};
\node [dot] at (2,-0.2) {};
\node [dot] at (3,0.3) {};
\node [dot] at (3,-0.3) {};

\node [dot] at (4.07,0.25) {};
\node [dot] at (4.125,0) {};
\node [dot] at (4.07,-0.25) {};

\end{tikzpicture}
  \caption{A multi-arc on a surface.}
  \label{fig:multiarc}
\end{center}
\end{figure}
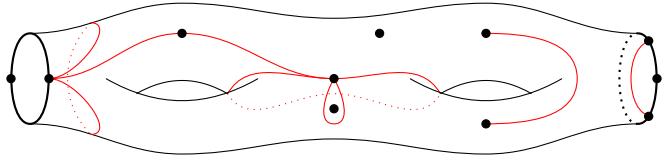

\begin{definition}[Arc complex]
 The \emph{arc complex} $\A(S,\P)$ is the flag simplicial complex whose vertices are the essential arcs on $(S,\P)$ (considered up to homotopy), with simplices corresponding to multi-arcs on $(S,\P)$.
\end{definition}

Every simplex in $\A(S,\P)$ can be extended to a maximal one,
and every maximal simplex in $\A(S,\P)$ corresponds to a triangulation of $(S,\P)$.
Moreover, $\A(S,\P)$ is finite dimensional and every maximal simplex
has the same dimension.
In the case where $S$ is closed, connected, orientable, and has genus $g \geq 2$
then $\A(S,\P)$ is connected, locally infinite, has
dimension $\kappa(S,\P) = 6g + 3|\P| - 7$, and has infinite diameter. Masur and Schleimer \cite{masur_schleimer_13} proved that $\A(S,\P)$ is Gromov hyperbolic, with constants depending on the topology of $(S,\P)$; 
a new proof yielding a hyperbolicity constant independent of the topology was later given in \cite{hensel_webb_15}.

The results of \cite{irmak_mccarthy10}, \cite{aramayona_koberda_parlier_15}, and \cite{disarlo_15} can be combined (as done in \cite[Theorem 2.2]{bell_disarlo_tang_18}) to deduce the following:
\begin{thm}[Rigidity of arc complexes] \label{thm:arc_rigid}
 Assume $(S,\P)$ and $(S',\P')$ are surfaces such that the associated arc complexes are non-empty.
 Suppose ${\phi :\A(S,\P) \to \A(S',\P')}$ is a simplicial isomorphism.
 Then there exists a homeomorphism ${F : (S,\P) \to (S',\P')}$ inducing $\phi$.
 $\hfill\square$
\end{thm}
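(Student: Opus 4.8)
The plan is to reconstruct a triangulated surface purely from the combinatorics of $\A(S,\P)$, transport it through $\phi$, and then recognise the result as $(S',\P')$. Several facts are essentially formal. A multi-arc is a triangulation of $(S,\P)$ exactly when its simplex is maximal in $\A(S,\P)$, and every simplex extends to a maximal one; hence $\phi$ restricts to a bijection between the triangulations of $(S,\P)$ and those of $(S',\P')$. Since all maximal simplices of $\A(S,\P)$ have the same size $N$, a multi-arc with $N$ arcs is a triangulation and a multi-arc with $N-1$ arcs (a codimension-one face) is exactly one whose link is non-empty and $0$--dimensional; $\phi$ preserves all of this, along with the cardinality and the edge structure of links. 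An interior arc of a triangulation is flippable precisely when the corresponding codimension-one face has a link consisting of exactly two vertices --- the two diagonals of the square formed by the two adjacent triangles --- so $\phi$ carries flips to flips and induces an isomorphism of flip graphs. One could now invoke rigidity of the flip graph to conclude, but I would instead continue with a direct reconstruction, since this is the template for the half-translation argument.

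The crux is a combinatorial criterion, internal to $\A(S,\P)$, that decides when a face of a triangulation $T$ is the edge-set of an embedded triangle of $T$. The idea is to read off the isomorphism types of links of faces of small codimension: deleting one or two arcs from $T$ merges a small number of adjacent triangles into a single region, and the possible regions (a square, a triangle glued to itself along an edge, an annulus, and so on) are distinguished from one another by whether the corresponding link is finite or infinite, by its number of vertices, and by how those vertices are joined. Extracting this data for the low-codimension faces of $T$ recovers the set of triangles of $T$ together with the incidence pattern recording, for each arc of $T$, which one or two triangles abut it and along which sides. The subtlety is that combinatorially indistinguishable small links can arise from geometrically different configurations, so one must cross-reference several faces simultaneously --- or pass to a neighbouring triangulation via a flip --- to break ties, and a finite list of low-complexity surfaces will behave exceptionally; this is precisely why the statement is assembled from \cite{irmak_mccarthy10}, \cite{aramayona_koberda_parlier_15}, and \cite{disarlo_15}.

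With the triangles and their gluings identified combinatorially, I would build a surface by taking one model triangle per combinatorial triangle of $T$ and gluing sides according to the recovered incidence pattern. To be sure the result is an honest surface carrying a coherent orientation --- and not a non-orientable or otherwise degenerate patchwork --- one needs a second combinatorial criterion deciding whether two adjacent triangles are consistently oriented, so that $\phi$ can be arranged to respect a coherent system of orientations on triangles. Granting this, the assembled surface is homeomorphic to $(S',\P')$, and the bijections between triangles and between sides furnish a homeomorphism $F \colon (S,\P) \to (S',\P')$ whose induced action on $\A(S,\P)$ agrees with $\phi$ on the arcs of $T$. To see that $F$ induces $\phi$ on every vertex, observe that an arbitrary arc $\alpha$ lies in a triangulation obtained from $T$ by finitely many flips; propagating the reconstruction along such a flip path, using that $\phi$ respects flips and the triangle and incidence data at each step, shows $F(\alpha)$ is homotopic to $\phi(\alpha)$.

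I expect the principal obstacle to be the middle step: producing a clean, uniformly valid combinatorial characterisation of embedded triangles --- and of consistently oriented pairs of triangles --- from link data alone, robust across every admissible topological type of $(S,\P)$, including the low-complexity exceptions. This is exactly the role that the Triangle Test and the Orientation Test will play in the half-translation setting; there the analogous characterisations become more delicate, though one gains additional geometric structure --- preferred directions, lengths, cylinders --- that can be exploited to compensate.
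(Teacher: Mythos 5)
The paper does not prove this theorem: it is quoted as a known result assembled from Irmak--McCarthy, Aramayona--Koberda--Parlier, and Disarlo (following Bell--Disarlo--Tang), which is why the statement carries a terminating $\square$ and no argument. Your sketch recapitulates the strategy of those sources and of the paper's own argument for the saddle connection complex: detect triangulations as maximal simplices, detect flips from two-vertex links of codimension-one faces, find a link-theoretic criterion for embedded triangles and a criterion for consistent orientation, rebuild the surface by gluing model triangles, and propagate from one triangulation to all of $\A(S,\P)$ using flip-connectivity. That is the right shape of argument, and the formal preliminaries (maximal simplices are triangulations, flippability is $\#\lk(\T\setminus\{a\})=2$, etc.) are all correctly identified.

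However, the two items you single out as the crux --- a uniformly valid triangle-detection criterion and an orientation criterion --- are precisely where the entire substance of the theorem lies, and your proposal acknowledges them without supplying them. The paper records the relevant topological criterion in \cref{top_tri_test}: $\tau$ bounds a non-folded triangle iff some triangulation $\T\supseteq\tau$ has $\lk(\T\setminus\tau)$ isomorphic to the hexagon arc complex and $\lk(\T\setminus\sigma)$ isomorphic to the pentagon arc complex for each edge $\sigma\subset\tau$; the hexagon-extension input is itself a nontrivial proposition of Irmak--McCarthy. You would additionally need to detect folded triangles separately (via $\lk(\alpha)=\{\beta\}*\K$, since those triangles are not extendable to hexagons), identify once-marked monogons, and then carefully dispose of the low-complexity cases where the relevant link isomorphism types coincide --- which is exactly why the published statement requires combining three papers rather than following from any single one. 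None of these obstacles is misidentified, and your informal description of the strategy for dealing with them (cross-referencing several faces, passing through flips) is the correct instinct; but as written, the proposal names the target criteria without producing them, so it is an outline of the known proof rather than a proof.
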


Here the homeomorphism must act bijectively on the sets of marked points, and
is allowed to reverse orientation.

\begin{definition}[Mapping class group $\Mod(S,\P)$]
 Let $\Homeo(S,\P)$ denote the group of homeomorphisms of $S$ that fix $\P$ setwise. 
 The \emph{extended mapping class group} $\Mod(S,\P)$ is the group of the isotopy classes of elements of $\Homeo(S,\P)$, where isotopies are required to fix $\P$ pointwise. 
 An element of $\Mod(S,\P)$ is called a \emph{mapping class}.
 Mapping classes are allowed to reverse the orientation of $S$.
\end{definition}

Since mapping classes preserve the property of disjointness for arcs, it follows that
$\Mod(S,\P)$ acts naturally on $\A(S,\P)$ by simplicial automorphisms.

In the remainder of this article, we restrict to the case where $S$ is a closed surface.
\begin{thm}[Automorphisms of the arc complex \cite{irmak_mccarthy10}]
 Assume $(S,\P)$ is not a sphere with at most 3 marked points, nor a torus with one marked point. 
 Then the natural map $\Mod(S,\P) \to \Aut(\A(S,\P))$ is an isomorphism. $\hfill\square$
\end{thm}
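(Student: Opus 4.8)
The plan is to deduce the statement from the rigidity theorem for arc complexes (\cref{thm:arc_rigid}) together with a faithfulness argument. As noted above, $\Mod(S,\P)$ acts on $\A(S,\P)$, giving a homomorphism $\rho\colon\Mod(S,\P)\to\Aut(\A(S,\P))$, so it suffices to show that $\rho$ is bijective. Surjectivity is immediate: the hypotheses on $(S,\P)$ ensure $\A(S,\P)$ is non-empty, so given $\phi\in\Aut(\A(S,\P))$, \cref{thm:arc_rigid} applied with $(S',\P')=(S,\P)$ produces a homeomorphism $F\colon(S,\P)\to(S,\P)$ inducing $\phi$ on vertices; since $\A(S,\P)$ is flag, $F$ induces $\phi$, that is, $\rho([F])=\phi$. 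The content of the theorem is therefore the injectivity of $\rho$, equivalently the faithfulness of the action, and this is where the excluded cases enter.

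To prove faithfulness, suppose $[F]\in\ker\rho$, so that $F$ fixes the homotopy class of every essential arc; the goal is to show $[F]=\id$. Fix a triangulation $T$ of $(S,\P)$. First, $F$ fixes each marked point: if a marked point $p$ is an endpoint of two edges of $T$ whose other endpoints are distinct, then $F(p)=p$, and the surfaces admitting no marked point with such a pair of edges are among those excluded in the statement. Next, since a multi-arc is determined up to ambient isotopy by the homotopy classes of its arcs and distinct edges of $T$ are pairwise non-homotopic, we may isotope $F$ so that it fixes each edge of $T$ setwise, and then --- using that $F$ fixes the marked points --- so that it fixes $T$ pointwise. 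Now $F$ permutes the complementary triangles of $T$, fixing the boundary of each; two distinct complementary triangles with the same boundary would exhibit $S$ as the union of two triangles, forcing $(S,\P)$ to be one of the excluded surfaces, so in fact $F$ fixes each complementary triangle. Finally, $F$ restricts on each closed complementary triangle to a homeomorphism fixing the boundary pointwise, hence isotopic rel boundary to the identity by Alexander's trick; gluing these isotopies yields an isotopy $F\simeq\id$. This is an instance of the Alexander method applied to the edges of a triangulation, which form a filling collection of arcs cutting $(S,\P)$ into triangles.

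It remains to account for the excluded cases. When $(S,\P)$ is a sphere with at most three marked points or a torus with one marked point, there is a nontrivial mapping class of order two --- a reflection fixing all marked points, respectively the elliptic involution --- that reverses every essential arc but fixes each homotopy class, so $\ker\rho$ is this order-two subgroup and $\rho$ is not injective (and for a once-marked sphere $\A(S,\P)$ is in addition empty). I expect the main obstacle to be the injectivity half of the proof, and inside it the bookkeeping of the Alexander-method steps above --- pinning down the marked points, fixing $T$ pointwise, and ruling out nontrivial permutations of the complementary triangles --- together with verifying that the configurations in which this bookkeeping breaks down are exactly the surfaces excluded in the statement; surjectivity, by contrast, is a direct corollary of \cref{thm:arc_rigid}.
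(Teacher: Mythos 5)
The paper states this theorem without proof, citing Irmak--McCarthy, and your reconstruction --- surjectivity from \cref{thm:arc_rigid} plus faithfulness via the Alexander method, with the excluded surfaces accounting for the failures of faithfulness --- is the standard argument and essentially correct. The only step worth tightening is the pinning down of the marked points: rather than gesturing at which surfaces lack a marked point with two edges to distinct endpoints in the chosen triangulation, note that on every surface not excluded here each marked point is the endpoint of an essential loop arc, so $F$ fixes that marked point outright (its endpoint set is a singleton), and the rest of $\P$ then follows by propagating along the edges of $T$.
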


\subsubsection{Links, cutting, and gluing}

Let $\sigma \in \A(S,\P)$ be a multi-arc. 
Then the \emph{link} of $\sigma$ is the induced subcomplex $\lk_{\A(S,\P)}(\sigma)$ of~$\A(S,\P)$ with vertex set \[\{\alpha \in \A(S,\P) ~|~ \alpha \notin \sigma \textrm{ and } \alpha \textrm{ is disjoint from all arcs in }\sigma \}.\]
Define the surface $(S - \sigma, \P)$ obtained by \emph{cutting $S$ along the multi-arc $\sigma \in \A(S,\P)$} as follows.
Equip $S$ with a Riemannian metric, and realise $\sigma$ as a set of pairwise disjoint smooth arcs. For each complementary component $R'_i$ of $S \backslash \sigma$, let $R_i$ be its metric completion with respect to the induced metric. The inclusion $int(R_i) = R'_i  \hookrightarrow S$ extends to a continuous map $\iota_i : R_i \rightarrow S$. Then $\P_i = \iota_i^{-1}(\P)$ is a non-empty set of marked points on $R_i$. In particular, $(R_i, \P_i)$ is a surface with boundary, where each boundary component has at least one marked point.
We then define $(S - \sigma, \P)$ to be the (possibly disconnected) surface $\bigsqcup_i (R_i, \P_i)$.
We call each $(R_i, \P_i)$ a \emph{region} of~$(S - \sigma, \P)$.
When it is clear from context that $\P$ is the set of marked points on a fixed ambient surface $(S, \P)$, then we may simply write  $S - \sigma$ for $(S - \sigma, \P)$ and $R_i$ for $(R_i, \P_i)$.
For brevity, we shall also refer to each $R_i$ as a region of $\sigma$. 
 
The map $\iota : (S - \sigma, \P) \to (S,\P)$ defined by $\iota|_{R_i} = \iota_i$ is surjective, sends marked points to marked points, and restricts to an embedding on the interior of $S - \sigma$.
If $\alpha$ is a boundary arc of~$S - \sigma$ then $\iota(\alpha)$ is either a boundary arc of $S$ or an arc in $\sigma$.
(In fact, there are exactly two boundary arcs on $S - \sigma$ mapping to each arc in $\sigma$ via $\iota$.) If $\alpha$ is an essential arc on $S - \sigma$ then~$\iota(\alpha)$ is an essential arc on $S$ disjoint from $\sigma$.
Conversely, if $\beta$ is an essential arc on $S$ disjoint from $\sigma$ then~$\iota^{-1}(\beta)$ is the disjoint union of an essential arc on $S - \sigma$ with a (possibly empty) set of marked points.
Therefore, $\iota$ preserves disjointness of arcs, and so we deduce the following.
 
\begin{lem}[Links and cutting] \label{link_iso}
 The map $\iota : (S - \sigma, \P) \to (S,\P)$ as above induces a simplicial isomorphism  $\iota_* : \A(S - \sigma, \P) \to \lk_{\A(S,\P)}(\sigma)$. $\hfill \square$
\end{lem}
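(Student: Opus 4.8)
The plan is to verify that the natural map $\iota_*$ on arc complexes induced by $\iota$ is well-defined, injective, surjective, and simplicial (together with its inverse being simplicial), so that it is a simplicial isomorphism onto $\lk_{\A(S,\P)}(\sigma)$. Most of the content has in fact already been assembled in the paragraph preceding the statement; the lemma is essentially a bookkeeping assembly of those observations, so the proof should be short. First I would define $\iota_*$ on vertices: given an essential arc $\alpha$ on $S - \sigma$, set $\iota_*([\alpha]) = [\iota(\alpha)]$. The paragraph above records that $\iota(\alpha)$ is an essential arc on $S$ disjoint from $\sigma$, hence a vertex of $\lk_{\A(S,\P)}(\sigma)$; one checks this descends to homotopy classes because a proper homotopy of $\alpha$ in $S - \sigma$ pushes forward under $\iota$ to a proper homotopy in $S$.

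Next I would establish bijectivity on vertices. For surjectivity, take any essential arc $\beta$ on $S$ disjoint from $\sigma$; realize $\beta$ transverse to the smooth representatives of $\sigma$, note it must then be disjoint from them up to homotopy, and invoke the recorded fact that $\iota^{-1}(\beta)$ is a disjoint union of a single essential arc $\alpha$ on $S - \sigma$ with finitely many marked points, giving $\iota_*([\alpha]) = [\beta]$. For injectivity, suppose $\iota_*([\alpha_1]) = \iota_*([\alpha_2])$, i.e.\ $\iota(\alpha_1)$ and $\iota(\alpha_2)$ are properly homotopic in $S$; since $\iota$ restricts to an embedding on the interior of $S - \sigma$ and the complementary regions $R_i$ are embedded (with boundary) into $S$, a homotopy between the images can be lifted back into $S - \sigma$ — it cannot cross from one region to another without crossing $\sigma$, contradicting disjointness — so $\alpha_1$ and $\alpha_2$ are properly homotopic in $S - \sigma$.

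Finally I would check that both $\iota_*$ and $\iota_*^{-1}$ are simplicial, which amounts to checking that they preserve disjointness of arcs in both directions; since all complexes in play are flag, this suffices to conclude that $\iota_*$ is a simplicial isomorphism onto its image. If $\alpha_1, \alpha_2$ are disjoint arcs on $S - \sigma$ then, realizing them disjointly, their images $\iota(\alpha_1), \iota(\alpha_2)$ are disjoint on $S$ since $\iota$ is injective on the interior; conversely, if $\beta_1, \beta_2$ are disjoint arcs on $S$ (each disjoint from $\sigma$), realize $\beta_1 \cup \beta_2 \cup \sigma$ as a multi-arc and take preimages to obtain disjoint representatives of the corresponding arcs on $S - \sigma$. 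This shows $\iota_*$ maps multi-arcs to multi-arcs and reflects multi-arcs, hence is a simplicial isomorphism $\A(S-\sigma,\P) \to \lk_{\A(S,\P)}(\sigma)$. The only mild subtlety — the main point requiring care rather than a genuine obstacle — is the lifting-of-homotopies argument for injectivity and for the reverse direction of simpliciality: one must argue that homotopies of arcs in $S$ that are disjoint from $\sigma$ can be taken to avoid $\sigma$ throughout, so that they lift to $S - \sigma$; this is standard (a transversality / innermost-arc argument, using that $\sigma$ separates the regions), but it is where the geometric realization via a Riemannian metric on $S$ is actually used.
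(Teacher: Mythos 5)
Your proposal is correct and takes the same route as the paper, which treats the lemma as an immediate consequence of the observations recorded in the paragraph preceding it (hence the boxed square at the end of the statement with no separate proof environment). Your write-up simply spells out the bookkeeping — well-definedness on homotopy classes, bijectivity on vertices via the recorded preimage fact, and preservation/reflection of disjointness for simpliciality — that the paper leaves implicit, including the one genuinely non-trivial point (lifting homotopies of arcs in $S$ that avoid $\sigma$ back to $S - \sigma$), which you correctly flag as the place where geometric realization is actually used.
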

 
Given a region $R$ of $\sigma$, we shall regard its boundary $\partial R \subseteq \sigma$ as a multi-arc on $S$, and hence a simplex in $\A(S,\P)$. Each boundary arc of $R$ shall also be called a \emph{side} of $R$. If an arc $\alpha \in \sigma$ appears as two sides of $R$ then we shall refer to these sides as the two \emph{copies} of $\alpha$ on $\partial R$.
We also say a region $R$ \emph{meets} an arc $\alpha$ if $\alpha$ is a side of $R$.
Note that there are either one or two regions of $\sigma$ meeting a given arc in $\sigma$; and each arc appears at most twice as a side of any given region. In particular, a triangular region with an arc appearing twice as a side is called a \emph{folded triangle} (see the triangle $T$ in \cref{fig:links_arc_complex}.)

If $R$ and $R'$ are regions of $\sigma$ meeting along an arc $\alpha$ in $\sigma$, then we may obtain a region of~$\sigma \setminus \{\alpha\}$ by \emph{gluing} them along $\alpha$: this is the unique region of $\sigma \setminus \{\alpha\}$ containing the interiors of $R$ and $R'$.
If $\alpha$ appears twice as a side of $R$, then we may glue $R$ to itself along the two copies of $\alpha$ to produce a region of $\sigma \setminus \{\alpha\}$. Observe that marked points are mapped to marked points under any of these gluing operations.
 
Note that the regions obtained by the cutting operation (considered up to proper isotopy) do not depend on the choice of the Riemannian metric or on the representatives of the smooth arcs~in~$\sigma$.

\subsubsection{Join decompositions of links}

Given a finite set of flag simplicial complexes $A_1, \ldots, A_n$,
we define their \emph{join} ${A_1 * \ldots * A_n}$ as follows:
the vertex set is the disjoint union $\sqcup_{i} A_i$ of the
vertex sets of each $A_i$; the simplices are spanned by sets of vertices
of the form
$\sigma_1 \sqcup \ldots \sqcup \sigma_n$, where each $\sigma_i$ 
is a (possibly empty) simplex of $A_i$.

\begin{lem}[Uniqueness of minimal join decomposition]
 Let $\K$ be a finite-dimensional flag simplicial complex.
 Then there exist subcomplexes
 $A_1, \ldots, A_n$ of $\K$, where $n \geq 1$, such that $\K = A_1 * \ldots * A_n$ and each 
 $A_i$ cannot be decomposed as a non-trivial join.
 Moreover, the $A_i$'s are unique up to permutation.
\end{lem}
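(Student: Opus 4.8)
The plan is to establish existence and uniqueness of the minimal join decomposition via a notion of \emph{join-equivalence} on the vertex set of $\K$. For vertices $v, w$ of $\K$, say $v \sim w$ if either $v = w$, or $v$ and $w$ are \emph{not} joined by an edge in the $1$--skeleton of $\K$ (equivalently, $\{v,w\}$ is not a simplex). The key observation is that, because $\K$ is flag, a decomposition $\K = A_1 * \ldots * A_n$ is completely determined by the partition of the vertex set into the vertex sets of the $A_i$: a subset $\sigma$ of vertices spans a simplex of a join if and only if its intersection with each factor spans a simplex of that factor. Moreover, $v$ and $w$ lie in different factors of a join decomposition if and only if every vertex of the factor containing $v$ is joined by an edge to every vertex of the factor containing $w$; in particular if $v \not\sim w$ then $v$ and $w$ must lie in the same factor. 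So the partition induced by any join decomposition is a coarsening of the partition into connected components of the graph $G$ on the vertex set of $\K$ whose edges are the \emph{non-edges} of the $1$--skeleton (the ``complement graph'').

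For existence, I would take the $A_i$ to be the induced subcomplexes of $\K$ spanned by the connected components of this complement graph $G$. First one checks $\K = A_1 * \ldots * A_n$: given any simplex $\sigma \in \K$ and vertices $u \in \sigma \cap A_i$, $u' \in \sigma \cap A_j$ with $i \neq j$, the pair $\{u, u'\}$ is an edge of $\K$ (since $u,u'$ lie in different components of $G$), so the flag condition and an induction on $|\sigma|$ show $\sigma$ is a simplex of the join; conversely any simplex of the join is a complete subgraph of the $1$--skeleton, hence a simplex of $\K$ by flagness. Here one uses that $\K$ is finite-dimensional so $n$ is finite and each $A_i$ is nonempty; the hypothesis $n \geq 1$ just records that the vertex set is nonempty. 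Next, each $A_i$ admits no non-trivial join decomposition: if $A_i = B * C$ with both factors nonempty, then every vertex of $B$ would be joined by an edge to every vertex of $C$, so there would be no edge of $G$ between $B$ and $C$ inside $A_i$, contradicting connectedness of the component.

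For uniqueness, suppose $\K = A_1 * \ldots * A_n = A_1' * \ldots * A_m'$ are two decompositions into join-indecomposable factors. By the paragraph above, each partition into the vertex sets of the $A_i$ (resp. $A_j'$) coarsens the connected-component partition of $G$. It therefore suffices to show that a join-indecomposable factor's vertex set is a \emph{single} connected component of $G$: then both decompositions are exactly the component decomposition and agree up to permutation. If some $A_i$ had its vertex set split across two or more components of $G$, partition those components into two nonempty groups, spanning induced subcomplexes $B, C$ of $A_i$; since there are no $G$-edges between the groups, every $B$--$C$ vertex pair is an edge of $\K$, and flagness gives $A_i = B * C$, contradicting indecomposability. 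Hence each $A_i$ is a single component, and likewise each $A_j'$, so $\{n\} = \{m\}$ and the factors match up.

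The only real subtlety — the step I expect to need the most care — is verifying that the claimed decompositions are genuine joins, i.e. that a set of vertices with one vertex from each of several $G$-components, together with extra vertices spanning simplices in those components, really spans a simplex of $\K$. This is where finite-dimensionality and the flag hypothesis are both essential: flagness reduces the claim to checking pairwise adjacency, and one argues that any two vertices in distinct $G$-components are joined by an edge of $\K$ by the very definition of $G$. Everything else is bookkeeping about partitions.
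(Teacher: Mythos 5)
Your proof is correct and follows essentially the same route as the paper: both identify the factors with the connected components of the complement graph of the $1$--skeleton and exploit flagness to reduce the join condition to pairwise adjacency. Your write-up is somewhat more explicit than the paper's, particularly in spelling out the uniqueness argument, which the paper leaves implicit.
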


\begin{proof}
 Since $\K$ is a flag complex, we may instead work with its $1$--skeleton $G$.
 Observe that the graph $G$ cannot be decomposed as a non-trivial join if
 and only if the complement graph $\overline{G}$ (i.e.\ the graph where vertices are adjacent if and only if they are not adjacent in $G$) is connected.
 Since $\K$ is finite-dimensional, there are finitely many
 connected components $H_1, \ldots, H_n$ of $\overline{G}$.
 Let $V_i$ be the vertex set of $H_i$, and $A_i$ be the induced subgraph of $V_i$ in $G$.
 Then $G = A_1 * \ldots * A_n$, and each $A_i$ cannot be decomposed as a non-trivial join.
 Alternatively, we can take $A_i$ to be the induced subcomplex of $V_i$ in $\K$ which
 yields the desired decomposition $\K = A_1 * \ldots * A_n$.
\end{proof}

Note that $S - \sigma$ cannot have any monogons nor bigons as regions since $\sigma$ is a multi-arc which, by definition, consists of essential and pairwise non-homotopic arcs.
A region $R$ of $S - \sigma$ is a triangle if and only if $\A(R) = \emptyset$; and $R$ is a monogon with one interior marked point if and only if $\A(R)$ is a single vertex.
 
\begin{prop}[Decomposition of links in $\mathcal{A}(S,\P)$] \label{decomp-arc}
 Suppose $\sigma \in \mathcal{A}(S,\P)$ is a simplex, and 
 let $\lk_{\mathcal{A}(S,\P)}(\sigma) = A_1 \ast \cdots \ast A_n$
 be the minimal join decomposition of its link. Then
 \begin{enumerate}
  \item $S - \sigma$ has exactly $n$ non-triangular regions $R_1, \ldots, R_n$,
  \item $A_i = \A(R_i)$ (up to permutation), and
  \item the topological type of $R_i$ is determined by the isomorphism type of $A_i$.
 \end{enumerate}

\begin{proof}
Let $R_1, \ldots R_m$ be the non-triangular regions of $S - \sigma$.
Let $B_i = \A(R_i)$ be the induced subcomplex in $\A(S,\P)$ of the arcs contained in~$R_i$.
Any arc $\gamma \in \lk_{\A(S,\P)}(\sigma)$ is contained in exactly one non-triangular region
of $S - \sigma$, and hence is a vertex of precisely one of the $B_i$.
Moreover, if arcs $\beta, \gamma \in \lk_{\A(S,\P)}(\sigma)$ are contained in 
distinct regions then they are disjoint.
Therefore $\lk_{\A(S,\P)}(\sigma) = B_1 \ast \ldots \ast B_m$.
We shall show that each $B_i$ cannot be decomposed as a non-trivial join.
Together with the above lemma, this will yield Parts (i) and (ii).
Part (iii) follows from the combinatorial rigidity of arc complexes (\cref{thm:arc_rigid}).

To simplify the exposition, we shall refer to each $A_i$ as a ``colour''.
In particular, any pair of intersecting arcs in $\lk_{\A(S,\P)}(\sigma)$ have the same colour.
Our goal is to show that all vertices in~$B_i$ have the same colour.
If $B_i$ is a single vertex then we are done, so suppose it has at least two vertices.
Let $\T$ be a triangulation of $(S,\P)$ containing $\sigma$. Note that $R_i$ contains
at least two triangles of $\T$, for otherwise $R_i$ is a once-marked monogon (as $R_i$
is assumed to be non-triangular) and hence contains only one arc. Let $\T_i = \T \cap B_i$
be the arcs of $\T$ contained in~$R_i$.
If we can show that every arc in $\T_i$ has the same colour, then we are done since each~$\gamma \in B_i$ either intersects $\T_i$ or is an arc in $\T_i$. 

\textbf{Claim:} If $B_i$ has at least two vertices, then for every arc $\gamma\in B_i$ there exists an arc~$\gamma' \in B_i$ intersecting $\gamma$.
If $\gamma$ meets two triangles of $\T$, then we let $\gamma'$ be the other diagonal of the quadrilateral formed by the two triangles meeting $\gamma$. It $\gamma$ meets only one triangle $T$ of $\T$, it has to form two sides of the folded triangle $T$ (see \cref{fig:links_arc_complex}). Let $\beta$ be the other side of $T$. Since~$R_i$ contains at least two triangles of~$\T$, there is a triangle $T' \neq T$ of $\T$ also meeting $\beta$. 
Gluing~$T$ to itself along $\gamma$ and to $T'$ along $\beta$ forms a once-marked bigon, with $\gamma$ connecting the interior marked point to a boundary marked point of the bigon. We can take $\gamma'$ to be the essential arc with both endpoints on the other boundary marked point. Therefore, there always exists an arc $\gamma' \in B_i$ that intersects $\gamma$.

\begin{figure}
 \begin{center}
  \begin{tikzpicture}[]  
   \draw (0,0) -- node[below=-0.0]{$\gamma$} (3,0);
   \draw[fill] (0,0) circle (2pt);
   \draw[fill] (3,0) circle (2pt);
   \draw[fill] (6,0) circle (2pt);
   
   \draw (0,0) to[out=30, in=90, looseness=0.8] node[above=-0.15]{$\beta$} (4,0) to[out=-90, in=-30, looseness=0.8] (0,0);
   \draw[red] (6,0) to[out=165, in=90, looseness=0.7] node[above right=-0.1]{$\gamma'$} (2.5,0) to[out=-90, in=-165, looseness=0.7] (6,0);
   
   \draw (0,0) to[out=45, in=135] (6,0);
   \draw (0,0) to[out=-45, in=-135] (6,0);
   
   \draw (4.8,-0.6) node{$T'$};
   \draw (2.2,-0.4) node{$T$};
  \end{tikzpicture}
  \caption{An arc $\gamma$ forming two sides of a folded triangle $T$.}
  \label{fig:links_arc_complex}
 \end{center}
\end{figure}
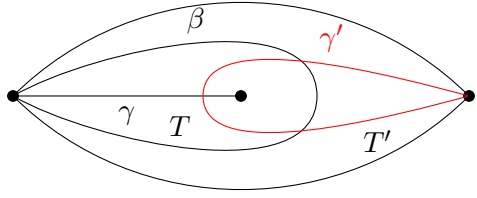

Now suppose arcs $\beta, \gamma \in \T_i$ bound a common triangle $T$ of $\T$.
Applying the above claim, there exist arcs $\beta', \gamma' \in B_i$ intersecting
$\beta$ and $\gamma$ respectively. If $\beta' = \gamma'$, $\beta' \pitchfork \gamma$,
$\beta \pitchfork \gamma'$, or $\beta' \pitchfork \gamma'$ then $\beta$ and $\gamma$
have the same colour. Otherwise, $\beta'$ and $\gamma'$ must both intersect the third
side~$\alpha$ of~$T$, implying that $\alpha$ is an arc in $\T_i$ sharing the same
colour with both $\beta$ and $\gamma$. Since $R_i$ is connected, we deduce that every
arc in $\T_i$ has the same colour as desired.
\end{proof}
\end{prop}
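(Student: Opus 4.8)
The plan is to reduce the statement to one local fact---that the arc complex of a single non-triangular region cannot be written as a non-trivial join---and then let \cref{link_iso} and the uniqueness of minimal join decomposition handle the bookkeeping. By \cref{link_iso}, the map $\iota_*$ identifies $\lk_{\A(S,\P)}(\sigma)$ with $\A(S-\sigma,\P)$. Since each essential arc of $S-\sigma$ lies in exactly one region, and arcs in distinct regions are disjoint, $\A(S-\sigma)$ is the join $\A(R_1)\ast\cdots\ast\A(R_k)$ over all regions $R_1,\dots,R_k$ of $S-\sigma$. Triangular regions have empty arc complex and so contribute trivial factors; discarding them and writing $B_i=\A(R_i)$ for the non-triangular regions $R_1,\dots,R_m$, we obtain $\lk(\sigma)=B_1\ast\cdots\ast B_m$ with each $B_i$ non-empty (a non-triangular region is neither a monogon nor a bigon, so $B_i$ has at least one vertex). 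Granting that each $B_i$ is join-indecomposable, the uniqueness of minimal join decomposition forces $m=n$ and $\{B_i\}=\{A_j\}$ up to permutation, which is (i) and (ii); part (iii) then follows from rigidity of arc complexes (\cref{thm:arc_rigid}), since an abstract isomorphism $A_i\cong\A(R_i)$ is realised by a homeomorphism of the underlying surfaces---with the degenerate single-vertex case treated by hand, as a region whose arc complex is a single vertex must be a once-marked monogon.

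The core of the argument is thus to show that $\A(R)$ is join-indecomposable for a non-triangular region $R$; by the characterisation used in the uniqueness lemma, this amounts to saying that the \emph{crossing graph} of $R$---with vertices the arcs in $R$ and an edge whenever $\alpha\pitchfork\alpha'$---is connected. I would carry this out by reintroducing the ambient decomposition as a ``colour'': give each vertex of $\lk(\sigma)$ the index of the factor $A_j$ containing it, so that any two crossing arcs share a colour, and aim to prove each $B_i$ monochromatic (this suffices, since then each $A_j$ is a union of $B_i$'s, whence $\{A_j\}=\{B_i\}$ by non-emptiness of the $B_i$ and indecomposability of the $A_j$). Fix a triangulation $\T$ of $(S,\P)$ with $\sigma\subseteq\T$, and let $\T_i=\T\cap B_i$ be the diagonals of $\T$ lying in $R_i$, which triangulate $R_i$; every arc of $B_i\setminus\T_i$ crosses $\T_i$, so it is enough to make $\T_i$ monochromatic. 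Two sub-steps: (a) every $\gamma\in\T_i$ is crossed by some arc of $B_i$---if $\gamma$ lies between two triangles of $\T$, take the opposite diagonal of the resulting quadrilateral, and if $\gamma$ forms two sides of a folded triangle $T$, use a second triangle meeting the third side of $T$ to build a once-marked bigon that contains a crossing arc (here one uses that $R_i$, being non-triangular with at least two vertices in $B_i$, contains at least two triangles of $\T$); and (b) if $\beta,\gamma\in\T_i$ are two sides of a common triangle $T$ of $\T$, then, choosing $\beta',\gamma'\in B_i$ crossing $\beta$ and $\gamma$ respectively, a short case check---$\beta'=\gamma'$, or $\beta'\pitchfork\gamma$, or $\beta\pitchfork\gamma'$, or $\beta'\pitchfork\gamma'$---makes $\beta$ and $\gamma$ the same colour directly, and otherwise $\beta'$ and $\gamma'$ must both leave $T$ across its third side $\alpha$, forcing $\alpha\in\T_i$ to share a colour with both. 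Since $R_i$ is connected, the dual graph of $\T_i$ is connected, so $\T_i$, and hence $B_i$, is monochromatic.

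The step I expect to be the main obstacle is (b). One has to argue that an arc entering the triangle $T$ across one side and missing a second side must leave $T$ through the remaining side---ruling out a return through the entry side via minimal position---and keep careful track of whether that remaining side $\alpha$ is an interior diagonal of $\T_i$ or a boundary arc of $R_i$; in the latter case $\alpha$ is an arc of $\sigma$, which no arc of $\lk(\sigma)$ can cross, so one of the easy cases must occur instead. Everything else is routine once \cref{link_iso} and \cref{thm:arc_rigid} are in hand.
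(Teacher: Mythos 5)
Your proposal reproduces the paper's own proof essentially verbatim: the same reduction via \cref{link_iso} to a join over regions, the same colouring of vertices by $A_j$-factor, the same claim (a) that every $\gamma\in\T_i$ is crossed by some arc of $B_i$ (with the same folded-triangle/bigon construction), the same case analysis (b) for two sides of a common triangle, and the same appeal to connectedness of $R_i$ and to \cref{thm:arc_rigid} for part (iii). The one point you flag as a worry---that the ``third side $\alpha$'' in case (b) might be a boundary arc of $R_i$ and hence lie in $\sigma$---is handled correctly by your own observation that no arc of $\lk(\sigma)$ can cross $\sigma$, so in that situation one of the easy subcases of (b) must already apply; the paper elides this but it is exactly right.
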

 
\subsubsection{Infinite links}

\begin{definition}[Infinite link simplices]\label{defin:IL}
Given a simplicial complex $\K$, define its set of \emph{infinite link simplices} to be
\[\IL(\K) = \{\sigma \in \K ~|~ \# \lk_\K(\sigma) = \infty\}.\]
Let $\MIL(\K)\subseteq\IL(\K)$ be the set of $\sigma\in\IL(\K)$ for which $\lk(\sigma')$ is finite
for all $\sigma' \supsetneq \sigma$.
In other words, $\MIL(\K)$ comprises the simplices
which are maximal among those with infinite link. 
\end{definition}

A \emph{simple closed curve} (or a \emph{curve}) on $(S,\P)$ is an embedded loop on $S \setminus \P$.
Here, curves are considered up to proper homotopy.
A curve on $(S,\P)$ is \emph{essential} if it does not bound a disc
with at most one marked point. 
Note that an essential curve of $S$ contained in some region $R$ could
be \emph{peripheral} on $R$, that is, parallel to a boundary component of $R$.

We also extend the notion of cutting a surface $S$ along a simple closed curve $\gamma$
to obtain a surface $S - \gamma$. The two boundary components of $S - \gamma$ obtained
by cutting along $\gamma$ will have no marked points.

\begin{rem}[Regions with finitely many arcs]\label{dehn_twist}
 Let $\sigma$ be a multiarc. A region $R$ of $\sigma$ contains an essential simple closed curve if and only if
 $R$ is not a polygon with at most one interior marked point. For regions without simple essential closed curves, the arc complex is
 finite. In fact, these are the only possible regions with finitely many arcs:
 if $R$ contains an essential simple closed curve $\gamma$,
 there must exist some arc $\alpha$
 intersecting~$\gamma$ non-trivially; applying Dehn twists about $\gamma$
 to $\alpha$ yields infinitely many arcs.
\end{rem}

\begin{prop}[Infinite links in the arc complex]\label{IL-arc}
 Suppose $\sigma \in \A(S,\P)$ is a multi-arc. Then
 \begin{enumerate}
 \item $\sigma \in \IL(\A(S,\P))$ if and only if $\sigma$ is disjoint from
 an essential simple closed curve on $(S,\P)$, and
 \item ${\sigma \in \MIL(\A(S,\P))}$ if and only if $S - \sigma$ has a
 $(1,1)$--annulus as its unique non-triangular region.
 \end{enumerate}
\end{prop}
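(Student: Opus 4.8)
The plan is to pull the whole question back to the cut surface $S-\sigma$ using the isomorphism $\lk_{\A(S,\P)}(\sigma)\cong\A(S-\sigma)$ of \cref{link_iso}.

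For part (i), I would first note that $\A(S-\sigma)$ has finitely many vertices if and only if each region $R_i$ of $S-\sigma$ has finitely many arcs, which by \cref{dehn_twist} happens exactly when no $R_i$ contains a simple closed curve essential in $R_i$. Since $S$ is closed, a curve disjoint from $\sigma$ is precisely a curve lying in the interior of some region, so it remains to check that a curve $\gamma$ lying in a region $R$ is essential in $R$ if and only if it is essential in $(S,\P)$. If $\gamma$ bounds a disc with at most one marked point in $R$, its image bounds such a disc in $S$ because $\iota$ embeds interiors. Conversely, if $\iota(\gamma)$ bounds a disc $D$ in $S$ with at most one marked point, then no arc of $\sigma$ can have interior meeting $D$: it would have to cross $\partial D=\iota(\gamma)$, impossible since $\iota(\gamma)\subset R$ is disjoint from $\sigma$; or lie inside $D$ with both endpoints at the unique marked point of $D$, hence bound a monogon and be inessential in $S$, contradicting that $\sigma$ is a multi-arc. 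So $D\subseteq R$ and $\gamma$ is inessential in $R$. Combining, $\sigma\in\IL$ if and only if $\sigma$ is disjoint from an essential curve.

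For part (ii), recall $\sigma\in\MIL$ means $\sigma\in\IL$ while no $\sigma'\supsetneq\sigma$ lies in $\IL$. The ``if'' direction is easy: if $S-\sigma$ has a $(1,1)$--annulus $R$ as its unique non-triangular region, then $\sigma\in\IL$ since the core curve of $R$ is essential, and any $\sigma'\supsetneq\sigma$ is obtained by adjoining arcs that must lie in $R$ (triangles carry no essential arc); as every essential arc of a $(1,1)$--annulus joins its two marked points, cutting along one yields a quadrilateral and further cuts merely subdivide polygons, so every region of $S-\sigma'$ is a polygon with at most one interior marked point and $\sigma'\notin\IL$ by part (i). For the ``only if'' direction, let $\sigma\in\MIL$; by part (i) some non-triangular region $R$ of $S-\sigma$ contains an essential curve. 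If $S-\sigma$ had another non-triangular region $R''$, then (non-triangular regions have nonempty arc complex) $R''$ would carry an essential arc $\alpha''$, and $\sigma\cup\{\iota(\alpha'')\}$ would still have $R$ as a region with an essential curve, hence lie in $\IL$ by part (i); but this is impossible, as $\sigma\cup\{\iota(\alpha'')\}$ properly contains $\sigma\in\MIL$. So $R$ is the unique non-triangular region, and for each essential arc $\alpha$ in $R$, since $\sigma\cup\{\iota(\alpha)\}$ properly contains $\sigma\in\MIL$ and so is not in $\IL$, part (i) forces every region of $R-\alpha$ to be a polygon with at most one interior marked point. It therefore suffices to prove: \emph{if a region $R$ contains an essential simple closed curve but is not a $(1,1)$--annulus, then some essential curve $\gamma$ and some essential arc $\alpha$ in $R$ are disjoint} --- for then $\gamma$ survives inside a region of $R-\alpha$ and stays essential there by the argument of part (i), so that region is not a polygon with at most one interior marked point, a contradiction; hence $R$ is a $(1,1)$--annulus, finishing (ii).

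The step I expect to be the main obstacle is that displayed claim, which I would attack by a case analysis: by \cref{dehn_twist}, $R$ has genus $\ge1$, or at least three boundary components, or a boundary component carrying at least two marked points, or at least two interior marked points, or exactly two boundary components together with an interior marked point, and in each case one exhibits the disjoint pair explicitly. For instance: two interior marked points $p,q$ give an arc $\alpha$ from $p$ to $q$ and a small circle $\gamma$ around both; a boundary component $C$ with two marked points gives a loop $\alpha$ at one of them parallel to $C$ --- essential because $C$ carries no boundary arc from a marked point to itself --- together with a parallel copy $\gamma$ of $C$; and in the genus case one takes $\gamma$ non-separating and lets $\alpha$ be the image of an essential arc of $R-\gamma$, which exists because $R-\gamma$ is connected with at least three boundary components and a marked point. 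Making this airtight requires knowing exactly which surfaces carry no essential arc (besides triangles and once-marked monogons, also once-punctured discs and annuli with a marked-point-free boundary component) so as to handle the genus case, dealing with components of $R-\gamma$ that have no marked points by re-choosing $\gamma$ inside such a component, and tracking the subtlety that a loop based on a boundary component is an essential arc or a boundary arc according as that component has at least two, or exactly one, marked point. Throughout, I would use the elementary fact --- proved exactly as in part (i) --- that essentiality of a curve is preserved when cutting along a disjoint arc or curve.
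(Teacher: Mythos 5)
Your proof of part (i) and of the ``if'' direction of part (ii) follows the paper essentially verbatim (in fact you are slightly more careful than the paper in verifying that essentiality of a curve in a region $R_i$ agrees with essentiality in $(S,\P)$, a point the paper's proof glosses over). For the ``only if'' direction of (ii), however, you take a genuinely different route. After reducing, as the paper does, to the statement that every essential curve in the unique non-triangular region $R$ must meet every essential arc, the paper proceeds by a non-separating/separating dichotomy: in the non-separating case it performs a surgery along an arc meeting the curve once to produce two arcs $\beta,\beta'$ bounding a $(1,1)$--annulus, which by maximality must be boundary arcs, forcing $R$ to be that annulus; in the separating case it cuts along the curve and shows both halves are $(0,1)$--annuli. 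You instead prove the contrapositive directly --- if $R$ carries an essential curve and is not a $(1,1)$--annulus, exhibit a disjoint essential curve/arc pair --- via a case analysis on the topological type of $R$ (genus, number of boundary components, distribution of marked points). Both approaches are sound; the paper's is more uniform and avoids the enumeration, whereas yours is more elementary in spirit but requires filling in several cases (you only sketch three of the five you list) and tracking the essentiality subtleties you rightly flag (boundary components of $R-\gamma$ carrying no marked points, loops based at boundary marked points being boundary-parallel or not, etc.). These gaps are fillable, but completing the case analysis rigorously is roughly the same amount of work as the paper's surgery argument, so the payoff of the alternate route is mainly conceptual transparency rather than brevity.
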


\proof
 Let $R_1, \ldots, R_n$ be the non-triangular regions of $S - \sigma$,
 and $A_i = \A(R_i)$. By \cref{decomp-arc}, we have the minimal join decomposition
 $\lk_{\A(S,\P)}(\sigma) = A_1 * \ldots * A_n$.
 Thus $\# \lk_\K(\sigma) = \infty$ if and only if $\# A_i = \infty$ for some $i$.
 By the above remark, this holds if and only if $A_i$ contains an
 essential simple closed curve, yielding Part (i).
 
 Now, suppose $S - \sigma$ has a $(1,1)$--annulus as its unique non-triangular region.
 The core curve of this annulus is an essential simple closed curve disjoint from
 $\sigma$, and so ${\sigma \in \IL(\A(S,\P))}$. However, any arc
 $\alpha \in \lk_{\A(S,\P)}(\sigma)$ must connect the marked points on opposite boundary
 components of the annulus. Thus $S - (\sigma \cup \{\alpha\})$ has a quadrilateral
 as its unique non-triangular region. Therefore, $\sigma \cup \{\alpha\}$ has finite link,
 and so it follows that $\sigma \in \MIL(\A(S,\P))$.
 
 Conversely, assume $\sigma \in \MIL(\A(S,\P))$. Since $\sigma\in\IL(\A(S,\P))$, there
 exists an essential simple closed curve on $(S,\P)$ disjoint from $\sigma$.
 Let $R$ be the region of $S - \sigma$ containing such a curve.
 By the maximality assumption, every essential simple closed curve on $R$
 must intersect every essential arc on $R$.
 This implies that $R$ is the only non-triangular region of $S - \sigma$.
 We want to show that $R$ is a $(1,1)$--annulus.
 
 \begin{figure}[b]
 \begin{center}
  \begin{tikzpicture}[scale=0.8]
   \draw[color=blue] (0,0) to[out=0, in=-150] node[left]{$\beta$} (2.1,1.62);
   \draw[color=blue, dashed] (2.1,1.62) to[out=0, in=0, looseness=0.3] (2.1,-1.62);
   \draw[color=blue] (2.1,-1.62) to[out=160, in=-20] (0,0);

   \draw[color=blue] (8,0) to[out=180, in=-30] node[right]{$\beta'$} (5.9,1.62);
   \draw[color=blue, dashed] (5.9,1.62) to[out=180, in=180, looseness=0.3] (5.9,-1.62);
   \draw[color=blue] (5.9,-1.62) to[out=20, in=-160] (8,0);
  
   \draw[color=red, fill] (0,0) node[below]{$p$} circle (2pt);
   \draw[color=red, fill] (8,0) node[below]{$p'$} circle (2pt);
   \draw[color=red] (4,-0.5) to[bend right=5] node[below]{$\alpha$} (8,0);
   \draw[color=red] (4,-0.5) to[bend left=5] (0,0);
   
   \draw (4,1.5) to[out=180, in=-30, looseness=0.7] (0,2.2);
   \draw (4,1.5) to[out=0, in=-150, looseness=0.7] (8,2.2);

   \draw (4,-1.5) to[out=180, in=30, looseness=0.7] (0,-2.2);
   \draw (4,-1.5) to[out=0, in=150, looseness=0.7] (8,-2.2);
   
   \draw (4,1.5) to[out=0, in=0, looseness=0.5] node[right]{$\gamma$} (4,-1.5);
   \draw[dashed] (4,1.5) to[out=180, in=180, looseness=0.5] (4,-1.5);
  \end{tikzpicture}
  \caption{A $(1,1)$--annulus bounded by $\beta$ and $\beta'$, with
  $\gamma$ as a core curve.}
  \label{fig:proof_IL-arc}
 \end{center}
\end{figure}
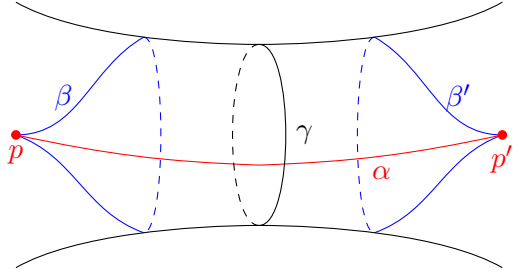

 First, suppose there exists a non-separating curve $\gamma$ on $R$.
 Then there exists an arc $\alpha$ on $R$ that intersects $\gamma$ exactly once
 (in minimal position). Let $p$ and $p'$ be the endpoints of $\alpha$ (these could
 possibly coincide). Let $a, a'$ be the subsegments of $\alpha$
 separated by the intersection point~$x$ of $\alpha$ and $\gamma$,
 with an endpoint at $p,p'$ respectively.
 Let $\beta$ be the arc that begins at $p$, runs along~$a$ to $x$, follows $\gamma$ exactly once,
 and then returns from $x$ to $p$ along $a$.
 Define $\beta'$ similarly using~$a'$ instead (see \cref{fig:proof_IL-arc}).
 Then, after a homotopy, $\beta$ and $\beta'$ bound a $(1,1)$--annulus $R' \subseteq R$,
 containing $\gamma$ as a core curve and $\alpha$ as an arc connecting opposite boundary components.
 By the maximality assumption, $\beta$ and $\beta'$ cannot be essential arcs on $R$, and so
 they must belong to~$\partial R$. Therefore $R = R'$ is a $(1,1)$--annulus.
 
 Next, we consider the case where all curves on $R$ are separating;
 this occurs precisely when~$R$ has genus $0$.
 Let $\gamma$ be any simple closed curve on $R$, and suppose $R'$ and $R''$ are
 the two components obtained by cutting $R$ along $\gamma$. 
 Our goal is to show that $R'$ and $R''$ are $(0,1)$--annuli.
 
 Suppose, for a contradiction, that $R'$ has at least two marked points $p,p'$.
 If~$p$ and~$p'$ are not contained in the same boundary component, then any arc
 connecting them is essential.
 If~$p$ and~$p'$ lie on the same boundary component, then there exists an essential
 arc with both endpoints on $p$ which cuts off a polygon having $p'$ as one of its
 marked points.
 This is an arc that is also essential on $R$.
 This contradicts the assumption on maximality.
 
 Thus, we have shown that $R'$ (and $R''$) has at most one marked point.
 In particular, $R'$~and~$R''$ have at most two boundary components (including
 the one obtained by cutting along~$\gamma$).
 Since~$\gamma$ is essential on $R$,
 neither $R'$ or $R''$ can be a closed disc with at most one interior marked point.
 Therefore $R'$ (and $R''$) has exactly one marked point, and this marked point
 must be a boundary marked point. It follows that both $R', R''$ are $(0,1)$--annuli.
 Gluing $R'$ and $R''$ along~$\gamma$ yields a $(1,1)$--annulus $R$. This completes
 the proof of Part (ii).
\endproof

\subsection{Triangles and flip graphs}

 Recall that a triangulation $\T$ on $(S,\P)$ is a maximal multi-arc.
 We refer to each (triangular) region of $S - \T$ as a triangle of $\T$.
 Note that two arcs $\alpha, \beta \in \A(S,\P)$ bound a folded triangle~$T$, with $\beta$ forming two sides of $T$, if and only if $\lk_{\A(S,\P)}(\alpha) = \{\beta\} * \K$ for some simplicial complex~$\K \subset \A(S,\P)$.
 
 The following was proven in \cite[Proposition 3.2]{irmak_mccarthy10}.
 
 \begin{prop}[Extending triangles to hexagons] 
  Let $T$ be a non-folded triangle on $(S,\P)$. Then there exists a triangulation $\T$ of $(S,\P)$
  containing~$T$ such that $S - (\T \setminus \partial T)$ has a hexagon as its unique non-triangular
  region. In particular, each side of $T$ cuts the hexagon into a pentagon and a triangle. $\hfill\square$
 \end{prop}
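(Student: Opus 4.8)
The plan is to realise the hexagon as the union of $T$ with an \emph{ear} triangle glued along each of its three sides, and then to triangulate the rest of the surface. Write $\partial T=\{\alpha_1,\alpha_2,\alpha_3\}$; since $T$ is non-folded these are three distinct essential arcs. First I would extend the multi-arc $\partial T$ to a triangulation $\T$ of $(S,\P)$, and let $T_i$ be the triangle of $\T$ sharing the side $\alpha_i$ with $T$. Because $T$ is non-folded, $\alpha_i$ occurs only once on $\partial T$, so the triangle on the far side of $\alpha_i$ cannot be $T$ itself; hence $T_i\ne T$ for each $i$. Now pass to the multi-arc $\T\setminus\partial T$, and let $R_0$ be the region of $S-(\T\setminus\partial T)$ containing the interior of $T$. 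Since $\alpha_1,\alpha_2,\alpha_3$ are the only arcs of $\T$ removed and each of them borders only $T$ and the corresponding $T_i$, the region $R_0$ is obtained by gluing $T$ to $T_1,T_2,T_3$ along $\alpha_1,\alpha_2,\alpha_3$, while every other region of $S-(\T\setminus\partial T)$ is an unchanged triangle of $\T$. So it suffices to arrange that $R_0$ is a hexagon.

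Write $\beta_i,\gamma_i$ for the two sides of $T_i$ other than $\alpha_i$. I claim $R_0$ is a hexagon precisely when (a)~$T_1,T_2,T_3$ are pairwise distinct and (b)~the six arcs $\beta_1,\gamma_1,\beta_2,\gamma_2,\beta_3,\gamma_3$ are pairwise distinct. Indeed, assuming (a)--(b), one builds $R_0$ by gluing $T$ successively to $T_1$, then to $T_2$, then to $T_3$, one side at a time: at each step a fresh triangle is attached to a disc along a single boundary arc with no further identification (this is exactly what (a)--(b) rule out), so $R_0$ is a disc with six boundary arcs, i.e.\ a hexagon. In this hexagon the three sides $\alpha_i$ of $T$ join alternate vertices, and each $\alpha_i$ separates $R_0$ into the ear $T_i$ and a pentagon, as required; conversely a hexagonal $R_0$ forces (a)--(b). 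In particular, once (a)--(b) hold, $R_0$ is automatically the unique non-triangular region of $S-(\T\setminus\partial T)$.

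It remains to modify $\T$ so that (a)--(b) hold, and this is the heart of the matter. One only ever flips arcs of $\T$ not belonging to $\partial T$, so that $T$ remains a triangle and $\partial T\subseteq\T$ throughout. If (a) fails, say $T_i=T_j$, then this common triangle has $\alpha_i$ and $\alpha_j$ among its sides and a third side $e\notin\partial T$ (if $e$ were in $\partial T$ then $T\cup T_i$ would already be all of $S$, which is excluded by the standing hypotheses on $(S,\P)$), and flipping $e$ replaces $T_i$ by two \emph{distinct} triangles meeting $\alpha_i$ and $\alpha_j$. If (a) holds but (b) fails, some arc appears twice among $\beta_1,\gamma_1,\dots,\gamma_3$: either some $T_i$ is folded along $\beta_i=\gamma_i$, or two ears share a free side, and in each case one flips the offending free arc, again checking that it is not in $\partial T$. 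The delicate point, which I expect to be the main obstacle, is to organise the flips so that the process terminates: one must exhibit a complexity (a suitably weighted count of the coincidences in (a) and (b)) which each prescribed flip strictly decreases, and one must handle the fact that a folded ear cannot be flipped across its fold arc directly — here the standing hypotheses, ensuring that no side $\alpha_i$ bounds a once-marked monogon, let one circumvent this, for instance by flipping one triangle further out. An alternative route avoiding iteration is to work with $S\setminus\mathrm{int}(T)$, a surface with boundary $\alpha_1\cup\alpha_2\cup\alpha_3$: one first picks three ear triangles greedily, pushing marked points away from $\partial T$ so that they are distinct with distinct free sides, and then triangulates the complement. Either way, once (a)--(b) are in force the first two paragraphs show that $\T$ is as desired: $S-(\T\setminus\partial T)$ has the hexagon $R_0$ as its unique non-triangular region, and each side of $T$ cuts it into a pentagon and a triangle. $\hfill\square$
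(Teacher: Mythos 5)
The paper does not prove this statement; it cites it as \cite[Proposition 3.2]{irmak_mccarthy10}, so there is no in-text argument to compare against, and I assess your proposal on its own.

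Your reduction to a condition on the ear triangles is essentially right, but condition (b) is superfluous. If $T,T_1,T_2,T_3$ are pairwise distinct, then $R_0$ is built by successively attaching each disc $T_i$ to the growing disc along the single arc $\alpha_i$ (distinctness of the $T_i$ rules out a second identification, and $\beta_i,\gamma_i\notin\partial T$ for the same reason), so $R_0$ is a disc with boundary the six sides $\beta_1,\gamma_1,\dots,\gamma_3$ \emph{as sides of the cut region}. Whether some of these sides map to the same arc of $S$ under $\iota$ is irrelevant to the topological type of $R_0$: cutting along $\T\setminus\partial T$ produces two boundary sides for each arc regardless, exactly as a folded quadrilateral is still a quadrilateral. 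In particular a folded ear $T_i$, or two ears sharing a free side, are not obstructions, and your converse claim that a hexagonal $R_0$ forces (b) is false. This also makes moot your remark that ``the standing hypotheses ensure no side $\alpha_i$ bounds a once-marked monogon''---which is itself incorrect: a side of $T$ can bound a once-marked monogon on a large surface.

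The genuine gap is the one you flag: you never establish termination of the flip procedure, and the naive move does not obviously make progress. Concretely, if $T_1=T_2$ with third side $e$ and the triangle $T'$ across $e$ happens to be $T_3$, then the quadrilateral around $e$ has sides $\alpha_1,\alpha_2,\alpha_3,g$; after flipping $e$ the two resulting triangles share $e'$, one of them has $\alpha_2$ and $\alpha_3$ (or $\alpha_1$ and $\alpha_3$) among its sides, so $T_j=T_3$ for some $j\in\{1,2\}$, and the obvious next flip (of $e'$) just undoes the first. So the number of coinciding pairs among $T_1,T_2,T_3$ need not decrease, and some more careful choice of flips, or a different construction altogether (such as the greedy triangulation of $S\setminus\mathrm{int}(T)$ that you gesture at, but do not make precise), is required. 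As written the argument is incomplete precisely where you suspected.
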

 
 This result can be used to give a purely combinatorial characterisation of non-folded triangles on $(S, \P)$.
 
 \begin{prop}[Topological triangle test]\label{top_tri_test}
  Let $\tau \in \A(S,\P)$ be a $2$--simplex. Then~$\tau$ bounds a non-folded triangle on
  $(S,\P)$ if and only if there exists a maximal simplex $\T \in \A(S,\P)$ containing $\tau$
  such that:
  \begin{enumerate}
   \item $\lk_{\A(S,\P)}(\T \setminus \tau)$ is isomorphic to the arc complex of the hexagon, and
   \item $\lk_{\A(S,\P)}(\T \setminus \sigma)$ is isomorphic to the arc complex of the pentagon
   for every $1$--simplex $\sigma \subset \tau$.
  \end{enumerate}
  
 \end{prop}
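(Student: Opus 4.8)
The plan is to prove both implications by relating the link of $\T \setminus \tau$ (respectively $\T \setminus \sigma$) inside $\A(S,\P)$ to the arc complex of the region $S - (\T \setminus \tau)$ (respectively $S - (\T \setminus \sigma)$), using \cref{link_iso} and the classification of links via \cref{decomp-arc}. The crucial inputs are: first, that the arc complex of a region determines its topological type (\cref{thm:arc_rigid}, or its consequence Part (iii) of \cref{decomp-arc}); and second, the ``Extending triangles to hexagons'' proposition, which supplies the triangulation $\T$ in the forward direction.

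For the forward implication, suppose $\tau$ bounds a non-folded triangle $T$. Apply the ``Extending triangles to hexagons'' proposition to obtain a triangulation $\T \supseteq \tau$ (we may take $T$ itself to be the triangle bounded by $\tau$, so $\partial T = \tau$) such that $S - (\T \setminus \tau)$ has a hexagon as its unique non-triangular region. By \cref{link_iso}, $\lk_{\A(S,\P)}(\T \setminus \tau) \cong \A(S - (\T \setminus \tau))$, and since the only non-triangular region is a hexagon, \cref{decomp-arc} gives that this link is isomorphic to the arc complex of the hexagon; this is condition (i). For condition (ii), fix a $1$--simplex $\sigma \subset \tau$, say $\sigma$ consists of two of the three sides of $T$, and let $\alpha$ be the remaining side. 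Then $S - (\T \setminus \sigma)$ is obtained from $S - (\T \setminus \tau)$ by gluing the triangle $T$ back onto the hexagon along the arc $\alpha$; since the proposition asserts that each side of $T$ cuts the hexagon into a pentagon and a triangle, re-attaching $T$ along $\alpha$ produces a pentagon as the unique non-triangular region. Again by \cref{link_iso} and \cref{decomp-arc}, $\lk_{\A(S,\P)}(\T \setminus \sigma)$ is isomorphic to the arc complex of the pentagon, giving condition (ii).

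For the converse, suppose $\T \supseteq \tau$ is a maximal simplex satisfying (i) and (ii). Condition (i) together with \cref{link_iso} and Part (iii) of \cref{decomp-arc} forces $S - (\T \setminus \tau)$ to have a hexagon as its unique non-triangular region, since the hexagon is the unique surface whose arc complex is isomorphic to that of the hexagon. Now the three arcs of $\tau$ lie inside this hexagon region $H$ and, being part of the triangulation $\T$, they subdivide $H$ into triangular regions; since $\# \tau = 3$, they cut $H$ into exactly four triangles. It remains to rule out the possibility that one of these is a folded triangle, i.e.\ that $\tau$ bounds a folded triangle on $(S,\P)$. This is where condition (ii) is used: if some arc $\alpha \in \tau$ appeared as two sides of a folded triangle, then removing from $\T$ the other two arcs of $\tau$ — i.e.\ taking $\sigma = \tau \setminus \{\alpha\}$ — the region $S - (\T \setminus \sigma)$ would contain a once-marked monogon or would otherwise fail to be a pentagon; more precisely, the folded structure forces $\lk_{\A(S,\P)}(\alpha)$ restricted appropriately to split off a join factor, contradicting that the link is the (join-indecomposable) arc complex of a pentagon. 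Checking all three choices of $\sigma$ and using that the pentagon's arc complex is join-indecomposable (so its appearance as a link rules out any folded region meeting that side) shows $\tau$ bounds a non-folded triangle.

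The main obstacle is the converse direction — specifically, carefully tracking how re-gluing a triangle onto a polygon along one side affects the topological type of the resulting region, and verifying that a folded triangle among the four triangular pieces of the hexagon is genuinely incompatible with condition (ii). This amounts to a small case analysis of how $\tau$ can triangulate a hexagon, distinguishing the configurations containing a folded triangle from those that do not, and matching each against the pentagon-link hypothesis; the key structural fact doing the work is that arcs bounding a folded triangle are detected by a join splitting of the link (as noted in the paragraph preceding the ``Extending triangles to hexagons'' proposition), whereas the arc complex of a pentagon admits no non-trivial join decomposition.
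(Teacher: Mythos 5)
Your necessity direction lands on the right conclusion, but the operation you describe is reversed: since $\T\setminus\sigma = (\T\setminus\tau)\cup\{\alpha\}$, the surface $S - (\T\setminus\sigma)$ is obtained from $S - (\T\setminus\tau)$ by \emph{cutting} the hexagon further along the diagonal $\alpha$ (yielding a pentagon plus a triangle), not by ``gluing $T$ back on.'' You land on the correct region type, but the stated operation runs in the wrong direction.

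The sufficiency direction has a genuine gap: you have misidentified what needs to be ruled out. After deducing from Condition~(i) that the unique non-triangular region of $S - (\T\setminus\tau)$ is a hexagon $H$ with the three arcs of $\tau$ as disjoint diagonals, the issue is not whether one of the four resulting triangular pieces is folded. Folded triangles simply cannot arise here: a folded triangle is bounded by only two distinct arcs (a $1$--simplex), so a $2$--simplex $\tau$ of three distinct arcs can never bound one, and the triangular pieces of a polygon are always non-folded anyway. The possibility that actually needs excluding is that $\tau$ does not bound \emph{any} of the four triangles --- this is exactly what happens in the ``fan'' and ``zigzag'' configurations of three disjoint diagonals in $H$. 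Up to symmetry, those bad configurations are precisely the ones in which some $\alpha\in\tau$ is a long diagonal of $H$ cutting it into two quadrilaterals; for such $\alpha$, taking $\sigma = \tau\setminus\{\alpha\}$ gives $S-(\T\setminus\sigma)$ two quadrilaterals as non-triangular regions, so $\lk_{\A(S,\P)}(\T\setminus\sigma)\cong\NG_2*\NG_2\cong\CG_4$, which is not the arc complex of a pentagon. That is how Condition~(ii) does its work, and it is the content of the paper's case analysis accompanied by the figure of the three configurations of disjoint diagonals in a hexagon. Your argument about join factors and once-marked monogons, which is tuned to detect folded triangles, does not touch any of these cases.
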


 \proof
  The necessity of these conditions follows from the previous proposition.

 \begin{figure}
 \begin{center}
  \begin{tikzpicture}[]
   \draw (90:1cm) to[bend right] (30:1cm) to[bend right] (-30:1cm) to[bend right] (-90:1cm) to[bend right] (-150:1cm) to[bend right] (150:1cm) to[bend right] (90:1cm);
   \draw[color=red] (90:1cm) to[bend right] (-30:1cm) to[bend right] (-150:1cm) to[bend right] (90:1cm);
   
   \begin{scope}[xshift=3cm]
    \draw (90:1cm) to[bend right] (30:1cm) to[bend right] (-30:1cm) to[bend right] (-90:1cm) to[bend right] (-150:1cm) to[bend right] (150:1cm) to[bend right] (90:1cm);
    \draw[color=red] (90:1cm) to[bend right] (-30:1cm);
    \draw[color=red] (90:1cm) -- (-90:1cm);
    \draw[color=red] (90:1cm) to[bend left] (-150:1cm);
   \end{scope}
   
   \begin{scope}[xshift=6cm]
    \draw (90:1cm) to[bend right] (30:1cm) to[bend right] (-30:1cm) to[bend right] (-90:1cm) to[bend right] (-150:1cm) to[bend right] (150:1cm) to[bend right] (90:1cm);
    \draw[color=red] (90:1cm) to[bend right] (-30:1cm);
    \draw[color=red] (90:1cm) -- (-90:1cm);
    \draw[color=red] (-90:1cm) to[bend right] (150:1cm);
   \end{scope}
  \end{tikzpicture}
  \caption{Three possible configurations of three disjoint arcs on a hexagon.}
  \label{fig:hyperbolic_hexagon_tau}
 \end{center}
 \end{figure}
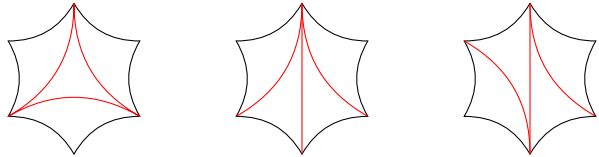
 
  To prove sufficiency, suppose $\T$ is a triangulation containing $\tau$
  which satisfies the given conditions.
  Combining Condition (i) and \cref{decomp-arc}, we deduce that $S - (\T \setminus \tau)$ has one unique non-triangular region $R$, and $R$ is a hexagon. 
  Moreover, $\tau$ forms three disjoint diagonals of $R$.
  This can occur in three possible ways: either $\tau$ bounds a triangle inside $R$,
  or $\tau$ contains a diagonal of $R$ that cuts $R$ into two quadrilaterals,
  leaving two possible choices for the other two diagonals (see \cref{fig:hyperbolic_hexagon_tau}).
  Applying Condition (ii) rules out the latter cases.
\endproof

 Suppose $\alpha$ is an arc of a triangulation $\T$ that meets two
 distinct triangles of $\T$. Then $S - (\T \setminus \{\alpha\})$ has a quadrilateral
 as its unique non-triangular region, with $\alpha$ as a diagonal;
 let $\beta\in\A(S,\P)$ be the other diagonal
 of this quadrilateral. The triangulation $\T'$ obtained by \emph{flipping} $\alpha$ in $\T$
 is given by $\T' = (\T \setminus \{\alpha\}) \cup \{\beta\}$ (see \cref{fig:flip_operation}). Note that
 \[\lk_{\A(S,\P)}(\T \setminus \{\alpha\}) = \{\alpha\} \sqcup \{\beta\} = \lk_{\A(S,\P)}(\T' \setminus \{\beta\}).\]
 We say that $\alpha$ is \emph{flippable} in $\T$ if the above operation can be performed;
 this holds precisely when $\alpha$ meets two distinct triangles of $\T$ or, equivalently,
 when $\alpha$ does not form two sides of a folded triangle of $\T$.
 The following is immediate.
 
\begin{figure}
 \begin{center}
  \begin{tikzpicture}[]
   \draw (0:0) to[bend right] (0,2) to[bend right] (2,2) to[bend right] (2,0) to[bend right] (0,0);
   \draw[color=red] (0,0) -- node[above left]{$\alpha$} (2,2);
   
   \draw[<->] (2.5,1) -- (3.5,1);
   
   \begin{scope}[xshift=4cm]
    \draw (0:0) to[bend right] (0,2) to[bend right] (2,2) to[bend right] (2,0) to[bend right] (0,0);
    \draw[color=red] (2,0) -- node[above right]{$\beta$} (0,2);
   \end{scope}
  \end{tikzpicture}
  \caption{The flip operation.}
  \label{fig:flip_operation}
 \end{center}
\end{figure}
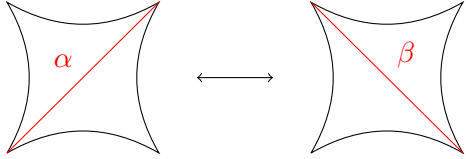
  
 \begin{lem}[Sides in non-folded triangles are flippable]\label{non-fold_flip}
  If $T$ is a non-folded triangle on $(S,\P)$, then every side of $T$ is flippable
  in any triangulation $\T$ containing $T$. $\hfill\square$
 \end{lem}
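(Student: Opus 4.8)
The plan is to unpack the definition of ``flippable'' and show that the obstruction to flipping a side $\alpha$ of $T$ — namely, that $\alpha$ forms two sides of a folded triangle of $\T$ — cannot occur once we know $T$ is a non-folded triangle containing $\alpha$. Recall from the discussion preceding \cref{non-fold_flip} that an arc $\alpha$ in a triangulation $\T$ is flippable precisely when $\alpha$ meets two distinct triangles of $\T$, equivalently when $\alpha$ does not form two sides of a folded triangle of $\T$. So it suffices to argue that if $T$ is a non-folded triangle of $\T$ with $\alpha$ as a side, then $\alpha$ is not a side of any folded triangle of $\T$.

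First I would observe that every arc $\alpha$ in a triangulation $\T$ meets exactly two triangle-sides of $S-\T$ (counted with multiplicity): there are exactly two boundary arcs on $S-\T$ mapping to $\alpha$ under the cutting map $\iota$, as noted in the discussion of cutting and gluing. Each of these two sides belongs to a (triangular) region of $S-\T$. Now $T$ is one triangular region meeting $\alpha$, and since $T$ is non-folded, $\alpha$ appears as only \emph{one} of the two sides of $S-\T$ mapping to it. Hence the other side mapping to $\alpha$ lies in a region $T'$ of $S-\T$ with $T' \neq T$. This already shows $\alpha$ meets two distinct triangles of $\T$, which is exactly the flippability criterion.

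The one point to be careful about is the degenerate-looking possibility that $T'$, while distinct from $T$ as a region, is itself the folded triangle using $\alpha$ twice — but that is impossible, since a folded triangle using $\alpha$ has \emph{both} of its $\alpha$-sides mapping to $\alpha$, which would use up both boundary arcs over $\alpha$ and leave none for $T$. So $\alpha$ genuinely borders two distinct triangular regions, neither of which is a folded triangle along $\alpha$, and therefore $\alpha$ is flippable in $\T$. Since $\alpha$ was an arbitrary side of $T$, and $\T$ an arbitrary triangulation containing $T$, the lemma follows. There is no real obstacle here; the only subtlety is correctly interpreting ``$\alpha$ forms two sides of a folded triangle'' as a statement about the two preimage boundary arcs over $\alpha$, which is precisely what rules out interference with the side of $T$.
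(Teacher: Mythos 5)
Your proof is correct and fills in the details of the argument the paper regards as immediate: unpacking the flippability criterion (``$\alpha$ is flippable iff $\alpha$ meets two distinct triangles, iff $\alpha$ does not form two sides of a folded triangle'') and using the two boundary arcs over $\alpha$ under the cutting map to conclude. This is the same approach the paper intends, just written out.
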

 
 \begin{definition}[Flip graph]
  The flip graph $\F(S,\P)$ is the graph with the set of all triangulations of $(S,\P)$ as its vertex set,
  where two triangulations $\T,\T'$ are joined by an edge if and only if they differ in
  precisely one~flip.
 \end{definition}

 The flip graph $\F(S,\P)$ is connected; see \cite{disarlo_parlier_14} and \cite{aramayona_koberda_parlier_15} for a discussion about its geometric properties.

\section{Half-translation surfaces and saddle connections}\label{sec:flat}

In this section, we discuss singular Euclidean metrics on $(S,\P)$
known as \emph{half-translation structures}.
This class of Euclidean structures on $S$ naturally arises through the
study of \emph{holomorphic quadratic differentials} on $S$.
However, we will use a more geometric approach instead of working directly with quadratic differentials.
For further details, refer to \cite{strebel_84, zorich_06, wright_15}.

For the remainder of this paper, assume $(S,\P)$ is a closed, connected, orientable surface.
A \emph{half-translation structure} on $(S,\P)$ consists of an atlas of charts from $S \setminus \P$
to~$\CC$, with transition maps of the form $z \mapsto \pm z + c$
for some $c \in \CC$.
If all transition maps are of the form $z \mapsto z + c$ for some $c \in \CC$
then the atlas defines a \emph{translation surface}.
Pulling back the Euclidean metric on $\CC$ gives a locally Euclidean metric on $S \setminus \P$
whose metric completion endows each point of~$\P$ with
the structure of a Euclidean cone point with cone angle $k\pi$ for some positive integer $k$.
Two half-translation structures on $(S,\P)$ are considered equivalent if they are related
by an isometry isotopic to the identity (the isotopies are not required to fix $\P$).

We shall denote a half-translation surface by $(S,q)$.
The set of marked points $\P$ is implicitly
assumed to be the set of singularities of the half-translation structure $q$.
In particular, we allow for \emph{removable singularities} and \emph{simple poles}; these are respectively singularities with cone angle~$2\pi$ and~$\pi$.
Let $\QD(S)$ denote the space of all half-translation structures on $S$
(or, equivalently, the space of holomorphic quadratic differentials on $S$).

Half-translation surfaces enjoy a consistent notion of slope.
Given any slope $\theta \in \RP^1$,
we can pull back the foliation of $\CC$ by straight lines with slope $\theta$
to a singular foliation on $(S,q)$.
Masur showed in \cite[Theorem 2]{masur_86} that for every half-translation surface $(S,q)$, there exists a dense set of slopes
in $\RP^1$ for which the corresponding foliation on $(S,q)$ possesses a closed
(non-singular) leaf.
Each such closed leaf on $(S,q)$ is contained in a (maximal) open Euclidean cylinder foliated by
closed leaves parallel (and isotopic) to the given leaf; each of these closed leaves
is called a \emph{core curve} of the cylinder.
A simple closed curve on~$S$ isotopic to a core curve of a cylinder on $(S,q)$ is
called a \emph{cylinder curve}; we denote the set of all cylinder curves on $(S,q)$
by $\cyl(q)$.

\begin{rem}[Isotopies of curves] \label{rem:curves}
 We can consider simple closed curves on $(S,\P)$ up to two forms of isotopy:
 when isotopies cannot pass through $\P$, and when they can.
 Let $\C(S,\P)$ and~$\C(S)$ denote the respective sets of isotopy classes
 of simple closed curves. We want to consider every isotopy class in $\C(S,\P)$ as an isotopy class in $\C(S)$. Strictly speaking, such a map can only be defined on curves in $\C(S,\P)$ not
 bounding discs with marked points, but curves of this type cannot be cylinder curves. Additionally, each isotopy class in $\C(S)$ arises from some isotopy class in $\C(S,\P)$ upon allowing isotopies to pass through $\P$.
 Therefore, $\cyl(q)$ can be equally regarded as a subset
 of $\C(S,\P)$ or of $\C(S)$. More precisely, the composition
 $\cyl(q) \hookrightarrow \C(S,\P) \to \C(S)$ is injective because
 if $\gamma\in\C(S)$ can be realised by a cylinder curve on~$(S,q)$, then there is
 a unique Euclidean cylinder on $(S,q)$ with $\gamma$ as its core curve.
\end{rem}

\subsection{Saddle connection complex}\label{sec:sccomplex}

We now introduce the main objects of study in this paper.

\begin{definition}[Saddle connection]
 A \emph{saddle connection} on $(S,q)$ is a locally isometric embedding
 $a : [0,l] \to (S,q)$ such that $\{a(0), a(l)\} \subseteq \P$ and 
 the restriction $a|_{(0,l)}$ is an embedding into $S \setminus \P$.
 We shall usually identify~$a$ with its image.
 In particular, we do not care about the orientation.
\end{definition}

Saddle connections are Euclidean straight-line segments, and thus have
a well-defined slope on $(S,q)$. In particular, the boundary of a Euclidean
cylinder on $(S,q)$ is a non-empty, finite set of saddle connections with the same slope.
Therefore, the set of slopes of saddle connections on~$(S,q)$ is also dense
in $\RP^1$.

\begin{definition}[Saddle connection complex]
 The \emph{saddle connection complex} $\A(S,q)$ is the simplicial complex
 whose vertices are saddle connections on $(S,q)$, and whose simplices are
 sets of pairwise disjoint saddle connections.
\end{definition}

Any saddle connection on $(S,q)$ is also (a representative of) an essential
arc on $(S,\P)$; in fact, it is the unique geodesic representative
in its proper homotopy class.
Let $\i_q: \A(S,q) \to \A(S,\P)$ be the natural inclusion map.
Minsky and Taylor introduced the \emph{saddle connection graph}, the $1$--skeleton of $\A(S,q)$,
in \cite{minsky_taylor_17} and established some important results regarding geodesic representatives of arcs, which we shall recall.

Consider the universal cover of $(S,q) \setminus \P$, and take its metric completion.
This metric completion is CAT(0), since $(S,q)$ is locally CAT(0), and so every path has a unique geodesic representative in its proper homotopy class. The action of the deck group extends to the set of completion points $\widetilde \P$; indeed, the set of orbits in $\widetilde \P$ is in natural bijection with $\P$.
Thus, every arc $\alpha \in \A(S,q)$ lifts to an arc $\tilde\alpha$ connecting a pair of distinct points in $\widetilde \P$. The unique geodesic representative of $\tilde\alpha$ descends to a path $\alpha_q$ on $(S,q)$ formed by concatenating a finite sequence of saddle connections. Note that this path does not depend on the choice of the lift $\tilde \alpha$.

We would like to say that $\alpha_q$ is the geodesic representative of $\alpha$ on $(S,q)$, however, in order to do this we need to allow for homotopies to move arcs so that they touch points of $\P$, but without being pushed past them. More precisely, we say that $\alpha_1$ is a representative of $\alpha \in \A(S,q)$ if there is an embedded representative $\alpha_0$ of $\alpha$ on $(S,\P)$, and a path homotopy $\alpha_t : [0,1] \to S$
such that for all $0 < s < 1$ and $0 \leq t < 1$ we have $\alpha_t(s) \in S \setminus \P$;
thus $\alpha_t$ is permitted to pass through points of $\P$ only when $t = 1$.
Note that if $\alpha_1$ passes through some points of $\P$ in its interior, then it does not necessarily determine a unique proper homotopy class of arcs; there is the ambiguity arising from the choice of side for ``pushing'' $\alpha_1$ off a marked point. In the case where consecutive saddle connections of $\alpha_q$ form an angle of less than $\pi$ on one side, then $\alpha_q$ cannot be pushed off on that side while remaining in the homotopy class of $\alpha$.

\begin{prop}[Geodesic representatives of arcs \cite{minsky_taylor_17}]
Every arc $\alpha \in \A(S,\P)$ has a unique geodesic representative on $(S,q)$ in its proper homotopy class.
The geodesic representative~$\alpha_q$ of $\alpha$ is a finite concatenation
of saddle connections, possibly with repetition, where no two of these saddle connections have transverse intersections. Furthermore, if $\alpha, \beta \in \A(S,\P)$
are disjoint arcs then no saddle connection appearing on $\alpha_q$ crosses a saddle connection appearing on $\beta_q$ (some saddle connections may appear on both, however). $\hfill\square$
\end{prop}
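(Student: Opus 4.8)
The plan is to work in the metric completion $\widetilde{X}$ of the universal cover of $(S,q) \setminus \P$, as set up just above the statement. First I would record that $\widetilde{X}$ is CAT(0): away from the preimage $\widetilde\P$ of $\P$ it is locally isometric to $\CC$, while a puncture of $(S,q) \setminus \P$ corresponds to an infinite-order element of the fundamental group, so each point of $\widetilde\P$ unwraps to a cone point of infinite angle and thus has a CAT(0) neighbourhood. Since $\widetilde{X}$ is also complete and simply connected, any two of its points are joined by a unique geodesic, and geodesics are injective. Existence and uniqueness of $\alpha_q$ are then immediate: lifting $\alpha$ to an arc $\tilde\alpha$ between two points of $\widetilde\P$ and straightening it to the geodesic $\tilde\gamma$ between those points, the projection $\alpha_q$ of $\tilde\gamma$ is a representative of $\alpha$ (the straightening homotopy may be taken to avoid $\widetilde\P$ until the final instant, as required by the definition of a representative), and any local geodesic representative of $\alpha$ lifts to a complete local geodesic between the same two points of $\widetilde\P$, hence equals $\tilde\gamma$.

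Next I would describe the combinatorial structure of $\alpha_q$. The geodesic $\tilde\gamma$ is a Euclidean straight segment except possibly at points of $\widetilde\P$ that it passes through, where the local geodesic condition forces the two sides to subtend angle at least $\pi$. Between consecutive points of $\widetilde\P$ on $\tilde\gamma$, the projection to $(S,q)$ is a saddle connection. Since the shortest saddle connection on $(S,q)$ has positive length and $\tilde\gamma$ has finite length (equal to the distance between its endpoints), $\tilde\gamma$ meets only finitely many points of $\widetilde\P$; hence $\alpha_q$ is a finite concatenation of saddle connections, possibly with repetition.

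For the intersection statements I would argue by an equivariant bigon-criterion argument in $\widetilde{X}$. Fix disjoint embedded representatives $\alpha_0, \beta_0$ of $\alpha$ and $\beta$ on $(S,\P)$ — taking $\beta_0 = \alpha_0$ for the self-intersection case — and consider their full preimages in $\widetilde{X}$: these form a $\pi_1$--invariant, locally finite family of embedded arcs with endpoints in $\widetilde\P$ and pairwise disjoint interiors. The claim is that replacing each such arc by the geodesic between its endpoints again yields a family with pairwise disjoint interiors. Granting this, a transverse crossing on $(S,q)$ between a saddle connection of $\alpha_q$ and one of $\beta_q$ would lift, after applying a suitable deck transformation, to a transverse crossing between two geodesics of the straightened family (or a self-crossing of one, contradicting injectivity of geodesics); since $\alpha_q$ and $\beta_q$ are the projections of such geodesics, the statement follows. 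To prove the claim I would suppose two straightened arcs cross, use that two geodesic segments in a CAT(0) space meet in at most one point together with the disjointness of the original arcs to isolate an innermost disc bounded by subarcs of an original arc, its straightening, and portions of a second such pair, and then push the straightening back across this disc to produce a strictly shorter path in the same relative homotopy class — contradicting the uniqueness of geodesics in $\widetilde{X}$.

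The step I expect to be the main obstacle is precisely this last one: making the innermost-disc/surgery argument rigorous in the singular space $\widetilde{X}$. One must handle the fact that the arcs terminate at the cone points of $\widetilde\P$ (the non-Euclidean points of $\widetilde{X}$), that saddle connections of $\alpha_q$ and $\beta_q$ may coincide along segments rather than cross — so the notion of "minimal position'' must be set up to disregard such coincidences — and that the straightening homotopies should be chosen $\pi_1$--equivariantly so that ``innermost'' is well defined. Everything else reduces to the CAT(0) geometry of $\widetilde{X}$ and the positive lower bound on the lengths of saddle connections.
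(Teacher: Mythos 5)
The paper does not prove this proposition itself --- the $\square$ directly after the citation signals that it is quoted from Minsky--Taylor --- so there is no internal proof to compare against. What the paper does supply, in the paragraph immediately preceding the statement, is exactly the framework you reconstruct: pass to the metric completion $\widetilde X$ of the universal cover of $(S,q)\setminus\P$, observe it is CAT(0), lift, straighten via the unique geodesic, and project. Your treatment of existence, uniqueness (including the care about when a representative is allowed to touch $\widetilde\P$), and the finite concatenation into saddle connections all match this sketch and are fine; the observation that each point of $\widetilde\P$ unwinds to a cone of infinite angle is the standard reason $\widetilde X$ is CAT(0).

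The step you yourself flag --- that straightening preserves disjointness --- is precisely the part the paper outsources to \cite{minsky_taylor_17}, and your innermost--disc sketch is not correct as phrased. Pushing a geodesic across a disc does not produce a \emph{shorter} path; it produces one of at least the same length, so you cannot directly ``contradict the uniqueness of geodesics'' that way. A cleaner route, still entirely inside your CAT(0) framework, avoids surgery altogether: two geodesic segments in a uniquely geodesic space intersect in a convex set, hence either overlap along a subsegment (which is permitted by the statement) or meet in at most one point. If lifts $\tilde\alpha_q$ and $\tilde\beta_q$ crossed transversely, they would therefore cross exactly once, giving algebraic intersection number $\pm 1$ rel endpoints. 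But $\tilde\alpha_q$ is homotopic rel endpoints to $\tilde\alpha$, and $\tilde\beta_q$ to $\tilde\beta$, and $\tilde\alpha,\tilde\beta$ are disjoint proper arcs in the disc $\widetilde X$, so the algebraic intersection number is $0$ --- a contradiction. (One must still handle lifts sharing endpoints in $\widetilde\P$, e.g.\ by a small push-off, and the ``segment overlap'' case is exactly what the proposition allows.) The self-intersection statement for $\alpha_q$ alone then follows, as you note, by running the same argument on a lift and its image under a deck transformation. So: right framework, right diagnosis of where the difficulty lies, but the closing move of your argument as written would not go through.
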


Minsky and Taylor use this result in \cite[Section 4.3]{minsky_taylor_17} to define a \emph{straightening operation} $\t_q : \A(S,\P) \to \A(S,q)$
by mapping a multi-arc $\sigma \in \A(S,\P)$ to the simplex $\t_q(\sigma) \in \A(S,q)$
whose vertices correspond to the saddle connections on $(S,q)$ appearing on $\alpha_q$ for some
$\alpha \in \sigma$.
Note that $\t_q \circ \i_q$ is the identity on $\A(S,q)$,
and so $\i_q(\A(S,q))$ is an induced subcomplex of~$\A(S,\P)$.
If $\alpha_0, \ldots, \alpha_k$ is a path in $\A(S,\P)$ with endpoints
$\alpha_0, \alpha_k \in \A(S,q)$, then choosing a vertex $\beta_i \in \t_q(\alpha_i)$
for each $i$ yields a path 
$\beta_0, \ldots, \beta_k$ (possibly with consecutive vertices coinciding)
in $\A(S,q)$ connecting $\alpha_0 = \beta_0$ to
$\alpha_k = \beta_k$. Thus, $\i_q \circ \t_q$ defines a coarse retraction
from $\A(S,\P)$ onto $\i_q(\A(S,q))$ that sends sets of diameter at most 1 to sets of diameter at most~1.
A similar construction was also used in \cite{tang_webb_18}.
Using the connectedness and uniform hyperbolicity of arc complexes,
we deduce the following.

\begin{prop}[Saddle connection complex is connected and hyperbolic] \label{isom_emb}
 The map $\i_q: \A(S,q) \to \A(S,\P)$ is an isometric embedding.
 In particular, $\A(S,q)$ is connected and uniformly Gromov hyperbolic.
 $\hfill \square$
\end{prop}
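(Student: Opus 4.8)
The plan is to use the straightening operation $\t_q$ and the inclusion $\i_q$ as coarse inverses, combined with the uniform hyperbolicity of the arc complex. First I would observe that $\i_q$ is distance non-increasing: if $a, b \in \A(S,q)$ are disjoint saddle connections, then (being essential arcs that happen to be geodesic) they are disjoint arcs on $(S,\P)$, so adjacency is preserved and $d_{\A(S,\P)}(\i_q a, \i_q b) \le d_{\A(S,q)}(a,b)$. The work is therefore in the reverse inequality. Here I would invoke the coarse retraction $\i_q \circ \t_q$ constructed just before the statement: given a geodesic $\alpha_0, \ldots, \alpha_k$ in $\A(S,\P)$ between vertices $\alpha_0, \alpha_k \in \i_q(\A(S,q))$, choosing $\beta_i \in \t_q(\alpha_i)$ for each $i$ yields a path in $\A(S,q)$ from $\alpha_0$ to $\alpha_k$ of length at most $k$ (consecutive $\beta_i$ are equal or adjacent, using the last sentence of the Minsky--Taylor proposition on geodesic representatives of disjoint arcs). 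This shows $d_{\A(S,q)}(a,b) \le d_{\A(S,\P)}(\i_q a, \i_q b)$ for all vertices $a,b$, and together with the first inequality gives that $\i_q$ is an isometric embedding on the $1$--skeleton, hence on the whole complex with the combinatorial metric.

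Next I would deduce connectedness: since $\A(S,\P)$ is connected whenever it is non-empty (which holds here, as $(S,q)$ has saddle connections and hence essential arcs), and $\i_q \circ \t_q$ restricts to a (coarse, in fact genuine up to the path-interpolation above) surjection of $\A(S,\P)$ onto $\i_q(\A(S,q))$ realized by paths, any two vertices of $\A(S,q)$ are joined by a path; more cleanly, the isometric embedding together with surjectivity of $\t_q$ onto vertices and the path-pushing argument directly exhibits connectivity. For uniform hyperbolicity I would use that $\A(S,\P)$ is $\delta$--hyperbolic with $\delta$ independent of the topology (Hensel--Webb, cited in the excerpt), and that a subspace which is the image of an isometric embedding and which is coarsely Lipschitz--retracted onto — the retraction $\i_q \circ \t_q$ sending diameter-$\le 1$ sets to diameter-$\le 1$ sets — inherits hyperbolicity with a uniformly comparable constant. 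Concretely: a geodesic triangle in $\A(S,q)$ maps under $\i_q$ to a geodesic triangle in $\A(S,\P)$ (by the isometric embedding, geodesics go to geodesics), which is $\delta$--thin there; applying the coarse retraction $\i_q \circ \t_q$ back, $\delta$--thinness is preserved up to a bounded additive error, so $\A(S,q)$ is $\delta'$--hyperbolic for a universal $\delta'$.

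The main obstacle I expect is making the retraction argument for hyperbolicity fully rigorous — specifically, verifying that the coarse retraction does not distort geodesics too badly, i.e.\ that the image of a geodesic segment of $\A(S,\P)$ under $\i_q \circ \t_q$ is a uniform unparametrized quasi-geodesic in $\A(S,q)$, so that the Morse lemma (with uniform constants, available since $\delta$ is uniform) lets one transfer thinness. This is where the precise statement that $\t_q$ sends sets of diameter $\le 1$ to sets of diameter $\le 1$, together with the fact that $\i_q$ is an \emph{isometric} (not merely quasi-isometric) embedding, does the heavy lifting: the composition is $1$--Lipschitz, its restriction to $\i_q(\A(S,q))$ is the identity, and a $1$--Lipschitz retraction onto an isometrically embedded subspace of a $\delta$--hyperbolic space yields a $\delta$--hyperbolic subspace by a standard argument. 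Everything else is bookkeeping about essential arcs versus saddle connections, which the cited Minsky--Taylor results handle.
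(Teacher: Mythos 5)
Your proposal is correct and follows the same route as the paper: establish $1$--Lipschitz from the fact that $\i_q$ is simplicial, get the reverse inequality by pushing $\A(S,\P)$--geodesics down via the straightening operation $\t_q$ (using Minsky--Taylor's non-crossing property so consecutive choices $\beta_i,\beta_{i+1}$ are equal or adjacent), and then inherit connectedness and uniform hyperbolicity from $\A(S,\P)$. One small simplification: once you know $\i_q$ is an isometric embedding, the $\delta$--thin triangles condition transfers directly without invoking the retraction a second time — a geodesic triangle in $\A(S,q)$ maps to a geodesic triangle in $\A(S,\P)$ whose sides lie entirely in $\i_q(\A(S,q))$, so the nearby points guaranteed by $\delta$--thinness already live in the image, and the isometric embedding pulls the distance estimate back verbatim. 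The retraction (and the Morse lemma worry you raise) are really only needed to prove the isometric embedding itself; after that they are redundant, so the argument is even cleaner than you suggest.
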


To prove our main theorem for the saddle connection complex,
we only use its isomorphism class as a simplicial complex.
This means that we can only use the fact that $\A(S,q)$ arises as a saddle connection complex of some half-translation surface without knowing what the underlying surface is. In particular, we do not know the labels of the vertices, that is, which topological arcs or saddle connections they represent.
The topological information must be recovered from the combinatorics of the graph in the process of the proof.

We shall show, at the end of \cref{sec:cylinder}, that every vertex in the saddle connection complex has infinite valence, and so the saddle connection complex is also locally infinite.
By following a standard method of Luo (which is adapted from an argument of Kobayashi \cite{kobayashi_88} and appears in \cite{masur_minsky_99}), Pan shows that $\A(S,q)$ has infinite diameter \cite{pan_22}.
A~consequence of \cref{isom_emb} is that $\i_q$ induces an embedding of the Gromov boundary of~$\A(S,q)$ into that of~$\A(S,\P)$.
The large-scale geometry of $\A(S,q)$, including its Gromov boundary, has been described in the paper \cite{disarlo_pan_randecker_tang_21}, which is a follow-up to this work.

We finish this subsection with an example of a saddle connection complex.

\begin{exa}[Saddle connection complex of the flat torus] \label{exa:scc_flat_torus}
 If $(S,q)$ is a flat torus with exactly one marked point, then it can be formed by gluing two Euclidean triangles along three pairs of sides. It is well-known that its arc complex $\mathcal{A}(S,\P)$ is isomorphic to the Farey tessellation of the hyperbolic plane. Moreover, every arc can be realised as a saddle connection and so the saddle connection complex~$\mathcal{A}(S,q)$ is also isomorphic to the Farey tessellation.
 
 Any two flat tori with exactly one marked point are affine equivalent. Moreover, this is the only affine equivalence class of half-translation surfaces that admit triangulations by three saddle connections, and hence a saddle connection complex of dimension $2$.
 Therefore, $(S,q)$ is a flat torus with one marked point if and only if~$\mathcal{A}(S,q)$ is isomorphic to the Farey tessellation.
\end{exa}

This example shows that if $(S,q)$ or $(S',q')$ is a flat torus with one marked point then for every simplicial isomorphism $\phi \colon \mathcal{A}(S,q) \to \mathcal{A}(S',q')$, there exists an affine diffeomorphism $F \colon (S,q) \to (S', q')$ which induces $\phi$. 
However, it is not true that this affine diffeomorphism is unique.
This is because of the existence of a hyperelliptic involution that acts trivially on~$\A(S,q)$, but exchanges the two triangles in any given triangulation (see Irmak-McCarthy \cite{irmak_mccarthy10}).
Composing~$F$ with this hyperelliptic involution yields the only other affine diffeomorphism inducing~$\phi$.

From now on, we consider only half-translation surfaces whose triangulations contain more than two triangles.

\subsection{Regions on half-translation surfaces}

Given a simplex $\sigma \in \A(S,q)$, we can cut $(S,q)$ along all saddle connections
in $\sigma$ to obtain a surface with boundary in a similar manner as for topological multi-arcs.
The resulting surface, which we shall denote by $(S - \sigma,q)$,
has a locally Euclidean metric, with boundary formed by finitely many
straight-line segments.

We shall define the associated saddle connection complex $\A(S - \sigma, q)$
as in the situation of the arc complex. \cref{link_iso} also
holds for the saddle connection complex: the natural map 
$\iota : (S - \sigma,q) \to (S,q)$ induces a simplicial isomorphism
  $\iota_* : \A(S - \sigma, q) \to \lk_{\A(S,q)}(\sigma)$. 

Suppose $R$ is a polygonal region of $(S - \sigma,q)$.
We shall refer to (essential) topological arcs on polygons as \emph{diagonals}.
A diagonal of $R$ is \emph{straight} if it can be realised by
a saddle connection, otherwise we call it {\emph{broken} (see \cref{fig:good_bad_diagonals}). 

On a polygon $R$, every diagonal is straight if and only if
all corners of $R$ have angle strictly less than $\pi$;
in this case we call $R$ a \emph{strictly convex} polygon.

\begin{figure}
 \begin{center}
  \begin{tikzpicture}[]
   \draw (0,0) -- (3,0) -- (4,3) -- (2,1) -- (1,1) -- (-2,2) -- (0,0);
   \draw[color=blue] (3,0) -- (1,1) -- (0,0) -- (2,1) -- (3,0) -- (-2,2);
   \draw[color=red, densely dashed] (0,0) to[bend right=30] (4,3);
   \draw[color=red, densely dashed] (-2,2) to[bend right=25] (2,1);
   \draw[color=red, densely dashed] (-2,2) to[out=-40, in=-120, looseness=1.5] (4,3);
   \draw[color=red, densely dashed] (1,1) to[out=-50, in=-130] (4,3);
  \end{tikzpicture}
  \caption{A hexagon with five straight diagonals (in blue) and four broken diagonals (in red, dashed).}
  \label{fig:good_bad_diagonals}
 \end{center}
\end{figure}

\begin{lem}[Convex polygons are embedded] \label{poly_embed}
 Let $\tilde R$ be a strictly convex polygon in the universal cover $(\tilde S, \tilde q)$. Then its image~$R$ on $(S,q)$ has embedded interior.
 In particular, if $R$ is a triangle then it is embedded, except for possibly at the corners.
\end{lem}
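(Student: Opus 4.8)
The plan is to work in the universal cover $(\tilde S, \tilde q)$, which by the earlier discussion is a CAT(0) space: its metric completion is CAT(0), so any two points are joined by a unique geodesic, and in particular geodesic triangles are embedded (thin, with uniqueness of sides). The strictly convex polygon $\tilde R$ lifts to a convex subset of this CAT(0) space — each side is a geodesic segment, each corner has angle strictly less than $\pi$, so $\tilde R$ is a genuine embedded convex polygon upstairs. The content of the lemma is therefore entirely about what happens when we project down via the covering map $\pi : \tilde S \to S$: the interior of $\tilde R$ should map injectively.

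First I would set up the contradiction: suppose two distinct interior points $\tilde x \neq \tilde y$ of $\tilde R$ satisfy $\pi(\tilde x) = \pi(\tilde y)$. Then there is a nontrivial deck transformation $g$ with $g(\tilde x) = \tilde y$, hence $g(\tilde R)$ is another embedded strictly convex polygon sharing the interior point $\tilde y$ with $\tilde R$. The key geometric input is that the boundary $\partial \tilde R$ consists of saddle connections — segments between points of $\widetilde \P$ — and similarly for $g(\tilde R)$. Since both $\tilde R$ and $g(\tilde R)$ are convex and their interiors overlap, I would argue that either one is contained in the other, or their boundaries must cross transversely; and a transverse crossing of boundaries forces a transverse crossing of two saddle connections. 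But the interior of every side of $\tilde R$ avoids $\widetilde \P$ (saddle connections have no singularities in their interior), and two saddle connections that are sides of the \emph{same} simplex $\sigma$ — hence disjoint as arcs — cannot cross; more carefully, $\partial \tilde R$ and $g(\partial \tilde R)$ project to the boundary arcs of $R$ in $S - \sigma$, which are sides of a single region and so pairwise non-crossing. This rules out transverse intersection. If instead $\tilde R \subseteq g(\tilde R)$ (or vice versa), then comparing areas (both are compact polygons, $g$ is an isometry so $\operatorname{area}(g(\tilde R)) = \operatorname{area}(\tilde R)$) forces $\tilde R = g(\tilde R)$, so $g$ stabilises a compact set, and hence $g$ is elliptic in the CAT(0) sense — but the deck group acts freely on $\tilde S \setminus \widetilde \P$ and properly discontinuously, so the only deck transformation fixing a point of the interior is the identity, contradicting $g \neq \id$.

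For the final sentence, when $R$ is a triangle: every diagonal of a triangle is a single edge, so there is nothing about diagonals to check, and a triangle is automatically strictly convex provided its corner angles are all $< \pi$ — but a Euclidean triangle in the universal cover always has corner angles summing to $\pi$, so each is $< \pi$ automatically. Hence the triangle case is just the special case of the general statement, with the caveat ``except possibly at the corners'' recording that distinct corners of $\tilde R$ (which are points of $\widetilde \P$) may well project to the same singularity of $(S,q)$ — the injectivity claim is only for the interior.

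The main obstacle I anticipate is making the dichotomy ``nested or boundaries cross'' fully rigorous for convex polygons in a CAT(0) space whose boundary segments are geodesics: one must be careful that ``overlapping interiors'' of two convex sets, neither contained in the other, genuinely produces a transverse crossing of boundary geodesics rather than, say, a tangential contact along a shared sub-segment. I would handle the tangential case by noting that a shared boundary sub-segment still lies on two saddle connections (one from each polygon) whose images in $S$ are among the non-crossing sides of the region $R$, and invoke the fact that distinct arcs of the multi-arc $\sigma$ are non-homotopic, so overlapping geodesic representatives must in fact coincide — again forcing $\tilde R = g(\tilde R)$ by convexity and leading to the same contradiction as before.
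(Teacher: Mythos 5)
Your approach takes a genuinely different route from the paper's, and it contains a circularity that cannot be repaired without essentially reproducing the paper's argument. The paper works directly in local Euclidean coordinates: if the interior of $R$ is not embedded, there is a deck transformation $g$ with $\tilde R \cap g(\tilde R)$ having nonempty interior, and in a chart $g$ takes the form $w \mapsto \pm w + w_0$. If $g$ is a translation, comparing $|w_0|$ with a tallest vertical segment of $\tilde R$ (one endpoint of which is a corner) shows that a singularity lift lands in the interior of $R$ or of one of its sides. If $g$ is a half-translation, the centre of the $\pi$--rotation must lie in the interior of $\tilde R$ and descends to a simple pole in the interior of $R$. Either way strict convexity is contradicted, with no appeal whatsoever to how the sides of $\tilde R$ sit with respect to one another in the quotient.

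Your argument instead tries to rule out a transverse crossing of $\partial\tilde R$ and $g(\partial\tilde R)$. The step where you assert that ``$\partial \tilde R$ and $g(\partial \tilde R)$ project to the boundary arcs of $R$ in $S - \sigma$, which are sides of a single region and so pairwise non-crossing'' is the gap. The lemma carries no hypothesis that the sides of $\tilde R$ project to pairwise disjoint saddle connections, i.e.\ that $\partial R$ is a multi-arc forming a simplex of $\A(S,q)$. Indeed the paper applies the lemma precisely in situations where this is not known in advance: to the flow triangle $T(z)$ constructed in Section 5, and to the quadrilateral $Q = T_i \cup T_j$ in the proof of the Second Triangle Test, where $c_i$ and $c_j$ are not a priori disjoint. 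The pairwise disjointness of the projections of the sides of $\tilde R$ is essentially a consequence of the embeddedness the lemma is proving, so it cannot be used as input. Without it, a transverse crossing of $\partial\tilde R$ with $g(\partial\tilde R)$ merely reports that two sides of $R$ cross transversely on $(S,q)$ --- a possibility you have not yet excluded --- rather than yielding a contradiction. Your nested case is correct in spirit (torsion-freeness of the deck group, finiteness of the isometry group of a compact polygon), though it too is handled for free by the paper's coordinate computation: a translation never stabilises $\tilde R$, and a half-translation stabilising it again places its rotation centre, a simple pole, in the interior.
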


\proof
Recall that a strictly convex polygon meets the set of (pre-images of) singularities exactly at its corners and hence contains no singularities in its interior.
Suppose the interior of $R$ is not embedded. Then there is some deck transformation~$g$
of~$(\tilde S, \tilde q)$ such that $\tilde R$ and $g(\tilde R)$ have overlapping
interiors. This implies that $\tilde R \cap g(\tilde R)$ is a strictly convex polygon with non-empty interior;
in particular it has at least three sides.
We may choose local co-ordinates such that~$R$ is identified with a Euclidean
polygon in $\CC$, and such that $g$ is given by $w \mapsto \pm w + w_0$ for some~$w_0 \in \CC$. 

First, suppose $g$ is a translation.
Without loss of generality, we may apply a rotation to assume that
$w_0$ is purely imaginary. Consider a tallest vertical line segment $L$
contained in~$\tilde R$; this can be chosen so that at least one endpoint
of $L$ is a corner of $\tilde R$ (see \cref{fig:poly_embed}). Since $\tilde R$ and $g(\tilde R)$ have overlapping
interiors, we deduce that $|w_0| < \length(L)$. But this implies that there
is a singularity in the interior of $R$, or in the interior of
one of its sides, contradicting the assumption that $\tilde R$ is strictly convex.

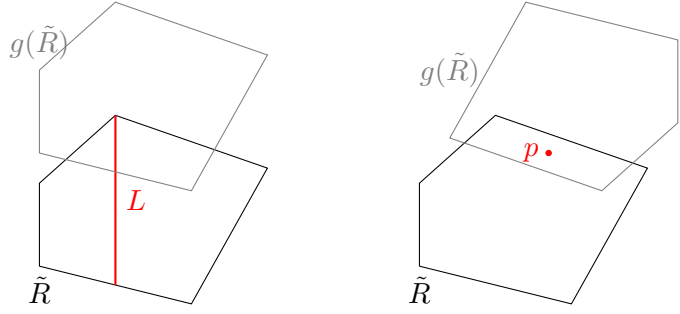
\begin{figure}[b]
 \begin{center}
  \begin{tikzpicture}[]
   \newcommand{\polygon}{(0,0) -- (2,-0.5) -- (3,1.3) -- (1,2) -- (0,1.1) -- (0,0)}
   \draw \polygon;
   \draw (0,0) node[below]{$\tilde R$};
   \draw[red, thick] (1,2) -- node[right]{$L$} (1,-0.25);
   \begin{scope}[yshift=1.5cm]
    \draw[gray] \polygon;
    \draw[gray] (0,1.1) node[above=0.2]{$g(\tilde R)$};
   \end{scope}
   
   \begin{scope}[xshift=5cm]
   \draw \polygon;
   \draw (0,0) node[below]{$\tilde R$};
   \draw[red, fill] (1.7,1.5) node[left]{$p$} circle (1pt);
   \begin{scope}[rotate=180, yshift=-3cm, xshift=-3.4cm]
    \draw[gray] \polygon;
    \draw[gray] (3,0.4) node{$g(\tilde R)$};
   \end{scope}
   \end{scope}
  \end{tikzpicture}
  \caption{The situation where $g$ is a translation (left) and a half-translation (right).}
  \label{fig:poly_embed}
 \end{center}
\end{figure}
Next, suppose $g$ is of the form $w \mapsto -w + w_0$. Then $g$ is a rotation
about a point $p \in \CC$ through an angle of $\pi$. Now $p$ must lie in the interior
of $\tilde R$, for otherwise $\tilde R$ and $g(\tilde R)$ will have disjoint
interiors (see \cref{fig:poly_embed}).
Then $p$ descends to a cone point of angle $\pi$ on $(S,q)$, which again implies the existence of a singularity in the interior of $R$, a contradiction.
\endproof

\subsection{Triangles and saddle flip graphs}

In \cref{sec:orient}, we will consider \emph{oriented triangles};
an oriented triangle is specified by a triple of sides $\TV = [a,b,c]$, considered up to cyclic permutation, where $a,b,c\in\A(S,q)$.
Any non-cyclic permutation of the sides determines the same triangle with the opposite orientation.
We say that two oriented triangles are \emph{consistently oriented} if they give the same orientation on $S$.

\begin{lem}[Maximal simplices in $\A(S,q)$]
 A maximal simplex of $\A(S,q)$ corresponds to a triangulation of $(S,q)$ by saddle connections.
\end{lem}

\begin{proof}
 Let $\sigma$ be a maximal simplex in $\A(S,q)$. Cutting $(S,q)$ along $\sigma$ gives rise to a disjoint union of Euclidean cone surfaces with piecewise geodesic boundary. It is a standard result that any such surface can be triangulated by Euclidean polygons (with corners at singularities). If some component $R$ is triangulated by at least two triangles, then it must contain a saddle connection that is not contained in $\partial R$. But this saddle connection would be disjoint from $\sigma$, contradicting the maximality assumption. Therefore, $\sigma$ cuts $(S,q)$ into a finite collection of Euclidean triangles.
\end{proof}
 
 We call such a triangulation a \emph{saddle triangulation} of $(S,q)$.
 Whenever we speak of a triangulation of a half-translation surface, we shall implicitly
 mean a saddle triangulation.
 Given a simplex $\sigma\in\A(S,q)$, we may also consider triangulations of
 $(S - \sigma,q)$; these are the maximal simplices of $\A(S- \sigma,q)$.
 
 A flip of a saddle triangulation is defined in the same way as for the (topological) flip graph.
 Given a triangulation $\T$ of $(S,q)$,
 a saddle connection $a \in \T$ is flippable in $\T$ if and only if
 the unique non-triangular region of $\T \setminus \{a\}$ is a strictly convex
 quadrilateral; in this case the quadrilateral has two straight diagonals,
 and so the flip is performed by replacing $a$ with the other diagonal.
 Note that $\T$ cannot have any folded triangles, since the three sides of a
 Euclidean triangle have distinct slopes.
 However, folded $n$--gons with $n\geq 4$ such as folded quadrilaterals can appear.
 These will be relevant in \cref{codim2}.
 
 \begin{definition}[Saddle flip graph]
  The \emph{saddle flip graph} $\F(S,q)$ is the graph with the set of all saddle triangulations on $(S,q)$
  as its vertex set, where two triangulations are joined by an edge if and only if they differ in precisely one flip.
 \end{definition}
 
 Given a simplex $\sigma \in \A(S,q)$, let $\F_\sigma(S,q)$
  be the induced subgraph of $\F(S,q)$
  whose vertices are ${\{\T \in \F(S,q) ~|~ \sigma \subseteq \T\}}$.
 
 We shall write $\F(S - \sigma, q)$ for the saddle flip graph of $(S - \sigma,q)$.
 The simplicial isomorphism $\iota_* : \A(S - \sigma, q) \to \lk_{\A(S,q)}(\sigma)$
 induces a natural graph isomorphism ${\F(S - \sigma,q) \to \F_\sigma(S,q)}$, given by $\T \mapsto \iota_*(\T) \cup \sigma$.
 
\begin{thm}[Flip graph is connected \cite{tahar_17}] \label{thm:tahar}
 For any simplex $\sigma\in\A(S,q)$, the graph $\F_\sigma(S,q)$ is connected.
 In particular, $\F(S,q)$ is connected. $\hfill\square$
\end{thm}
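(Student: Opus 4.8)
The plan is to run a Delaunay (Lawson) flip algorithm on saddle triangulations. Using the isomorphism $\F(S-\sigma,q)\to\F_\sigma(S,q)$ recorded above it suffices to treat a closed half-translation surface; to keep the constraint in play I work with \emph{$\sigma$-constrained} saddle triangulations, declaring the saddle connections of $\sigma$ to be constraint edges that are never flipped (the same argument applies verbatim to the bordered surface $S-\sigma$ once its polygonal boundary is treated as a further constraint). One might hope instead to transport a flip path from the connected topological flip graph $\F(S,\P)$, but the straightening of a topological triangulation need not be a saddle triangulation, so this does not work directly.

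First I would set up the constrained Delaunay structure. Developing a saddle connection together with the triangle on either side into $\CC$ — legitimate, since a strictly convex quadrilateral embeds by \cref{poly_embed} — one has well-defined circumscribed circles, and one calls a non-constraint flippable edge \emph{locally Delaunay} if the vertex of one adjacent triangle opposite the edge lies on or outside the circumcircle of the other. As in the planar theory, the saddle connections with the ``empty circumscribed disc'' property, together with $\sigma$, form the $1$-skeleton of the \emph{$\sigma$-constrained Delaunay cell decomposition}, a decomposition of $(S,q)$ into finitely many convex Euclidean polygons; a $\sigma$-constrained saddle triangulation all of whose non-constraint flippable edges are locally Delaunay is exactly a triangulation refining this decomposition. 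Since the flip graph of the triangulations of a fixed convex polygon is connected (connectivity of the $1$-skeleton of the associahedron), any two such ``Delaunay triangulations'' are joined by a path in $\F_\sigma(S,q)$. (For a half-translation surface in general position the Delaunay triangulation is unique and this step is vacuous.)

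Second I would show that an arbitrary $\T\in\F_\sigma(S,q)$ is flip-connected to a Delaunay triangulation. If $\T$ has a non-constraint flippable edge that fails to be locally Delaunay, flip it; one checks in addition that a non-Delaunay edge which is \emph{not} flippable cannot obstruct the process, since non-flippability of an interior edge forces the opposite vertices outside the relevant circumcircles, so such an edge is automatically locally Delaunay. Each such \emph{Lawson flip} strictly decreases the quantity $V(\T):=\sum_{T\in\T}\bigl(\area(T)\,\rho(T)^{2}-\int_{T}|x-O_T|^{2}\,dx\bigr)\ge 0$, where $O_T$ and $\rho(T)$ are the circumcentre and circumradius of the triangle $T$: concretely $V(\T)=\int_{(S,q)}(f_\T-h)$, where $f_\T$ is the piecewise-affine ``paraboloid lift'' of the vertex set over $\T$ in local developing coordinates and $h$ is the paraboloid, and flipping a non-Delaunay diagonal pushes $f_\T$ strictly downward over the interior of the quadrilateral (the fact underlying planar Delaunay theory). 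Since $V(\T)$ controls the lengths of the edges of $\T$, and only finitely many saddle connections have length below any given bound, only finitely many saddle triangulations can appear along a Lawson trajectory; strict monotonicity of $V$ then forces the algorithm to terminate, necessarily at a Delaunay triangulation. Together with the previous paragraph this shows that $\F_\sigma(S,q)$ is connected, and taking $\sigma=\emptyset$ gives connectivity of $\F(S,q)$.

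The main obstacle is the termination of the Lawson algorithm. For a finite planar point set, monotonicity of $V$ suffices because there are only finitely many triangulations; on a half-translation surface there are infinitely many saddle connections, so one genuinely needs the a priori implication that $V(\T)\le C$ forces every edge of $\T$ to have length at most $g(C)$, and this is not completely immediate because a single Lawson flip can \emph{lengthen} an edge (flipping the short diagonal of a tall thin quadrilateral to the long one), so the bound must be extracted from the monovariant rather than from the trajectory. Lesser technical points, none of them serious, are the bookkeeping for degenerate (cocircular) Delaunay cells, the handling of non-flippable edges above, and — if one prefers to argue on $S-\sigma$ — the polygonal boundary, dealt with by constrained Delaunay triangulations having the boundary saddle connections as constraints.
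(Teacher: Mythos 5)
You have put your finger on exactly the right place — termination of the Lawson algorithm — but the claim you lean on there, that $V(\T)\le C$ bounds the edge lengths of $\T$, is false. Taking the centroid of a triangle $T$ as origin (using the translation invariance of your per-triangle integrand) and the fact that $\int_T |x-\bar p|^2\,dx = \tfrac{\area(T)}{36}(a^2+b^2+c^2)$, one gets the closed form $V_T = \tfrac{1}{12}\area(T)\,(a^2+b^2+c^2)$, where $a,b,c$ are the side lengths of $T$. Hence a triangle with one side of length $L$ and area of order $1/L^2$ contributes $V_T = O(1)$ while carrying an arbitrarily long edge, and since it uses negligible area, nothing in the global area budget or the fixed number of triangles prevents such triangles from appearing in a triangulation with small $V$. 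So the sublevel set $\{\T : V(\T)\le V(\T_0)\}$ can contain saddle triangulations with arbitrarily long edges and is therefore potentially infinite; strict monotonicity of $V$ alone does not confine the Lawson trajectory to a finite set, and "extracting the bound from the monovariant" in the way you propose cannot work.

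The standard way to close the gap is a second monotone quantity not derivable from $V$: a Lawson flip does not increase the maximal circumradius. If $e=pq$ is flipped to $rs$, the inscribed-angle theorem applied to the chords $pr$ and $ps$ gives $\angle psr > \angle pqr$ and $\angle prs > \angle pqs$ (these are the two ways of reading the Delaunay violation), and since the triangle $prs$ has at most one obtuse angle, at least one of these comparisons lies in $[0,\pi/2]$ where $\sin$ is increasing; then the extended law of sines $\rho(prs)=|pr|/(2\sin\angle psr)=|ps|/(2\sin\angle prs)$ yields $\rho(prs)<\max(\rho(pqr),\rho(pqs))$, and symmetrically $\rho(qrs)<\max(\rho(pqr),\rho(pqs))$. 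Consequently every edge appearing along the Lawson trajectory is a chord of some circumcircle of radius at most $\rho_{\max}(\T_0)$, so has length at most $2\rho_{\max}(\T_0)$; on a compact half-translation surface only finitely many saddle connections meet this bound, so only finitely many triangulations can occur, and strict decrease of $V$ (or of the lexicographically sorted circumradius vector, which the above shows strictly decreases at each flip) rules out cycles. With this lemma inserted, your constrained-Delaunay plan for $\F_\sigma(S,q)$ is sound; as written, the termination step does not go through.
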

 
 Tahar's proof works for a more general class of surfaces.
 He allows for singular Euclidean surfaces with arbitrary cone angles, possibly with
 boundary formed by finitely many straight-line segments.
 The surface $(S - \sigma,q)$ falls into this class, and so
 $\F(S - \sigma,q)$ is connected.
 
 Note that if $\sigma \in \A(S,\P)$ is a topological triangulation, then $\t_q(\sigma)$ is not
 necessarily a saddle triangulation. However, all regions of $S - \t_q(\sigma)$
 will be polygons.

\subsection{Affine diffeomorphisms}

\begin{definition}[Affine diffeomorphism]
Let $(S,q)$ and $(S',q')$ be half-translation surfaces, with respective set of singularities $\P$ and~$\P'$. A homeomorphism $F : (S,q) \to (S', q')$
is called an \emph{affine diffeomorphism} if
it is locally affine on $S \setminus \P$ with respect to the underlying half-translation structures.

We call two half-translation surfaces \emph{affine equivalent} if there exists an affine
diffeomorphism between them. The group of affine self-diffeomorphisms of
$(S,q)$ shall be denoted $\Aff(S,q)$. 
We also allow orientation-reversing
diffeomorphisms.
\end{definition}

Note that if $F : (S,q) \to (S', q')$ is an affine diffeomorphism then $F^{-1}$ is also an affine diffeomorphism.
Furthermore, $F$ maps saddle connections to saddle connections and preserves disjointness, and so it induces a simplicial isomorphism $\A(S,q) \to \A(S',q')$.
The main goal of this paper is to prove that the converse also holds.

There is a natural $\GL(2,\R)$--action on $\QD(S)$ defined as follows.
Given $M \in \GL(2,\R)$ and a half-translation structure $q \in \QD(S)$,
define $M\cdot q \in \QD(S)$ to be the half-translation structure obtained by postcomposing
the charts from $(S,q)$ to $\CC$ (defined away from the set of singularities),
with the $\R$--linear map on $\CC\cong \R^2$ given by $z \mapsto M(z)$,
where $z\in\CC$ is a local co-ordinate.
The metric can be extended to the set of singularities in the usual manner.
Thus, the identity map $\id_S : (S,q) \to (S,M\cdot q)$
on the underlying surface $S$ is (isotopic to) an affine diffeomorphism
with derivative $M$.

The following is a key ingredient for our proof of the main theorem.

\begin{thm}[Cylinder Rigidity Theorem \cite{duchin_leininger_rafi_10}] \label{thm:cyl_rigid}
 Let $q,q' \in \QD(S)$ be half-translation structures on $S$.
 Then $\GL(2,\R) \cdot q = \GL(2,\R) \cdot q'$
 if and only if $\cyl(q) = \cyl(q')$. $\hfill\square$
\end{thm}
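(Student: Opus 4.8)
The plan is to prove the two implications separately, starting with the routine forward one. Suppose $\GL(2,\R)\cdot q = \GL(2,\R)\cdot q'$, so $q' = M\cdot q$ for some $M\in\GL(2,\R)$. As discussed above, the identity map of the underlying surface $S$ is then isotopic to an affine diffeomorphism $(S,q)\to(S,q')$ with derivative $M$. An affine diffeomorphism sends straight lines to straight lines and preserves parallelism, hence carries each maximal Euclidean cylinder of $(S,q)$ to a maximal Euclidean cylinder of $(S,q')$; being isotopic to the identity, it fixes isotopy classes of curves, so every core curve of $(S,q)$ remains a core curve of $(S,q')$. This gives $\cyl(q)\subseteq\cyl(q')$, and applying the argument to $M^{-1}$ gives equality.

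For the converse, assume $\cyl(q)=\cyl(q')$ as subsets of $\C(S)$; note this common subset inherits the geometric intersection pairing from $\C(S)$, and that both structures live on the \emph{same} surface $S$. The substance of the proof is to reconstruct the flat geometry, up to the $\GL(2,\R)$--action, from this combinatorial datum, which I would do in three stages. \emph{Stage 1 (organise by direction).} By Masur's theorem there is a dense set of directions on $(S,q)$ carrying a cylinder; each cylinder curve $\gamma$ has a well-defined direction $\theta_\gamma\in\RP^1$, circumference $c_\gamma$ and modulus $\mu_\gamma$. The first task is to recover combinatorially, from $\cyl(q)$ equipped with intersection numbers, the partition of $\cyl(q)$ into direction classes — and, for those directions that are completely periodic, the cylinder decomposition of $S$ that they determine. \emph{Stage 2 (match cross-sections).} For two transverse direction classes, the intersection numbers $i(\gamma,\delta)$ between their members record how the two families of cylinders overlap, i.e.\ the combinatorial picture of $S$ subdivided into parallelograms by the two transverse cylinder systems. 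Since $\cyl(q)=\cyl(q')$ carries identical intersection data, the direction classes and their combinatorial cross-sections are the same for $q$ and for $q'$.

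\emph{Stage 3 (normalise and rigidify).} Choose two transverse direction classes of $(S,q)$ whose cylinders cross, and apply an element of $\GL(2,\R)$ to $q$, and another to $q'$, so that — using Stage 2 — both structures have the first class horizontal and the second vertical, with two selected cylinders normalised to have unit circumference; this exhausts $\GL(2,\R)$ up to a finite group. After this normalisation $q$ and $q'$ carry the same horizontal and vertical cylinder systems with the same intersection numbers between them, and the requirement that these cylinders glue up into a closed flat surface (equivalently, that the holonomy around every closed loop is trivial) pins down all the remaining circumferences and heights uniquely. Hence $q=q'$ after normalisation, and the original structures lie in a common $\GL(2,\R)$--orbit.

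The main obstacle is Stage 1: passing from the bare set $\cyl(q)$, which records only \emph{which} isotopy classes bound cylinders, to directional and metric information. One needs a purely combinatorial criterion — phrased in terms of the subset $\cyl(q)\subseteq\C(S)$ and intersection numbers — for two cylinder curves to be parallel, and (when relevant) for a multicurve of cylinder curves to be a genuine cylinder decomposition rather than a spurious maximal multicurve; the geometric fact making this possible is that a cylinder curve disjoint from a cylinder decomposition is trapped inside a single one of its cylinders. The secondary difficulty, in Stage 3, is checking that the linear system describing a flat surface built from two transverse cylinder systems, subject to the closing-up relations, has a unique solution once two circumferences are fixed — this is the point at which the qualitative cylinder data becomes rigid, with the residual freedom being exactly the $\GL(2,\R)$ we divided out.
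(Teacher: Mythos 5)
The paper does not actually prove this theorem; the statement carries a $\square$ because it is cited verbatim from Duchin--Leininger--Rafi (Lemma 22 of that paper) and treated as a black box. So there is no in-paper argument to compare your proposal against, only the reference. Evaluating your sketch on its own merits: the forward implication is routine and fine, but the converse, as you present it, has genuine gaps.

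The decisive gap is in Stage~3. Two transverse cylinder decompositions of $S$ with identical combinatorial intersection data do \emph{not} determine the flat structure up to $\GL(2,\R)$. Once the gluing pattern of rectangles is fixed, \emph{any} positive assignment of heights $h_i$ to the horizontal cylinders and widths $w_j$ to the vertical cylinders yields a closed half-translation surface carrying those two cylinder decompositions with the same intersection numbers; the ``closing-up'' condition you invoke is a topological condition on the gluing pattern and holds identically, not a nondegenerate linear system that forces uniqueness. The subgroup of $\GL(2,\R)$ preserving both directions is only the $2$-parameter diagonal group, so after your normalisation there is typically a positive-dimensional moduli family of flat structures all sharing the two normalised cylinder systems. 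Rigidity must therefore come from the \emph{full} set $\cyl(q)$---further directions, or further curves in the fixed directions---and your argument never brings that additional data to bear. Stage~1 is also left open: you flag, but do not supply, a purely combinatorial criterion (in terms of $\cyl(q)\subseteq\C(S)$ and intersection numbers) for two cylinder curves to be parallel, and the geometric fact you quote---that a cylinder curve disjoint from a full cylinder decomposition must sit inside one of its cylinders---shows only that no such curve exists besides the core curves; it does not by itself organise $\cyl(q)$ by direction, and in general directions carrying a cylinder need not be completely periodic, so there may be no cylinder decomposition to anchor the criterion to. Both points are where the real content of the Duchin--Leininger--Rafi lemma lies, and your proposal identifies them as difficulties without closing them.
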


The statement in \cite[Lemma 22]{duchin_leininger_rafi_10}
regards $\cyl(q)$ as a subset of $\C(S)$. However, by \cref{rem:curves},
this theorem also holds when $\cyl(q)$ is viewed as a subset of $\C(S,\P)$.
We thank Kasra Rafi for pointing this out.

\section{Links of simplices in \texorpdfstring{$\A(S,q)$}{the saddle connection complex}}\label{sec:links}

In this section, we discuss an important distinction between join
decompositions of links in the saddle connection complex and in the arc complex.
We introduce the notion of a \emph{cordon} of a simplex, which
shall be used throughout this paper. Furthermore, we give a classification
of link types for low-codimensional simplices.

\subsection{Join decompositions}

By \cref{decomp-arc}, there is a natural correspondence between 
the factors of the minimal join decomposition of the
link of a simplex $\sigma$ in the arc complex,
and the non-triangular regions of $S - \sigma$.
In contrast, the analogous correspondence does not necessarily hold
for links in the saddle connection complex.

 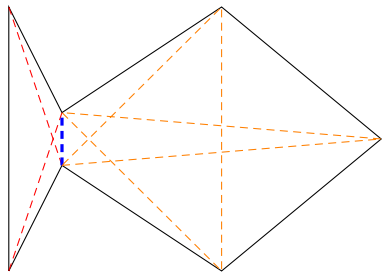
\begin{figure}
  \begin{center}
   \begin{tikzpicture}[scale=0.7]
    \draw (0,0) -- (1,2.0) -- (4.0,0) -- (7,2.5) -- (4,5) -- (1,3.0) -- (0,5) -- (0,0);
    \draw[densely dashed, color=red] (0,0) -- (1,3.0);
    \draw[densely dashed, color=red] (0,5) -- (1,2.0);
    \draw[densely dashed, very thick, color=blue] (1,2.0) -- (1,3.0);
     \draw[densely dashed, color=orange] (1,2.0) -- (7,2.5) -- (1,3.0) -- (4,0) -- (4,5) -- (1,2.0);
   \end{tikzpicture}
   \caption{A ``fish'' with all saddle connections drawn using dashed lines. The blue cordon separates all red saddle connections in the ``tail'' from the orange ones in the ``body''.}
   \label{fish_cordon_blue}
  \end{center}
 \end{figure}

\begin{exa}[Fish with cordon]
\cref{fish_cordon_blue} shows a possible region $R$
of a collection of saddle connections on a half-translation surface $(S,q)$.
Let $A$ be the induced subcomplex of $\A(S,q)$ whose vertices are all
the saddle connections in $R$ (these are indicated with dashed lines).
Let $A_1$, $A_2$, and $A_3$ be the induced subcomplexes of $\A(S,q)$
spanned by the saddle connections in red, blue, and orange respectively. 
The blue saddle connection cuts $R$ into two regions, each containing the
saddle connections of a single colour.
Therefore, no two saddle connections with distinct colours intersect, 
and so $A = A_1 \ast A_2 \ast A_3$ has a non-trivial join decomposition.
\end{exa}

We shall see that the presence of ``separating'' saddle connections,
such as the blue one in the above example, is the only obstruction
to having the desired correspondence between non-triangular
regions and factors in the minimal join decomposition.

For the rest of this paper, $\lk(\sigma)$ shall always denote the link of
$\sigma$ in $\A(S,q)$.
We shall also write $S - \sigma$ for the surface $(S - \sigma, q)$
when the underlying half-translation structure~$q$ is clear.
Thus, each region of $S - \sigma$ will be equipped with a half-translation structure.

\begin{definition}[Cordon]\label{def:cordon}
Let $\sigma\in\A(S,q)$ be a simplex. Call a saddle connection $\gamma \in \lk(\sigma)$ a \emph{cordon} of $\sigma$
if any of the following equivalent conditions hold:
\begin{enumerate}
 \item $\gamma$ is disjoint from every $\gamma' \in \lk(\sigma) \setminus \{\gamma\}$,
 \item $\lk(\sigma) = \{\gamma\} \ast A$ for some simplicial complex $A \subseteq \A(S,q)$, or
 \item $\gamma$ belongs to \emph{every} saddle triangulation containing $\sigma$ (and hence is non-flippable).
\end{enumerate}
\end{definition}

In particular, the cordons of $\sigma$ are precisely the single-vertex factors appearing
in the minimal join decomposition of $\lk(\sigma)$. 
If we were working in
the topological setting, then these arcs are precisely those contained in
once-marked monogon regions of $S - \sigma$.
However, saddle connections on a half-translation surface cannot bound monogons with
one singularity. Instead, cordons play an important role in our setting:
after cutting along all cordons of~$\sigma$, we obtain our desired
analogue of \cref{decomp-arc} for the saddle connection complex.

\begin{prop}[Decomposition of links in $\mathcal{A}(S,q)$] \label{decomp-saddle}
 Suppose $\sigma \in \A(S,q)$ is a simplex. Let $\lk(\sigma) = c_1 \ast \ldots \ast c_k \ast A_1 \ast \ldots \ast A_n$
  be the minimal join decomposition of its link, where the $c_i$'s are precisely the single-vertex factors (i.e.\ the cordons of~$\sigma$).
  Then $S - (\sigma \cup c_1 \cup \ldots \cup c_k)$ has exactly $n$ non-triangular regions $R_1, \ldots, R_n$.
  Moreover, up to permutation, $A_i$ is the induced subcomplex of $\A(S,q)$ whose vertices are the saddle
  connections contained in $R_i$.
\end{prop}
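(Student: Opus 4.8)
The plan is to reduce this statement to the already-established arc complex version (\cref{decomp-arc}) by cutting along cordons first, then handling the resulting surface with boundary. First I would let $\tau = \sigma \cup c_1 \cup \ldots \cup c_k$ and observe that, since each $c_i \in \lk(\sigma)$, the set $\tau$ is indeed a simplex of $\A(S,q)$. By the transitivity of links (cutting along $\sigma$ and then along the images of the $c_i$), we have $\lk_{\A(S,q)}(\tau) \cong \A(S-\tau, q)$, and moreover $\lk_{\A(S,q)}(\tau) = A_1 \ast \ldots \ast A_n$: the cordons $c_i$ are exactly the single-vertex factors that get "peeled off" when passing from $\lk(\sigma)$ to $\lk(\tau)$, because condition (iii) of \cref{def:cordon} says each $c_i$ lies in every saddle triangulation extending $\sigma$, hence extending $\tau$, so $\lk(\tau)$ has no new cordons coming from the $c_i$, and the remaining join factors are unchanged. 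I should double-check that $\lk(\tau)$ has \emph{no} single-vertex factors at all — i.e. that $\tau$ has no cordons — which follows from minimality of the original decomposition: any cordon of $\tau$ would be disjoint from all of $\lk(\tau) = A_1 \ast \ldots \ast A_n$, forcing some $A_i$ to be a single vertex, contradicting that the $A_i$ are the non-single-vertex factors.

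Next I would transfer the problem to the topological setting. The surface $S - \tau$ (with its half-translation structure forgotten, retaining only the marked points) is a topological surface with boundary, and $\A(S-\tau, q)$ is the induced subcomplex of $\A(S-\tau, \P)$ spanned by arcs realisable as saddle connections. The key point is that the \emph{join decomposition} of $\A(S-\tau, q)$ into non-single-vertex factors matches that of $\A(S-\tau,\P)$ region-by-region: two saddle connections in $S - \tau$ cross iff, realised as arcs, they cross, and this happens only within a single non-triangular region of $S-\tau$. So by \cref{decomp-arc} applied to $S - \tau$, the non-triangular regions $R_1, \ldots, R_m$ of $S - \tau$ biject with the non-single-vertex join factors $\A(R_j)$ of $\A(S-\tau,\P)$, and intersecting this with the saddle connection complex gives that the non-single-vertex factors of $\A(S-\tau,q) = \lk(\tau)$ are exactly the induced subcomplexes on saddle connections contained in each $R_j$. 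It remains to match $m = n$ and identify these with the $A_i$: the only subtlety is that a non-triangular region $R_j$ of $S-\tau$ might contribute only finitely many (or even one) saddle connections, or a triangular region of $S-\tau$ could still contain a saddle connection if it is not strictly convex — wait, a triangle in $S-\tau$ bounded by saddle connections is strictly convex (three distinct slopes) hence contains no further saddle connections, but a triangle region of $S - \tau$ bounded partly by pieces of $\partial S$ need not be; however such regions are not "triangles" in the arc-complex sense unless $\A$ of them is empty. I would phrase it as: a region $R_j$ contributes a (possibly empty) factor, empty precisely when $\A(R_j, q) = \emptyset$, and the nonempty ones are the $A_i$; one must argue no nonempty $\A(R_j,q)$ is a single vertex, i.e. no cordons remain, which is exactly the point already handled.

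The main obstacle I anticipate is making rigorous the claim that cordons of $\sigma$ correspond \emph{exactly} to the once-cut regions that "disappear" and contribute nothing further — equivalently, that after cutting along all cordons, the remaining link has no single-vertex factors. This is where the Euclidean geometry genuinely differs from the topological case: in the arc complex, single-vertex factors come from once-marked monogons, which cannot occur for saddle connections, so one must instead characterise them via condition (iii) and argue that cutting along a cordon $c_i$ does not create a \emph{new} cordon. A cordon $c_i$ of $\sigma$ separates $\lk(\sigma) = \{c_i\} \ast (\text{rest})$; cutting along it, a new cordon of $\sigma \cup \{c_i\}$ would have to be a saddle connection disjoint from everything in the remaining link, but if it were not already a cordon of $\sigma$, it must cross $c_i$ — contradiction, since cordons are disjoint from all of $\lk(\sigma)$. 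Iterating over the finitely many cordons (finiteness from $\A(S,q)$ being finite-dimensional, via \cref{isom_emb} and finite-dimensionality of $\A(S,\P)$) closes the argument. I would also need the routine but necessary observation that the cutting/link isomorphism $\iota_*$ of \cref{link_iso} composes correctly through the sequence of cuts and identifies the join factors compatibly, so that the $A_i$ in $\lk(\sigma)$ literally are the induced subcomplexes on saddle connections of the $R_i$ living inside the original $(S,q)$.
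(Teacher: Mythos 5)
Your structural idea --- pass to $\tau = \sigma \cup c_1 \cup \cdots \cup c_k$, verify that $\tau$ has no cordons, and identify the $A_i$ with the saddle connection complexes of the non-triangular regions of $S - \tau$ --- is sound in outline, and your argument that $\tau$ has no cordons (via uniqueness of the minimal join decomposition) is correct. But there is a genuine gap at the step ``intersecting this with the saddle connection complex gives that the non-single-vertex factors of $\A(S-\tau,q)$ are exactly the induced subcomplexes on saddle connections contained in each $R_j$.'' What you actually establish is that the regions provide \emph{some} join decomposition $\lk(\tau) = B_1 \ast \cdots \ast B_m$ with no single-vertex factors, where $B_j$ is spanned by the saddle connections in $R_j$. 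You never show that each $B_j$ is itself indecomposable, and having no single-vertex factors is necessary but not sufficient for the region-by-region decomposition to be the minimal one: a priori some $B_j$ could decompose as $C \ast D$ with $\#C, \#D \geq 2$, so that two distinct minimal factors $A_i$ would live in the same region.

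This indecomposability does not come for free from \cref{decomp-arc}: that result shows $\A(R_j, \P)$ is indecomposable, but $B_j = \A(R_j, q)$ is only an \emph{induced subcomplex} of $\A(R_j, \P)$, and induced subcomplexes of indecomposable flag complexes routinely decompose --- this is exactly what happens inside the fish region of \cref{fish_cordon_blue}, where a single connected region has a saddle connection complex that is a nontrivial join. The geometric content of the proposition is that this pathology cannot persist once all cordons of $\sigma$ are cut, and that is the step still requiring proof. The paper closes the gap by rerunning the colouring argument from the proof of \cref{decomp-arc} on a saddle triangulation $\T \supseteq \sigma$ (hence $\T \supseteq \tau$, by condition (iii) of \cref{def:cordon}): one colours the vertices of $\lk(\sigma)$ by the minimal factor they lie in, shows the arcs of $\T$ inside a fixed $R_j$ are monochromatic by chaining through adjacent triangles of $\T$, and then extends to all of $B_j$. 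The one new ingredient replacing the ``once-marked monogon'' claim is the observation you do make correctly: every $\gamma \in B_j$ has a crossing partner $\gamma' \in B_j$, since $\gamma$ cannot be a cordon of $\tau$ and any $\gamma' \pitchfork \gamma$ must lie in the same region. Your proof supplies that ingredient but never runs the connectivity argument that upgrades ``no isolated vertex'' to ``complement graph connected.''
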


\begin{proof}
 Let $R_1, \ldots, R_m$ be the non-triangular regions of $S - (\sigma \cup c_1 \cup \ldots \cup c_k)$, and $B_i$ be the subcomplex of $\A(S,q)$ spanned by the saddle connections contained in $R_i$.
 We need to show that each $B_i$ cannot be decomposed as a non-trivial join. 
 The proof proceeds in exactly the same manner as in \cref{decomp-arc}, taking $\T$ to be a saddle triangulation containing $\sigma$, and using the following claim instead. (Saddle triangulations do not have any folded triangles, so we do not need to deal with that case.)

 \textbf{Claim:} For every saddle connection $\gamma \in B_i$, there exists a saddle connection $\gamma' \in B_i$ that intersects $\gamma$.
 
 Since we have already cut along all cordons of $\sigma$ to obtain the regions $R_i$,
 it follows that $\gamma \in B_i$ cannot be a cordon of $\sigma$.
 Therefore, there is some saddle connection $\gamma' \in \lk(\sigma)$ intersecting $\gamma$.
 Now, $\gamma'$ cannot be cordon of $\sigma$, and so it is contained in some region of
 $S - (\sigma \cup c_1 \cup \ldots \cup c_k)$. But $\gamma$ and $\gamma'$ intersect, and
 so they must belong to the same region, namely $R_i$. Thus $\gamma' \in B_i$, yielding the claim.
 \end{proof}

\subsection{Classification of low-codimensional simplices}\label{sec:classify}

Recall from \cref{decomp-arc} that
the isomorphism type of the link of a simplex in the arc complex
completely determines the topological type of its non-triangular regions.
For the saddle connection complex, however, this does not hold. 
We shall nevertheless provide a partial classification for simplices
corresponding to triangulations missing at most two saddle~connections.

Recall that every maximal simplex in $\A(S,q)$ corresponds to
a triangulation, and all triangulations possess the same number
of saddle connections.
Thus, we may define the \emph{codimension} of a simplex $\sigma \in \A(S,q)$
to be $\codim(\sigma) = \dim(\A(S,q)) - \dim(\sigma)$; this counts the number
of saddle connections that need to be added to $\sigma$ in order to produce a triangulation.
Observe that $\dim(\lk(\sigma)) = \codim(\sigma) - 1$.
Thus, for simplices $\sigma \in \A(S,q)$ of codimension at most 2,
the link $\lk(\sigma)$ is a simplicial graph.

Let $\CG_k$ denote the cycle graph of length $k$; $\PG_k$ the path graph of length $k$;
and $\NG_k$ the edgeless graph on $k$ vertices. We shall write $\PG_\infty$ for the
bi-infinite path graph.

The classification of links for codimension--$1$ simplices is as follows
(see \Cref{tab:codim1} in \Cref{app:classification}).

\begin{lem}[Codimension--$1$ simplices] \label{codim1}
 Let $\sigma\in\A(S,q)$ be a codimension--$1$ simplex.
 Then $S - \sigma$ has a quadrilateral as its unique non-triangular region
 which is:
 \begin{enumerate}
  \item strictly convex $\iff \lk(\sigma) \cong \NG_2$, 
  \item not strictly convex $\iff \lk(\sigma) \cong \NG_1$.
 \end{enumerate}
 \end{lem}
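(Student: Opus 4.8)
The plan is to analyze a codimension--$1$ simplex $\sigma \in \A(S,q)$ by first invoking the simplicial isomorphism $\iota_* : \A(S - \sigma, q) \to \lk_{\A(S,q)}(\sigma)$, so that understanding $\lk(\sigma)$ reduces to understanding the saddle connection complex of the cut surface $S - \sigma$. Since $\sigma$ has codimension $1$, any triangulation containing $\sigma$ has exactly one more saddle connection, so $S - \sigma$ has a unique non-triangular region $R$, and that region must admit a triangulation by adding exactly one diagonal; hence $R$ is a quadrilateral. (One should check $R$ cannot be, say, a once-punctured object or an annulus, but on a half-translation surface a region with a single extra saddle connection completing a triangulation is forced to be a Euclidean quadrilateral — this follows from \cref{decomp-saddle} together with the fact that saddle connections cannot bound monogons, and that a $(1,1)$-annulus would need a core curve and hence infinitely many arcs.) So the content of the lemma is: the isomorphism type of $\A(R)$ (equivalently, the number of good diagonals of the quadrilateral $R$) detects strict convexity.

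Next I would enumerate the possibilities for a Euclidean quadrilateral $R$ according to how many of its two diagonals are \emph{good} (realizable by saddle connections). A quadrilateral is strictly convex precisely when all four corner angles are less than $\pi$; in that case both diagonals are good, so $\A(R)$ has exactly two vertices, which are disjoint (they are the two diagonals of a convex quadrilateral), giving $\lk(\sigma) \cong \NG_2$. If $R$ is not strictly convex, then at least one corner has angle $\geq \pi$. Here I would argue that exactly one diagonal is good: the diagonal emanating from a reflex vertex (angle $> \pi$) is bad, since a straight segment from that vertex into the region immediately runs into the boundary or is blocked — more carefully, such a diagonal's geodesic representative is not a single saddle connection. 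Meanwhile the other diagonal, which connects the two non-reflex vertices, is good. The case of a corner angle exactly equal to $\pi$ needs a remark: such a ``corner'' is really a marked point in the interior of an edge, and one of the diagonals still fails to be straight while the other succeeds; I would note that a quadrilateral cannot have two corners of angle $\geq \pi$ (the angle sum of a Euclidean quadrilateral is $2\pi$, and with marked points on the boundary one would need to be careful, but in the relevant configurations at most one diagonal survives). So $\A(R) \cong \NG_1$, i.e.\ $\lk(\sigma) \cong \NG_1$.

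To finish, I would observe that the two cases are mutually exclusive and exhaustive: $\lk(\sigma)$ is a graph on one or two vertices with no edges (an edge would mean two disjoint good diagonals of a quadrilateral other than the two standard diagonals, which is impossible, and the two standard diagonals of a convex quadrilateral \emph{do} cross, hence are non-adjacent — wait, here I must be careful: diagonals of a quadrilateral cross each other, so they are never disjoint, confirming that $\lk(\sigma)$ is always edgeless on at most two vertices). Thus $\lk(\sigma) \cong \NG_2$ if and only if $R$ has two good diagonals if and only if $R$ is strictly convex, and $\lk(\sigma) \cong \NG_1$ in the remaining (non-strictly-convex) case. The main obstacle I anticipate is the careful treatment of degenerate configurations — quadrilaterals with a marked point of angle exactly $\pi$ on an edge, and verifying that in every non-strictly-convex case exactly one diagonal (not zero, not two) is realizable by a saddle connection. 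This requires a short case analysis of the local Euclidean geometry near a reflex or straight-angle vertex, using \cref{poly_embed} to control embeddedness of the candidate saddle connection.
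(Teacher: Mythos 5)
Your overall approach matches the paper's: both reduce to identifying the non-triangular region as a Euclidean quadrilateral $Q$ and then counting how many of its two diagonals are realizable as saddle connections. The paper is slightly more direct — it completes $\sigma$ to a triangulation $\T$, lets $a = \T \setminus \sigma$, notes $a$ meets two distinct triangles (no folded triangles in a saddle triangulation), and defines $Q$ as their union with $a$ a diagonal; your detour through $\iota_*$ and \cref{decomp-saddle} arrives at the same place.

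However, your case analysis for the non-strictly-convex quadrilateral contains a concrete error: you write that ``the diagonal emanating from a reflex vertex is bad,'' and that ``the other diagonal, which connects the two non-reflex vertices, is good.'' This is exactly backwards. If $Q = ABCD$ has its reflex (or straight-angle) corner at $C$, the diagonal $AC$ \emph{through} the reflex vertex is the one that lies in the interior of $Q$ and is realized by a straight segment — there is plenty of angular room at a reflex corner. It is the \emph{other} diagonal $BD$, joining the two vertices adjacent to $C$, that fails: for a genuinely reflex corner it leaves the region near $C$, and for a straight-angle corner it would have to pass through the singularity at $C$ itself, so it cannot be a single saddle connection. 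The final count — exactly one good diagonal in the non-strictly-convex case, hence $\lk(\sigma)\cong\NG_1$ — is unaffected, so the proof's conclusion survives, but the identification of which diagonal survives should be corrected. The paper sidesteps this case analysis entirely by phrasing the dichotomy in terms of flippability: $Q$ is strictly convex if and only if $a$ is flippable, i.e.\ if and only if the \emph{other} diagonal $b$ exists as a saddle connection, which immediately gives $\lk(\sigma) = \{a\}\sqcup\{b\}$ versus $\lk(\sigma) = \{a\}$.
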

 
 \begin{proof}
 Complete $\sigma$ to a triangulation $\T$, and let $a$ be the unique saddle 
 connection in~\mbox{$\T \setminus \sigma$}. Then~$a$ meets two distinct triangles $T, T'$ of $\T$.
 Gluing $T$ and $T'$ along $a$ produces a quadrilateral~$Q$ with $a$ as a diagonal. 
 Now, $Q$ is strictly convex if and only if the angles at each of its corners are strictly less
 than $\pi$. This occurs precisely when there exists another diagonal $b$ of~$Q$,
 obtained by flipping $a$ in $\T$, cutting~$Q$ into two triangles (see \cref{fig:codimension_1_simplices}).
 \begin{figure}
 \begin{center}
  \begin{tikzpicture}[]
   \draw (0,0) -- (1,0.8) -- (-0.2,2) -- (-2.1,1.1) -- (0,0);
   \draw[color=red] (0,0) -- (-0.2,2);
   \draw[color=red] (1,0.8) -- (-2.1,1.1);
   
   \begin{scope}[xshift=3cm]
    \draw (0,0) -- (1,0.6) -- (2.6,0.2) -- (1.2,2) -- (0,0);
    \draw[color=red] (1,0.6) -- (1.2,2);
   \end{scope}
  \end{tikzpicture}
  \caption{A strictly convex quadrilateral and a quadrilateral that is not strictly convex, with their straight diagonals.}
  \label{fig:codimension_1_simplices}
 \end{center}
 \end{figure}
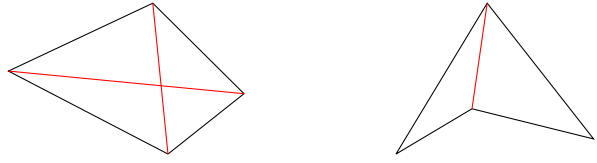
 In this case we have $\lk(\sigma) = \{a\} \sqcup \{b\} \cong \NG_2$.
 On the other hand, if $Q$ is not strictly convex then $a$ is the only diagonal of $Q$, and so $\lk(\sigma) = \{a\} \cong \NG_1$.
\end{proof}
 
\begin{cor}[Flippability condition]\label{cor:flip}
 Let $\T$ be a triangulation. Then a saddle connection $a\in\T$
 is flippable in $\T$ if and only if $\lk(\T \setminus \{a\}) \cong \NG_2$.
 $\hfill\square$
\end{cor}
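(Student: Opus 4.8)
The statement to prove is \cref{cor:flip}: for a triangulation $\T$, a saddle connection $a \in \T$ is flippable in $\T$ if and only if $\lk(\T \setminus \{a\}) \cong \NG_2$. This follows almost immediately from \cref{codim1}, so the proof plan is short.

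First I would observe that $\sigma := \T \setminus \{a\}$ is a codimension--$1$ simplex, since removing one saddle connection from a (maximal) triangulation produces a simplex one dimension below maximal. Hence \cref{codim1} applies directly to $\sigma$: the surface $S - \sigma$ has a quadrilateral $Q$ as its unique non-triangular region (with $a$ as a diagonal), and $Q$ is strictly convex if and only if $\lk(\sigma) \cong \NG_2$, while $Q$ fails to be strictly convex if and only if $\lk(\sigma) \cong \NG_1$.

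Next I would recall the characterization of flippability given just before \cref{non-fold_flip} and in the paragraph defining the saddle flip graph: a saddle connection $a \in \T$ is flippable precisely when the unique non-triangular region of $\T \setminus \{a\}$ is a strictly convex quadrilateral, in which case $Q$ has exactly two good diagonals and the flip replaces $a$ by the other one. Combining this with the previous step: $a$ is flippable $\iff Q$ is strictly convex $\iff \lk(\T \setminus \{a\}) \cong \NG_2$, which is exactly the claim.

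There is essentially no obstacle here — this is a corollary packaged from \cref{codim1} and the definition of flippability. The only thing to be mildly careful about is matching up the region $Q$ from \cref{codim1} with the quadrilateral appearing in the definition of the flip operation (they are the same: the unique non-triangular region of $S - (\T\setminus\{a\})$, which is the union of the two distinct triangles of $\T$ meeting $a$, glued along $a$). Since \cref{codim1} already handles the strict-convexity dichotomy in terms of $\NG_2$ versus $\NG_1$, invoking it and the flippability criterion gives the result in one line; I would simply write ``This is immediate from \cref{codim1} and the definition of flippability.''
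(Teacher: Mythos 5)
Your proposal is correct and matches the paper's intended reasoning exactly: the paper marks this corollary as immediate from \cref{codim1}, and your argument is precisely the combination of that lemma with the definition of flippability for saddle triangulations.
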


For codimension--$2$ simplices, there are six possibilities for the links and eight possibilities for the non-triangular regions (see \Cref{tab:codim2} in \Cref{app:classification}).

\begin{lem}[Codimension--$2$ simplices] \label{codim2}
 Let $\sigma\in\A(S,q)$ be a codimension--$2$ simplex.
 Then the non-triangular region(s) of $S - \sigma$ comprises:
 \begin{enumerate}
  \item a $(1,1)$--annulus $\iff \lk(\sigma) \cong \PG_\infty$,
  \item a strictly convex pentagon $\iff \lk(\sigma) \cong \CG_5$,
  \item a pentagon with one broken diagonal $\iff \lk(\sigma) \cong \PG_3$,
  \item two strictly convex quadrilaterals $\iff \lk(\sigma) \cong \CG_4$,
  \item either a pentagon with two broken diagonals, or two quadrilaterals where
  exactly one is strictly convex$\iff \lk(\sigma) \cong \PG_2$,
  \item either a pentagon with three broken diagonals, two quadrilaterals that are both
  not strictly convex, or a bigon containing a simple pole $\iff \lk(\sigma) \cong \PG_1$.
 \end{enumerate}
 Moreover, the above list is exhaustive.
\end{lem}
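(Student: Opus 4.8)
The plan is to enumerate the possible non-triangular region configurations of $S - \sigma$ and then compute the link in each case using the same tools as in the codimension--$1$ analysis. Since $\codim(\sigma) = 2$, there are exactly two saddle connections that need to be added to $\sigma$ to form a triangulation $\T$. First I would complete $\sigma$ to a saddle triangulation $\T$ and set $\{a,b\} = \T \setminus \sigma$. The pair $a,b$ may or may not be disjoint from a common triangle, so I would split into cases according to the combinatorics of how $a$ and $b$ sit among the triangles of $\T$: the key dichotomy is whether, after removing $a$ and $b$ from $\T$, the complementary non-triangular part of $S - \sigma$ is connected (a single polygon, which must have $5$ sides or be the $(1,1)$--annulus case with an essential curve) or disconnected into two quadrilaterals. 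The region count follows from \cref{decomp-saddle}: after cutting along the cordons of $\sigma$, the non-cordon factors of the minimal join decomposition of $\lk(\sigma)$ correspond to the non-triangular regions, and since $\dim(\lk(\sigma)) = 1$ the link is a graph whose combinatorial structure I can read off directly.

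Next, for each region type I would compute the link explicitly. For the $(1,1)$--annulus case, the argument from \cref{IL-arc}(ii) applies almost verbatim: any saddle connection in the link joins opposite boundary marked points, and flipping it yields another such, so the link is the bi-infinite path $\PG_\infty$. For a strictly convex pentagon, the diagonals are exactly its five good diagonals, adjacent in the link (as elements of $\lk(\sigma)$, i.e.\ disjoint) precisely when they share no interior crossing; this gives the pentagon-diagonal graph $\CG_5$. For pentagons with some bad diagonals, I would note that a bad diagonal cannot be a vertex of $\A(S - \sigma, q)$, so the link is the induced subgraph of $\CG_5$ on the good diagonals: removing $1$, $2$, or $3$ vertices from $\CG_5$ yields $\PG_3$, $\PG_2$, or $\PG_1$ respectively (one must check, using the convexity geometry, which induced subgraphs actually arise — e.g.\ the bad diagonals of a pentagon correspond to reflex corners, and the possible patterns are constrained). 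For two strictly convex quadrilaterals glued along part of $\sigma$, each contributes a single flip, the two flips are independent, so $\lk(\sigma) \cong \NG_2 \ast \NG_2 = \CG_4$; if exactly one quadrilateral is strictly convex we get $\NG_2 \ast \NG_1 = \PG_2$; if neither is, $\NG_1 \ast \NG_1 = \PG_1$. The bigon-with-a-simple-pole case arises from a folded quadrilateral and must be analysed separately: such a bigon admits exactly one good diagonal, giving $\PG_1$.

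The main obstacle I expect is the exhaustiveness claim: showing that the list of geometric region types is complete and that no other link graph can occur. This requires an Euler-characteristic / side-counting argument to bound the total number of sides of the non-triangular part (it has $\codim(\sigma) + 2 = 4$ new boundary arcs coming from $a$ and $b$, contributing to a total of at most six sides distributed among one or two regions), combined with the constraint from \cref{dehn_twist} that a region with an essential curve but finitely many arcs must be a $(1,1)$--annulus. I would carefully enumerate: one region with $5$ sides (pentagon), one region with $6$ sides but genus forcing it to be the annulus, or two regions with $3+5$, $4+4$ sides (the $3$-sided one being a triangle, hence only the quadrilateral pair survives as non-triangular), and the folded/degenerate cases producing the bigon with a pole. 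The subtlety is handling folded polygons and making sure the geometric realisability (strict convexity vs.\ presence of reflex angles, and the slope constraint that forbids folded triangles) matches up exactly with the six graph-isomorphism types, with no overlaps and no omissions. Once the case analysis is organised, each individual link computation is routine given \cref{codim1}, \cref{poly_embed}, and \cref{decomp-saddle}.
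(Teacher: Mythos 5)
Your overall case split — the four triangles $T_1, T_1', T_2, T_2'$ of $\T$ adjacent to $a_1, a_2$ pairwise distinct vs.\ coinciding, giving two quadrilaterals, a pentagon, or the annulus/bigon — is the same as the paper's, and the join-decomposition bookkeeping via \cref{decomp-saddle} is the right tool. But there is one genuine gap you flag and then skip over, and it is the technical heart of items (iii), (v), and (vi) in the pentagon case. You write that removing $1$, $2$, or $3$ bad diagonals from $\CG_5$ ``yields $\PG_3$, $\PG_2$, or $\PG_1$ respectively'' and then parenthetically admit ``one must check\ldots which induced subgraphs actually arise.'' This is precisely what needs to be proved: removing $2$ non-adjacent vertices from $\CG_5$ gives $\PG_1 \sqcup \NG_1$, and removing $3$ vertices can give $\NG_2$. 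So you need the constraint that the bad diagonals of a pentagon on $(S,q)$ are always consecutive on the cycle $\CG_5$ (equivalently, that the good diagonals span a connected path). The paper establishes this by proving the claim: \emph{if $c$ (the diagonal crossing both $a_1$ and $a_2$) is good, then at least one of the flip partners $b_1$, $b_2$ is also good}, via an angle-sum argument after cutting the pentagon along $c$. Your proposal has no substitute for this, and without it the pentagon cases are not finished.

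Two smaller issues worth flagging. First, you assert the bigon with a simple pole ``admits exactly one good diagonal, giving $\PG_1$''; this is inconsistent on its face — $\PG_1$ has two vertices — and in fact the bigon contains exactly two saddle connections ($a_1$ and $a_2$), which are disjoint, hence $\lk(\sigma) \cong \PG_1$. Second, your side-counting sketch for exhaustiveness ascribes ``6 sides'' to the $(1,1)$-annulus region, which is not quite right (as a cut region it has two boundary saddle connections); the paper's enumeration by coincidences among $\{T_1, T_1', T_2, T_2'\}$ sidesteps any such count and is cleaner. These two points are minor slips, but the missing pentagon claim is a real gap.
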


\proof
Complete $\sigma$ to a triangulation $\T$, and suppose $a_1, a_2$ are
the saddle connections of~$\T \setminus \sigma$. Let $T_i \neq T_i'$ be the triangles
of $\T$ meeting $a_i$, for $i = 1,2$, and $Q_i$ be the quadrilateral
obtained by gluing $T_i$ and $T'_i$ along $a_i$.
We shall go through the cases depending on
how many of the triangles $T_1, T'_1, T_2, T'_2$ coincide.

First, suppose that the four triangles are distinct. Then $Q_1$ and $Q_2$
have disjoint interiors, and form the two non-triangular regions of $S - \sigma$.
Therefore, $\lk(\sigma) = A_1 * A_2$, where $A_i$ is the induced subcomplex
of $\A(S,q)$ with the saddle connections contained in $Q_i$ as vertices. By the
classification of links of codimension--$1$ simplices, we have
\[
  \lk(\sigma) \cong \left.
  \begin{cases}
    \NG_2 * \NG_2 \cong \CG_4 & \textrm{if both } Q_1, Q_2 \textrm{ are strictly convex,} \\
    \NG_2 * \NG_1 \cong \PG_2 & \text{if exactly one of } Q_1, Q_2 \textrm{ is strictly convex, or} \\
    \NG_1 * \NG_1 \cong \PG_1 & \text{if neither of } Q_1, Q_2 \textrm{ is strictly convex.}
  \end{cases} \right.
\]

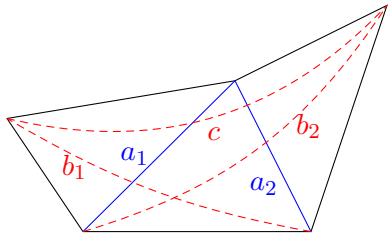
\begin{figure}
 \begin{center}
  \begin{tikzpicture}
   \draw (0,0) -- (3,0) -- (4,3) -- (2,2) -- (-1,1.5) -- (0,0);
   \draw[blue] (0,0) -- node[left]{$a_1$} (2,2);
   \draw[blue] (3,0) -- node[left, pos=0.3]{$a_2$} (2,2);
   \draw[red, densely dashed] (0,0) to[bend right=20] node[right,pos=0.6]{$b_2$} (4,3);
   \draw[red, densely dashed] (3,0) to[bend left=10] node[left,pos=0.7]{$b_1$} (-1,1.5);
   \draw[red, densely dashed] (-1,1.5) to[bend right] node[below]{$c$} (4,3);
  \end{tikzpicture}
  \caption{A pentagon, glued from $T_1'$, $T$, and $T_2'$, with its diagonals.}
  \label{fig:classification_codim2_pentagon}
 \end{center}
\end{figure}
  
Next, suppose $T_1 = T_2$, but $T'_1 \neq T'_2$. Then gluing $T \coloneqq T_1 = T_2$
to $T'_1$ and $T'_2$ along $a_1$ and $a_2$ produces a pentagon $P$, with
$a_1,a_2$ as non-intersecting straight diagonals. There are three more diagonals
in $P$: the topological arcs $b_1, b_2$ obtained by respectively flipping $a_1,a_2$
in $\T$, and the arc $c$ that intersects both $a_1,a_2$ (see \cref{fig:classification_codim2_pentagon}).
If $P$ is strictly convex, then all five diagonals are straight, and so $\lk(\sigma)$
is a copy of $\CG_5$ as shown:
\begin{center}
 \begin{tikzpicture}[scale = 0.8]
  \tikzstyle{every node}=[draw,circle,fill=black,minimum size=4pt, inner sep=0pt]
   \draw (0,0) node (a) [label=above:$b_2$] {}
   -- (1,0) node (b) [label=above:$a_1$] {}
   -- (2,0) node (c) [label=above:$a_2$] {}
   -- (3,0) node (d) [label=above:$b_1$] {}
   -- (4,0) node (e) [label=above:$c$] {};
   \draw -- (e) to [bend left=20] (a) ;
 \end{tikzpicture}
\end{center}
If $P$ is not strictly convex, then $\lk(\sigma)$ is an induced subgraph of the above.
We claim that if~$c$ is a straight diagonal, then at least one of
$b_1$ or $b_2$ is also straight. This will imply that $\lk(\sigma)$ is isomorphic to a connected subgraph
of $\CG_5$. Consequently, if $P$ has $1 \leq k \leq 3$ broken diagonals then
$\lk(\sigma) \cong \PG_{4-k}$.

To prove the claim, suppose $c$ is a straight diagonal. Cutting $P$ along $c$
produces a triangle~$T'$ and a quadrilateral $Q$. By considering the angle sum of quadrilaterals,
at least three corners of~$Q$ have angle strictly less than $\pi$.
Suppose this holds for the corner of $Q$ lying at an endpoint of $a_1$ (otherwise at an endpoint of $a_2$).
Note that this corner is also a corner of $Q_1$. A small neighbourhood of the corner of $Q_1$ at the other endpoint of $a_1$ lies strictly inside $T'$, and so it also has angle strictly less than $\pi$. The remaining two corners of $Q_1$ are themselves corners of~$T$ and~$T'_1$ respectively. Therefore, $Q_1$ is a strictly convex quadrilateral and so $b_1$ is a straight diagonal of~$P$.

 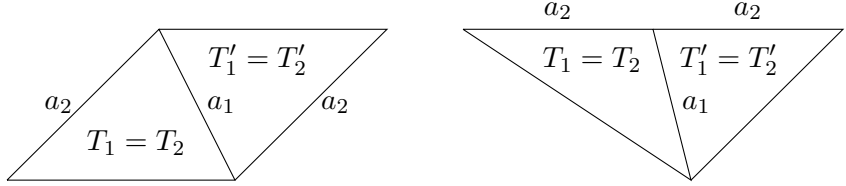
\begin{figure}[b]
 \begin{center}
  \begin{tikzpicture}[]
   \draw (0,0) -- (3,0) -- node[right]{$a_1$} (2,2) -- node[left]{$a_2$} (0,0);
   \draw (1.7,0.5) node{$T_1=T_2$};
   \draw (3,0) -- node[right]{$a_2$} (5,2) -- (2,2);
   \draw (3.3,1.6) node{$T_1'=T_2'$};
   
   \begin{scope}[xshift=6cm]
    \draw (0,2) -- (3,0) -- node[right]{$a_1$} (2.5,2) -- node[above]{$a_2$} (0,2);
    \draw (1.7,1.6) node{$T_1=T_2$};
    \draw (3,0) -- (5,2) -- node[above]{$a_2$} (2.5,2);
    \draw (3.5,1.6) node{$T_1'=T_2'$};
   \end{scope}
  \end{tikzpicture}
  \caption{A $(1,1)$--annulus (left) and a bigon (right) are formed by gluing $T_1=T_2$ and $T_1'=T_2'$ along $a_1$ and $a_2$.}
  \label{fig:codim2_annulus}
 \end{center}
 \end{figure}

Finally, we consider the case where $T_1 = T_2$ and $T'_1 = T'_2$.
Observe that $a_2$ appears as a side of both $T_1$ and $T'_1$.
Therefore $Q_1$ is either a parallelogram with $a_2$ appearing on opposite sides, or is a folded quadrilateral with a corner of angle $\pi$ formed by adjacent copies of $a_2$; see \cref{fig:codim2_annulus}.
In the first case, gluing $Q_1$ along the two copies of $a_2$ produces a $(1,1)$--annulus~$A$ forming the unique non-triangular region of $S - \sigma$. Every topological arc in $A$ is realisable as a saddle connection. Therefore, $\lk(\sigma)$ is isomorphic to the arc complex of the $(1,1)$--annulus, the bi-infinite path graph $\PG_\infty$.
In the second case, gluing $Q_1$ along the two copies of $a_2$ produces a bigon containing a simple pole as the unique triangular region of $S - \sigma$. Moreover, $a_1$ and $a_2$ are the only saddle connections in this bigon, so $\lk(\sigma) \cong \PG_1$.
\endproof

\pagebreak[3]

Observe that a region of a codimension--$2$ simplex that can be
recognised by its link is either a $(1,1)$--annulus, a strictly convex pentagon,
or a pentagon with at most one broken diagonal.
We shall call a pentagon \emph{almond\,\footnote{Almond stands for ``\emph{a}t \emph{m}ost \emph{o}ne \emph{n}on-straight \emph{d}iagonal'' (where at most one \emph{l}etter in the acronym does not stand for a word).}} if it has at most one broken diagonal.
Also note that any $(1,1)$--annulus on a half-translation surface must be a cylinder;
call any triangle contained in such an annulus a \emph{$(1,1)$--annular triangle}.
By examining Cases (i)--(iii) in the above proposition, we deduce the~following.

\begin{cor}[Detectable codimension--2 simplices] \label{detectable}
 Let $a,b$ be saddle connections in a triangulation $\T$. Then
 $\lk(\T \setminus \{a,b\})$ contains $\PG_3$ as an induced subgraph if and only
 if $\T \setminus \{a,b\}$ has either an almond pentagon or a $(1,1)$--annulus as its unique
 non-triangular region.
 In this situation, the region contains a triangle of $\T$ having $a$ and~$b$
 as two of its sides, and at
 least one of $a$ or $b$ is flippable in $\T$. $\hfill\square$
\end{cor}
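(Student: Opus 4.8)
The plan is to read both assertions off the classification of codimension--$2$ simplices in \cref{codim2}, together with one small graph-theoretic observation. Write $\sigma = \T \setminus \{a,b\}$, a codimension--$2$ simplex, so by \cref{codim2} the graph $\lk(\sigma)$ is one of $\PG_\infty$, $\CG_5$, $\PG_3$, $\CG_4$, $\PG_2$, $\PG_1$, and its isomorphism type determines the non-triangular region(s) of $S - \sigma$ up to the ambiguities listed there. The first step is therefore to decide which of these six graphs contain $\PG_3$, the path on four vertices, as an \emph{induced} subgraph.

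This is a direct check: $\PG_\infty$ does (take any four consecutive vertices), $\CG_5$ does (delete one vertex of the $5$--cycle), and $\PG_3$ is an induced subgraph of itself; on the other hand $\CG_4$ does not, since its only four-vertex induced subgraph is $\CG_4 \not\cong \PG_3$, and $\PG_2$, $\PG_1$ are too small. Comparing with \cref{codim2}, cases (i)--(iii) are exactly those in which $\lk(\sigma) \in \{\PG_\infty, \CG_5, \PG_3\}$, so $\lk(\sigma)$ contains $\PG_3$ as an induced subgraph if and only if the unique non-triangular region of $S - \sigma$ is a $(1,1)$--annulus, a strictly convex pentagon, or a pentagon with exactly one bad diagonal. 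Since a good pentagon is by definition one with at most one bad diagonal, the latter two cases together say precisely that the region is a good pentagon, which gives the stated equivalence.

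It remains to establish the structural claims under this hypothesis, and for that I would revisit the proof of \cref{codim2} in cases (i)--(iii). In each of those cases the analysis is carried out assuming, after relabelling, that a single triangle $T$ of $\T$ meets both $a$ and $b$; this $T$ then lies in the non-triangular region of $S - \sigma$ and has $a$ and $b$ among its three sides. For flippability, recall from \cref{codim1} and \cref{cor:flip} that a saddle connection $e \in \T$ is flippable in $\T$ precisely when the quadrilateral obtained by gluing the two triangles of $\T$ meeting $e$ is strictly convex, equivalently when the \emph{other} diagonal of that quadrilateral is realisable as a saddle connection. In the $(1,1)$--annular case every topological arc in the region can be realised by a saddle connection (as noted in the proof of \cref{codim2}), so both $a$ and $b$ are flippable. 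In the good-pentagon case, two of the pentagon's five diagonals are $a$ and $b$, and of the remaining three, one crosses both $a$ and $b$ while the other two, say $b_a$ and $b_b$, are the second diagonals of the quadrilaterals glued from the triangles of $\T$ meeting $a$, respectively $b$; as $a$ and $b$ are realisable and at most one of the three is not, at least one of $b_a$, $b_b$ is realisable, so at least one of $a$, $b$ is flippable in $\T$.

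Everything here is routine once \cref{codim2} is available; the only points requiring a little care are the observation that $\CG_5$ (but not $\CG_4$) contains $\PG_3$ as an \emph{induced} subgraph, and, in the pentagon case, matching which of $a$, $b$ can fail to be flippable with which of the three remaining diagonals is the bad one. I do not anticipate any genuine obstacle.
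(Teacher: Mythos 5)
Your argument is correct and is exactly the check the paper is implicitly asking the reader to perform (the corollary is stated with no written proof beyond the instruction to examine Cases (i)--(iii) of \cref{codim2}). The two points you flag as requiring care — that $\CG_5$ but not $\CG_4$ contains $\PG_3$ as an induced subgraph, and the matching of the possible bad diagonal of the pentagon against the flip partners of $a$ and $b$ — are indeed the only places where anything needs to be said, and you handle both correctly.
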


\section{Cylinders and infinite links}\label{sec:cylinder}

By \cref{IL-arc}, a simplex in the arc complex of $(S,\P)$ has infinite link if and only if
it is disjoint from some simple closed curve on $(S,\P)$.
For the saddle connection complex, this does not hold, as the following
example shows.

\begin{exa}[Finite-link region with simple closed curve]
 Suppose $\sigma\in\A(S,q)$ is a simplex where every region is planar.
 Then each region contains only finitely many saddle connections, and so
 $\lk(\sigma)$ is finite. But if some region of $S - \sigma$ is not a topological disc with
 at most one interior marked point, then $\sigma$ will be disjoint from some
 simple closed curve (see \cref{fig:finite_link_disjoint_curve} for such an example).
  \begin{figure}
 \begin{center}
  \begin{tikzpicture}[scale=0.5]
   \draw (-1,-1) -- (1,-1) -- (1,1) -- (-1,1) -- (-1,-1);
   \draw (-4,-3) -- (4,-3) -- (4,3) -- (-4,3) -- (-4,-3);
   \draw[red,dashed] (0,0) circle (2.5cm);

   \draw[blue] (-1,-1) -- (-4,-3);
   \draw[blue] (-1,-1) -- (4,-3);
   \draw[blue] (-1,-1) -- (-4,3);

   \draw[blue] (1,-1) -- (4,-3);
   \draw[blue] (1,-1) -- (-4,-3);
   \draw[blue] (1,-1) -- (4,3);

   \draw[blue] (1,1) -- (4,3);
   \draw[blue] (1,1) -- (-4,3);
   \draw[blue] (1,1) -- (4,-3);

   \draw[blue] (-1,1) -- (-4,3);
   \draw[blue] (-1,1) -- (4,3);
   \draw[blue] (-1,1) -- (-4,-3);
  \end{tikzpicture}
  \caption{A planar region that contains a simple closed curve (in red) but only finitely many saddle connections (in blue).}
  \label{fig:finite_link_disjoint_curve}
 \end{center}
 \end{figure}
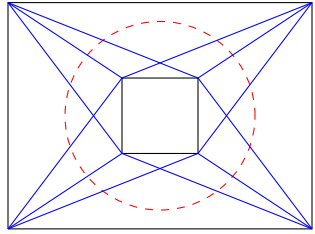
\end{exa}

Suppose $C$ is a cylinder on $(S,q)$. A \emph{transverse arc} of $C$ is an arc
contained in $C$ whose endpoints lie on opposite boundary components of $C$.
An arc contained in $C$ is realisable as a saddle connection if and only if it is a transverse arc.
In particular, $C$ contains infinitely many saddle connections.
Therefore, if a simplex $\sigma\in\A(S,q)$ is disjoint from (the interior of) 
some Euclidean cylinder $C$ on $(S,q)$, then
$\lk(\sigma)$ is infinite. We shall see that the converse is also true.
Call a simplex $\sigma \in \A(S,q)$ a \emph{triangulation away from a cylinder} if~$\sigma$ has a Euclidean cylinder as its only non-triangular region on $(S,q)$.

Recall that for a simplicial complex $\K$, the set of \emph{infinite link simplices} is
\[\IL(\K) = \{\sigma \in \K \colon \# \lk_\K(\sigma) = \infty\},\]
and $\MIL(\K)\subseteq\IL(\K)$ is the set of $\sigma\in\IL(\K)$ for which $\lk(\sigma')$ is finite
for all $\sigma' \supsetneq \sigma$.
We now state analogues of \cref{IL-arc} for the saddle connection complex.

\begin{prop}[Infinite link simplices] \label{IL}
 Let $\sigma \in \A(S,q)$ be a simplex. Then {$\sigma \in \IL(\A(S,q))$}
 if and only if $\sigma$ is disjoint from some cylinder curve on $(S,q)$.
\end{prop}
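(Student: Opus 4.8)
The plan is to prove both directions. The ``if'' direction is straightforward: suppose $\sigma$ is disjoint from some cylinder curve $\gamma$ on $(S,q)$. Then $\gamma$ is the core curve of a Euclidean cylinder $C$, and since $\gamma$ is disjoint from $\sigma$ (as curves), after an isotopy we may assume the whole of $C$ is disjoint from the saddle connections in $\sigma$ — here I would use the fact that a cylinder curve determines a \emph{unique} maximal Euclidean cylinder (\cref{rem:curves}), so we can shrink $C$ if necessary to avoid $\sigma$. Every transverse arc of $C$ is realisable as a saddle connection disjoint from $\sigma$, and $C$ contains infinitely many non-homotopic transverse arcs (obtained by applying Dehn twists about $\gamma$ to a fixed transverse arc). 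These all lie in $\lk(\sigma)$, so $\sigma \in \IL(\A(S,q))$.

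For the ``only if'' direction, suppose $\sigma \in \IL(\A(S,q))$, so $\lk(\sigma)$ is infinite. First pass to the minimal join decomposition $\lk(\sigma) = c_1 \ast \cdots \ast c_k \ast A_1 \ast \cdots \ast A_n$ of \cref{decomp-saddle}; since $\lk(\sigma)$ is infinite and the cordon factors $c_i$ are single vertices, some $A_i$ must be infinite. By \cref{decomp-saddle}, $A_i$ is the subcomplex spanned by the saddle connections contained in a non-triangular region $R := R_i$ of $S - (\sigma \cup c_1 \cup \cdots \cup c_k)$, and $R$ contains infinitely many saddle connections. Now I want to conclude that $R$ contains a Euclidean cylinder whose core curve is disjoint from $\sigma$. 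The key observation is that a region with finitely many saddle connections must be (essentially) a polygon — more precisely, if $R$ is not a polygon with at most one interior marked point, then its underlying topological surface carries an essential simple closed curve $\delta$; picking $\delta$ in a slope direction admitting a closed leaf (which exists densely by Masur's theorem, applied to the flat surface obtained from $R$) produces a Euclidean subcylinder of $R$. Conversely, if $R$ \emph{is} a polygon with at most one interior marked point, then $R$ has only finitely many topological arcs, hence finitely many saddle connections, contradicting $\#A_i = \infty$. So $R$ is not such a polygon and contains a cylinder; its core curve $\gamma$ lies in $R$, hence is disjoint from $\sigma$, and it is a cylinder curve on $(S,q)$ by construction.

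The main obstacle is the step asserting that a region $R$ of a half-translation surface containing infinitely many saddle connections must contain a Euclidean cylinder whose core avoids $\sigma$. The subtlety is twofold: (1) one must run Masur's denseness-of-cylinder-directions result in the setting of the \emph{cut} surface $R$, which is a compact flat surface with geodesic boundary and cone points (including possibly poles), rather than a closed half-translation surface; Tahar's framework (\cref{thm:tahar}) already works in this generality, and Masur's argument adapts, but one should be careful that the closed leaf produced is genuinely essential and non-peripheral in $R$ — if it were peripheral it could be parallel to a boundary saddle connection of $R$, which would be fine only if that boundary component consists of a single closed geodesic, but in general boundary components are unions of several saddle connections meeting at cone points and so cannot be core curves. (2) Alternatively, and perhaps more robustly, one can avoid dynamics entirely: since $R$ is not a once-punctured polygon, its topological arc complex is infinite, so by the pigeonhole principle among the infinitely many saddle connections in $R$ there are two, $a$ and $b$, that are disjoint and freely homotopic after a small perturbation — no, this is not quite right either. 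The cleanest route is to argue that if $R$ contains no essential simple closed curve then $R$ is a polygon with at most one interior marked point (the same topological fact used in \cref{dehn_twist}), and such a region has finite arc complex, hence finitely many saddle connections; and if $R$ does contain an essential simple closed curve, then the core curve of a cylinder in the appropriate direction realises one such curve, giving the cylinder curve disjoint from $\sigma$ as required.
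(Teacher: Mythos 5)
Your ``if'' direction is essentially the paper's: a cylinder curve disjoint from $\sigma$ has a Euclidean cylinder $C$ around it; since the saddle connections in $\sigma$ are geodesics and a straight segment entering a flat cylinder must cross every core curve, disjointness from the core already forces the interior of $C$ to avoid $\sigma$, so the infinitely many transverse saddle connections of $C$ lie in $\lk(\sigma)$. (You do not actually need an isotopy or to shrink $C$, but that is cosmetic.)

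The ``only if'' direction has a genuine gap, and it is exactly the hard part of the proof. Your reduction---that a region $R$ with infinitely many saddle connections cannot be a polygon with at most one interior marked point, hence carries a topologically essential simple closed curve---is fine. But the step ``if $R$ does contain an essential simple closed curve, then the core curve of a cylinder in the appropriate direction realises one such curve'' is not justified, and as stated it is false: the paper's \cref{sec:cylinder} opens with \cref{fig:finite_link_disjoint_curve}, a planar region that is a topological annulus (so it carries an essential simple closed curve) yet contains no Euclidean cylinder and no closed geodesic at all. The extra hypothesis that $R$ contains \emph{infinitely many} saddle connections is what rules out such examples, but you never use that hypothesis when producing the cylinder, and Masur's density theorem does not hand you one: it lives on the closed surface $(S,q)$, so the closed leaf it produces may not lie inside $R$, and even if you try to run a Masur-type argument on $R$ directly you must contend with $R$ being a flat surface \emph{with geodesic boundary}, where recurrent trajectories can accumulate on boundary saddle connections rather than on a closed leaf. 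You flag this as ``the main obstacle'' and list two candidate fixes, but neither is carried out, and the second one (the ``cleanest route'') simply restates the unproved implication. In the paper this gap is closed by a substantially more elaborate argument: choosing a singularity of $R$ incident to infinitely many members of $\lk(\sigma)$ and extracting a limiting horizontal geodesic $\beta$; proving \cref{crossbeta} that no saddle connection of $\sigma$ can cross $\beta$; splitting into the case where $\beta$ wraps a cylinder boundary versus the case where $\beta$ enters a minimal component $X$ of the horizontal foliation; and then proving \cref{prop:folcyl} that any such $X$ contains a cylinder curve disjoint from all horizontal saddle connections, via doubling $X$, passing to the translation double cover, and invoking Vorobets' definite-width theorem (\cref{thm:vorobets}) after a suitable $\SL(2,\R)$--deformation. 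None of these ingredients appears in your proposal, so the central implication remains unproved.
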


The proof of the above proposition shall be given over the next two subsections.
Before going into the details, let us give an immediate corollary.

\begin{cor}[Maximal infinite links simplices] \label{MIL}
 A simplex $\sigma\in\A(S,q)$ is a triangulation
 away from a cylinder on $(S,q)$ if and only if ${\sigma \in \MIL(\A(S,q))}$.
\end{cor}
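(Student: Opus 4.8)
The plan is to deduce \cref{MIL} from \cref{IL} together with the classification results and the characterisation of $\MIL$ as the collection of $\sigma$ that have infinite link while every proper supersimplex has finite link. First I would establish the easy direction: suppose $\sigma$ is a triangulation away from a cylinder, so $S-\sigma$ has a single non-triangular region, a Euclidean cylinder $C$. Since $C$ contains infinitely many transverse arcs, each of which is realisable as a saddle connection, $\lk(\sigma)$ is infinite and so $\sigma\in\IL(\A(S,q))$. For maximality, take any $\gamma\in\lk(\sigma)$; by \cref{decomp-saddle} (applied after cutting along cordons) $\gamma$ is contained in the unique non-triangular region, namely the cylinder $C$, and hence $\gamma$ is a transverse arc of $C$. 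Cutting $C$ along $\gamma$ produces a strictly convex quadrilateral (this is exactly the codimension-drop computed in Case (i) of \cref{codim2}: $\lk(\sigma)\cong\PG_\infty$, and removing one edge's worth of data leaves $\lk(\sigma\cup\{\gamma\})$ finite). Therefore $S-(\sigma\cup\{\gamma\})$ is a triangulation, so $\sigma\cup\{\gamma\}$ has finite (in fact empty) link, and every simplex strictly containing $\sigma$ likewise has finite link. Hence $\sigma\in\MIL(\A(S,q))$.

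For the converse, suppose $\sigma\in\MIL(\A(S,q))$. By \cref{IL}, $\sigma$ is disjoint from some cylinder curve on $(S,q)$, so some region $R$ of $S-\sigma$ contains a cylinder $C$. The key step is to upgrade this to the statement that $R$ \emph{is} a cylinder and is the \emph{only} non-triangular region of $S-\sigma$. I would argue by maximality: if $S-\sigma$ had any other non-triangular region $R'$, then $R'$ would contribute at least one flippable saddle connection $\gamma'$ (every non-triangular region, being a polygon with at least four sides or an annulus, admits an essential diagonal realisable as a saddle connection — recall from \cref{decomp-saddle} that after cutting along cordons each $A_i$ has a vertex with a neighbour, so in particular a flippable saddle connection exists in $R'$ away from cordons). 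Cutting along such a $\gamma'$ inside $R'$ still leaves the cylinder $C\subseteq R$ untouched, so $\sigma\cup\{\gamma'\}$ would still have infinite link, contradicting $\sigma\in\MIL$. Hence $R$ is the unique non-triangular region. Similarly, if $R$ (which contains $C$) were strictly larger than $C$, then $R$ would contain a saddle connection $\gamma'$ disjoint from the core curve of $C$ — e.g. a boundary-parallel diagonal cutting off $C$, or any diagonal of the polygonal piece $R\setminus C$ — and cutting along $\gamma'$ would again leave $\lk$ infinite, a contradiction. So $R=C$ is a cylinder, meaning $\sigma$ is a triangulation away from a cylinder.

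The main obstacle I anticipate is the step ``$R=C$'': one must carefully justify that, if the cylinder $C$ does not fill out the whole region $R$, there is a saddle connection in $R$, not already in $\sigma$ and not a cordon, that is disjoint from $C$ (or at least disjoint from a core curve of $C$), so that adjoining it keeps the link infinite. This is essentially the analogue of the argument in the proof of \cref{IL-arc}, Part (ii), but in the Euclidean setting one has to be mindful that arbitrary topological diagonals need not be realisable as saddle connections (the region $R\setminus C$, or $R$ itself, could fail to be strictly convex). The fix is to avoid relying on strict convexity: I would instead use a boundary component of $C$, which is a concatenation of saddle connections lying in $\sigma$ or forming genuine saddle connections of $S-\sigma$; if $R\supsetneq C$, at least one such saddle connection is not a cordon, giving the needed supersimplex. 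Once that is in place, the remainder follows formally from \cref{decomp-saddle} and \cref{codim2}, Case (i), and the two inclusions combine to prove the corollary.
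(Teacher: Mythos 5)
Your overall plan matches the paper's, but there are a few inefficiencies and one unsubstantiated claim worth flagging.

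In the forward direction your logic is right, but the parenthetical details are off: cutting the cylinder $C$ along a transverse arc $\gamma$ yields a polygon (a quadrilateral only when $C$ is a $(1,1)$-annulus), so $\sigma \cup \{\gamma\}$ is generally not a triangulation and $\lk(\sigma \cup \{\gamma\})$ is finite but typically non-empty. Similarly, the invocation of $\lk(\sigma) \cong \PG_\infty$ and \cref{codim2} Case (i) only applies when $\codim(\sigma) = 2$; for a larger cylinder the codimension exceeds $2$ and $\lk(\sigma)$ is not a graph. None of this is fatal since all you need is finiteness.

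For the converse, the paper's argument is tighter than yours: it observes that no saddle connection of $\sigma$ can cross $\partial C$ transversely (any straight segment crossing $\partial C$ must cross the core curve, but $\sigma$ misses it), and then applies maximality \emph{directly} to conclude $\partial C \subseteq \sigma$. Your plan instead reasons through regions $R \supseteq C$ and $R'$, which is more circuitous, and you insert a requirement that your candidate saddle connection be ``not a cordon'' that is both unjustified and unnecessary. The missing observation is that \emph{adding a cordon also keeps the link infinite}: if $c$ is a cordon, $\lk(\sigma) = \{c\} * A$, so $\lk(\sigma \cup \{c\}) = A$ is infinite whenever $\lk(\sigma)$ is; indeed this shows $\sigma \in \MIL$ forces $\sigma$ to have \emph{no} cordons at all, rendering your caveat vacuous. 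With that noted, your proposed fix (use $\partial C$: if $R \supsetneq C$ then some $a \in \partial C \setminus \sigma$ exists, $a \in \lk(\sigma)$, $a$ is disjoint from the interior of $C$, so $\sigma \cup \{a\}$ still has infinite link, contradiction) is exactly the paper's argument and does close the gap.
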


\proof
If $\sigma$ is a triangulation away from some cylinder $C$, then 
$\#\lk(\sigma) = \infty$ by the above proposition. 
Any $\alpha \in \lk(\sigma)$ must be a transverse arc of $C$,
and so $\sigma \cup \{\alpha\}$ has a polygon as its unique non-triangular
region. Therefore, $\lk(\sigma \cup \{\alpha\})$ is finite and so
$\sigma \in \MIL(\A(S,q))$.

Conversely, if $\sigma \in \MIL(\A(S,q))$ then there exists some cylinder
$C$ disjoint from $\sigma$ by \cref{IL}. Note that $\sigma$ cannot intersect any saddle
connection in $\partial C$ transversely. By the maximality assumption, 
we deduce that $\partial C \subseteq \sigma$, and so $C$ is a non-triangular
region of~$S - \sigma$. If there exists any $\alpha \in \lk(\sigma)$
not contained in $C$, then $\lk(\sigma \cup \{\alpha\})$ is infinite,
violating the maximality assumption. Therefore, $\sigma$ must be a triangulation
away from $C$.
\endproof

\begin{rem}[Triangulations away from the same cylinder]\label{sameMIL}
 Observe that if $\sigma \in \MIL(\A(S,q))$ then the vertices of $\lk(\sigma)$ are precisely the transverse arcs in the cylinder that is the unique region of $S - \sigma$.

 Consequently, if $\sigma, \sigma' \in \MIL(\A(S,q))$ then $\lk(\sigma) = \lk(\sigma')$ if and only if they are triangulations away from the same cylinder.
\end{rem}

In order to use the Cylinder Rigidity Theorem (\cref{thm:cyl_rigid}) to prove our main theorem,
we need to detect cylinder curves
on $(S,q)$ using only combinatorial information from $\A(S,q)$.
By \cref{rem:curves}, the set of cylinder curves $\cyl(q)$ on $(S,q)$
can be regarded as a subset of either $\C(S,\P)$ or $\C(S)$;
in other words, it does not matter whether or not we permit isotopies of curves to pass
through~$\P$.
\cref{MIL} can be rephrased as follows.

\begin{cor}[Cylinder Test] \label{MIL_cyl}
 Let $\gamma$ be (an isotopy class of) a simple closed curve on~$S$.
 Then $\gamma \in \cyl(q)$ if and only if there exists some $\sigma\in\MIL(\A(S,q))$
 disjoint from $\gamma$.
 $\hfill \square$
\end{cor}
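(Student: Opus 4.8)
The plan is to derive the Cylinder Test directly from \cref{MIL} together with \cref{MIL}'s corollary \cref{MIL}, i.e.\ from the characterisation of $\MIL(\A(S,q))$ as the set of triangulations away from a cylinder. Since the statement is an ``if and only if'', I would prove the two implications separately, keeping careful track of the fact that $\cyl(q)$ may be viewed inside either $\C(S,\P)$ or $\C(S)$ (justified by \cref{rem:curves}), so the choice of ambient isotopy class does not matter.

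First, for the forward direction, suppose $\gamma \in \cyl(q)$. By definition $\gamma$ is isotopic to a core curve of a Euclidean cylinder $C$ on $(S,q)$. The boundary $\partial C$ consists of finitely many saddle connections all of the same slope, and these are pairwise disjoint, so $\partial C$ spans a simplex in $\A(S,q)$. I would then extend $\partial C$ to a maximal simplex $\sigma$ subject to the constraint that $\sigma$ contains no saddle connection in the interior of $C$; concretely, triangulate every complementary region of $C$ by saddle connections (using \cref{thm:tahar}, or just the existence of saddle triangulations of the subsurfaces $S - \partial C$ minus the cylinder component), and set $\sigma$ to be the union of $\partial C$ with all these triangulating saddle connections. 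Then $C$ is the unique non-triangular region of $S - \sigma$, so $\sigma$ is a triangulation away from a cylinder, hence $\sigma \in \MIL(\A(S,q))$ by \cref{MIL}. Moreover $\sigma$ is disjoint from $\gamma$ (the core curve of $C$ lies in the interior of the region $C$, away from all saddle connections of $\sigma$). This gives the required $\sigma$.

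Conversely, suppose there exists $\sigma \in \MIL(\A(S,q))$ disjoint from $\gamma$. By \cref{MIL}, $\sigma$ is a triangulation away from a cylinder $C$, so $C$ is the unique non-triangular region of $S - \sigma$ and all other regions are triangles. Since $\gamma$ is an essential simple closed curve disjoint from $\sigma$, it is isotopic (in $S - \sigma$, and hence in $S$) into some region of $S - \sigma$; but a triangle contains no essential simple closed curve, so $\gamma$ must be isotopic into $C$. An essential simple closed curve inside a Euclidean cylinder is isotopic to the core curve of that cylinder, so $\gamma$ is a cylinder curve, i.e.\ $\gamma \in \cyl(q)$. This completes the proof.

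The only subtlety — and the main point to be careful about — is the bookkeeping around isotopy classes: one must make sure that ``$\sigma$ disjoint from $\gamma$'' is interpreted so that the argument in the reverse direction genuinely forces $\gamma$ into the cylinder region rather than, say, into a once-punctured region (which cannot occur here since half-translation surfaces have no removable monogons, but in any case the non-cylinder regions are all triangles by the $\MIL$ characterisation). Appealing to \cref{rem:curves} handles the ambiguity between $\C(S,\P)$ and $\C(S)$, since a cylinder curve has a canonical cylinder. Everything else is a direct translation of \cref{MIL}, so this corollary is essentially a restatement and the proof is short.
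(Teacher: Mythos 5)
Your proof is correct and is precisely the natural unpacking of \cref{MIL} that the paper leaves implicit (the corollary is stated with an immediate $\square$, preceded by ``\cref{MIL} can be rephrased as follows''). Both directions are sound: forward, you extend $\partial C$ to a triangulation away from $C$ (e.g.\ by taking any saddle triangulation containing $\partial C$ and discarding the saddle connections interior to $C$), and conversely the $\MIL$-characterisation forces the essential curve $\gamma$ into the unique non-triangular region, which must be the cylinder.
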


\subsection{A limiting geodesic}

Assume $\sigma \in \IL(\A(S,q))$.
Since $\lk(\sigma)$ is infinite,
there exists a region $R$ of $\sigma$ containing
infinitely many saddle connections.
Our goal is to prove that $R$ contains a cylinder curve.
As $R$ contains only finitely many singularities, we may choose a singularity $p \in R$ (possibly on the boundary~$\partial R$)
that forms an endpoint
of infinitely many saddle connections in $R$. Since $\RP^1$ is compact,
the slopes of these saddle connections ending at $p$ must accumulate; we shall rotate $(S,q)$
so that they accumulate on the horizontal slope.

Suppose $p$ has cone angle $k\pi$ and choose a sufficiently small $\epsilon > 0$
so that every
saddle connection on $(S,q)$ has length greater than $2 \epsilon$.
Cutting the open $\epsilon$--neighbourhood of $p$ on~$(S,q)$ along the $k$
horizontal line segments of length $\epsilon$ emanating from $p$
yields $k$ \emph{half-discs} (with horizontal boundary) centred at $p$.
Each half-disc has the form
 \[\{ z \in \CC ~|~ |z| < \epsilon, ~0 \leq \arg(z) \leq \pi \}\]
for some suitable choice of local co-ordinates, with $p$ identified with $0\in\CC$.
Say a saddle connection $a \colon [0,l] \rightarrow (S,q)$ \emph{begins}
at a half-disc $H$ centred at $p$ if (up to possibly reversing the orientation) $a(0) = p$
and $a \cap H$ contains $a([0,\epsilon))$.

 Consider an infinite sequence $a_i \in \lk(\sigma)$ of saddle connections beginning at
 a common half-disc~$H$, whose slopes $\theta_i \in \RP^1$ converge to the horizontal slope.
 Without loss of generality, we may assume that 
 $0 < \theta_{i+1} < \theta_i < \frac{\pi}{2}$ for all $i$, that is,
 the slopes ``decrease'' to the horizontal slope $\theta = 0$. 
 (If this is not possible, then we can instead assume
 $\pi > \theta_{i+1} > \theta_i > \frac{\pi}{2}$ for all $i$
 and argue similarly.) 
 We shall define a ``limiting'' geodesic $\beta$ of the $a_i$.

 Let $\tilde H$ be a lift of $H$ to the universal cover $(\tilde S, \tilde q)$,
 with centre $\tilde p$ descending to $p$. Choose local co-ordinates so that
 $\tilde H$ is identified with 
 \[\{ z \in \CC ~|~ |z| < \epsilon,~ 0 \leq \arg(z) \leq \pi \} \subset \CC.\]
 For any $r > 0$, the open $r$--ball $B_r(\tilde p)$ in $(\tilde S, \tilde q)$
 centred at $\tilde p$ contains finitely many singularities.
 Therefore, given $r > 0$ there exists some $\psi_r > 0$ such that the sector 
 \[U(r,\psi_r) \coloneqq \{ z \in \CC ~|~ |z| < r,~ 0 \leq \arg(z) \leq \psi_r \} \subset \CC\]
 isometrically embeds into $(\tilde S, \tilde q)$, with the
 embedding agreeing with that of $\tilde H$ on 
 ${\tilde H \cap U(r,\psi_r)}$ (compare \cref{fig:sector}). In particular, the image of 
 the interior of $U(r, \psi_r)$ in $(\tilde S, \tilde q)$ contains no singularities.
 Therefore, any saddle connection starting at $\tilde H$ with
 slope $0 < \theta < \psi_r$ has length at least $r$. It follows
 that the lengths $|a_i| \rightarrow \infty$ as $i \rightarrow \infty$.

 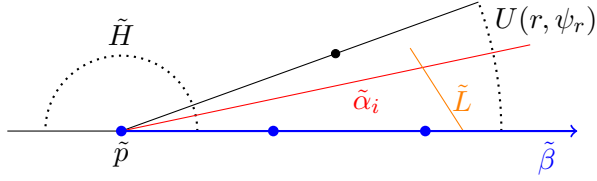
\begin{figure}
  \begin{center}
   \begin{tikzpicture}[scale=1]
      \tikzset{dot/.style={draw,shape=circle,fill=black,scale=0.3}}
   \node (A) at (0,0)[below] {$\tilde p$};
   \node (H) at (0,1)[above] {$\tilde H$};
   \node (U) at (17:5)[right] {$U(r, \psi_r)$};
   \draw[dotted, thick] ([shift=(0:1)]0,0) arc (0:180:1);
   \draw[dotted, thick] ([shift=(0:5)]0,0) arc (0:20:5);
   \draw (0,0) -- (20:3) node[dot] {} -- (20:5);
   \draw[red] (0,0) -- node[pos=0.6,below] {$\tilde \alpha_i$} (12:5.5);
   \draw[orange] (4.5,0) -- node[pos=0.4,right] {$\tilde L$} (3.8,1.1) node[right] {};
   \draw (0,0) -- (-1.5,0);
   \draw[blue,->,thick] (0,0) node[circle,fill,minimum size=4pt, inner sep=0pt] {} -- (2,0) node[circle,fill,minimum size=4pt, inner sep=0pt] {} -- (4,0) node[circle,fill,minimum size=4pt, inner sep=0pt] {} -- node[pos=0.8, below] {$\tilde \beta$} (6,0);
   \end{tikzpicture}
   \caption{A half-disc $\tilde H$ and sector $U(r, \psi_r)$ in $\CC$.
   Any line segment $\tilde L$ (as in \cref{crossbeta}) starting on
   the ray $\tilde \beta$ and leaving to its left will cross some $\tilde \alpha_i$.}
   \label{fig:sector}
  \end{center}
 \end{figure}
 
 Let $\tilde\beta \colon [0, \infty) \rightarrow (\tilde S, \tilde q)$
 be the horizontal unit-speed geodesic ray defined so that $\tilde \beta(t)$
 coincides with the image of $t \in U(r, \psi_r)$ in $(\tilde S, \tilde q)$
 whenever $0 \leq t < r$. Let $\beta$ be its projection to $(S,q)$.
 Note that whenever $\beta$ passes through a singularity, it has an angle of $\pi$ to its left; in particular, if~$\beta$ passes through a simple pole then it must immediately backtrack.
 The saddle connections~$a_i$ converge to $\beta$ in the following sense:
 
 \begin{lem}
  Let $\tilde a_i$ be the lift of $a_i$ beginning at $\tilde H$.
  Then for all $r>0$, the sequence $\tilde a_i \cap B_r(\tilde p)$ converges
  to $\tilde \beta \cap B_r(\tilde p)$ as subsets of $(\tilde S, \tilde q)$
  under the Hausdorff topology.
 \end{lem}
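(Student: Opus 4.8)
The plan is to reduce the statement to an elementary comparison of two radial segments inside a Euclidean sector. Fix $r > 0$ and let $\psi_r > 0$ and $U(r, \psi_r) \subset \CC$ be as in the construction above, so that $U(r, \psi_r)$ embeds isometrically into $(\tilde S, \tilde q)$, extending the identification of $\tilde H$, and with no singularity in the interior of its image. Since $\theta_i \to 0$, we have $\theta_i < \psi_r$ for all sufficiently large $i$; fix such an $i$ and recall that then $|a_i| \geq r$.

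First I would pin down the two intersections. The lift $\tilde a_i$ is a locally isometric embedding whose interior avoids $\widetilde\P$, hence a geodesic segment of the CAT(0) space $(\tilde S, \tilde q)$; in particular $d(\tilde p, \tilde a_i(t)) = t$, so $\tilde a_i(t) \in B_r(\tilde p)$ exactly when $t < r$. A local geodesic in a CAT(0) space is determined by its initial point and direction, and $\tilde a_i$ leaves $\tilde H$ with slope $\theta_i < \psi_r$; since the radial Euclidean segment of slope $\theta_i$ is itself a local geodesic of $(\tilde S, \tilde q)$ as long as it stays in the singularity-free sector $U(r,\psi_r)$, the two must agree there, giving $\tilde a_i \cap B_r(\tilde p) = \{ s\,e^{\imagi\theta_i} : 0 \leq s < r \}$. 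The same argument handles $\tilde\beta$: it is a local geodesic, since at a point of $\widetilde\P$ it subtends angle $\pi$ on its left and --- lifts of singularities having infinite cone angle --- at least $\pi$ on its right; hence it is a unit-speed geodesic ray, $d(\tilde p, \tilde\beta(t)) = t$, and $\tilde\beta \cap B_r(\tilde p) = \{ s : 0 \leq s < r \} \subset U(r,\psi_r)$.

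Finally, $U(r,\psi_r)$ is convex in $\CC$ and embeds isometrically, so distances in $(\tilde S, \tilde q)$ between points of its image are at most the corresponding Euclidean distances; projecting $s\,e^{\imagi\theta_i}$ to $s$ and back bounds the Hausdorff distance between the two radial segments by $\sup_{0 \leq s < r} | s\,e^{\imagi\theta_i} - s | = r\,|e^{\imagi\theta_i} - 1|$, which tends to $0$. This gives the desired convergence. The step I expect to need the most care is the verification that $\tilde a_i \cap B_r(\tilde p)$ and $\tilde\beta \cap B_r(\tilde p)$ are the \emph{whole} radial segments, with neither curve re-entering $B_r(\tilde p)$ after leaving it; this rests on the identity $d(\tilde p, \cdot) = t$ for unit-speed geodesic rays out of $\tilde p$ in a CAT(0) space, and --- for $\tilde\beta$ --- on the fact, peculiar to the singular flat setting, that the ``angle $\pi$ on the left'' prescription makes $\tilde\beta$ a local, hence a global, geodesic.
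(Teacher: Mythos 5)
Your proof is correct and follows essentially the same approach as the paper: both identify $\tilde a_i \cap B_r(\tilde p)$ and $\tilde\beta \cap B_r(\tilde p)$ with radial segments in the embedded sector $U(r,\psi_r)$ and bound the Hausdorff distance by an explicit quantity tending to $0$ (you use $r\,|e^{\imagi\theta_i}-1|$ where the paper uses $r|\sin\theta_i|$). The only difference is that you spell out, via the CAT(0)/geodesic-ray argument, why those intersections are the \emph{whole} radial segments and why neither curve re-enters the ball --- a point the paper's proof leaves implicit.
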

 
 \proof
 Fix some $r > 0$. Then for all~$i$ sufficiently large, we have $\theta_i < \psi_r$ 
 and so $\tilde a_i \cap B_r(\tilde p)$ can be identified with the straight-line
 segment in $U(r, \psi_r)$ defined by $\arg(z) = \theta_i$ (together with the origin).
 Then the Hausdorff distance between $\tilde a_i \cap B_r(\tilde p)$ and
 $\tilde \beta \cap B_r(\tilde p)$ is at most $r|\sin\theta_i|$.
 The result follows since $r|\sin\theta_i| \rightarrow 0$ as $i \rightarrow \infty$.
 \endproof

 \begin{lem} \label{crossbeta}
  Let $L$ be a non-horizontal line segment on $(S,q)$ that starts at a point
  on $\beta$, and leaves $\beta$ on its left.
  Then $L$ intersects some $a \in \lk(\sigma)$ transversely.
  In particular, no saddle connection in $\sigma$ intersects $\beta$ transversely. 
  
 \begin{proof}
  Suppose $L$ starts at $\beta(t)$ for some $t \geq 0$.
  Let $\tilde L$ be the lift of $L$ to $(\tilde S, \tilde q)$ that starts at~$\tilde \beta(t)$. For a fixed $r > t$, the pre-image of 
  $\tilde L$ in $U(r, \psi_r)$ contains a straight line segment connecting
  $t\in\CC$ to some point $z_0 \in \CC$ satisfying $|z_0| < r$ and 
  $0 < \arg(z_0) < \psi_r$; refer to \cref{fig:sector}.
  For~$i$ sufficiently large, the saddle connection
  $a_i$ has slope $0 < \theta_i < \arg(z_0)$ and length at least~$r$.
  Therefore, the lift $\tilde a_i$ starting at $\tilde H$
  must intersect $\tilde L$ transversely, and the first part of the statement follows.

  In particular, the segment $L$ cannot be a segment of a saddle connection in $\sigma$.
 \end{proof} 
  
 \end{lem}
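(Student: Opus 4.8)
The plan is to exploit the fact, established in the preceding lemma, that the lifted saddle connections $\tilde a_i$ hug the limiting ray $\tilde\beta$ from its left inside the universal cover $(\tilde S,\tilde q)$. Thus any geodesic segment that issues from a point of $\beta$ and immediately departs to the left of $\beta$ must be caught and crossed by $\tilde a_i$ once $i$ is large. The whole argument takes place inside the embedded Euclidean sector $U(r,\psi_r)$, where everything is genuinely flat.

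Concretely, I would fix the starting point $\beta(t)$ of $L$ (the essential case being $t>0$) and lift $L$ to the geodesic $\tilde L$ based at $\tilde\beta(t)$. In the flat coordinates of $U(r,\psi_r)$ the ray $\tilde\beta$ is the positive real axis, $\tilde\beta(t)$ is the real number $t$, and $\tilde L$ is a straight segment leaving $t$ in a direction making some angle $\phi$ with the positive real axis. Since $L$ is non-horizontal and leaves $\beta$ on its left, $0<\phi<\pi$, and a short computation — the argument of $t+se^{i\phi}$ vanishes at $s=0$, has positive derivative there, and modulus $<r$ for small $s$ — shows that an initial sub-segment of $\tilde L$ lies in the open sector. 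Fix a point $z_0$ on it with $0<\arg z_0<\psi_r$ and $|z_0|<r$. For all large $i$ one has $\theta_i<\arg z_0$ and (by the length estimate recorded earlier) $|a_i|\geq r$, so $\tilde a_i$ appears in $U(r,\psi_r)$ as the radial segment $\{\arg z=\theta_i,\ 0\leq|z|<r\}$. Along the chord of $\tilde L$ from $t$ to $z_0$ the argument increases continuously from $0$ past $\theta_i$, so this chord meets the radial segment, say at $w$; convexity of metric balls keeps $|w|<r\leq|a_i|$, so $w$ genuinely lies on $\tilde a_i$. The two segments have distinct slopes, so the crossing is transverse, and it descends to a transverse intersection of $L$ with $a_i\in\lk(\sigma)$.

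For the final clause, I would argue by contradiction: suppose some saddle connection $a\in\sigma$ crossed $\beta$ transversely. The crossing point lies in the interior of $a$, hence is not a singularity, so in particular it is not the endpoint $p$ of $\beta$; thus it is $\beta(t)$ for some $t>0$. Cutting $a$ there, one of the two sub-segments leaves $\beta$ on its left and is non-horizontal, so by the first part it — and therefore $a$ — meets some $a'\in\lk(\sigma)$ transversely. But $a'$ is disjoint from every saddle connection of $\sigma$, hence from $a$, a contradiction.

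The part requiring care is the geometry inside the thin sector: one must check that ``leaving $\beta$ on the left'' really forces $\tilde L$ to enter $U(r,\psi_r)$ rather than exiting immediately through the edge $\arg z=\psi_r$ or past the cut locus, and that the crossing point $w$ lands on the finite segment $\tilde a_i$ and not merely on its supporting line; both follow from convexity of metric balls in the CAT(0) completion together with the freedom to shrink the initial piece of $\tilde L$. The one genuinely delicate point is the degenerate possibility that $L$ starts at the singularity $p$ itself, where this argument does not apply verbatim — but this case is not needed for the ``in particular'' statement nor, one expects, for the later applications, so I would simply flag it.
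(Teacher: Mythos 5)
Your proof matches the paper's argument: lift $L$ into the embedded flat sector $U(r,\psi_r)$, use $\theta_i\to 0$ together with the length bound $|a_i|\geq r$ to force a transverse crossing of $\tilde L$ by some $\tilde a_i$, and obtain the ``in particular'' clause by noting $L$ cannot be a sub-segment of a saddle connection in $\sigma$. Your flag at $t=0$ is well placed --- the intermediate-value reasoning on $\arg$ genuinely needs $t>0$, since at $t=0$ the lift $\tilde L$ and the $\tilde a_i$ share the origin as a common endpoint rather than crossing --- but as you observe that case plays no role here, where the crossing point of a transverse intersection always lies in the interior of a saddle connection and hence at $\beta(t)$ with $t>0$, nor in the lemma's later applications.
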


\subsection{Cylinders on minimal components}
 
 We shall use some results concerning measured foliations and their components.
 These can be defined for any foliation with constant slope on $(S,q)$ but,
 for simplicity, we shall assume we are always working with the horizontal
 foliation.
 
 Every non-singular leaf is either closed or is dense in a subsurface, called a \emph{minimal component} of the horizontal foliation.
 The horizontal foliation decomposes the surface $S$ into a finite union of maximal horizontal cylinders and minimal components. The boundary of each such component is formed by a set of horizontal saddle connections. Refer to \cite{fathi_laudenbach_poenaru_79} for more~background.

 Our goal is to prove that there is some cylinder curve on $(S,q)$ disjoint from $\sigma \in \IL(\A(S,q))$ from the previous section.
 
 Recall that the limiting geodesic $\beta$ always has an angle of $\pi$ on its left whenever
 it passes through a singularity. This property,
 together with any segment $\beta([t_1, t_2])$ for any $0 \leq t_1 < t_2$,
 uniquely determines $\beta$ for all $t \geq t_2$.
 Therefore, if there exists some half-disc $H$ centred at a singularity
 $p$ such that~$\beta$ passes through $p$ with $H$ on its left more than once,
 then it must eventually repeatedly run
 over some finite set of horizontal saddle connections.
 In this case, since $\beta$ always has an angle of $\pi$ to its left,
 there exists a closed leaf of the horizontal foliation on
 the left of $\beta$.
 Therefore, $\beta$ forms a boundary component of some horizontal cylinder $C$. 
 If some saddle connection $\gamma\in\sigma$ intersects
 a core curve of $C$ transversely, then $\gamma \cap C$
 contains a straight line segment starting at $\beta$,
 and leaving on its left. But this contradicts \cref{crossbeta}.
 Therefore, $\sigma$ is disjoint from a cylinder curve as desired.
 
 Now assume $\beta$ does not pass through a singularity with a given half-disc on
 its left more than once.
 Since there are finitely many such half-discs on
 $(S,q)$, there is some largest value of~$t_0 \geq 0$ such that
 $\beta(t_0)$ is a singularity. Then the geodesic ray
 $\beta' = \beta([t_0, \infty))$ is a non-compact leaf of the
 horizontal foliation on $(S,q)$.
 Let $X \subseteq S$ be the minimal
 component containing $\beta'$.

 \begin{lem} \label{lem:non-horizontal_saddle_connection_disjoint_cylinder}
  If $\gamma\in\sigma$ is a non-horizontal saddle connection then it
  is disjoint from the interior of $X$.
  \end{lem}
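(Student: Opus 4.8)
The plan is to argue by contradiction and invoke \cref{crossbeta}. Suppose, for a contradiction, that some non-horizontal saddle connection $\gamma \in \sigma$ meets the interior of $X$. Since $\gamma$ is a straight-line segment of constant, non-horizontal slope, its interior contains no singularities and is everywhere transverse to the horizontal foliation. Hence $\gamma \cap \operatorname{int}(X)$ is a non-empty relatively open subset of $\gamma$, and we may choose within it a short compact sub-arc $\tau$ lying in $\operatorname{int}(X)$, missing all singularities, and transverse to the horizontal foliation. Shrinking $\tau$ if necessary, we may take it to be the cross-section of a flow box $N$ of the horizontal foliation, so that every leaf which enters $N$ crosses $\tau$.

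Recall that $\beta' = \beta([t_0,\infty))$ is a non-compact leaf ray of the horizontal foliation contained in the minimal component $X$. By the standard theory of minimal components of measured foliations (see \cite{fathi_laudenbach_poenaru_79}), every half-leaf contained in a minimal component is dense in it; equivalently, the first-return map of the foliation to a transverse arc in $X$ is a minimal interval exchange transformation, so every forward orbit is dense. In particular, $\beta'$ enters every non-empty open subset of $X$, hence enters the flow box $N$ and therefore crosses $\tau$, and thus $\gamma$, transversely.

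Since $\beta'$ is a sub-ray of $\beta$, this exhibits a transverse intersection point of $\gamma$ with $\beta$, which contradicts the final assertion of \cref{crossbeta} that no saddle connection in $\sigma$ meets $\beta$ transversely. Therefore no non-horizontal saddle connection $\gamma \in \sigma$ can meet $\operatorname{int}(X)$, which is the claim.

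The only input beyond elementary transversality considerations is the density of the half-leaf $\beta'$ in its minimal component $X$, and this is the step where I expect a careful reader to want the most detail; it relies on the fact that a minimal component of a measured foliation contains no closed leaves and that its return maps to transverse arcs are minimal interval exchange transformations, so that all forward orbits are dense. Everything else follows formally from the transversality of a non-horizontal saddle connection to the horizontal direction together with \cref{crossbeta}, so the substantive work has effectively already been carried out in establishing \cref{crossbeta}.
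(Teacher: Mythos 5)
Your proof is correct and follows the same route as the paper: both argue by contradiction, invoke density of the leaf ray $\beta'$ in the minimal component $X$ to produce a transverse intersection of $\gamma$ with $\beta'\subset\beta$, and conclude by \cref{crossbeta}. You simply unpack the density-to-intersection step (via the flow-box argument) in more detail than the paper's one-line version.
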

  
 \proof
  Suppose $\gamma\in\sigma$ is not horizontal. If $\gamma$ meets
  the interior of $X$, then it must intersect~$\beta'$ transversely
  since $\beta'$ is dense in $X$. But this contradicts \cref{crossbeta}.
 \endproof
 
 The rest of this section is devoted to proving the following result.
 Together with the above lemma, this will complete the proof of \cref{IL}.
 
\begin{prop}[Subsurfaces with horizontal boundary contain cylinders] \label{prop:folcyl}
 Let $\zeta \in \A(S,q)$ be a set of horizontal saddle connections and $X$ be a connected component of~$S - \zeta$.
 Then there exists a cylinder curve on~$X$ that is disjoint
 from every horizontal saddle connection.
\end{prop}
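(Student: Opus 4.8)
The plan is to split according to whether the horizontal foliation on $X$ has a closed leaf. Equip $X$ with the half-translation structure induced from $(S,q)$, so that $\partial X$ consists of horizontal saddle connections, and decompose the horizontal foliation on $X$ into its maximal horizontal cylinders and minimal components (see \cite{fathi_laudenbach_poenaru_79}). If the horizontal foliation on $X$ has a closed leaf $\gamma$, then $\gamma$ is a regular closed horizontal leaf lying in a maximal horizontal cylinder $C \subseteq X$: it contains no singularity and hence is disjoint from every horizontal saddle connection, it lies in the interior of $X$, and the interior of $C$ embeds isometrically as a Euclidean cylinder in $(S,q)$, so $\gamma$ is a cylinder curve. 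We may therefore assume the horizontal foliation on $X$ has no closed leaf. Cutting $X$ along all horizontal saddle connections lying in its interior then produces finitely many connected flat surfaces, each with horizontal boundary, minimal horizontal foliation, and no horizontal saddle connection in its interior; moreover every horizontal saddle connection of $(S,q)$ meeting the closure of such a piece lies on its boundary. A cylinder curve in the interior of one such piece is automatically disjoint from every horizontal saddle connection, so after replacing $X$ by that piece I may assume that $X$ is connected, has nonempty horizontal boundary, has minimal horizontal foliation, and contains no horizontal saddle connection in its interior.

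Next I would double $X$ across its horizontal boundary to obtain a \emph{closed} half-translation surface $DX$; reflecting across a horizontal saddle connection is compatible with the half-translation atlas, so this is legitimate. The two isometric copies $X$ and $X' = \iota(X)$ of $X$ inside $DX$ are precisely the complementary components of the finite set $\Sigma \subset DX$ of horizontal ``slits'' coming from $\partial X$, and the horizontal foliation of $DX$ is minimal on each of $X$ and $X'$. By Masur's theorem \cite[Theorem 2]{masur_86}, $DX$ carries cylinders in a dense set of directions of $\RP^1$. If one such cylinder $C$ is disjoint from $\Sigma$, then $C$ lies inside one copy of $X$, its core curve lies in the interior of $X$ (which embeds isometrically in $(S,q)$), and therefore its core curve is a cylinder curve on $(S,q)$ disjoint from all horizontal saddle connections, completing the proof.

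The remaining point --- and the one I expect to be the main obstacle --- is to ensure that one of the cylinders produced by Masur's theorem can be taken disjoint from $\Sigma$. I would work with directions $\theta$ close to horizontal, using the sub-claim that the length of a $\theta$-geodesic segment that starts on $\Sigma$, enters the interior of $X$, and returns to $\Sigma$ tends to infinity as $\theta \to 0$. This follows by compactness from the fact that no horizontal segment joins $\Sigma$ to $\Sigma$ through the interior of $X$ (because the horizontal foliation on $X$ is minimal and its interior contains no horizontal saddle connection), since a Hausdorff limit of short such $\theta$-segments would furnish one; this is in the spirit of the crossing argument in \cref{crossbeta}. Consequently, any $\theta$-cylinder whose core curve crosses $\Sigma$ has circumference blowing up as $\theta \to 0$. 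I would then combine this estimate with the density of cylinder directions and a limiting argument in the space of half-translation structures --- analysing what a sequence of direction-$\theta$ cylinders ($\theta \to 0$) can degenerate to after rotating the horizontal direction onto $\theta$ --- to conclude that for $\theta$ sufficiently close to horizontal some $\theta$-cylinder must be disjoint from $\Sigma$, hence contained in a single copy of $X$. Making this last compactness argument precise is the crux of the proof.
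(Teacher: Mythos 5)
Your first two paragraphs are sound and overlap with the paper's opening move: the paper also begins by doubling across the horizontal boundary, though it skips the preliminary reduction to a minimal horizontal foliation (that reduction is valid but not needed for the paper's argument). After doubling, the paper additionally passes to the canonical translation double branched cover so as to work with a genuine translation surface; you stay with the half-translation double $DX$.

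The real issue is exactly where you say ``the crux'' lies, and it is a genuine gap, not just a detail to be filled in. Masur's theorem (\cref{masur_86}) asserts only that cylinder directions are dense in $\RP^1$; it gives no control whatsoever on the circumference, width, or area of those cylinders. Your sub-claim that a $\theta$--cylinder meeting $\Sigma$ has circumference blowing up as $\theta\to 0$ is correct and in the spirit of \cref{crossbeta}, but by itself it proves nothing: it is entirely consistent with this bound that for every small $\theta$, every $\theta$--cylinder crosses $\Sigma$ and simply has a very long, thin core. To rule that out you need a companion upper bound saying that for each (or for a sequence of) $\theta\to 0$ there is some $\theta$--cylinder of bounded circumference, and Masur's density statement does not supply one. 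The proposed ``limiting argument in the space of half-translation structures'' also does not close this: as $\theta\to 0$, after rotating $\theta$ to the horizontal, the rotated surfaces converge to $DX$, whose horizontal foliation has no closed leaf by construction, so a sequence of rotated cylinders with unbounded circumference can simply degenerate rather than produce a contradiction.

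The quantitative input you are missing is precisely what the paper's argument supplies: Vorobets' theorem (\cref{thm:vorobets}), which guarantees on any translation surface a cylinder of width at least $W\sqrt{\mathrm{area}}$ with $W$ depending only on the stratum. The paper doubles $X$, takes the translation double branched cover $\hat X$ (needed because Vorobets is stated for translation surfaces), applies an $\SL(2,\R)$--deformation to make every horizontal saddle connection of $\hat X$ shorter than $W\sqrt{\mathrm{area}(\hat X)}$, and then invokes Vorobets: the resulting cylinder is too wide for any horizontal saddle connection to enter its interior, hence its core is disjoint from all horizontal saddle connections, and this descends to $X$. That width bound is the substitute for the compactness you were trying to extract, and without something like it your argument does not go through.
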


In particular, \cref{prop:folcyl} implies that every vertex of $\A(S,q)$ has infinite valence, as stated in \cref{sec:sccomplex}: every saddle connection is disjoint from a maximal cylinder, and hence is disjoint from infinitely many saddle connections.

 The proposition follows immediately if $X$ is a cylinder, so we assume otherwise.
 Our strategy is to adapt a theorem of Vorobets to the case of
 half-translation surfaces with horizontal boundary.
 
 \begin{thm}[Cylinders of definite width \cite{vorobets_03}] \label{thm:vorobets}
  Let $\Sigma$ be a finite-area translation surface (without boundary),
  possibly with a finite set of removable singularities.
  Then there exists a Euclidean cylinder on $\Sigma$
  with width at least $W\sqrt{\area(\Sigma)}$,
  where~$W$ is a constant depending only on the genus and the cone angles of singularities of $\Sigma$. $\hfill\square$
 \end{thm}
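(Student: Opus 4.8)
The plan is to induct on a complexity parameter of the stratum of abelian differentials containing $\Sigma$ — the dimension of the stratum, suitably refined so that it strictly decreases under each of the degenerations considered below — running a compactness argument within each induction step. The \emph{base case} is the once-marked flat torus $\CC/\Lambda$: it is itself a single Euclidean cylinder in the direction of a shortest lattice vector $v \in \Lambda$, of width $\area(\Sigma)/|v| \geq \tfrac12 \sqrt{\pi\,\area(\Sigma)}$, since Minkowski's lemma gives $|v| \leq 2\sqrt{\area(\Sigma)/\pi}$. For the inductive step, rescale so that $\area(\Sigma) = 1$ and argue by contradiction: suppose that for the prescribed genus and cone angles there is a sequence of area-$1$ translation surfaces $\Sigma_i$ with these invariants such that every Euclidean cylinder on $\Sigma_i$ has width less than $1/i$. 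One elementary estimate organises the argument: if $\ell$ is the length of a shortest saddle connection ending at a cone point $p$ of angle $\theta \geq 2\pi$, then a shortest geodesic loop based at $p$ is a saddle connection, so the open ball $B(p, \ell/2)$ is isometrically embedded and $1 = \area(\Sigma_i) \geq \tfrac{\theta}{2}(\ell/2)^2 \geq \tfrac{\pi}{4}\ell^2$; thus the systole $\ell_{\mathrm{sys}}(\Sigma_i)$ is bounded above. Passing to a subsequence, either $\ell_{\mathrm{sys}}(\Sigma_i) \geq \epsilon_0 > 0$ for all $i$, or $\ell_{\mathrm{sys}}(\Sigma_i) \to 0$.

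\textbf{Non-degenerate case.} If the systoles are bounded below, then by the standard compactness criterion (area bounded, systole bounded below) a subsequence of the $\Sigma_i$ converges to a translation surface $\Sigma_\infty$ in the same stratum. By Masur's theorem on periodic directions (\cite[Theorem~2]{masur_86}, as recalled in \cref{sec:flat}), $\Sigma_\infty$ carries at least one Euclidean cylinder, of some width $\delta_\infty > 0$. The boundary saddle connections of that cylinder persist under the small deformation from $\Sigma_\infty$ to $\Sigma_i$, so for large $i$ the surface $\Sigma_i$ contains an embedded flat annulus bounded by two nearly parallel piecewise-geodesics of transverse width close to $\delta_\infty$; extracting a maximal flat sub-cylinder of this annulus produces a Euclidean cylinder on $\Sigma_i$ of width at least $\delta_\infty/2$. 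Choosing $i$ with $1/i < \delta_\infty/2$ contradicts the choice of $\Sigma_i$.

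\textbf{Degenerate case.} If $\ell_{\mathrm{sys}}(\Sigma_i) \to 0$, then after passing to a subsequence the $\Sigma_i$ degenerate, in a compactification of the stratum by nodal (multi-scale) translation surfaces, to a finite union of closed translation surfaces $Y_1, \dots, Y_r$ glued along nodes, with $\sum_j \area(Y_j) = 1$ and $r$ bounded in terms of the fixed invariants. Some component $Y$ has $\area(Y) \geq 1/r$; it belongs to a boundary stratum of strictly smaller complexity, so by the inductive hypothesis $Y$ contains a Euclidean cylinder $C$ of width at least $W(Y)\sqrt{\area(Y)} \geq W(Y)/\sqrt r =: \delta_0 > 0$, where $W(Y)$ depends only on the finitely many possible invariants of $Y$. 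The core curve of $C$ can be chosen disjoint from the finitely many nodes, so $C$ deforms back: as in the non-degenerate case, for large $i$ the surface $\Sigma_i$ carries a Euclidean cylinder of width at least $\delta_0/2$, again contradicting the choice of $\Sigma_i$. This closes the induction and produces the constant $W$ depending only on the genus and the cone angles.

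\textbf{Main obstacle.} The technical heart is the degenerate case: identifying the limiting nodal translation surface precisely, showing that a component carrying a definite fraction of the area lies in a strictly simpler stratum — so that the induction is well founded (one must control which pinching curves can occur given that the total area stays equal to $1$, ruling out limits into strata of the same or larger dimension) — and verifying that a Euclidean cylinder on that component deforms back to a cylinder of comparable width on $\Sigma_i$. All of this rests on the structure of the boundary of strata of abelian differentials; by contrast the non-degenerate case and the base case are soft. Vorobets' original proof proceeds more concretely and yields explicit constants; reproducing its quantitative control — of how a short saddle connection, or the shape of a largest embedded rectangle, forces a Euclidean cylinder of definite width — is the analogous difficulty in that approach.
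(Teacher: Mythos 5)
The paper does not prove this statement at all: \cref{thm:vorobets} is quoted from Vorobets \cite{vorobets_03}, whose argument is elementary and quantitative (it produces an explicit constant from a largest-immersed-disc/shortest-saddle-connection analysis), so your compactness-plus-induction scheme is a genuinely different route. As written, however, it has a real gap at exactly the point you label the technical heart. In the degenerate case you assert that the nodal limit consists of components $Y_1,\dots,Y_r$ with $\sum_j \area(Y_j)=1$, hence that some component of area at least $1/r$ lies in a strictly smaller stratum. For geometric or multi-scale limits of unit-area surfaces this is not true in general: area can escape into thin parts that are invisible in the limit, or sit on lower-level components that are only seen after rescaling. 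Concretely, a flat cylinder whose core circumference tends to $0$ can carry a fixed amount of area while degenerating to a point or a line, and a cylinder living on a rescaled (lower-level) component transports back to $\Sigma_i$ with width tending to $0$, so it cannot contradict the assumption that all cylinders of $\Sigma_i$ have width less than $1/i$. Thus the two key steps of the degenerate case --- ``some limit component has definite area'' and ``its cylinder deforms back with comparable width'' --- are unjustified as stated.

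Any repair has to use the standing contradiction hypothesis: if every cylinder of $\Sigma_i$ has width less than $1/i$, then a cylinder with short core curve has tiny area, and one must upgrade this to a quantitative statement that the $\epsilon$-thin part of $\Sigma_i$ has small area, so that a top-level, unrescaled component of the limit retains a definite fraction of the area and a slightly shrunken copy of its cylinder (kept away from the nodes) perturbs back to $\Sigma_i$ with width bounded below. Supplying that thick--thin area control, together with the well-foundedness bookkeeping for the boundary strata you invoke, is essentially the whole difficulty, and it is not easier than Vorobets' direct argument, which avoids compactness and the multi-scale compactification machinery altogether. Your base case (Minkowski on the marked torus) and the non-degenerate case (Mumford-type compactness in the stratum, Masur's periodic direction on the limit, persistence of a cylinder under small deformation) are sound.
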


 \begin{proofof}{\cref{prop:folcyl}}
 First, double $X$ along its boundary to obtain a closed half-translation surface $X'$ (we regard any point arising from a boundary singularity with interior angle $\pi$ as a removable singularity on $X'$).
 The boundary arcs of $\partial X$
 give rise to a set $\delta$ of disjoint horizontal saddle connections
 on $X'$ which are fixed under the natural involution of $X'$.
 Next, take the canonical translation double branched
 cover of $X'$ to obtain a translation surface~$\hat X$ (with simple poles lifting to removable singularities).
 Let $\hat \delta$ be the pre-image of $\delta$ on~$\hat X$.
 Observe that any saddle connection on $\hat X$ that does not intersect $\hat \delta$
 transversely descends to a saddle connection of the same length and slope on $X$.
 (Note that a saddle connection on $\hat X$ descends to a boundary saddle connection on $X$
 if and only if it belongs to $\hat \delta$.)
 Conversely, every (non-boundary) saddle connection on $X$ has exactly
 four saddle connections in its pre-image on~$\hat X$, also with the same
 slope and length.

 By applying an $\SL(2,\R)$--deformation that shrinks in the horizontal direction, we can assume that all horizontal saddle connections on $\hat X$
 have length strictly less than $W\sqrt{\area( \hat X)}$.
 Let $\hat C$ be a Euclidean cylinder on~$\hat X$ as given by \cref{thm:vorobets}.
 Then any horizontal saddle connection on $\hat X$ must be disjoint from the
 interior of $\hat C$. Thus, any core curve $\hat \eta$ of $\hat C$
 is disjoint from every horizontal saddle connection on $\hat X$.
 It follows that its image $\eta$ on $X$ is also disjoint from $\partial X$ and
 all horizontal saddle connections. But $\eta$ is a closed geodesic
 of constant slope, and so must be  a cylinder curve on $X$, as desired.
   $\hfill\square$
 \end{proofof}

\section{Flowing results}\label{sec:flow}

In this section, we shall establish some results concerning straight-line flows. A \emph{straight-line flow} on $(S,q)$ in direction $\theta$ is an action of $\mathbb{R}_{\geq 0}$ on $(S,q)$ such that for every $p\in S$, the map $t \mapsto t\cdot p$ is a unit-speed geodesic starting from $p$ with direction $\theta$.
Strictly speaking, this action may not be defined for all times for some points as trajectories can hit singularities, however, this only occurs on a measure zero set. Thus, we will still call this action a flow as it is standard terminology in the context of translation surfaces.

For our purposes, we consider a flow~$\varphi^t$ emanating from a given saddle
connection $a \in \A(S,q)$ in some direction $\theta$ not parallel to~$a$.
To simplify the exposition, we shall assume that~$a$ is vertical and the
direction of the flow~$\varphi^t$ is horizontal.
However, the results in this section work in general
by applying an appropriate $\SL(2,\R)$--deformation.
The \emph{height} of a saddle connection $a' \in \A(S,q)$ with respect to~$\varphi^t$
is given by taking its length measured orthogonally to the flow direction.
Equivalently, $\height(a')$ is the intersection number
between $a'$ and the foliation on $(S,q)$ with slope $\theta$.
Call $a\in\sigma$ a \emph{tallest} saddle connection in $\sigma$ if
it has maximal height among all saddle connections~in~$\sigma$.
Analogously, we define the \emph{width} of a saddle connection, measuring the length along the flow direction.

Throughout this section, we shall assume that $a$ is a vertical saddle connection
in a simplex $\sigma \in \A(S,q)$, with $\height(a) = h > 0$.
Equip $a$ with a unit-speed parameterisation and orientation
$a \colon [0,h] \rightarrow (S,q)$ so that the flow $\varphi^t$ emanates from
the right of $a$. 

 If the endpoints of $a$ coincide, then the flow $\varphi^t$ is not uniquely
 defined at the common endpoint. This will not cause us any problems;
 we shall work in the universal cover to define the two trajectories
 $\varphi^t(a(0))$ and $\varphi^t(a(h))$ in the next subsection.
 
 A notion we shall use many times in the rest of this paper is that of
 \emph{visibility} with respect to~$\varphi^t$ and $\sigma$;
 a precise formulation is given in \cref{sec:vis}.
 Informally, a singularity $z$ is \emph{visible} if some trajectory of $\varphi^t$
 starting on the interior of $a$ hits $z$ at some time $t = t_0 > 0$ without
 intersecting any saddle connection of $\sigma$ transversely.
 (Under an additional condition, we also allow for trajectories to start at an endpoint of $a$.)

 The following two propositions will be proven in the subsequent subsections.
 
 \begin{prop}[Tallest saddle connections see singularities] \label{prop:tall}
 Suppose $a$ is a tallest saddle connection in $\sigma$.
 Then there exists a visible singularity with respect to
 $\sigma$ and $\varphi^t$.
 In particular, if $a$ is strictly tallest in $\sigma$ and does not lie in a horizontal cylinder then there exists a visible singularity along some trajectory of $\varphi^t$ that starts in the interior of $a$.
 \end{prop}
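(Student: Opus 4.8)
The plan is to argue by contradiction: suppose that no singularity is visible with respect to $\sigma$ and $\varphi^t$, and derive a contradiction with $a$ being a tallest saddle connection in $\sigma$. For each $s\in(0,h)$ I flow $a(s)$ to the right under $\varphi^t$ and let $\tau(s)\in(0,\infty]$ be the first time the trajectory meets $\bigcup\sigma$ transversely or meets a singularity. By the contradiction hypothesis the latter never occurs for $s\in(0,h)$, so when $\tau(s)<\infty$ the point $\varphi^{\tau(s)}(a(s))$ lies in the interior of some $\gamma_s\in\sigma$ and is met transversely, hence $\gamma_s$ is not horizontal. Let $\mathcal R$ be the image of $\{(s,t)\colon s\in(0,h),\ 0\le t<\tau(s)\}$ under $(s,t)\mapsto\varphi^t(a(s))$, working in $(\tilde S,\tilde q)$ near the endpoints of $a$ if these coincide. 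Two properties are key: (a) $\mathcal R$ is embedded in $(S,q)$ --- if $\varphi^{t_1}(a(s_1))=\varphi^{t_2}(a(s_2))$ with $t_1<t_2$, then the trajectory of $a(s_2)$ returns to the interior of $a$ at time $t_2-t_1$, and since $a\in\sigma$ this would force $\tau(s_2)\le t_2-t_1<t_2$, a contradiction; and (b) $\mathcal R$ contains no singularity in its interior, since any such singularity would lie on an interior trajectory before that trajectory meets $\sigma$, hence would be visible. As $S$ has finite area, $\mathcal R$ has finite area, so $\tau(s)<\infty$ for almost every $s$, and a short argument (a positive-measure set with $\tau=\infty$ would carry a flow-invariant subsurface of zero area) upgrades this to all $s\in(0,h)$. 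Being an embedded flow box over an interval, $\mathcal R$ is a topological disc.

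Next I analyse $\overline{\mathcal R}$ as a compact flat surface with piecewise-geodesic boundary and no interior cone points, and apply the flat Gauss--Bonnet formula, which gives total boundary turning equal to $2\pi$. The boundary of $\mathcal R$ consists of: copies of sub-arcs of $a$ itself (the starting copy of full height $h$, together with any sub-arcs along which later trajectories return to $a$); horizontal leaf segments (the top and bottom of the box and separatrix pieces through singularities on $\partial\mathcal R$); and sub-arcs of the non-horizontal saddle connections $\gamma_s\in\sigma$, each of vertical extent at most $\height(\gamma_s)\le\height(a)=h$. The turning is $+\tfrac{\pi}{2}$ at each endpoint of a copy of $a$ (where a vertical side meets a horizontal leaf in the corner occupied by $\mathcal R$); it is non-positive at each singularity on $\partial\mathcal R$ lying on a leaf of $\mathcal R$, since the angle of $\mathcal R$ there is at least $\pi$; and --- this is the crucial point --- at the corners where a horizontal leaf meets an incoming $\gamma_s$, or where two $\gamma_s$-arcs meet, the hypothesis that $a$ is at least as tall as every $\gamma_s$ forces the turning to be non-positive as well. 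Hence the $+\tfrac\pi2$ contributions must account for the full $2\pi$, i.e.\ there are at least four endpoints of $a$-copies on $\partial\mathcal R$; since the starting copy supplies only two, a further copy of (a sub-arc of) $a$ must appear as a return, and tracing through the possibilities shows this return copy has full height $h$, so that gluing the two copies of $a$ in $\partial\mathcal R$ realises $\mathcal R$ as a horizontal cylinder with $a$ as a cross-section. In that case the endpoints of $a$ lie on horizontal saddle connections bounding the cylinder, and the singularities terminating those saddle connections are reached by the endpoint trajectories $\varphi^t(a(0))$, $\varphi^t(a(h))$; as the additional condition of the visibility definition is designed precisely for this situation, such a singularity is visible --- contradicting our hypothesis. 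This establishes the first assertion.

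For the ``in particular'' statement, assume $a$ is strictly tallest in $\sigma$ and is not contained in a horizontal cylinder. Running the same argument, the strictly-tallest hypothesis upgrades the turning contributions at the $\gamma_s$-corners to be strictly negative, and ``not contained in a horizontal cylinder'' rules out the return-copy (cylinder) escape valve, so no copy of $a$ beyond the starting one can appear on $\partial\mathcal R$; then the total turning is at most $\pi<2\pi$ --- contradicting Gauss--Bonnet --- unless some singularity contributes positive turning, which forces it to be reached by a trajectory starting in the interior of $a$ before meeting $\sigma$. That is exactly a visible singularity along an interior trajectory, as desired.

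\textbf{Main obstacle.} I expect the delicate part to be the corner-by-corner turning bookkeeping of the second paragraph: one must carefully enumerate the corner types of $\partial\mathcal R$ --- in particular the interaction between a horizontal boundary leaf and an incoming non-horizontal $\gamma\in\sigma$, and the behaviour at a singularity sitting on $\partial\mathcal R$ but not on an interior trajectory --- and verify that ``$a$ tallest'' forces every contribution other than the $\tfrac\pi2$ corners at the ends of copies of $a$ to be non-positive (and strictly negative in the strict case). Secondary technical points are ruling out leakage of area into trajectories that never meet $\sigma$, and the degenerate case where the two endpoints of $a$ coincide; both are handled by passing to $(\tilde S,\tilde q)$ and using local embeddedness of convex pieces as in \cref{poly_embed} (and the fact from \cref{crossbeta}-type reasoning that nothing in $\sigma$ crosses the flowed-out region).
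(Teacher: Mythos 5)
Your proposal takes a genuinely different route from the paper's: you argue by contradiction via a Gauss--Bonnet count on the flowed-out region $\mathcal R$, whereas the paper works with the shadow intervals $\Im(J\setminus Z)\subseteq[0,h]$ of the visible segments $J\in\vis$, showing they are pairwise disjoint relatively open intervals whose union is dense but cannot equal $[0,h]$ (the complement being exactly the heights of visible singularities, \cref{intervals}). The only place the tallest hypothesis enters the paper's proof is \cref{biginterval}: a segment $J$ whose shadow covers $[0,h)$ or $(0,h]$ must actually share an endpoint with $J_0$, since otherwise $\varphi$ maps $J$ into a saddle connection of height strictly greater than $h$.

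The gap in your proposal is precisely the step you flag as ``the crucial point'': the claim that, other than $+\tfrac\pi2$ corners at endpoints of $a$-copies, every turning contribution on $\partial\mathcal R$ is non-positive because $a$ is tallest. This is not established and is false as stated. For instance, a visible segment $J$ sharing an endpoint with $a$ at the origin (which is exactly what the tallest hypothesis forces when an endpoint of $J$ lands on $\{y=0\}$ or $\{y=h\}$) produces a corner whose turning is $\pi-\theta>0$, where $\theta\in(0,\pi)$ is the angle between $a$ and $J$ inside $\mathcal R$; and your $+\tfrac\pi2$ claim at $a$-endpoints holds only when the adjacent boundary arc there is horizontal. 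Likewise, ruling out corners where $J$ terminates at a point $(x,0)$ or $(x,h)$ with $x>0$, or at a point of $Z$ with height in $(0,h)$, or at an open endpoint on a deleted ray, requires the very argument of \cref{biginterval} (such corners would yield either a saddle connection taller than $a$, or a visible singularity). Once one carries out that case analysis, one has essentially reproved the paper's lemma, and at that stage the Gauss--Bonnet framing is superfluous: one simply observes that under ``no visible singularity'' every visible $J$ must have both endpoints at $0$ and $\imagi h$, forcing $J=J_0\notin\J$, a contradiction. There are also secondary issues --- whether $\overline{\mathcal R}$ is a compact disc, what happens when a return copy of $a$ appears as a proper sub-arc rather than the full spine, and the precise definition of visibility for the boundary trajectories $\varphi^t(a(0))$, $\varphi^t(a(h))$ --- but the turning bookkeeping is the central unresolved point.
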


\begin{prop}[Visible singularities yield visible triangles] \label{prop:vis}
 Suppose that for some $y \in [0,h]$, the trajectory $\varphi^t(a(y))$ hits a visible singularity at $t = t_0 > 0$.
 Then there exists a triangle $T$ on $(S,q)$, with $\height(T) = h$, such that:
 \begin{itemize}
  \item $\partial T$ has no transverse intersections with $\sigma$, and
  \item $T$ has $a$ as one of its sides, and appears on the side of $a$ from
  which $\varphi^t$ emanates.
 \end{itemize}
 Furthermore, $T$ can be chosen so that the corner opposite of $a$ is
 $\varphi^{t_1}(a(y_1))$, where $t_1 \leq t_0$ and $y_1 \in (0,h) \cup \{y\}$.
 In particular, if $0 < y < h$ then $T$ has no horizontal sides.
\end{prop}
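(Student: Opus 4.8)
The plan is to trace the trajectory $\varphi^t(a(y))$ from the interior point $a(y)$ until it first hits the visible singularity $z$ at time $t_0$, and to ``fill in'' the region swept out on the emanating side of $a$ by the flow rectangle of width $t_0$ over the subsegment of $a$ on which the trajectories remain disjoint from $\sigma$. First I would lift everything to the universal cover $(\tilde S, \tilde q)$: take the lift $\tilde a$ of $a$, and for each $y' \in [0,h]$ let $\tilde\varphi^t(\tilde a(y'))$ be the horizontal lift of the trajectory emanating to the right. By the visibility hypothesis, no $\tilde\varphi^t(\tilde a(y))$, for $0 \leq t \leq t_0$, crosses any lift of a saddle connection in $\sigma$ transversely; the key step is to show this persists for all starting heights $y'$ in a suitable subinterval. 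Concretely, I would consider the maximal half-open subinterval $I \subseteq [0,h]$ containing $y$ such that for every $y' \in I$ the trajectory $\tilde\varphi^t(\tilde a(y'))$ does not cross any lift of $\sigma$ transversely before time $t_0$. Because saddle connections of $\sigma$ are closed straight segments with endpoints in $\tilde\P$, the set of starting heights whose trajectories are ``blocked'' before time $t_0$ is closed; so $I$ is (relatively) open in $[0,h]$, and its boundary behaviour is governed by singularities.

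Next I would analyse what happens at the endpoints of $I$ inside $[0,h]$. If an endpoint $y_1$ of $I$ lies in the open interval $(0,h)$, then the obstruction to extending past $y_1$ must be a singularity $z_1 = \tilde\varphi^{t_1}(\tilde a(y_1))$ encountered at some time $t_1 \leq t_0$ (a saddle connection of $\sigma$ could only block a whole sub-arc of starting heights if its endpoint singularity were hit, so the genuine barrier is always a cone point). In that case the closed region bounded by the segment $\tilde a|_{[y,y_1]}$ (or $\tilde a|_{[y_1,y]}$), the two trajectory segments $\tilde\varphi^{[0,t_0]}(\tilde a(y))$ and $\tilde\varphi^{[0,t_1]}(\tilde a(y_1))$, together with the flow, is a triangle $\tilde T$ in $\tilde S$ with one side on $\tilde a$, one horizontal side, and third corner $z_1 = \tilde\varphi^{t_1}(\tilde a(y_1))$ — we just truncate the flow rectangle at the first singularity hit. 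Its image $T$ on $(S,q)$ has $\height(T) = |y - y_1| \le h$; to get $\height(T) = h$ exactly I would instead first extend $I$ as far as possible in both directions, so that both of its endpoints (whenever they lie in $(0,h)$) are singularities, and then take the triangle spanned by the full sub-arc of $\tilde a$ between those two endpoints. If instead an endpoint of $I$ is $0$ or $h$ (i.e.\ no singularity obstructs before reaching the end of $a$), then that endpoint of $\tilde a$ is a corner of the rectangle and the triangle degenerates appropriately; by choosing which side to truncate we can always arrange the opposite corner to be some $\tilde\varphi^{t_1}(\tilde a(y_1))$ with $y_1 \in (0,h) \cup \{y\}$ and $t_1 \le t_0$, and with both of the remaining sides straight segments — hence good diagonals of the rectangle — so they are saddle connections, and $T$ is a genuine Euclidean triangle. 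Strict convexity of $\tilde T$ (all corners $< \pi$, since two corners sit at $\tilde a$ with the rectangle on one side and the third is a cone point hit ``head-on'' by the flow) then gives, via \cref{poly_embed}, that $T$ is embedded on $(S,q)$ up to its corners.

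Finally I would verify the two bulleted conclusions. That $T$ has $a$ as a side on the emanating side of $a$ is built into the construction. That $\partial T$ has no transverse intersection with $\sigma$ follows because: the side on $a \in \sigma$ does not cross $\sigma$ transversely (disjoint saddle connections); the horizontal side and the slanted ``trajectory'' side were, by the choice of $I$ and the visibility of $z$, not crossed transversely by any saddle connection of $\sigma$ before reaching a singularity — and the straightened sides of $T$ lie in the closure of the swept rectangle, so a transverse crossing of $\partial T$ by some $\gamma \in \sigma$ would force $\gamma$ to cross one of the boundary trajectories or to cross $a$, a contradiction. For the last sentence: if $0 < y < h$, then the corner of $T$ on $a$ at $a(y)$ is an interior point of $a$, so the side of $T$ through it is not horizontal (it issues from $a(y)$ into the rectangle transversally to the vertical $a$), and the opposite corner $\varphi^{t_1}(a(y_1))$ is reached by a genuine horizontal flow of positive length $t_1 > 0$ (it cannot be $0$ since $y_1 \ne$ the blocking configuration forces $t_1>0$), so none of the three sides is horizontal. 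The main obstacle I anticipate is the bookkeeping at the boundary of the interval $I$: carefully ruling out the possibility that a saddle connection of $\sigma$ (rather than a bare singularity) is what truncates the rectangle on its slanted side, and handling the degenerate cases where $I$ runs off the end of $a$ — these require a patient case analysis using the hypothesis $\height(a) = h$ and the fact that elements of $\sigma$ are disjoint from $a$ and from each other.
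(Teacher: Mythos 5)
Your construction does not actually produce a triangle whose side is all of $a$, and this is where the argument breaks. You flow from a subinterval $I \ni y$ of $[0,h]$ and ``truncate the flow rectangle at the first singularity hit.'' But the region bounded by $\tilde a|_{[y_1,y]}$, the horizontal segment $\tilde\varphi^{[0,t_1]}(\tilde a(y_1))$, and the horizontal segment $\tilde\varphi^{[0,t_0]}(\tilde a(y))$ is (when $z_1 \neq z$) a quadrilateral, not a triangle, and in any case its side on $a$ is only a sub-arc. You notice yourself that this gives $\height(T) = |y - y_1| < h$. Your proposed fix---``extend $I$ as far as possible in both directions, so that both of its endpoints (whenever they lie in $(0,h)$) are singularities, and then take the triangle spanned by the full sub-arc''---does not repair this: if an endpoint $y_1$ of $I$ lies in $(0,h)$, you are stuck with a sub-arc $\tilde a|_{[y_1,\cdot]}$ and a vertex at the interior point $\tilde a(y_1)$ of $a$; there is no mechanism in your construction that forces the third vertex to see all of $[J_0^-, J_0^+]$ along straight segments.

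The paper's construction sidesteps this entirely. Rather than flowing from interior starting heights and truncating, it fixes the endpoints $J_0^\pm$ of $a$ and \emph{straightens} the broken path that goes from $J_0^\pm$ down/up $a$ to $a(y)$ and then horizontally to $z$: in the slit strip $Y$, this is realized as the left boundary path $\eta^\pm(z)$ of the convex hull of the singularities $Z^\pm(z)$ lying to the upper-left (resp.\ lower-left) of $z$. The triangle is then the convex hull of $J_0$ with the \emph{first} singularity $z_1^\pm$ on $\eta^\pm(z)$, and one chooses the narrower of the two candidates. Because $J_0$ is an entire side, one automatically gets $\height(T) = h$ and $a$ as a side; the fact that its interior avoids $Z$ is extracted from the trapezium $R(z) \cap \{\Re \le \width(T(z))\}$. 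This ``take the geodesic from the endpoints of $a$ to $z$, then take its first saddle connection'' step is the essential idea your proposal is missing.

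There is a secondary gap as well: you assert that the slanted side of your triangle is not crossed transversely by $\sigma$ ``by the choice of $I$ and the visibility of $z$.'' But the slanted side of the correct triangle is not a flow trajectory, so the visibility of the flow lines from $I$ says nothing directly about it. The paper handles this via \cref{cross_visible}: a blocking segment $J \in \J$ entering the interior of $T(z) \subseteq R(z)$ would have to connect $\eta^+(z)\setminus\{z\}$ to $\eta^-(z)\setminus\{z\}$ (it cannot cross $J_0$, and cannot have both endpoints on a single $\eta^\pm$ since $J \cap R(z)$ would then lie in $Y^\pm(z)$), and hence must cross $L_z$, contradicting visibility of $z$. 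That nesting/separation argument in the slit strip is what you would need to make the visibility of $\partial T$ rigorous, and it does not follow from your interval $I$ bookkeeping.
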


In other words, if a singularity on $(S,q)$ is \emph{visible}
(not blocked by $\sigma$) along some trajectory
of $\varphi^t$ starting from $a$, then 
there exists a simplex $\sigma' \supseteq \sigma$
such that $(S - \sigma',q)$
has a triangular region meeting $a$ on its right.

As many readers would be familiar with these flowing-style arguments for translation
surfaces, let us give an informal sketch of the main ideas;
the formal proofs of the above propositions shall be given in the following two subsections.

To prove \cref{prop:tall}, suppose that the flow $\varphi^t$ emanating from $a$ does not see any visible singularities.
Then for all $y \in [0,h]$, the flow $\varphi^t(a(y))$ will hit the interior of
some saddle connection in~$\sigma$ before any singularities.
We then argue that there exists a saddle connection~$a' \in \sigma$ that blocks every trajectory.
But this implies that $a'$ is strictly taller than $a$.

For \cref{prop:vis}, suppose that $\varphi^t(y)$ hits a visible singularity at time $t = t_0$.
Consider the two (oriented) topological arcs $\eta^+, \eta^-$ that start at an endpoint of $a$,
run vertically along~$a$ until~$a(y)$, and then follow the horizontal flow trajectory
until they hit $z$. Straighten $\eta^\pm$ to obtain its geodesic representative $\t_q(\eta^\pm)$.
Let $z_1^\pm$ be the terminal endpoint of the first saddle
connection appearing along $\t_q(\eta^\pm)$.
Then at least one of $z_1^+$ or $z_1^-$ will be a corner of a triangle~$T$ satisfying the desired properties.

\subsection{Constructing a triangle}

Choose local complex co-ordinates for $(S,q)$
so that $a(y)$ is given by $\imagi y \in \CC$. Thus, in these co-ordinates,
$\varphi^t(a(y))$ is given by $t + \imagi y$ for small $t \geq 0$. 
There are only countably many $y_i \in [0,h]$ such that
the trajectory $\varphi^t(a(y_i))$ is singular: it (first) hits
a singularity at some
time $t = t_i > 0$ and so $\varphi^t$ cannot be defined for $t > t_i$. 
Let $Z \subset \CC$ denote the
countable set of points
$z_i = t_i + \imagi y_i$ arising in this manner.
We claim that the set $Z$ is non-empty. By Poincar\'e recurrence, almost every trajectory starting in $[0,h]$ has to hit a copy of $a$. If some trajectory starting in $[0,h]$ hits a singularity without hitting a copy of $a$ then we are done, so let us assume otherwise. Given $y \in [0,h]$, let $a'$ be the first copy of $a$ hit by the trajectory starting at $y$. Consider the maximal subinterval $I \subseteq [0,h]$ containing $y$ such that every trajectory starting on $I$ hits $a'$ first among all copies of $a$. If the trajectory starting at some $y' \in I$ hits the interior of $a'$ then, by our assumption, so must every trajectory in an open neighbourhood of $y'$ in $[0,h]$. Therefore, at least one of the trajectories starting an endpoint of $I$ must hit an endpoint of $a'$, a singularity.

The trajectories $\varphi^t(y)$ for all other $y \in [0,h]$ can be defined
for all $t \geq 0$.
Therefore, the map~$\varphi$ with $\varphi(t + \imagi y) \coloneqq \varphi^t(a(y))$ is defined on
\[ Y \coloneqq \{z \in \CC ~|~ \Re(z) \geq 0, 0 \leq \Im(z) \leq h\} \setminus \bigcup_{z_i \in Z} \{z_i + s ~|~ s > 0 \} \subset \CC. \]
This domain is an infinite horizontal strip with countably many horizontal rays deleted (see \cref{fig:flow}).
The map $\varphi \colon Y \rightarrow (S,q)$ is a locally isometric embedding
(with respect to the induced Euclidean path metric on $Y$) compatible
with the half-translation structure on $(S,q)$.
Furthermore, $\varphi$ lifts to a map
$\tilde{\varphi} \colon Y \rightarrow (\tilde{S}, \tilde{q})$.
Since the set of singularities on the universal cover $(\tilde{S}, \tilde{q})$
is discrete, it follows that $Z\subset \CC$ is discrete also.

\begin{figure}
 \begin{center}
  \begin{tikzpicture}[yscale=0.95]
    \tikzset{dot/.style={draw,shape=circle,fill=black,scale=0.4}}
    \tikzset{dotgreen/.style={draw,shape=circle,fill=green!40!black,scale=0.4}}
    \fill[blue!10,draw=black] (0,0) -- (0,5) -- (2,3.2) -- (0,0);
    \draw[red] (0.8,5) -- (2,3.2) -- (1.5,4.5) -- (4.8,3.9);
    \draw[red] (9,0) -- (7.5,1.5);
    \draw[red] (6,5) -- (8,4.5);
    \draw[red] (10,3.9) -- (11,4.5);
    \draw[] (0,5) -- (12,5);
    \draw (0,0) -- (6,0);
    \draw[gray, dashed] (6,0) node[dot] {} -- (12,0);
    \draw[dashed,gray] (2,3.2) -- (12,3.2);
    \draw[dashed,gray] (4.5,1.5) -- (12,1.5);
    \draw[dashed,gray] (5,2.5) -- (12,2.5);
    \draw[dashed,gray] (10,2.2) -- (12,2.2);
    \draw[dashed,gray] (1.5,4.5) node[dotgreen] {} -- (12,4.5);
    \draw[dashed,gray] (4.8,3.9) node[dot] {} -- (12,3.9);
    \draw[very thick] (0,5) node[dot] {} node[left] {$J_0^+$} -- node[left] {$J_0$} (0,0) node[dot] {} node[left] {$J_0^-$};
    \draw[blue,middlearrow={latex},thick] (0,0) -- node[above] {$\eta^-(z)$} (4.5,1.5) node[circle,fill,minimum size=4pt, inner sep=0pt] {} node[below] {$z_1^-$} -- (10,2.2) node[circle,fill,minimum size=4pt, inner sep=0pt] {};
    \draw[blue,middlearrow={latex},thick] (0,5) -- (2,3.2) node[circle,fill,minimum size=4pt, inner sep=0pt] {} node[above right] {$z_1^+$} -- node[pos=0.25,below] {$\eta^+(z)$} (5,2.5) node[circle,fill,minimum size=4pt, inner sep=0pt] {} -- (10,2.2) node[circle,fill,minimum size=4pt, inner sep=0pt] {}; 
    \draw[blue,->] (0,2.2) -- node[pos=0.3,below] {$L_z$} (10,2.2) node[below] {$z$};
    \draw[orange,dashed,thick] (0.8,4.7) -- (1.5,0.2);
    \draw[red] (2,0) -- (4.5,1.5) -- (10,2.2);
    \node[blue] (A) at (0.6,2.7) {$T(z)$};
  \end{tikzpicture}
  \caption{The domain $Y$ in $\CC$, with the deleted horizontal rays indicated by dashed
  lines. The visible singularity~$z \in Z$
  determines two paths $\eta^+(z)$ and $\eta^-(z)$ which can be used to construct a visible
  triangle $T(z)$. Line segments belonging to $\J$ are drawn in red. The green point is an element of $Z$ which has minimal real part but is not visible (in the sense of \cref{sec:vis}). The orange dashed line is as in \cref{cross_visible}.}
  \label{fig:flow}
 \end{center}
\end{figure}

Let $J_0 = a([0,h]) \subset Y$, that is $J_0$ is a vertical line segment,
with $J_0^- = 0$ and $J_0^+ = \imagi h$ as its endpoints.
Given $z \in Z$, consider the finite sets
\begin{eqnarray*}
Z^+(z) &=& \{z' \in Z ~|~ \Im(z') \geq \Im(z), \Re(z') \leq \Re(z)\} \cup J_0^+ \quad\textrm{and}\\
Z^-(z) &=& \{z' \in Z ~|~ \Im(z') \leq \Im(z), \Re(z') \leq \Re(z)\} \cup J_0^-.
\end{eqnarray*}
These are the points on the left and above $z$ respectively on the left and below $z$.
Let $Y^\pm(z)$ be the convex hull of $Z^\pm(z)$ in $\CC$.
If $Y^\pm(z)$ has non-empty interior, then there are two polygonal paths
in $\partial Y^\pm(z)$ from $J_0^\pm$ to $z$.
Let $\eta^\pm(z) \subseteq \partial Y^\pm(z)$ be the ``left'' path:
the one so that $Y^\pm(z)$ lies entirely to the right (see \cref{fig:flow}).
If $Y^\pm(z)$ is degenerate (which occurs precisely when the points in $Z^\pm(z)$ are collinear),
then it is a straight line segment from $J_0^\pm$ to $z$;
we take $\eta^\pm(z)$ to be this path in this situation.
In either case, $\eta^\pm(z)$ is a concatenation of straight line segments connecting
consecutive points in some sequence
\[J_0^\pm = z^\pm_0, z^\pm_1, \ldots, z^\pm_{k^\pm} = z,\]
where each $z^\pm_i \in Z^\pm(z)$ and $k^\pm \geq 1$.

Let $L_z$ be the horizontal line segment in $Y$ connecting
$\Im(z)$ to $z$.
Note that $\eta^\pm(z)$ is path homotopic within $Y$ to the concatenation of
the vertical path from $J^\pm_0$ to $\Im(z) \in J_0$ with~$L_z$.
Moreover, $\varphi(\eta^\pm(z))$ is the unique geodesic path on $(S,q)$
from $\varphi(J^\pm_0)$ to $\varphi(z)$ in its path homotopy class.
Note that $\varphi(\eta^\pm(z))$ is not necessarily simple on $(S,q)$,
that is, homotopic to an arc with embedded interior.
Observe that every point in $Z$ either lies on $\eta^+(z) \cup \eta^-(z)$,
or to its right. Therefore, the polygonal region $R(z)\subset Y$ bounded by 
$J_0$, $\eta^+(z)$, and $\eta^-(z)$ contains no points of~$Z$ in its interior, nor any
points lying directly to the right of points in $Z$.

Let $T^\pm(z)$ be the triangle formed by taking the convex hull
of $J_0 \cup z^\pm_1$ in $\CC$, and
choose ${T(z) \in \{T^+(z), T^-(z)\}}$ to be a triangle with the
lesser width.
Then
\[R(z) \cap \{w\in \CC ~|~ \Re(w) \leq \width(T(z)) \}\]
is a trapezium
containing $T(z)$, and so it follows that $T(z)$ 
intersects $Z$ precisely at its three corners.
Applying \cref{poly_embed}, the restriction of $\varphi$ to
$T(z)$ is injective, except for possibly at
 the corners.
Note that the corner $z_1 \in \{z_1^+,z_1^-\}$ of $T(z)$ opposite $J_0$
satisfies $\Im(z_1) = 0$ or $h$ only if $\Im(z) = 0$ or $h$ respectively.
We have thus constructed a triangle satisfying the following.

\begin{lem}[Producing triangles] \label{lem:good_tri} 
 For any $z \in Z$,
 the triangle $T = \varphi(T(z))$ on $(S,q)$ has $a$ as one of its sides,
 appears on the side of $a$ from which $\varphi^t$ emanates,
 and satisfies $\height(T) = h$ and $\width(T) \leq \Re(z)$.
 Moreover, the corner $z_1 \in Z$ of $T(z)$ opposite $J_0$ has imaginary part satisfying
 $\Im(z_1) \in (0,h) \cup \{\Im(z)\}$. $\hfill\square$.
\end{lem}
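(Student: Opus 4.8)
The statement simply records the four properties of the triangle $T(z)$ produced by the construction preceding it, so the plan is to read each one off from that construction. Recall that $T(z)$ is whichever of the two Euclidean triangles $T^+(z), T^-(z) \subset \CC$ has the smaller width, where $T^\pm(z)$ has vertices $J_0^- = 0$, $J_0^+ = \imagi h$, and $z_1^\pm$ (the second vertex of the polygonal path $\eta^\pm(z)$). Since $J_0$ is a side of $T(z)$ and $\varphi(J_0) = a$, the image $T = \varphi(T(z))$ has $a$ as a side; and because $Y \subset \{\Re \ge 0\}$ with $\varphi^t$ emanating from the right of $a$, the triangle $T$ lies on that side of $a$.

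For the height and width, I would argue entirely in the domain $Y \subset \CC$, using that $\varphi$ is a local isometry preserving the horizontal direction. The side $J_0$ is vertical of Euclidean length $h$, and the opposite vertex $z_1^\pm$ lies in $Y$, hence has imaginary part in $[0,h]$; therefore the vertical extent of $T(z)$ is exactly $h$, so $\height(T) = h$. The horizontal extent of $T^\pm(z)$ equals $\Re(z_1^\pm)$, since $J_0$ lies on the imaginary axis, and $z_1^\pm \in Z^\pm(z)$ forces $\Re(z_1^\pm) \le \Re(z)$; taking the narrower of the two triangles gives $\width(T(z)) = \min(\Re(z_1^+), \Re(z_1^-)) \le \Re(z)$.

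The constraint $\Im(z_1) \in (0,h) \cup \{\Im(z)\}$ on the corner $z_1 \in \{z_1^+, z_1^-\}$ opposite $J_0$ comes from the way $\eta^\pm(z)$ is built as the left-hand boundary path of the convex hull $Y^\pm(z)$ of $Z^\pm(z)$: since $Z^+(z)$ lies weakly above height $\Im(z)$ and inside the strip, we have $\Im(z) \le \Im(z_1^+) \le h$, and symmetrically $0 \le \Im(z_1^-) \le \Im(z)$; moreover, as was observed during the construction, $z_1$ can meet the top edge ($\Im = h$) or the bottom edge ($\Im = 0$) of $Y$ only if $z$ already lies on that edge. Combining the cases $T(z) = T^+(z)$ and $T(z) = T^-(z)$ then yields the claim.

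The step I expect to need the most care is confirming that $T = \varphi(T(z))$ is genuinely a triangle on $(S,q)$ — that is, that $\varphi|_{T(z)}$ is injective except possibly at the corners — since the word ``triangle'' in the statement depends on this. Here I would lift $T(z)$ through $\tilde\varphi$ to a Euclidean triangle $\tilde T(z)$ in $(\tilde S, \tilde q)$, which is strictly convex and non-degenerate because $\Re(z_1^\pm) > 0$ (as $z_1^\pm \in Z$), and then invoke \cref{poly_embed}. The key point that makes this work is precisely the choice of $T(z)$ as the narrower of $T^+(z)$ and $T^-(z)$: this places $T(z)$ inside the trapezium $R(z) \cap \{w : \Re(w) \le \width(T(z))\}$, whose interior contains no point of $Z$ and no point lying directly to the right of a point of $Z$, so $\tilde T(z)$ encloses no singularity in its interior or on its open sides. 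Then \cref{poly_embed} gives the required injectivity, and assembling all of the above establishes the lemma.
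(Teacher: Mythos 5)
Your proof is correct and follows the same approach as the paper, whose proof of this lemma is precisely the construction that precedes it: the definitions of $\eta^\pm(z)$ and $T(z)$, the trapezium observation that $T(z)$ meets $Z$ only at its corners, and the appeal to \cref{poly_embed}. You have accurately reproduced this reasoning, including correctly flagging the injectivity of $\varphi|_{T(z)}$ as the step that needs \cref{poly_embed} and the role of choosing the narrower of $T^+(z)$, $T^-(z)$.
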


\subsection{Visibility}\label{sec:vis}

We now wish to determine conditions so that
$\varphi(T(z))$ has no sides intersecting any saddle connection in the
simplex $\sigma \in \A(S,q)$ transversely.
Consider the pre-image $\varphi^{-1}(\sigma)$ in
$Y$. This is a countable collection $\J'(\sigma)$ of (maximal)
straight line segments
(or singletons contained in~$Z$, but we may safely ignore these).
Recall that in the construction of $Y$, the (open) horizontal ray starting from any point in $Z$ is deleted. Therefore, if there is a horizontal line segment in $\J'(\sigma)$, it must have non-trivial intersection with $J_0$. Since $a$ is disjoint from any other saddle connection in $\sigma$, any horizontal line segments in $\J'(\sigma)$ must have an endpoint at $J_0^\pm$.
Let $\J = \J(\sigma)$ be the subset of $\J'(\sigma) \setminus \{J_0\}$
consisting of all non-horizontal line segments (see \cref{fig:flow}).
Applying a Poincar\'{e} recurrence argument to the flow~$\varphi^t$
on $(S,q)$ starting from the saddle connection $a$, we deduce that
any trajectory that does not hit a singularity must eventually cross
$a$ transversely. Therefore the set $\J$ is non-empty.

\begin{rem}[Types of line segments] \label{rem:segments}
Each line segment $J \in \J$ is homeomorphic to either an open,
half-open, or a closed interval. Any open endpoint of $J$ must
lie on some deleted horizontal ray, and so is of the form
$z + s$ for some $z\in Z$ and $s > 0$.
Any closed endpoint of~$J$
either belongs to $Z$, or has imaginary part equal to $0$ or $h$.
Note that $J \setminus Z$ maps into the interior of a saddle
connection under $\varphi$. 
\end{rem}

Given a line segment $J \in \J$,
observe that $Y \setminus J$ has two connected components;
let $Y(J) \subset Y$ be the component comprising of all points
lying directly to the right of some point on $J$. Define
\[V = V(\sigma) \coloneqq Y \setminus \bigsqcup_{J \in \J} Y(J). \]

\begin{definition}[Visibility] \label{def:vis}
Call a point $p \in Y$ \emph{visible} (with respect to $\J$ and $\varphi^t$)
if $p \in V$. We also call its image $\varphi(p)$ on~$(S,q)$
\emph{visible} with respect to $\sigma$ and $\varphi^t$.
More generally, we call any subset $U \subseteq V$ and its image \emph{visible}.
\end{definition}

Let us point out here the difference of $p\in Y$ being visible to the informal definition given at the beginning of the section. If $\Im(p) \in (0,h)$, then the formal and informal definition are equivalent as
the horizontal line segment from $\Im(p)$ to $p$ cannot cross any line segment $J \in \J$ transversely if $p\in Y$.
If $\Im(p) \in \{0,h\}$, the characterisation by transverse crossings does not work. Indeed, suppose that $J$ is a line segment in $Y$ with positive slope, having an endpoint at~$J_0^-$.
Then the trajectory of $\varphi^t$ starting from
$J_0^-$ does not cross $J$ transversely for small $t$, but the corresponding points lie in $Y(J)$ and hence are
not visible.

A point in $Z$ having minimal real part is not necessarily visible (see the green point in \cref{fig:flow}).

\begin{lem} \label{cross_visible} 
 If $z \in Z$ is visible then the triangle $T(z)$ is also visible.
\end{lem}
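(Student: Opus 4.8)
The plan is to place $T(z)$ inside the region $R(z)\subseteq Y$ and to run a convexity argument against the line segments in $\J$. Recall from the discussion just before \cref{lem:good_tri} that $T(z)\subseteq R(z)$, that $R(z)$ is the polygonal region bounded by $J_0$, $\eta^+(z)$, $\eta^-(z)$, and that $R(z)$ contains no point of $Z$ in its interior and no point of any deleted horizontal ray. First I would record two features of $R(z)$: it is \emph{horizontally convex}, so that each slice $R(z)\cap\{\Im=y\}$ is a segment $[0,\rho(y)]\times\{y\}$ with right endpoint on $\eta^+(z)$ for $y\geq\Im(z)$ and on $\eta^-(z)$ for $y\leq\Im(z)$; and, on each of $[\Im(z),h]$ and $[0,\Im(z)]$, the function $\rho$ is convex, with $\rho(0)=\rho(h)=0$, $\rho(\Im(z))=\Re(z)$ and $0\leq\rho\leq\Re(z)$. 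The first point follows by inspecting where the Jordan curve $J_0\cup\eta^+(z)\cup\eta^-(z)$ meets a horizontal line; the second because $\eta^{\pm}(z)$ is the left boundary of the convex hull $Y^{\pm}(z)$ of $Z^{\pm}(z)$, all of whose vertices have real part in $[0,\Re(z)]$.

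Assume $z$ is visible; I claim that no $J\in\J$ meets the interior of $R(z)$, which together with visibility of $J_0\cup\eta^+(z)\cup\eta^-(z)$ (established similarly) gives that $T(z)\subseteq R(z)$ is visible. Suppose instead that some $J\in\J$ meets $\mathrm{int}(R(z))$, and pick $q\in J\cap\mathrm{int}(R(z))$. Since $J$ is non-horizontal, flowing horizontally to the right from $q$ inside the slice of $R(z)$ through $q$ reaches a point $w=(\rho(\Im(q)),\Im(q))$ on $\eta^+(z)$ or $\eta^-(z)$; this sub-segment meets $J$ only at $q$ and, by horizontal convexity, avoids all deleted rays, so $w$ lies directly to the right of $q\in J$ and hence $w\in Y(J)$.

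It remains to derive $z\in Y(J)$, contradicting visibility of $z$. Parametrise $J$ by height, write $\ell(y)$ for the real part of the point of $J$ at height $y$, and on the intersection of the height-range of $J$ with the interval on which $\rho$ is convex (namely $[\Im(z),h]$ if $\Im(q)\geq\Im(z)$, else $[0,\Im(z)]$) put $\delta(y)=\ell(y)-\rho(y)$. As a difference of an affine and a convex function, $\delta$ is concave, and $\delta(\Im(q))<0$ since $w\in Y(J)$. At each endpoint of the domain of $\delta$ one has $\delta\geq 0$: that endpoint of $J$ lies on $\partial Y$ (each point of $Z$ is a slit-tip), and it cannot lie strictly to the left of $\eta^{\pm}(z)$, since a point of $Z$ or of a deleted ray in that position would be respectively an interior point of $R(z)$ or a deleted-ray point of $R(z)$, both impossible; a point of $J$ at height $0$ or $h$ has $\rho=0$ there; and no point of $J$ lies in the relative interior of $J_0$ because $\varphi^{-1}(a)\cap\{\Re=0\}=J_0$. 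A concave function is not negative strictly between two points at which it is nonnegative, so since $\delta(\Im(q))<0$ while $\delta\geq0$ at the endpoints of its domain, the lower endpoint of that domain must be $\Im(z)$, with $\delta(\Im(z))<0$ (any other configuration would make $\delta$ nonnegative at both ends). Then $\ell(\Im(z))<\rho(\Im(z))=\Re(z)$, so $z$ is directly to the right of the point of $J$ at height $\Im(z)$; and since $z$ is the first singularity met by its horizontal trajectory, $Y\cap\{\Im=\Im(z)\}=[0,\Re(z)]$, so the horizontal segment from that point of $J$ to $z$ stays in $Y$ and meets $J$ only at its left end. Hence $z\in Y(J)$, a contradiction. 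The main obstacle is not this mechanism but the careful treatment of the degenerate configurations — $\Im(z)\in\{0,h\}$, a side or vertex of $T(z)$ lying on $\partial Y$ or along some $J\in\J$, and the structure of $Y(J)$ when $J$ ends at a point of $Z$ in the interior of the strip — all of which, I expect, reduce to the same dichotomy once the slice picture is set up with care.
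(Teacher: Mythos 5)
Your proposal is correct in substance, and it arrives at the same geometric conclusion as the paper (namely, that if some $J\in\J$ enters $\mathrm{int}(R(z))$ then $J$ hits the horizontal line at height $\Im(z)$ strictly to the left of $z$, so $z\in Y(J)$), but by a noticeably different route. The paper argues directly at the level of the chord $J\cap R(z)$: its two endpoints cannot both lie on the same path $\eta^{\pm}(z)$, because the convexity of $Y^{\pm}(z)$ would then force $J\cap R(z)\subseteq Y^{\pm}(z)$, which is disjoint from $\mathrm{int}(R(z))$; so the chord connects $\eta^{+}(z)\setminus\{z\}$ to $\eta^{-}(z)\setminus\{z\}$ and hence crosses $L_z$. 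You instead linearise the picture: you encode the right boundary of $R(z)$ as a convex height-function $\rho$ on each of $[\Im(z),h]$ and $[0,\Im(z)]$, form the concave difference $\delta=\ell-\rho$ against the affine parametrisation $\ell$ of $J$, and squeeze the sign of $\delta$ at the endpoints of its domain. Both arguments ultimately rest on the same fact (the convex-hull structure of $\eta^{\pm}(z)$), and both derive a crossing of $L_z$; the paper's version is shorter and more combinatorial, while yours is more quantitative and makes the endpoint constraints explicit, at the cost of more bookkeeping. You are right that the degenerate situations you list ($\Im(z)\in\{0,h\}$, a side of $T(z)$ lying along some $J$, slit-tip endpoints of $J$, and the passage from ``no $J$ meets $\mathrm{int}(R(z))$'' to visibility of the closed set $T(z)$) need a final pass; the paper's proof also absorbs them implicitly, and they do reduce to the same slice picture as you expect. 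The only place I would urge care is your use of ``$w\in Y(J)$'' to conclude $\delta(\Im(q))<0$: this inequality is just the direct comparison $\Re(q)<\Re(w)=\rho(\Im(q))$ and does not need the membership statement, which you then reprove in the opposite direction at the end.
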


\proof
 We shall prove the contrapositive.
 Suppose that $T(z)$ is not visible.
 Then $T(z)$ contains some point lying directly to the right of some
 non-horizontal line segment $J \in \J$. It follows that $J$ intersects the interior of $T(z)$;
 see \cref{fig:flow}.
 Since $T(z) \subseteq R(z)$, the line segment $J$ also intersects the interior of $R(z)$. Therefore
 $J \cap R(z)$ must be a line segment connecting two points on $\partial R(z)$.
 Note that $J$ cannot intersect $J_0$, except possibly at the endpoints.
 Furthermore, the endpoints of $J \cap R(z)$ cannot both lie on the same path $\eta^\pm(z)$, for
 otherwise $J \cap R(z) \subseteq Y^\pm(z)$, which is disjoint from the interior of $R(z)$.
 Therefore, $J \cap R(z)$ must connect some point on $\eta^+(z) \setminus \{z\}$
 to a point on $\eta^-(z) \setminus \{z\}$. It follows that $J$ must cross the line $L_z$,
 and so $z$ is not~visible. 
\endproof

We now complete the proof of \cref{prop:vis}.
Suppose that for some $0 \leq y \leq h$ and $t_0 > 0$,
the point $\varphi^{t_0}(a(h))$ on $(S,q)$ is a visible singularity
with respect to $\varphi^t$ and $\sigma$.
Then $z = t_0 + \imagi y \in Z$ is a visible point descending to this
singularity. By the above lemma and \cref{lem:good_tri},
taking $T = \varphi(T(z))$ gives a triangle on $(S,q)$
satisfying the required properties.
This completes the proof.

\subsection{Visible singularities exist}

Throughout the rest of this section, we assume that $a$ is a tallest saddle connection of $\sigma$.
Under this assumption, we want to show that some point of $Z$ is visible. We shall prove some
stronger results.

For each $J \in \J$, observe that $Y(J)\subset Y$ is open under the subspace topology.
Therefore $\Im(J \setminus Z) = \Im(Y(J))$ is a connected open subset of $[0,h] \subset \R$ under the subspace topology. We shall always work this
topology in this section.
Given distinct $J,J' \in \J$, observe that $Y(J)$ and $Y(J')$ are either
nested or disjoint, and so the intervals
 $\Im(J \setminus Z)$ and $\Im(J' \setminus Z)$ are either nested or disjoint.
 In particular, if some point on $J'$ lies strictly to the right
 of a point on $J \setminus Z$, then $Y(J') \subset Y(J)$.
 We deduce the following.

\begin{lem}[Visibility and nesting]
 Let $J \in \J$. If some point on $J \setminus Z$ is visible, then $J$ is visible. Furthermore, $J$ is visible if and only if for all $J'\in \J$ such that $J' \neq J$, either $Y(J') \cap Y(J) = \emptyset$
 or $Y(J') \subset Y(J)$ holds. $\hfill\square$
\end{lem}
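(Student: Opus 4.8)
The plan is to derive both assertions from the three facts recorded immediately above the statement: that each $Y(J)$ is open with $\Im(Y(J))=\Im(J\setminus Z)$ a connected open subset of $[0,h]$; that for distinct $J,J'\in\J$ the sets $Y(J),Y(J')$ are nested or disjoint; and that if some point of $J'$ lies strictly to the right of a point of $J\setminus Z$, then $Y(J')\subseteq Y(J)$. Here ``$J$ is visible'' means $J\subseteq V$, so the task is to show $J\cap Y(J')=\emptyset$ for every $J'\in\J$ (this is automatic when $J'=J$, since $Y(J)$ is a component of $Y\setminus J$). Two preliminary observations set things up: first, by \cref{rem:segments} the points of $Z$ on $J$ lie among its (at most two) endpoints, so $J\setminus Z$ is cofinite in $J$ and in particular connected; second, two distinct segments of $\J$ cannot cross transversely in their interiors (their $\varphi$-images would be crossing saddle connections of $\sigma$) nor overlap in a subsegment (by maximality of components), so distinct $J,J'\in\J$ meet in at most one common endpoint, and, $J'$ being non-horizontal, each height of $\Im(J'\setminus Z)$ is attained at a unique point $r'\in J'\setminus Z$.

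The technical core is a trichotomy: for $w\in J\setminus Z$ and $J'\neq J$ with $\Im(w)\in\Im(J'\setminus Z)$, write $r'$ for the point of $J'\setminus Z$ at height $\Im(w)$ and compare $\Re(w)$ with $\Re(r')$. If $\Re(w)>\Re(r')$, the horizontal segment from $r'$ to $w$ stays in $Y$ --- the unique deleted ray at that height, if present, starts to the right of $w$ since $w\notin Z$ --- and meets $J'$ only at $r'$, so $w$ lies in the right-hand component $Y(J')$. If $\Re(w)<\Re(r')$, then $r'$ is a point of $J'$ strictly to the right of the point $w\in J\setminus Z$, so the third recorded fact gives $Y(J')\subseteq Y(J)$. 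And $\Re(w)=\Re(r')$ forces $w=r'\in J\cap J'$.

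Then I would conclude as follows. Forward direction of the biconditional: if $J\subseteq V$ but some $J'$ violates the alternative, then by nesting $Y(J)\subsetneq Y(J')$, so $\Im(J\setminus Z)\subseteq\Im(J'\setminus Z)$; applying the trichotomy to each point of $J\setminus Z$ --- the middle case excluded by $Y(J)\subsetneq Y(J')$, the equality case occurring at most once since $J\cap J'$ is a single point --- shows all but at most one point of $J\setminus Z$ lies in $Y(J')$, contradicting $J\subseteq V$. Reverse direction: if the alternative holds for every $J'$ but $J\cap Y(J')\neq\emptyset$, choose $w\in(J\setminus Z)\cap Y(J')$ (possible, as $Y(J')$ is open and $J\setminus Z$ cofinite in $J$); the trichotomy forces $\Re(w)>\Re(r')$, so the third recorded fact with $J$ and $J'$ interchanged gives $Y(J)\subseteq Y(J')$, properly since $w\in J\cap Y(J')$ while $J\cap Y(J)=\emptyset$; but then $Y(J')\cap Y(J)=Y(J)\neq\emptyset$ and $Y(J')\not\subseteq Y(J)$, a contradiction. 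First assertion: given $p\in(J\setminus Z)\cap V$, by the reverse biconditional it suffices to verify the alternative for each $J'$; were it to fail we would have $Y(J)\subsetneq Y(J')$, and the trichotomy applied to $p$ would yield $p\in Y(J')$ (against $p\in V$) or $Y(J')\subseteq Y(J)$ (a contradiction), unless $\Re(p)=\Re(r')$.

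The step I expect to be the genuine obstacle is precisely this borderline equality case $p=r'\in J\cap J'$ (and its analogue for the single exceptional point in the forward direction): $p$ is then a common endpoint of $J$ and $J'$, and since $p\notin Z$, \cref{rem:segments} forces $\Im(p)\in\{0,h\}$; as $\varphi(p)$ is the singularity at an endpoint of $a$ and the horizontal trajectory issuing from that endpoint meets it only at $p$, this means $p\in\{J_0^{+},J_0^{-}\}$. This is exactly the configuration singled out in the discussion just before the lemma, where formal and informal visibility diverge, and I would dispose of it by the direct analysis of the flow near $J_0^{\pm}$ carried out there.
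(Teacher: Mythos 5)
The paper states this lemma with no separate proof, treating it as immediate from the two observations recorded just above: that the sets $Y(J)$ are pairwise nested or disjoint, and that a point of $J'$ lying strictly to the right of a point of $J\setminus Z$ forces $Y(J')\subset Y(J)$. Your trichotomy argument is a correct and more explicit unpacking of exactly that reasoning, and the biconditional (the second assertion) comes out right in both directions of your write-up.

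The first assertion, however, has a real gap exactly at the borderline case you flag, and your proposed resolution does not close it. You correctly reduce to the case $p=r'\in J\cap J'$ with $p\notin Z$, and hence $p\in\{J_0^+,J_0^-\}$. But deferring to ``the direct analysis of the flow near $J_0^\pm$ carried out there'' does not work: the paragraph before \cref{def:vis} only records that at $J_0^\pm$ the formal and informal notions of visibility diverge, and in fact the configuration you isolate is a genuine counterexample to the first assertion as stated. If $J,J'\in\J$ both have a closed endpoint at $J_0^-$ with $J'$ steeper than $J$, then near $J_0^-$ the sector cut off by $J$ sits strictly inside the sector cut off by $J'$, so $Y(J)\subsetneq Y(J')$ by nesting, and every point of $J$ near $J_0^-$ other than $J_0^-$ itself lies in $Y(J')$; thus $J$ is not visible. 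On the other hand $J_0^-$ is always visible: for each $J''\in\J$ it is a limit of points of $J_0$ lying in the component of $Y\setminus J''$ other than $Y(J'')$, hence lies in the closure of that component, which is disjoint from $Y(J'')$. Since $J_0^-\in J\setminus Z$, the first assertion fails for this pair. The slip is harmless for the paper's downstream use---the lemma is invoked for crossing points $\varphi^t(a(y))$ with $t>0$, and such a point, if shared by two segments of $\J$, maps to a singularity, which for $\Im\in\{0,h\}$ and $\Re>0$ would put it in $Z$; so the bad configuration never arises where the first assertion is applied---but a complete proof must either restrict the first assertion to $J\setminus(Z\cup\{J_0^+,J_0^-\})$ or verify directly that $J_0^\pm$ is excluded in the applications, rather than claiming the preceding discussion disposes of it.
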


Write $\vis \subseteq \J$ for the set of non-horizontal visible line segments.
This is precisely the set of $J\in\J$ for which $Y(J)$ is maximal
with respect to inclusion. In particular, $\vis$ is non-empty.

For every $y\in[0,h]$, the trajectory $\varphi^t(a(y))$
terminates at some point in $Z$, or eventually crosses some $J \in \J$
by Poincar\'e recurrence. (In the former case, the trajectory could possibly
cross some $J\in\J$ before terminating.) By the above lemma, the first line
segment in $\J$ crossed by the trajectory (if it exists) must be visible.
Since $Z$ is countable, we deduce that the disjoint~union 
\[\bigsqcup_{J \in \vis} \Im(J \setminus Z)\]
is dense in $[0,h]$. Our goal is to show that this union is not equal to $[0,h]$.

\begin{lem}[Tall saddle connections] \label{biginterval}
 Let $J \in \J$ be a straight line segment.
 If $\Im(J \setminus Z) \supseteq [0,h)$ then one endpoint of~$J$ is at $J_0^- = 0$, and
 the other has imaginary part~$h$.
 If $\Im(J \setminus Z) \supseteq (0,h]$ then one endpoint of~$J$ is at $J_0^+ = \imagi h$,
 and the other is on the real axis.
 In addition, $\Im(J \setminus Z) \neq [0,h]$.
\end{lem}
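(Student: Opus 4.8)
The statement is geometric: a line segment $J\in\J$ whose set of heights $\Im(J\setminus Z)$ covers all of $[0,h)$ (resp.\ $(0,h]$) must stretch between an endpoint of $a$ and a point at the opposite height; and in no case can $\Im(J\setminus Z)$ equal the full closed interval $[0,h]$. The key observation is that $J$ is the $\varphi$-preimage of (a sub-segment of the interior of) some saddle connection $a'\in\sigma$, and the height of a point on $J\setminus Z$ equals the height, measured transversally to the flow, at which $\varphi^t(a(y))$ first meets $a'$. Since $a$ is a tallest saddle connection in $\sigma$, we have $\height(a')\le h$; this bound on the ``vertical extent'' of $a'$ is what forces the endpoints of $J$ into the positions claimed.

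First I would recall from \cref{rem:segments} the possibilities for the endpoints of $J$: each endpoint is either an open endpoint of the form $z+s$ with $z\in Z$, $s>0$ (lying on a deleted horizontal ray), or a closed endpoint in $Z$, or a closed endpoint with imaginary part $0$ or $h$. Now suppose $\Im(J\setminus Z)\supseteq[0,h)$. Parametrise $J$ and look at the imaginary coordinate along $J$: since $J$ is a genuine (non-horizontal) line segment, $\Im$ is strictly monotone on $J$, so $\Im(J\setminus Z)$ is an interval whose endpoints are the imaginary parts of the two endpoints of $J$ (with endpoints included or excluded according to whether the corresponding endpoint of $J$ lies in $Z$ or not — note $J\setminus Z$ removes at most the finitely many points of $Z$ lying on $J$, which does not change the interval except possibly at its endpoints). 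So one endpoint of $J$ must have imaginary part $\le 0$, i.e.\ exactly $0$, and lie in $\overline{J}$; since the strip $Y$ lives in $\{0\le\Im(z)\le h\}$, this endpoint sits on the bottom edge $\Im=0$. If this endpoint had positive real part, it would be a point of $Z$ or an open endpoint $z+s$; but $\Im(J\setminus Z)$ contains a neighbourhood of $0$ on the right, forcing this endpoint to actually be attained as a limit of points of $J\setminus Z$ at height $\to 0$, hence it is the closed endpoint of $J$ at height $0$. The crucial point is that this endpoint must be $J_0^-=0$: otherwise $\varphi$ maps it to a point of $\P$ strictly in the interior of the flow trajectory, i.e.\ it lies in $Z$ with positive real part, contradicting that heights arbitrarily close to $0$ are realised on $J\setminus Z$ past this point. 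The other endpoint then has imaginary part $\ge h-\varepsilon$ for all $\varepsilon$, hence $=h$ (again using $\Im\le h$ in $Y$). The case $\Im(J\setminus Z)\supseteq(0,h]$ is symmetric, interchanging the roles of $J_0^-$ and $J_0^+$ and of the edges $\Im=0$ and $\Im=h$.

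The last sentence, $\Im(J\setminus Z)\ne[0,h]$, is where I expect the real work, and it is where tallness of $a$ is used decisively. Suppose for contradiction $\Im(J\setminus Z)=[0,h]$. Then by the two cases just handled, the closed endpoints of $J$ are $J_0^-=0$ and a point $w$ with $\Im(w)=h$; so $J$ is a chord of $Y$ joining the bottom-left corner $0$ to a point $w=\Re(w)+\imagi h$ on the top edge, with $\Re(w)>0$. Under $\varphi$ this $J$ maps onto (a piece of, in fact all of, since both ends are closed and lie in $Z\cup J_0$) a saddle connection $a'\in\sigma$. But the endpoints of $a'$ are $\varphi(0)=a(0)$ and $\varphi(w)$, and $a'$ connects these while having, by construction, transverse height (length measured orthogonally to the horizontal flow) at least $h$ — indeed $\height(a')$ is at least $|\Im(w)-\Im(0)|=h$, and strictly more than $h$ as soon as $\Re(w)>0$ makes the chord non-vertical, since height is the orthogonal projection of a slanted segment, hence strictly larger than the vertical drop only if\ldots — here I must be careful: height is measured orthogonally to the flow direction, i.e.\ $\height(a')=|\Im(w)-\Im(\text{other end})|=h$ exactly. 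So the naive ``strictly taller'' argument fails, and I instead need: if $J$ realises every height in $[0,h]$, then its closed lower endpoint is $J_0^-=0$ \emph{and} this forces $\varphi(J_0^-)=a(0)$ to be a singularity reached by the flow trajectory from $a(0)$ itself at some positive time — but a trajectory emanating from the right of $a$ at its own endpoint $a(0)$, re-hitting a saddle connection through $a(0)$, means $a'$ shares the endpoint $a(0)$ with $a$ and emanates into the half-disc on the right of $a$; combined with the symmetric statement that the \emph{upper} endpoint analysis (applied after reflecting) would force $a'$ to also pass through or abut $a(h)$, we get that $a'$ is pinned at both endpoints of $a$ in a way incompatible with $a,a'$ being distinct disjoint saddle connections of the same (maximal) height unless they coincide, contradiction. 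I would make this precise by noting that a closed endpoint of $J$ at $J_0^\pm$ means the trajectory \emph{starts} on $a'$ (not crosses it), so $a'$ and $a$ share that endpoint of $a$; if \emph{both} heights $0$ and $h$ were attained on $J\setminus Z$ we would need closed endpoints at height $0$ and height $h$, but a closed endpoint at height $h$ with positive real part is a point of $Z$, i.e.\ $\varphi$ sends it to a singularity $z$ reached from $a(h)$ at positive time with $\Im(z)=h$ — and then $a'$ has one endpoint $=a(0)$ and another endpoint a singularity at the same height as $a(0)$'s trajectory\ldots The main obstacle, and the step I would spend the most care on, is precisely this final contradiction: ruling out that a single segment of $\varphi^{-1}(a')$ spans the full closed height range by showing it would force $a'$ to coincide with $a$, or to be taller than $a$, contradicting either distinctness or maximality of $\height(a)=h$ among $\sigma$.
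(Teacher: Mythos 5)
Your argument for the first two claims has a genuine gap at the step you call ``crucial''. You correctly identify that if $\Im(J\setminus Z)\supseteq[0,h)$, then $J$ has a closed endpoint $p$ with $\Im(p)=0$, and $p\in J\setminus Z$ (the hypothesis realises the height $0$ on $J\setminus Z$, and $\Im$ is strictly monotone along $J$, so $p$ must be the unique point at height $0$). But then you assert that if $p\ne 0$, then $\varphi(p)$ is a singularity and hence $p\in Z$. That assertion is both unjustified and inconsistent with what you just established: $p\in J\setminus Z$ means precisely that $p\notin Z$, so $\varphi(p)$ is \emph{not} a singularity. The case you omit is exactly the one that matters, and it is where tallness of $a$ enters: $p$ could be a closed endpoint with $\Im(p)=0$, $\Re(p)>0$, and $p\notin Z$ (this is perfectly compatible with \cref{rem:segments} --- a maximal segment $J\subset\varphi^{-1}(\sigma)$ can terminate simply because it reaches the boundary $\Im=0$ of the strip $Y$, not because it hits a singularity). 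In that case $\varphi(p)$ lies in the interior of some $a'\in\sigma$. Since $\varphi$ is a local isometry, $\varphi(J)$ is a subsegment of $a'$ with $\height(\varphi(J))=h$, and $a'$ extends strictly beyond $\varphi(p)$ in the direction that increases its vertical extent. Therefore $\height(a')>h$, which contradicts the standing assumption that $a$ is tallest in $\sigma$. This height argument is the paper's proof of the first two claims and is what your proposal is missing.

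For the last claim, $\Im(J\setminus Z)\ne[0,h]$, you expect ``the real work'' to happen here, but in fact it follows formally from the two parts already proven. If $\Im(J\setminus Z)=[0,h]$, then it contains both $[0,h)$ and $(0,h]$; applying the first part gives that one endpoint of $J$ is $J_0^-=0$, and applying the second part gives that one endpoint is $J_0^+=\imagi h$. Hence $J$ is the line segment from $0$ to $\imagi h$, i.e.\ $J=J_0$, which is excluded from $\J$ by definition --- contradiction, done. Your proposal overlooks that the second part pins the upper endpoint at $J_0^+$ exactly (i.e.\ $\Re(w)=0$, not $\Re(w)>0$ as you write), and this oversight leads you into the long speculative digression that you acknowledge you cannot complete. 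Tallness is used in the first two claims, not the third.
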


\proof
 Assume that $\Im(J \setminus Z) \supseteq [0,h)$. 
 Then $J$ is a line segment that connects a point $p$ with $\Im(p)=0$
 to a point $p'$ with $\Im(p') = h$. Note that $p \in J$, while $p'$
 either lies in $J$, or is an open endpoint of $J$.
 If $p \neq 0$, then $\varphi$ maps $J$ into some saddle connection
 $a' \in \sigma$ on $(S,q)$, with $\varphi(p)$ contained in the interior of $a'$.
 But this implies that $\height(a') > \height(J) = h$, contradicting
 the assumption that $a$ is tallest in $\sigma$, and so~$p = 0$.
 By a similar argument, we deduce that if $\Im(J \setminus Z) \supseteq (0,h]$
 then $\imagi h$ is an endpoint of $J$. 
 Finally, if $\Im(J \setminus Z) = [0,h]$ then the endpoints of $J$ are $0$ and $\imagi h$ and hence
 $J$ must coincide with $J_0\notin\J$, a contradiction.
\endproof

It follows that for every $J \in \vis$, the interval $\Im(J\setminus Z)$ has
some open endpoint $y \in [0,h]$. In fact, every point of
$[0,h] \setminus \bigsqcup_{J \in \vis} \Im(J \setminus Z)$ arises in this manner.
Note that if $\Im(J \setminus Z) = (0,h)$, then the closure of $\varphi(J)$ in $(S,q)$ is a
tallest saddle connection of $\sigma$. In particular, if $a$ is strictly tallest then any such $J$ must map onto $a$ under $\varphi$, in which case $a$ is contained in a horizontal cylinder. Therefore, if $a$ is strictly tallest and not contained in a horizontal cylinder then $\Im(J \setminus Z)$ has an open endpoint $y \in (0,h)$ for all $J \in \vis$.
 
By \cref{rem:segments}, there exists some $z \in Z$ satisfying $\Im(z) = y$.
Note that $z$ must be visible, for otherwise $z \in Y(J)$ for some $J\in\J$,
and so $y = \Im(z) \in \Im(Y(J)) = \Im(J\setminus Z)$, a contradiction.
The results in this section can be summarised as follows.

\begin{prop}[Structure of the set of visible line segments] \label{intervals}
 The set $\{\Im(J \setminus Z) ~|~ J \in \vis\}$ is a collection of pairwise disjoint connected
 open sets in $[0,h]$, whose union is dense in $[0,h]$. Furthermore, the complement of this
 union is non-empty, and comprises precisely of all $y \in [0,h]$ for which there exists
 a visible $z \in Z$ satisfying $\Im(z) = y$.
 In particular, if $a$ is strictly tallest and not contained in a horizontal cylinder, then there exists a visible $z\in Z$ where $0 < \Im(z) < h$.
 $\hfill\square$
\end{prop}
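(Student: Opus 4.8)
\emph{Proof idea.} The statement consolidates the lemmas proven in this subsection, so the plan is simply to assemble them in order. For the first assertion, the Visibility and nesting lemma already gives that each $\Im(J \setminus Z) = \Im(Y(J))$ is a connected open subset of $[0,h]$, and that for distinct $J, J' \in \J$ the sets $Y(J)$ and $Y(J')$ are either nested or disjoint. Since $\vis$ is by definition the collection of $J \in \J$ for which $Y(J)$ is maximal with respect to inclusion, two distinct members of $\vis$ cannot be nested; hence $Y(J) \cap Y(J') = \emptyset$, and therefore $\Im(J \setminus Z) \cap \Im(J' \setminus Z) = \emptyset$, whenever $J, J' \in \vis$ are distinct. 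This produces the pairwise disjoint family of connected open sets.

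For the density claim, I would argue that the union omits only countably many points of $[0,h]$. Each $z \in Z$ gives a single value $\Im(z)$, and $Z$ is countable, so for all but countably many $y$ the trajectory $\varphi^t(a(y))$ does not terminate at a singularity. For such $y$, the Poincar\'e recurrence argument used above to establish $\J \neq \emptyset$ forces $\varphi^t(a(y))$ to eventually cross some segment of $\J$ transversely; by the Visibility and nesting lemma the \emph{first} such segment is visible, so $y \in \Im(J \setminus Z)$ for some $J \in \vis$. Hence $\bigsqcup_{J \in \vis} \Im(J \setminus Z)$ contains all but countably many points of $[0,h]$, and in particular is dense.

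Next I would establish non-emptiness of the complement and identify it. Since $[0,h]$ is connected and, by \cref{biginterval}, no interval $\Im(J \setminus Z)$ equals all of $[0,h]$, the disjoint union above cannot be all of $[0,h]$, so the complement is nonempty. Now fix $y$ in the complement. If $\varphi^t(a(y))$ crossed some segment of $\J$ transversely, then its first crossing (being visible) would put $y$ in some $\Im(J \setminus Z)$ with $J \in \vis$, contrary to the choice of $y$; so $\varphi^t(a(y))$ crosses no segment of $\J$, and therefore, by the recurrence argument, terminates at a singularity at some time $t_0 > 0$. Then $z := t_0 + \imagi y$ lies in $Z$ by definition and has $\Im(z) = y$. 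This $z$ must be visible: were $z \in Y(J)$ for some $J \in \J$, choosing $J' \in \vis$ with $Y(J) \subseteq Y(J')$ would give $y = \Im(z) \in \Im(Y(J')) = \Im(J' \setminus Z)$, a contradiction. Conversely, if some visible $z \in Z$ has $\Im(z) = y$, then $y$ cannot lie in any $\Im(J \setminus Z)$ with $J \in \vis$ (else $z \in Y(J)$, so $z$ would not be visible), so $y$ belongs to the complement. This is the stated characterisation.

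For the last clause, assume $a$ is strictly tallest in $\sigma$ and is not contained in a horizontal cylinder. I would rule out each of $[0,h]$, $[0,h)$, $(0,h]$, $(0,h)$ as a possible value of $\Im(J \setminus Z)$ for $J \in \vis$: the first is excluded by \cref{biginterval}, and in each of the remaining three the closure $\overline{\varphi(J)}$ has height $h$, hence is a tallest saddle connection of $\sigma$, hence equals $a$ by strict tallness — and then, as observed just before the statement of the proposition, $a$ is contained in a horizontal cylinder, contrary to assumption. Therefore every $\Im(J \setminus Z)$ with $J \in \vis$ has an open endpoint $y \in (0,h)$; taking any $J \in \vis$ (this set is nonempty) and applying the characterisation above yields a visible $z \in Z$ with $0 < \Im(z) < h$. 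The one genuinely substantive step is the density claim, and the point most needing care is that the recurrence argument must deliver a transverse crossing of a \emph{segment of $\J$}, not merely of $a$ — this is exactly why $\J$ was defined to contain every non-horizontal component of $\varphi^{-1}(\sigma) \setminus \{J_0\}$, since the trajectory, flowing to the right of $J_0$, can never meet $J_0$ again.
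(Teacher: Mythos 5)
Your proof is correct and takes essentially the same approach as the paper: assemble the nesting lemma for disjointness, Poincar\'e recurrence plus countability of $Z$ for density, \cref{biginterval} plus connectedness of $[0,h]$ for non-emptiness, and then establish the characterisation of the complement via termination of trajectories. Your direct derivation of the characterisation (a complement point $y$ forces the trajectory to terminate at some $z\in Z$ with $\Im(z)=y$, and any such $z$ is visible since a non-visible $z$ would place $y$ inside $\Im(J'\setminus Z)$ for some $J'\in\vis$) is, if anything, a little more explicit than the paper's passing assertion that every complement point is an open endpoint of some $\Im(J\setminus Z)$. One small inaccuracy in your final paragraph: for $\Im(J\setminus Z)\in\{[0,h),(0,h]\}$ the paper's horizontal-cylinder observation does not apply (it is stated only for $(0,h)$); instead $\overline{\varphi(J)}=a$ together with \cref{biginterval} forces $J$ to be a vertical segment through $J_0^-$ or $J_0^+$, hence $J=J_0$, which is excluded from $\J$ — so these two cases are ruled out by strict tallness alone, with no appeal to the cylinder hypothesis.
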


\cref{prop:tall} now follows from the above proposition.
We conclude this section with one final result.
 
 \begin{lem}[Visibility after extending simplices]
  Suppose $\sigma \subseteq \sigma'$ are simplices in which $a$
  is a tallest saddle connection. Assume $J \in \vis(\sigma)$ is
  visible. Then either $J \in \vis(\sigma')$, or there exists some
  $J' \in \vis(\sigma')$ such that $\Im(J' \setminus Z) \supseteq \Im(J \setminus Z)$.
  In the latter case, $J'$ maps into some saddle connection in
  $\sigma' \setminus \sigma$ under~$\varphi$.
 \end{lem}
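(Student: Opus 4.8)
The plan is to deduce the statement from two applications of \cref{intervals}, one to $\sigma$ and one to $\sigma'$, together with the observation that enlarging the simplex can only shrink the visible set.

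\emph{Monotonicity.} First I would record that $\J(\sigma) \subseteq \J(\sigma')$. Indeed, a segment $J \in \J(\sigma)$ is a maximal straight segment in $\varphi^{-1}(\sigma) \subseteq \varphi^{-1}(\sigma')$, and by \cref{rem:segments} the only points of $J$ lying in $Z$ are endpoints (its other possible endpoints lie at height $0$ or $h$, or on a deleted horizontal ray), so $J$ admits no straight prolongation inside $\varphi^{-1}(\sigma')$ and therefore remains maximal there. The same remark shows that any $J' \in \J(\sigma') \setminus \J(\sigma)$ maps under $\varphi$ into a single saddle connection, and that saddle connection must lie in $\sigma' \setminus \sigma$ (otherwise $J'$ would already be maximal in $\varphi^{-1}(\sigma)$). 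Since $V(\sigma) = Y \setminus \bigsqcup_{K \in \J(\sigma)} Y(K)$ and likewise for $\sigma'$, adding segments can only delete points, so $V(\sigma') \subseteq V(\sigma)$; in particular, every $z \in Z$ that is visible with respect to $\sigma'$ is visible with respect to $\sigma$.

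\emph{The covering interval.} Write $E(\sigma) \subseteq [0,h]$ for the complement of $\bigsqcup_{K \in \vis(\sigma)} \Im(K \setminus Z)$, and similarly $E(\sigma')$. By \cref{intervals}, $E(\sigma)$ is exactly the set of heights of the $Z$-points visible with respect to $\sigma$, and likewise for $\sigma'$; hence the monotonicity above gives $E(\sigma') \subseteq E(\sigma)$. Since $J \in \vis(\sigma)$, the interval $I := \Im(J \setminus Z)$ is one of the pairwise disjoint open pieces for $\sigma$, so $I \cap E(\sigma) = \emptyset$ and therefore $I \cap E(\sigma') = \emptyset$. Thus $I$ is a connected subset of $[0,h] \setminus E(\sigma') = \bigsqcup_{J' \in \vis(\sigma')} \Im(J' \setminus Z)$, a union of pairwise disjoint open sets, so $I$ is contained in exactly one of them; say $I \subseteq \Im(J' \setminus Z)$ with $J' \in \vis(\sigma')$.

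\emph{Identifying $J'$.} If $J' = J$ then $J \in \vis(\sigma')$, which is the first alternative. Suppose $J' \neq J$. If $J'$ were in $\J(\sigma)$, then $J$ and $J'$ would be distinct elements of $\J(\sigma)$ whose vertical shadows overlap (both meet the nonempty interval $I$), hence $Y(J)$ and $Y(J')$ would be nested; but $Y(J)$ is maximal among $\{Y(K) : K \in \J(\sigma)\}$, while $Y(J')$, since $J' \in \vis(\sigma')$ and $J \in \J(\sigma) \subseteq \J(\sigma')$, is maximal among $\{Y(K) : K \in \J(\sigma')\}$, so neither can be strictly contained in the other, forcing $Y(J) = Y(J')$ and hence $J = J'$ — a contradiction. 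Therefore $J' \in \J(\sigma') \setminus \J(\sigma)$, so $J'$ maps into a saddle connection of $\sigma' \setminus \sigma$, and $\Im(J' \setminus Z) \supseteq \Im(J \setminus Z)$ by the previous step; this is the second alternative.

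\emph{Main obstacle.} The only genuine idea is the inclusion $E(\sigma') \subseteq E(\sigma)$, which says that a ``gap'' in the heights for $\sigma$ cannot be covered by visible segments of $\sigma'$; everything else is bookkeeping. The steps I expect to require the most care are the claim $\J(\sigma) \subseteq \J(\sigma')$ — i.e.\ that no segment of $\varphi^{-1}(\sigma)$ acquires a straight prolongation after passing to $\sigma'$ — and the two elementary facts that overlapping shadows force the regions $Y(\cdot)$ to be nested and that $Y(J) = Y(J')$ implies $J = J'$, both of which I would extract from the fact that $Y \setminus K$ has exactly two components for each $K \in \J$ (so that $K$ is recovered as the relative boundary of $Y(K)$ in $Y$).
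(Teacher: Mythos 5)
Your proof is correct, and it is essentially the paper's argument made more explicit rather than a genuinely different route. The paper's proof is terse: if $J$ is not visible with respect to $\J(\sigma')$, then there is some $J' \in \vis(\sigma')$ with $Y(J') \supset Y(J)$, whence $\Im(J' \setminus Z) \supseteq \Im(J \setminus Z)$; and $J' \notin \J(\sigma)$, since otherwise $J$ would not be visible with respect to $\J(\sigma)$. You obtain the same $J'$ by different bookkeeping: passing to the complementary sets $E(\cdot)$ of heights of visible $Z$-points, noting $E(\sigma') \subseteq E(\sigma)$ from $V(\sigma') \subseteq V(\sigma)$, and then applying \cref{intervals} together with connectedness of $\Im(J\setminus Z)$ to land inside a single $\vis(\sigma')$-interval. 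Your exclusion of $J' \in \J(\sigma)$ via the maximality argument $Y(J) = Y(J') \Rightarrow J = J'$ is a mild detour compared to the paper's one-line observation, but rests on the same nesting facts. What you add is a spelled-out proof of the inclusion $\J(\sigma) \subseteq \J(\sigma')$ and of the fact that segments in $\J(\sigma')\setminus\J(\sigma)$ map into $\sigma'\setminus\sigma$; the paper uses both silently, and your justification via \cref{rem:segments} (that $J\setminus Z$ maps into the interior of a single saddle connection, so $J$ admits no straight prolongation when further saddle connections are introduced) is exactly the right input and is also what the paper's final sentence implicitly rests on.
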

 
 \proof
 Suppose $J$ is not visible with respect to $\J(\sigma')$.
 Then there exists some $J' \in \vis(\sigma')$
 such that $Y(J') \supset Y(J)$, and so 
 $\Im(J' \setminus Z) \supseteq \Im(J \setminus Z)$.
 Observe that $J'\notin\J(\sigma)$, for otherwise~$J$ would not be visible with respect
 to $\J(\sigma)$. Therefore, $\varphi(J')$ is contained in some saddle connection
 in $\sigma' \setminus \sigma$.
 \endproof

\section{Extending triangles}\label{sec:extend}

Recall from \cref{non-fold_flip} that any non-folded (topological) triangle $T$ on $(S,\P)$
has all three sides flippable in any triangulation $\T \in \F(S,\P)$
containing $T$. In contrast, this is far from true in the case of triangulations
on half-translation surfaces (see \cref{ex:bad_triangle}).
Nevertheless, knowing that some sides of a triangle are flippable will help us to detect neighbouring triangles.
Let us begin with an observation that will be relevant in the next example.

\begin{rem}[Flipping within a pentagon]
 Let $T$ be a triangle on $(S,q)$ with sides $a,b,c \in \A(S,q)$.
 Assume $T$ is not $(1,1)$--annular.
 Suppose $\T$ is a triangulation containing $T$ in which both $a$ and~$b$ are flippable.
 Appealing to \cref{codim2}, we deduce that $\T - \{a,b\}$ has an almond pentagon
 as its unique non-triangular region, in which
 $a$ and $b$ form non-intersecting diagonals.
 Consequently, the saddle connections $a'$ and $b'$ obtained by respectively flipping $a$ and $b$
 in $\T$ can only intersect in the interior of $T$.
\end{rem}

\begin{exa}[Triangle with flipping difficulties] \label{ex:bad_triangle}
 Let $T$ be the triangle with sides $a,b,c$ as shown in \cref{fig:bad_triangle}. 
 Observe that $T$ cannot be $(1,1)$--annular.
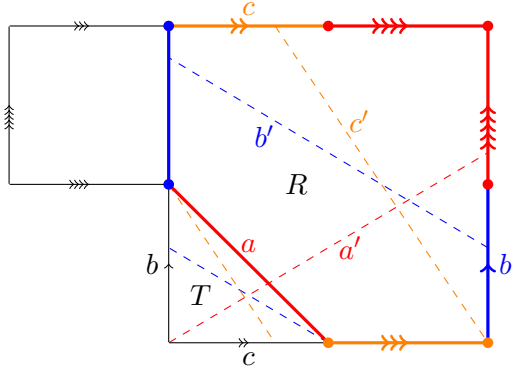
\begin{figure}
  \begin{center}
   \begin{tikzpicture}[scale=0.7]
    \foreach \x in {0,1,2,3} {
      \foreach \y in {0,1,2} {
    \node[circle,fill=none,minimum size=0pt, inner sep=0pt] (A\x-\y) at (3*\x, 3*\y) {};
    }}
     \draw[middlearrow={>>>>}] (A0-1) -- (A1-1);
     \draw[middlearrow={>>>}] (A0-2) -- (A1-2);
     \draw[middlearrow={>>>>>}] (A0-1) -- (A0-2);
     \draw[middlearrow={>}] (A1-0) -- node[left] {$b$} (A1-1);
      \draw[red,dashed] (A1-0) -- node[right] {$a'$} (3*3, 3*1.2) {};
     \draw[blue,dashed] (A2-0) -- (3*1, 3*0.6) {};
     \draw[blue,dashed] (3*3, 3*0.6) -- node[pos=0.7,below] {$b'$} (3*1, 3*1.8) {};
     \draw[orange,dashed] (A1-1) -- (3*5/3, 3*0) {};
     \draw[orange,dashed] (3*5/3, 3*2) -- node[pos=0.3,right] {$c'$} (A3-0) {};
     \draw[middlearrow={>},blue,very thick] (A3-0) -- node[right] {$b$} (A3-1);
     \draw[middlearrow={>>}] (A1-0) -- node[below] {$c$} (A2-0);
     \draw[middlearrow={>>},orange,very thick] (A1-2) -- node[above] {$c$} (A2-2);
     \draw[red,very thick] (A1-1) --  node[above] {$a$} (A2-0);
     \draw[middlearrow={>>>},orange,very thick] (A2-0) node[circle,fill,minimum size=4pt, inner sep=0pt] {} -- (A3-0)
     node[circle,fill,minimum size=4pt, inner sep=0pt] {};
     \draw[blue,very thick] (A1-1) node[circle,fill,minimum size=4pt, inner sep=0pt] {} -- (A1-2)
     node[circle,fill,minimum size=4pt, inner sep=0pt] {};
     \draw[middlearrow={>>>>>},red,very thick] (A3-1) node[circle,fill,minimum size=4pt, inner sep=0pt] {} -- (A3-2)
     node[circle,fill,minimum size=4pt, inner sep=0pt] {};
     \draw[middlearrow={>>>>},red,very thick] (A2-2) node[circle,fill,minimum size=4pt, inner sep=0pt] {} -- (A3-2);
     \node (T) at (3*1.2, 3*0.3) {$T$};
     \node (R) at (3*1.8, 3*1) {$R$};
   \end{tikzpicture}
   \caption{All horizontal and vertical saddle connections have the same length.
   There is a heptagon $R$ on this half-translation surface bounded by the saddle connections indicated
   in thick lines. The saddle connections $a',b',c'$ must intersect one another transversely inside $R$.
   (The gluings are indicated by the number of arrows, not~by~colour.)}
   \label{fig:bad_triangle}
  \end{center}
 \end{figure}
 Let $R$ be the heptagonal region bounded by the thick saddle connections.
 The boundary $\partial R$ is partitioned into six subintervals:
 as one follows the boundary, these intervals cycle through the colours red, orange, and blue,
 while also alternating between being open and closed.
 Any saddle connection $a'$ obtained by flipping~$a$ in any triangulation
 containing $T$ must contain a line segment in $R$ connecting the two red intervals.
 Similarly, any $b'$ or $c'$ obtained by flipping $b$ or $c$ will contain
 a line segment respectively connecting two blue intervals or two orange intervals.
 Then $a',b',c'$ must pairwise intersect in the interior of~$R$.
 By the above remark, we deduce that $T$ has at most one flippable side in any
 triangulation containing $T$.
\end{exa}

The goal of this section is to prove the following. In light of the above example,
this is the strongest result one could hope for in general. Recall that a pentagon
is almond if it has at most one broken diagonal, that is, at most one topological diagonal that cannot be realised by a saddle connection.

\begin{prop}[Extending triangles] \label{prop:extension}
 Suppose $T$ is a triangle on $(S,q)$ with sides ${a,b,c \in \A(S,q)}$. 
 Then there exists a triangulation $\T \supseteq \partial T$ in which $a$
 is flippable, and such that  $\lk(\T \setminus \{a,b\})$ contains $\PG_3$
 as an induced~subgraph.
 \end{prop}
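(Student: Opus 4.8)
The plan is to use \cref{detectable}: it suffices to produce a saddle triangulation $\T \supseteq \partial T$ in which $a$ is flippable and for which $\T \setminus \{a,b\}$ has a \emph{good} pentagon (or a $(1,1)$--annulus) as its unique non-triangular region. Such a region is necessarily $P = T_a \cup T \cup T_b$, where $T_a$ (resp.\ $T_b$) is the triangle of $\T$ meeting $T$ across $a$ (resp.\ across $b$), on the opposite side. Writing $w$ for the common endpoint of $a$ and $b$ and $\gamma_w,\alpha_{T_a,w},\alpha_{T_b,w}$ for the angles of $T,T_a,T_b$ at $w$, the five diagonals of $P$ are $a$, $b$, the arc $a'$ obtained by flipping $a$ inside $Q_a := T\cup T_a$, the arc $b'$ obtained by flipping $b$ inside $Q_b := T\cup T_b$, and the arc $xy$ joining the apex $x$ of $T_a$ to the apex $y$ of $T_b$; here $xy$ cuts off the corner of $P$ at $w$. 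Now $a,b$ are good, $a'$ is good exactly when $Q_a$ is strictly convex (equivalently, by \cref{cor:flip}, when $a$ is flippable in a triangulation containing $T$ and $T_a$), and $xy$ is good exactly when the angle $\alpha_{T_a,w}+\gamma_w+\alpha_{T_b,w}$ of $P$ at $w$ is less than $\pi$. So once $Q_a$ is strictly convex and $\alpha_{T_a,w}+\gamma_w+\alpha_{T_b,w}<\pi$, at most the diagonal $b'$ is bad, $P$ is a good pentagon, $a$ is flippable, and \cref{detectable} finishes the argument (after extending $\partial T\cup\partial T_a\cup\partial T_b$ to a saddle triangulation, which is possible since it is a simplex of $\A(S,q)$; connectivity of $\F(S,q)$ from \cref{thm:tahar} is available if needed).

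To construct $T_a$ I would run the flowing machinery of \cref{sec:flow} out of $a$, on the side of $a$ opposite $T$. First pick the flow direction (after an $\SL(2,\R)$--deformation, as permitted throughout \cref{sec:flow}) so that $a$ is a strictly tallest saddle connection of $\partial T$ and is not contained in a cylinder of that direction: taking the direction close to that of $b$ makes $\height(b)$ small while $\height(a)=\height(c)$ at the limit, and a short sign computation on $\height(a)^2-\height(c)^2$ (which changes sign transversally at the direction of $b$) shows that a one-sided perturbation makes $a$ strictly tallest; the cylinder condition excludes only countably many further directions. Then \cref{prop:tall} gives a visible singularity along a trajectory starting in the interior of $a$, and \cref{prop:vis} produces a triangle $T_a$ with $a$ as a side, on the side of $a$ opposite $T$, with $\partial T_a$ free of transverse crossings with $\partial T$ and with apex strictly between the endpoints of $a$. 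Composing with a large vertical stretch $\operatorname{diag}(\lambda^{-1},\lambda)$ fixes the direction of $a$ and preserves ``$a$ strictly tallest'' and the cylinder condition, while driving the angles of $T$ and of $T_a$ at the endpoints of $a$ towards $0$; so for $\lambda$ large the quadrilateral $Q_a=T\cup T_a$ is strictly convex (each of $T,T_a$ has embedded interior by \cref{poly_embed}). This fixes the flippability of $a$ and a positive constant $\delta := \pi-\alpha_{T_a,w}-\gamma_w$.

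It remains to find a triangle $T_b$ meeting $T$ across $b$ (opposite side), with $\partial T_b$ free of transverse crossings with $\partial T\cup\partial T_a$, with $T_b\neq T_a$, and with $\alpha_{T_b,w}<\delta$; then $xy$ is a good diagonal of $P$ and we are done. For this I would again invoke \cref{prop:tall} and \cref{prop:vis}, flowing out of $b$ inside the region of $S-(\partial T\cup\partial T_a)$ lying across $b$ from $T$, after an $\SL(2,\R)$--deformation chosen so that $b$ is tallest among the finitely many saddle connections of $\partial T\cup\partial T_a$ that can obstruct this flow and so that the triangle produced has its apex near $w$, forcing $\alpha_{T_b,w}<\delta$; the ``visibility after extending simplices'' lemma of \cref{sec:flow} tracks how the visible line segments behave as the cut is enlarged from $\partial T$ to $\partial T\cup\partial T_a$. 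The degenerate outcomes $T_a=T_b$ (where $T\cup T_a$ glues up to a $(1,1)$--annulus or a bigon with a simple pole) are disposed of directly using the classification in \cref{codim2}.

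The hard part is this last step. Once $T_a$ has been pinned down by a large vertical stretch, the saddle connections of $\partial T_a$ are long and nearly parallel to $a$, so one has lost the freedom to make $b$ genuinely tallest among $\partial T\cup\partial T_a$ with a single additional deformation; reconciling the deformation that is needed to see a visible singularity across $b$ (and to push the resulting triangle's corner at $w$ below the fixed threshold $\delta$) with the deformation already used to make $Q_a$ strictly convex is the delicate point, and it is here that the finer data of the flowing construction — the precise shape of $T_a$ and which singularity it first sees — must be exploited, possibly by carrying out the two flows in a more coordinated way rather than sequentially.
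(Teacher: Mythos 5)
You have the right high-level plan — produce the pentagon (or $(1,1)$--annulus) as the region across from $a$ and $b$ by constructing one triangle $T'$ across $a$ and one triangle $T''$ across $b$ via the flowing machinery of \cref{sec:flow} — and you have correctly diagnosed where the difficulty lies. But your construction does not close, and your own last paragraph concedes this: having spent an $\SL(2,\R)$--deformation making $a$ strictly tallest (and another to make $Q_a$ convex), you no longer have the freedom to flow out of $b$.

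The idea you are missing is that one should \emph{not} make $a$ strictly tallest. The paper applies a single $\SL(2,\R)$--deformation making $a$ vertical and $c$ horizontal, and then flows horizontally. With this choice $\height(c) = 0$ and $\height(a) = \height(b) = h = \height(T)$; that is, $a$ and $b$ are \emph{tied} for tallest in $\partial T$, and this tie is exactly the coordination you were looking for. \cref{prop:vis} guarantees that the triangle $T'$ produced across $a$ has height exactly $h$, so every saddle connection of $\partial T'$ has height $\le h$ and $a,b$ remain tallest in $\partial T \cup \partial T'$. One can therefore flow horizontally out of $b$ with no further deformation and apply Propositions \ref{prop:tall} and \ref{prop:vis} again to obtain $T''$. (To justify applying those propositions to $b$, which is not vertical, one uses a horizontal shear — this fixes the horizontal direction and all heights.) Flippability of $a$ is then supplied not by a vertical stretch but by the Awning Lemma (\cref{lem:awning}): the only obstruction is an awning, which must have negative slope and be tallest in $\sigma$, and no saddle connection of $\tau = \{a,b,c\}$ has negative slope. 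Note that your choice of flow direction — a perturbation of the direction of $b$ to make $a$ strictly tallest — pushes $\height(b)$ near $0$, so after even one flow out of $a$ the second flow out of $b$ becomes hopeless; the paper's choice (the direction of $c$) is precisely the one your perturbation moves away from. Incidentally, the ``large vertical stretch makes $Q_a$ strictly convex'' step is also unreliable as stated (the angle at the foot of $a$ where it meets a nearly horizontal side approaches $\pi/2$, not $0$), but that step is rendered unnecessary once the Awning Lemma is in hand.
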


If $b$ is also flippable in $\T$, then the non-triangular region of $\T \setminus \{a,b\}$ is either a $(1,1)$--annulus or an almond pentagon and $\lk(\T \setminus \{a,b\})$ contains
 ~\begin{tikzpicture}[scale = 0.8]
   \tikzstyle{every node}=[draw,circle,fill=black,minimum size=4pt, inner sep=0pt]
    \draw (0,0) node (a) {}
    -- (1,0) node (b) [label=above:$a$] {}
    -- (2,0) node (e) [label=above:$b$] {}
    -- (3,0) node (f) {};
    \end{tikzpicture}~
as a subgraph;
otherwise~$b$ is a non-flippable diagonal of an almond pentagon of $\T \setminus \{a,b\}$, in which case $\lk(\T \setminus \{a,b\})$ is equal to
  \begin{tikzpicture}[scale = 0.8]
   \tikzstyle{every node}=[draw,circle,fill=black,minimum size=4pt, inner sep=0pt]
    \draw (0,0) node (a) [label=above:$a$] {}
    -- (1,0) node (b) [label=above:$b$] {}
    -- (2,0) node (e)  {}
    -- (3,0) node (f) {};
    \end{tikzpicture}~.
 
We can also obtain stronger extension results for certain triangles.
 
\begin{prop}[Extending major triangles] \label{prop:major_ext}
 Suppose $T$ is a triangle on $(S,q)$ with sides $a,b,c \in \A(S,q)$.
 Assume $Q \supset T$ is a strictly convex
 quadrilateral that has $c$ as a diagonal, and satisfying
 $\area(T) \geq \frac{1}{2}\area(Q)$.
 Then the triangulation
 $\T \supseteq \partial T$ in \cref{prop:extension}
 can be chosen so that both $a$ and $c$ are flippable.
 Furthermore, if $T$ is not $(1,1)$--annular, then $\T$ can be chosen
 so that $\partial Q \subseteq \T$.
\end{prop}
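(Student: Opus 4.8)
The plan is to revisit the proof of \cref{prop:extension}, choosing the auxiliary flow direction so as to exploit the area hypothesis. Write $Q = T \cup_c T_0$, where $T_0$ is the triangle of $Q$ on the far side of the diagonal $c$; then $\partial Q$ consists of $a$, the remaining side $b$ of $T$, and the two sides of $T_0$ other than $c$. Since $Q$ is strictly convex, $\sigma_0 := \partial Q \cup \{c\}$ is a simplex of $\A(S,q)$, and $T$, $T_0$ are among the regions of $S - \sigma_0$.

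Assume first that $T$ is not $(1,1)$--annular. Applying an $\SL(2,\R)$--deformation, arrange the flow $\varphi^t$ to be horizontal and $c$ to be parallel to it. Then $\height(c) = 0$, each of $a, b$ has height $2\area(T)/\length(c)$, and each side of $T_0$ other than $c$ has height $2\area(T_0)/\length(c)$. The hypothesis $\area(T) \geq \tfrac12\area(Q)$ gives $\area(T) \geq \area(T_0)$, so $a$ and $b$ are both tallest saddle connections in $\sigma_0$; moreover, after a small perturbation of the direction we may assume the vertex of $T$ opposite $a$ lies strictly between the two endpoints of $a$ in height. Now run the argument of \cref{prop:extension} with $\sigma_0$ in the role of $\partial T$: flowing out of $a$ on the side away from $T$, \cref{prop:tall} yields a visible singularity with respect to $\sigma_0$ and $\varphi^t$, and \cref{prop:vis} then produces a triangle $T_a$ on the side of $a$ away from $T$, having $a$ as a side, with $\height(T_a) = \height(a)$, and with $\partial T_a$ having no transverse intersection with $\sigma_0$. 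Since distinct saddle connections that do not cross transversely are disjoint, $\sigma_0 \cup \partial T_a$ spans a simplex; extending it to a triangulation $\T$ makes $T$, $T_0$, and $T_a$ into triangles of $\T$. As $\height(T_a) = \height(a)$, the apex of $T_a$ lies within the height interval spanned by $a$; combined with the height condition on the vertex of $T$ opposite $a$, the segment joining these two vertices must cross the interior of $a$, so $S - (\T \setminus \{a\})$ has the strictly convex quadrilateral $T \cup_a T_a$ as its unique non-triangular region, and hence $a$ is flippable. Since $\sigma_0 = \partial Q \cup \{c\} \subseteq \T$ and $Q$ is strictly convex with diagonal $c$, the surface $S - (\T \setminus \{c\})$ has $Q$ as its unique non-triangular region, so $c$ is flippable and $\partial Q \subseteq \T$. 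Finally, $\lk(\T \setminus \{a,b\})$ contains $\PG_3$ as in the proof of \cref{prop:extension}: flowing out of $b$ as well, using that $b$ is also tallest in $\sigma_0$, one checks that $S - (\T \setminus \{a,b\})$ has a good pentagon (or a $(1,1)$--annulus) as its unique non-triangular region.

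It remains to treat the case where $T$ is $(1,1)$--annular, so that $T$ lies in a Euclidean cylinder $C$: two sides of $T$ are transverse arcs of $C$ and the third lies on $\partial C$. Here one uses that the two transverse arcs of a $(1,1)$--annulus are both flippable in the triangulation they form (flipping one gives the other transverse arc), together with \cref{prop:extension} to supply the pentagon-or-annulus property, and a short separate argument for the side of $T$ lying on $\partial C$.

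The main obstacle is the choice of perturbed flow direction. When $\area(T) = \tfrac12\area(Q)$ all four sides of $Q$ are equally tall, so one must verify that a single small perturbation can keep $a$ (and, for the link condition, $b$) tallest in $\sigma_0$ while simultaneously placing the vertex of $T$ opposite $a$ strictly inside the height interval of $a$. One must also confirm that the pentagon $S - (\T \setminus \{a,b\})$ is genuinely good — it has $a$, $b$, and the flip of $a$ among its good diagonals, and strict convexity of $T \cup_a T_a$ restricts which of its vertices can be reflex — and dispose of degenerate configurations, such as the flow triangles coinciding or sharing vertices with $\partial Q$, which is precisely where the $(1,1)$--annular case and small regions (for instance a bigon containing a simple pole) re-enter.
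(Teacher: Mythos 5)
The plan to rerun the flowing argument of \cref{prop:extension} with $\sigma_0 = \partial Q\cup\{c\}$ in place of $\partial T$ breaks down at the step you yourself flag as ``the main obstacle,'' and that obstacle is not merely a bookkeeping issue. In the proof of \cref{prop:extension} the crucial step is to rule out an \emph{awning} for $a$: since the only tallest saddle connections in $\tau$ are $a$ and $b$, neither of negative slope, no awning exists and \cref{lem:awning} immediately yields a visible singularity with $\Im(z)>0$. When you replace $\tau$ by $\sigma_0$, the sides $e,f$ of $T'$ enter the simplex; when $\area(T)=\tfrac12\area(Q)$ they are tied with $a,b$ for tallest, and the side $e$ opposite $a$ may well have negative slope. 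In that case there genuinely can be an awning and the quadrilateral $T\cup_a T_a$ you construct can fail to be strictly convex. Your proposed remedy -- perturb the direction so that the apex of $T$ opposite $a$ has height strictly between the two endpoints of $a$ while keeping $a$ tallest in $\sigma_0$ -- cannot always be achieved. A concrete obstruction: normalise $a$ vertical, $c$ horizontal, $p=(-w,0)$, $p'=(x',-h)$ with $x'\in(-w,0)$ (a strictly convex $Q$ with $\area(T)=\area(T')$); perturbing the direction one way raises $\height(b)$ above $\height(a)$, perturbing the other way raises $\height(e)$ above $\height(a)$, so no small perturbation makes $a$ tallest in $\sigma_0$ while also moving the apex off the boundary height. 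Hence \cref{prop:tall} does not apply and the argument stalls.

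The paper instead works with the awning criterion directly. It applies \cref{lem:awning} to $\sigma=\partial Q\cup\{c\}$: if $a$ is non-flippable in every triangulation containing $\sigma$, there is an awning, and a slope count forces this awning to be $\varphi$-image of the side of $Q$ opposite $a$; \cref{major_flip} then promotes this into a geometric statement. It lifts to local coordinates, writes the associated deck transformation as $w\mapsto\pm w+w_0$, shows a rotation by $\pi$ would produce overlapping triangles, and concludes $g$ is a translation; a convexity argument on the region between $J_0$ and the awning then manufactures a $(1,1)$--annulus $A$ with $a,c$ transverse and $b\subset\partial A$, so $T$ is in fact $(1,1)$--annular. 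This is precisely the dichotomy your proposal is missing: when $T$ is not $(1,1)$--annular, the contrapositive of \cref{major_flip} \emph{guarantees} $a$ is flippable in a triangulation containing $\partial Q\cup\{c\}$, with no perturbation needed, which also secures the ``Furthermore'' clause. Your $(1,1)$--annular branch is likewise underspecified: you say the two transverse arcs are flippable and that ``a short separate argument'' handles the boundary arc, but showing the boundary side is flippable is exactly the content of \cref{lem:ann_flip}, which requires another appeal to the awning lemma and careful slope bookkeeping. As written, the proposal therefore contains a genuine gap in both branches, and the missing ingredient is the awning/deck-transformation analysis of \cref{major_flip}.
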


\begin{definition}[Major triangle]
Call $T$ a \emph{major triangle} if there exists a strictly convex
quadrilateral $Q \supset T$ such that $\area(T) \geq \frac{1}{2}\area(Q)$;
if this holds, call the side of $T$ that forms a diagonal of $Q$ a \emph{base}
of~$T$. A major triangle may have more than one base.
Any $(1,1)$--annular triangle is major.
\end{definition}

The proofs of \cref{prop:extension} and \cref{prop:major_ext} are given
in \cref{subsec:extension} and \cref{subsec:major_ext} respectively.
In these subsections we shall continue using the following notation as defined in the previous section:
the collection of line segments $\J$, the segment $J_0$ and its endpoints $J_0^\pm$,
the singularities $Z \subset \CC$, the domain $Y\subset\CC$, and the locally isometric embedding
$\varphi \colon Y \to (S,q)$.

\subsection{Extending triangles to almond pentagons or \texorpdfstring{$(1,1)$--annuli}{(1,1)-annuli}} \label{subsec:extension}

Let $T$ be a triangle on $(S,q)$ with sides $a,b,c \in \A(S,q)$.
Apply an $\SL(2,\R)$--deformation to~$(S,q)$ to make $a$ vertical and $c$ horizontal.
This ensures that $a$ and $b$
are both tallest saddle connections in the simplex $\tau = \{a,b,c\} \in \A(S,q)$,
with height $h = \height(T) > 0$.
Let $\varphi^t$ be the horizontal unit-speed flow emanating from $a$, and
flowing away from $T$.
Equip $a$ with a unit-speed parameterisation
$a \colon [0,h] \rightarrow (S,q)$ so that $\varphi^t$ emanates from its right.
There is a unique isometric identification of $T$ with a Euclidean triangle
in~$\CC$ so that $a(y)$ maps to $\imagi y$ for $0 \leq y \leq h$.
Since $T$ appears to the left of $a$, the map
$\varphi \colon Y \rightarrow (S,q)$ can be extended to a locally isometric
embedding defined on $T \cup Y \subset \CC$.

Without loss of generality, we shall assume that the sides $a,b,c$ appear
in anticlockwise order around $T$ in $\CC$, and so $c$ is a line segment
lying on the negative real axis. 
Our goal is to extend~$\tau$ to a triangulation in which $a$ is flippable.
Let us prove a more general statement for any simplex $\sigma \supseteq \{a,b\}$ with tallest saddle connections $a$ and $b$ which characterises the non-flippability of $a$ in all triangulations
$\T \supseteq \sigma$.

\begin{lem}[Awning lemma] \label{lem:awning}
 Let $T$ be a triangle with $\partial T = \{a,b,c\}$ as above and assume
 $a$ is a tallest saddle connection in a simplex $\sigma \supseteq \{a,b\}$. 
 Then the following are equivalent:
 \begin{enumerate}
  \item $a$ is non-flippable in every triangulation $\T \supseteq \sigma$,
  \item There is exactly one visible singularity $z \in Z$ with respect to $\vis(\sigma)$,
  and this point satisfies $\Im(z) = 0$,
  \item There is a unique visible non-horizontal line segment $J \in \vis(\sigma)$,
  and $J$ has one endpoint at $J_0^+ = \imagi h \in Y$ and the other on the real axis.
 \end{enumerate}
\end{lem}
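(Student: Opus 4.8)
The plan is to prove the Awning Lemma by a cyclic chain of implications, using the visibility machinery and triangle-construction results of \cref{sec:flow}. The key conceptual point is that the flow $\varphi^t$ emanating from $a$ to its right must, by Poincar\'e recurrence, eventually re-cross a copy of $a$, so the set $\vis(\sigma)$ of visible non-horizontal line segments is non-empty; by \cref{prop:tall} (since $a$ is tallest in $\sigma$) there is a visible singularity. The content of the lemma is that this visible data degenerates to a single, maximally-constrained object exactly when $a$ can never be flipped.

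\smallskip

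\noindent\textbf{Step 1: (iii) $\Rightarrow$ (ii), and (ii) $\Rightarrow$ (iii).} Suppose there is a unique $J \in \vis(\sigma)$ with endpoints at $J_0^+ = \imagi h$ and on the real axis. Then $\Im(J \setminus Z) = (0, h)$: it cannot contain $0$ or $h$ as interior values by \cref{biginterval}, and density of $\bigsqcup_{J' \in \vis(\sigma)} \Im(J' \setminus Z)$ in $[0,h]$ (\cref{intervals}) forces $\Im(J\setminus Z) = (0,h)$ since $J$ is the only visible segment. By \cref{intervals}, the complement $[0,h] \setminus \Im(J \setminus Z) = \{0, h\}$ must consist precisely of imaginary parts of visible $z \in Z$. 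Since $J_0^+ = \imagi h$ is a \emph{closed} endpoint of $J$ (it lies on $\partial T$, not on a deleted horizontal ray), we can argue that the point of $Z$ at height $h$ is "absorbed" into $J$ and is not separately visible, leaving exactly one visible $z \in Z$ with $\Im(z) = 0$. Conversely, given (ii), the open endpoints of the intervals $\Im(J' \setminus Z)$ for $J' \in \vis(\sigma)$ must all lie in $\{0\}$ (by \cref{intervals}, these are the heights of visible points of $Z$, and there is only one, at height $0$); combined with density this forces a single $J$ with $\Im(J \setminus Z) = (0,h)$, and \cref{biginterval} pins down its endpoints as claimed.

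\smallskip

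\noindent\textbf{Step 2: (ii) $\Rightarrow$ (i).} This is where the "awning'' picture does its work, and I expect it to be the main obstacle. If the unique visible singularity $z$ has $\Im(z) = 0$, then the visible triangle $T' = \varphi(T(z))$ produced by \cref{prop:vis} / \cref{lem:good_tri}, \cref{cross_visible} has its corner opposite $a$ at height $0$ on the real axis --- that is, $T'$ shares the horizontal side $c$ (or more precisely, its apex lies on the boundary-ray through $J_0^-$). The key claim is that \emph{any} triangulation $\T \supseteq \sigma$ contains a triangle on the right of $a$ having a horizontal side, so the quadrilateral formed by $a$ and its two adjacent triangles in $\T$ is folded / non-strictly-convex (a corner of angle $\geq \pi$ appears at the horizontal side), whence by \cref{cor:flip} the link $\lk(\T \setminus \{a\}) \cong \NG_1$ and $a$ is non-flippable. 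To make this rigorous I would argue that if some $\T \supseteq \sigma$ had $a$ flippable, then the flipped arc $a'$ would cross $a$ in a non-horizontal line segment, pass into the visible region, and terminate at a visible singularity $z'$ with $\Im(z') \in (0,h)$ --- contradicting that the only visible singularity sits at height $0$. Here I would use \cref{lem:awning}'s hypotheses together with \cref{crossbeta}-style transversality bookkeeping to trap $a'$.

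\smallskip

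\noindent\textbf{Step 3: (i) $\Rightarrow$ (iii) (contrapositive).} Suppose (iii) fails. Then either there are at least two visible line segments in $\vis(\sigma)$, or the unique one does not have the specified endpoint configuration. In either case I claim there is a visible $z \in Z$ with $0 < \Im(z) < h$: indeed, by \cref{intervals} the complement of $\bigsqcup_{J \in \vis(\sigma)} \Im(J \setminus Z)$ in $[0,h]$ is non-empty and consists exactly of heights of visible singularities; if \emph{every} such height were in $\{0, h\}$, then combined with density and \cref{biginterval} one is forced into configuration (iii). So some visible $z$ has $\Im(z) \in (0,h)$, and \cref{prop:vis} produces a visible triangle $T' = \varphi(T(z))$ whose corner opposite $a$ has height in $(0,h)$, hence $T'$ has no horizontal side. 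Adding $\partial T'$ to $\sigma$ and completing to a triangulation $\T$ via \cref{thm:tahar}, the region $\T \setminus \{a\}$ obtained by removing $a$ is a strictly convex quadrilateral (the two triangles adjacent to $a$ meet only along $a$, and the offending angles are all $< \pi$ since no horizontal side is involved), so $a$ is flippable in $\T$, contradicting (i). Assembling Steps 1--3 closes the cycle (iii) $\Leftrightarrow$ (ii), (ii) $\Rightarrow$ (i), (i) $\Rightarrow$ (iii), proving all three equivalent.
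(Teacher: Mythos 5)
The high-level architecture of your proof — a chain of implications exploiting the visibility machinery of \cref{sec:flow}, with \cref{intervals}, \cref{biginterval}, \cref{prop:vis}, and \cref{lem:good_tri} as the workhorses — is essentially the paper's, and the direction (i)\,$\Rightarrow$\,(iii) in your Step~3 is close to the paper's contrapositive argument for (i)\,$\Rightarrow$\,(ii). However, there are three concrete problems.

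First, in Step~1 you assert several times that $\Im(J \setminus Z) = (0,h)$. In fact $J_0^+ = \imagi h$ is an endpoint of $J$ and $J_0^+ \notin Z$ (all points of $Z$ have positive real part), so $J_0^+ \in J \setminus Z$ and the correct set is $(0,h]$. This is not cosmetic: \cref{biginterval} only applies when the set \emph{contains} $(0,h]$ or $[0,h)$, so with $(0,h)$ you cannot invoke it, and the subsequent claim that the complement of $\Im(J\setminus Z)$ is $\{0,h\}$ is wrong (it is $\{0\}$). The ``absorbed into $J$'' hand-wave is a symptom of this error.

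Second, in Step~3 the key claim — \emph{``if every such height were in $\{0,h\}$, then combined with density and \cref{biginterval} one is forced into configuration (iii)''} — is not justified and is in fact false. For instance, the unique $J \in \vis(\sigma)$ could satisfy $\Im(J\setminus Z) = [0,h)$, which by \cref{biginterval} puts $J$'s endpoints at $J_0^-$ and at height $h$: this is not configuration (iii), and the only visible singularity has $\Im(z) = h$, not in $(0,h)$. Your argument then produces no triangle $T'$. The paper instead uses that (ii) failing produces some visible $z$ with $\Im(z) > 0$ (allowing $\Im(z) = h$), and checks that the resulting quadrilateral is strictly convex in all cases, including when $\Im(z') = h$ and $T'$ has a horizontal side.

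Third, Step~2 ((ii)\,$\Rightarrow$\,(i)) is acknowledged as a sketch and remains the genuine obstacle. Your proposed argument needs the flipped arc $d$ to terminate at a \emph{visible} (wrt $\sigma$) singularity $z'$ with $\Im(z') > 0$, but you do not establish that $z'$ is visible. The paper avoids this by proving (iii)\,$\Rightarrow$\,(i) rather than (ii)\,$\Rightarrow$\,(i): from (iii) one has the explicit awning $J$ whose image is a saddle connection $a' \in \sigma$, and the region bounded by $J_0$, $J$, and the real axis has no interior singularities, so the positive-slope segment of $\varphi^{-1}(d)$ starting on the interior of $J_0$ is trapped and must cross $J$, forcing $d$ to cross $a'$ — a contradiction. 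This is materially shorter than what you would need to finish Step~2, and it is worth noticing that (iii) gives you strictly more usable information than (ii) for blocking a flip.
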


\proof
We proceed via a cycle of implications.
\begin{itemize}
 \item (i) $\implies$ (ii): Suppose there exists a visible singularity
 $z\in Z$ satisfying $\Im(z) > 0$. Then by \cref{prop:vis}, there exists a triangle
 $T' = T(z) \subset Y$ meeting $a$ on the right, of $\height(T') = h$, such that its
 corner $z'\in Z$ opposite $a$ satisfies $\Im(z') > 0$.
 Gluing $T$ and $T'$ along $a$ produces a strictly convex quadrilateral,
 and so $a$ can be flipped in any triangulation containing $\sigma \cup \partial T'$.
 
 \item (ii) $\implies$ (iii): 
 Suppose $z\in Z$ is the unique visible singularity, and that $\Im(z) = 0$.
 Then by \cref{intervals}, we have
 \[\bigsqcup_{J\in\vis(\sigma)} \Im(J\setminus Z) = (0,h].\]
 Since this is a disjoint union of open intervals (under the subspace
 topology on $[0,h]\subset \R$), there exists only one visible non-horizontal line segment
 $J \in \vis(\sigma)$. In particular, we have $\Im(J\setminus Z) = (0,h]$.
 Applying \cref{biginterval}, we deduce that
 $J$ has one endpoint at $J_0^+$, and the other on the real axis as desired.
  
 \item (iii) $\implies$ (i): Let $J$ be as given in Condition (iii), and let $a'\in \sigma$ be its image on~$(S,q)$. Note that there are no points of $Z$ lying in the interior of the region $R \subset Y$ bounded by $J_0$, $J$, and the real axis.
 Suppose $\T \supseteq \sigma \supseteq \{a,b\}$ is a triangulation in which $a$ is flippable.
 Let $d$ be the diagonal obtained by flipping $a$ in $\T$.
 Since $d$ cannot intersect $b$, its pre-image $\varphi^{-1}(d) \subset T \cup Y$ contains a straight line segment with positive slope intersecting the interior of $a$ (see \cref{fig:awning}).
 Since $R$ has no interior singularities, this line must also cross $J$ transversely. This implies that $d$ intersects $a' \in \T \setminus \{a\}$, a contradiction.
\end{itemize}
  
\begin{figure}
 \begin{center}
  \begin{tikzpicture}[scale=1]
    \tikzset{dot/.style={draw,shape=circle,fill=black,scale=0.3}}
    \draw (2,0) -- node[above left] {$b$} (4,2);
    \draw[gray, dashed] (4,2) -- (8,2);
    \draw[gray, dashed] (7,0) -- (8,0);
    \draw[red,thick] (4,2) -- node[above right,pos=0.7] {$J$} (7,0);
    \draw (2,0) -- node[below] {$c$} (4,0);
    \draw[dashed,blue] (2.8,0) -- node[below,pos=0.6] {$d$} (5.8,1.4);
    \draw (4,2) node[dot] {} node[above] {$J_0^+$} -- node[right,pos=0.4] {$a$} (4,0) node[dot] {} node[below] {$J_0^-$};
    \draw (4,0) -- (6,0) node[dot] {};
    \draw[thick, dotted] (6,0) node[below] {$z$} -- (7,0);
    \node (A) at (3.5,0.9) {$T$};
    \node (R) at (5.2,0.5) {$R$};
  \end{tikzpicture}
  \caption{The line segment $J$ is an awning for $a$ with respect to $\J$ and
  the flow $\varphi^t$. The unique visible singularity $z\in Z$ may possibly
  lie on an endpoint of $J$.}
  \label{fig:awning}
 \end{center}
\end{figure}
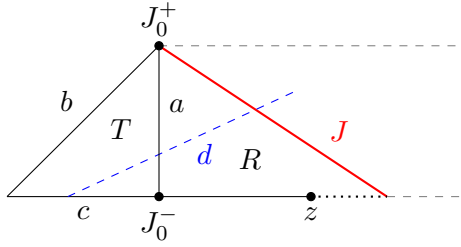
  
This completes the proof.
\endproof

\begin{definition}[Awning]
An \emph{awning} for~$a$ (with respect to $\J$ and the flow $\varphi^t$) is a visible non-horizontal line segment $J \in \vis(\sigma)$, with one endpoint at $J_0^+ = \imagi h \in Y$ and the other on the real axis (as in Condition~(iii) in the above lemma).
If $J$ is an awning for $a$, then $\varphi(J)$
maps into a tallest saddle connection $a'$ of $\sigma$, 
which we shall also refer to as an \emph{awning}. 
\end{definition}

Note that any awning $a'$ for $a$ cannot be parallel to $a$, and so $a'\neq a$.
Furthermore, $a$ and $a'$ must have the same height since $a$ is tallest in $\sigma$.

Let us now return our attention to \cref{prop:extension}. For this situation, we choose $\sigma = \tau$.
We will construct a triangulation that contains $\tau$ and in which $a$ is flippable.
Any awning $a'\in\tau$ is a tallest saddle connection with negative slope.
But the tallest saddle connections in $\tau = \{a,b,c\}$ are $a$ and~$b$, neither of which
have negative slope, and so $a$ does not have any awnings.
By the above lemma, there exists a visible singularity $z \in Z$
such that $\Im(z) > 0$.
This gives a visible triangle $T' = T(z)$ which can be glued to $T$
along $a$ to form a strictly convex quadrilateral~$Q$.
Moreover, $\height(T') = \height(a)$.
Therefore, $a$ and $b$ remain tallest in the simplex
$\tau' = \partial T \cup \partial T' \supset \tau$. 

If $b$ is also a side of $T'$, then gluing $Q$ along the two copies
of $b$ yields a $(1,1)$--annulus in which $a$ and $b$ are both
transverse arcs. This gives a link as desired.

Now suppose $b$ is not a side of $T'$.
Consider the horizontal unit-speed flow emanating from~$b$
away from $T$. By Propositions \ref{prop:tall} and \ref{prop:vis},
there exists a triangle $T''$ meeting $Q$ along $b$, of
$\height(T'') = h$, and such that $\partial T''$ has no transverse
intersections with $\tau'$ (see~\cref{fig:pentagon_flow}).
The isometric embedding $T \cup Y \rightarrow \CC$ can be uniquely
extended to an isometric embedding $T'' \cup T \cup Y \rightarrow \CC$.
Let $z' \in \CC$ be the corner of $T''$ opposite $b$.
Note that $\Re(z') < 0$ and $0 \leq \Im(z') \leq h$.

\begin{figure}
 \begin{center}
    \begin{tikzpicture}[scale=0.6]
   \draw (0,0) -- node[below] {$c$} (2,0) -- (4.5,2) node[right] {$z$} -- (2,4)
    -- (-2,1) node[left] {$z'$} -- (0,0) -- node[left] {$b$} (2,4) -- node[right] {$a$}  (2,0);
   \draw[red,dashed] (0,0) -- (4.5,2);
   \draw[red,dashed] (2,0) -- (-2,1);
   \node (A) at (1.5,1.9) {$T$};
   \node (B) at (3.2,2.2) {$T'$};
   \node (C) at (-0.2,1.5) {$T''$};
  \end{tikzpicture}
     \begin{tikzpicture}[scale=0.6]
   \draw (0,0) -- node[below] {$c$} (2,0) -- (4.5,2) node[right] {$z$} -- (2,4)
    -- (-3,0) node[left] {$z'$} -- (0,0) -- node[left] {$b$} (2,4) -- node[right] {$a$}  (2,0);
   \draw[red,dashed] (0,0) -- (4.5,2) -- (-3,0);
   \node (A) at (1.5,1.9) {$T$};
   \node (B) at (3.2,2.2) {$T'$};
   \node (C) at (-0.2,1.5) {$T''$};
  \end{tikzpicture}
  \caption{The two cases corresponding to when $\Im(z') > 0$ or $\Im(z') = 0$.
  In either case, the pentagon $P$ formed by gluing $T$, $T'$, and $T''$ along
  $a$ and $b$ is almond.}
  \label{fig:pentagon_flow}
 \end{center}
\end{figure}
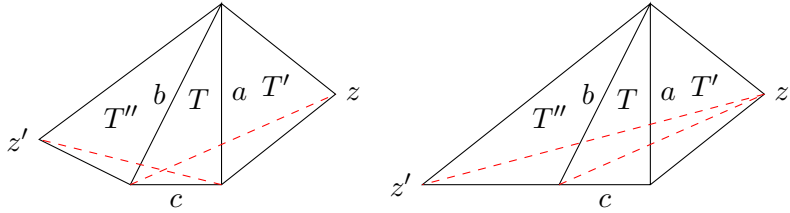

Let $P$ be the pentagon formed by gluing $Q$ and $T''$ along $b$.
Note that $a$, $b$, and the diagonal of~$Q$ obtained by flipping $a$ are all diagonals of $P$.
If $\Im(z') > 0$, then $b$ is also flippable in $P$ which implies that $P$ is an almond pentagon. Therefore, $\lk(\T \setminus \{a,b\})$ is either $\CG_5$ (if $P$ is strictly convex) or
 ~\begin{tikzpicture}[scale = 0.8]
  \tikzstyle{every node}=[draw,circle,fill=black,minimum size=4pt, inner sep=0pt]
  \draw (0,0) node (a) {}
    -- (1,0) node (b) [label=above:$a$] {}
    -- (2,0) node (e) [label=above:$b$] {}
    -- (3,0) node (f) {};
 \end{tikzpicture}~
(if $P$ has a broken diagonal, occurring precisely when $\Im(z')=h$). If $\Im(z') = 0$,
the straight line segment in $\CC$ connecting $z'$ to $z$ gives a fourth diagonal of $P$ and so $P$ is an almond pentagon. In this case, $\lk(\T \setminus \{a,b\})$ is equal to
 ~\begin{tikzpicture}[scale = 0.8]
  \tikzstyle{every node}=[draw,circle,fill=black,minimum size=4pt, inner sep=0pt]
  \draw (0,0) node (a) [label=above:$a$] {}
    -- (1,0) node (b) [label=above:$b$] {}
    -- (2,0) node (e)  {}
    -- (3,0) node (f) {};
 \end{tikzpicture}.

This completes the proof of \cref{prop:extension}.

\begin{rem}[Specific link after extending triangles] \label{rem:link_extension}
 From the above proof, we see that $\lk(\T \setminus \{a,b\})$ contains
 ~\begin{tikzpicture}[scale = 0.8]
  \tikzstyle{every node}=[draw,circle,fill=black,minimum size=4pt, inner sep=0pt]
  \draw (0,0) node (a) [label=above:$a$] {}
    -- (1,0) node (b) [label=above:$b$] {}
    -- (2,0) node (e)  {}
    -- (3,0) node (f) {};
 \end{tikzpicture}~
 unless $\Im(z') = h$.
 In this case, if $z'$ is the only visible singularity (seen from $b$), then $b$ is contained in a horizontal cylinder $C$ of height $h$. The lower boundary of $C$ can only contain one singularity and, moreover, $C$ contains at least three disjoint saddle connections of height $h$. Hence, $T$ is a $(m,1)$--annular triangle for some $m \geq 2$.
\end{rem}

\subsection{Extending major triangles} \label{subsec:major_ext}

In this section, we assume that $T$ is a major triangle with sides
$a,b,c\in\A(S,q)$, having~$c$ as a base.
Thus, there exists another triangle $T'$ having $c$ as a side such that gluing $T$ and $T'$ along~$c$ yields a strictly convex quadrilateral $Q$.
Apply an $\SL(2,\R)$--deformation to make~$c$ horizontal and $a$ vertical.
Thus, $\height(T) \geq \height(T')$ and so
$a$ and $b$ are tallest saddle connections in~$\partial Q \cup \{c\}$.
Note that $c$ is flippable in any triangulation containing
$\partial Q \cup \{c\}$.

As in the previous subsection, we isometrically identify $T$ with a triangle
in $\CC$ so that $a$ lies on the positive imaginary axis, $c$ on the negative
real axis, with their common corner at the origin.
The triangle $T'$ can be uniquely isometrically identified with a Euclidean
triangle in the third quadrant of $\CC$, so that it glues to the copy
of $T$ in $\CC$ along $c$ with the correct orientation.
The map $\varphi \colon Y \rightarrow (S,q)$ extends to a locally isometric embedding
on $\varphi \colon T \cup T' \cup Y \rightarrow (S,q)$, which has a lift
$\tilde{\varphi}$ to the universal cover. 

Our goal is to prove the existence of a triangulation containing $\partial T$
in which both $a$ and~$c$ are flippable.

\begin{lem}[Major triangles and flippability] \label{major_flip}
 Suppose $a$ is non-flippable in
 every triangulation $\T \supseteq \partial Q \cup \{c\}$.
 Then there exists a $(1,1)$--annulus $A$ 
 in which $a$ and $c$ are transverse arcs, and $b$ is contained in $\partial A$.
 In particular, $a$ and $c$ are flippable in any triangulation
 containing $\partial A \cup \{a,c\}$.
 In this case, we also have
 $\height(T) = \height(T')$.
\end{lem}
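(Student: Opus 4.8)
Assume $a$ is non-flippable in every triangulation $\T \supseteq \partial Q \cup \{c\}$. I will apply the Awning Lemma (\cref{lem:awning}) with $\sigma = \partial Q \cup \{c\}$; note that $a$ is a tallest saddle connection in $\sigma$ by the $\SL(2,\R)$--normalisation, and the triangle $T$ has $\partial T = \{a, b, c\}$ with $a$ vertical and $c$ horizontal as required. Non-flippability of $a$ in every such triangulation is exactly Condition (i) of that lemma, so Condition (ii) holds: there is exactly one visible singularity $z \in Z$ (with respect to $\vis(\sigma)$), and it satisfies $\Im(z) = 0$. The key first step is to unpack Condition (iii): there is a unique visible non-horizontal line segment $J \in \vis(\sigma)$, with one endpoint at $J_0^+ = \imagi h$ and the other on the real axis. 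Its image $\varphi(J)$ is an awning $a' \in \sigma$, a tallest saddle connection (so $\height(a') = h$) with negative slope.

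The heart of the argument is to identify the region $R \subset T \cup Y$ bounded by $J_0$ (i.e. $a$), the awning segment $J$, and the segment of the real axis from the origin to the real endpoint of $J$. Since $z$ is the unique visible singularity and $\Im(z) = 0$, the interior of $R$ contains no points of $Z$, and $R$ is a (possibly degenerate) triangle or, more precisely after including the copy of $T'$ glued along $c$, a parallelogram-type region. I would argue that $\varphi$ restricted to $R \cup T'$ is injective on interiors (invoking \cref{poly_embed}, after checking strict convexity), and that the image is a $(1,1)$--annulus $A$: the two horizontal sides of $R$ get glued because the only singularity reachable horizontally from the interior of $a$ is $z$ and it lies at height $0$, forcing $\varphi(J_0^+)$ and the real endpoint of $J$ to descend to the same singularity on $(S,q)$. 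Here the awning $a' = \varphi(J)$ and $a = \varphi(J_0)$ become the two transverse arcs of $A$, while $b$ — a side of $T$ lying along the "short" side of the parallelogram — lies in $\partial A$. One must also verify that $c$ is a transverse arc of $A$: since $c$ is a base of the strictly convex quadrilateral $Q$ and lies along the real axis inside $R \cup T'$, it connects the two boundary components.

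Once $A$ is produced, the remaining claims are quick. The saddle connections $a$ and $c$ are transverse arcs of the cylinder $A$, hence both are flippable in any triangulation $\T \supseteq \partial A \cup \{a, c\}$: cutting along $a$ (resp.\ $c$) leaves a strictly convex quadrilateral region, so \cref{cor:flip} applies. Finally, $\height(T) = \height(T')$ follows because $T$ and $T'$ together fill out the annulus $A$ up to the gluing, and $A$ being a genuine Euclidean cylinder with core curves parallel to $c$ forces the two triangles sharing the horizontal side $c$ to have equal height; alternatively, the awning $a'$ has height exactly $h = \height(T)$ and also equals $\height(a)$, and tracing through the picture shows $T'$ has the same height as $T$.

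The main obstacle I anticipate is the careful verification that the region $R \cup T'$ actually closes up into an embedded $(1,1)$--annulus rather than merely an immersed annular region or something with extra identifications: one needs to rule out that the awning segment $J$ or the base $c$ passes through additional singularities forcing a further subdivision, and this is where uniqueness of the visible singularity $z$ (Condition (ii) with $\Im(z)=0$) and the absence of points of $Z$ in the interior of $R$ must be used decisively, together with strict convexity of $Q$ to invoke \cref{poly_embed}.
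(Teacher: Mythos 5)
The first half of your proposal is sound: you correctly invoke the Awning Lemma with $\sigma = \partial Q \cup \{c\}$, identify the unique visible non-horizontal segment $J$ whose image is an awning $a' \in \sigma$ of negative slope, locate $R$, and observe that the awning has height $h$ so that $\height(T') = \height(a') = h = \height(T)$. But the heart of the argument — producing the $(1,1)$--annulus — has a genuine gap. You assert that uniqueness of the visible singularity at height $0$ "forc[es] $\varphi(J_0^+)$ and the real endpoint of $J$ to descend to the same singularity," but this is false in general: $J_0^+$ and the real endpoint of $J$ are the two endpoints of $J$, and $\varphi$ maps $J$ onto the saddle connection $a'$, whose two endpoints need not be the same singularity. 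The identification that actually holds is $\varphi(J_0^+) = \varphi(-w)$ (i.e.\ the top corner of $a$ coincides with the corner of $Q$ where $a'$ meets $b$), and it comes from the deck transformation $\tilde g$ of the universal cover that carries the lift of $a'$ to the lift of $J$, not from any statement about visible singularities. Relatedly, your region $R \cup T'$ is not a polygon in $\CC$ — $R$ lies in the right half-plane and $T'$ in the third quadrant, sharing only the origin — so one cannot ``check strict convexity'' of it and invoke \cref{poly_embed}; the naive region does not close up into an annulus as you describe.

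What is missing is precisely the paper's mechanism: after realising $\tilde g$ as an affine map $g(w) = \pm w + w_0$, one must prove that $g$ is a \emph{translation} (if it were a rotation by $\pi$, its center would be the diagonal intersection point of $Q$, forcing $T$ and $T'$ to have overlapping interiors on $(S,q)$, which is impossible). One then glues $R$ to $g(T')$ (not $T'$) along $J$ to obtain a strictly convex quadrilateral $Q'$ with no interior singularities, handles the subtlety that the visible singularity $z$ may sit strictly inside the base of $R$ (so that $Q'\not\subseteq Y$) by passing to the sub-triangle $T''\subset Q'\cap Y$ with sides $a$, $g(c)$, $J'$, and finally forms $A$ by gluing $T$ to $\varphi(T'')$ along $a$ and $c$. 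Your sketch omits all of these steps — the deck transformation, the translation-vs-rotation dichotomy, the $Q' \cap Y$ issue, and the correct pair of triangles to glue — so as written the annulus is never actually constructed.
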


\proof
By \cref{lem:awning}, there exists an awning $J\in\J$ for $a$ which descends
to a tallest saddle connection $a' \in \sigma = \partial Q \cup \{c\}$
via $\varphi$. Since $a'$ has negative slope, it cannot coincide with
$a$, $b$, nor $c$. Furthermore, the side $b' $ of $Q$
opposite $b$ has positive slope since $Q$ is strictly convex, and so
does not coincide with $a'$ neither.
Therefore, $a'$ must be the side of~$Q$ opposite $a$.
We shall identify $a'$ with its copy in $\partial T' \subset \CC$, lying in the
third quadrant. Furthermore, we have $\height(T') = \height(a')=\height(T)$.

Since $a'$ and $J$ have the same image on $(S,q)$, there is a deck transformation
$\tilde{g}$ of the universal cover sending $\tilde{\varphi}(a)$ to $\tilde{\varphi}(J)$.
We can realise $\tilde{g}$ in local co-ordinates: there exists a map
$g \colon \CC \rightarrow \CC$, of the form $g(w) = \pm w + w_0$ for some $w_0 \in \CC$,
such that $(\tilde{g} \circ \tilde{\varphi}) (w) = (\tilde{\varphi} \circ g)(w)$
whenever $w, g(w) \in T \cup T' \cup Y$. Refer to \cref{fig:major_flip}.

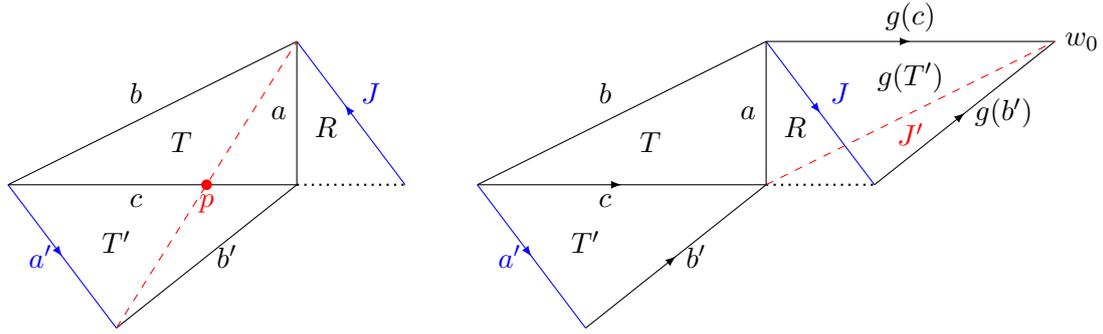
\begin{figure}
 \begin{center}
  \begin{tikzpicture}[scale=0.95]
  \begin{scope}
    \draw (0,0) -- node[above left] {$b$} (4,2);
    \draw[middlearrow={latex},blue] (0,0) -- node[left] {$a'$} (1.5,-2);
    \draw[middlearrow={latex reversed},blue] (4,2) -- node[above right] {$J$} (5.5,0);
    \draw (0,0) -- node[below left] {$c$} (4,0);
    \draw (1.5,-2) -- node[right] {$b'$} (4,0);
    \draw (4,2) -- node[left] {$a$} (4,0);
    \draw[thick, dotted] (4,0) -- (5.5,0);
    \node (A) at (2.4,0.6) {$T$};
    \node (B) at (1.5,-0.8) {$T'$};
    \node (R) at (4.4,0.8) {$R$};
    \draw[red,dashed] (1.5,-2) -- (2.75,0) node[circle,fill,minimum size=4pt, inner sep=0pt] {} node[below] {$p$} -- (4,2);
   \end{scope}
   \begin{scope}[xshift=6.5cm]
    \draw (0,0) -- node[above left] {$b$} (4,2);
    \draw[middlearrow={latex},blue] (0,0) -- node[left] {$a'$} (1.5,-2);
    \draw[middlearrow={latex},blue] (4,2) -- node[above right] {$J$} (5.5,0);
    \draw[middlearrow={latex}] (0,0) -- node[below left] {$c$} (4,0);
    \draw[middlearrow={latex}] (4,2) -- node[above] {$g(c)$} (8,2) node[right]{$w_0$};
    \draw[middlearrow={latex}] (1.5,-2) -- node[right] {$b'$} (4,0);
    \draw[middlearrow={latex}] (5.5,0) -- node[right] {$g(b')$} (8,2);
    \draw (4,2) -- node[left] {$a$} (4,0);
    \draw[thick, dotted] (4,0) -- (5.5,0);
    \node (A) at (2.4,0.6) {$T$};
    \node (B) at (1.5,-0.8) {$T'$};
    \node (C) at (6,1.5) {$g(T')$};
    \node (R) at (4.4,0.8) {$R$};
    \draw[red,dashed] (4,0) -- node[below] {$J'$} (8,2);
   \end{scope}
  \end{tikzpicture}
  \caption{The two cases in the proof of \cref{major_flip}.
  Left: $a'$ and $J$ are related by a rotation by~$\pi$ about $p$.
  Right: $a'$ and $J$ are related by a translation $g$. There may
  be singularities on the horizontal boundary of $R$ (black dotted line).}
  \label{fig:major_flip}
 \end{center}
\end{figure}

We claim that $g$ must be a translation; in which case we have
$w_0 = \length(c) + \imagi\length(a)$. Suppose otherwise, for a contradiction.
The unique half-translation of $\CC$ mapping $a'$ to $J$
is a rotation by $\pi$, with centre at the intersection point $p$ of the two
diagonals of $Q = T \cup T'$ in $\CC$. Then $g(T')$ and $T$ have overlapping but not coinciding
interiors. But this implies that $T$ and~$T'$ have overlapping interiors
(as triangles on $(S,q)$), which is impossible.

Let $R \subset \CC$ be the region bounded by $J_0$, $J$, and the real axis.
Since the awning $J$ is visible, the interior of $R$ is also visible
and hence contains no singularities.
Now consider the triangle $g(T') \subset \CC$. This triangle has
$J$ as one of its sides, and the point $w_0$ as the opposite corner.
Furthermore, $R$ and $g(T')$ can be glued along $J$ to form
a strictly convex quadrilateral $Q' \subset \CC$ with no interior singularities.
Note that $Q'$ is not necessarily
a subset of $Y$, since the unique visible singularity
$z\in Z$ could lie strictly to the left of the endpoint of $J$ on the real axis.
However, the points in $Q'$ strictly to the right of $z$ are the only points in $Q\setminus Y$.
Therefore, the line segment $J'$ connecting $0$ to $w_0$ in $\CC$
lies in~$Y$.
The triangle $T'' \subset Q' \cap Y$ with sides $a$, $g(c)$, and $J'$
descends to a triangle on $(S,q)$ with $a$ and $c$
as two of its sides. This triangle cannot coincide with $T$ on $(S,q)$,
since its interior overlaps that of $T'$.
Gluing the triangles $T$ and $T''$ along $a$ and $c$
gives the desired $(1,1)$--annulus $A$ on $(S,q)$.
\endproof

This completes the proof of \cref{prop:major_ext}.
We conclude this section with a lemma which will be useful in Sections \ref{sec:flip_pairs_bad_kites} and \ref{sec:triangle}.

\pagebreak

\begin{lem}[Annular triangles and flippability] \label{lem:ann_flip}
 Let $T$ be a triangle on $(S,q)$ with sides $a,b,c \in \A(S,q)$. Suppose that $A$ is an $(m,1)$--annulus that contains $b$ and $c$ as transverse arcs, where $m\geq 1$.
 Let $\sigma = \partial A \cup \{b,c\}$. Then there exists a triangulation $\T \supseteq \sigma$ in which $a$, $b$, and~$c$ are flippable.
 If $A$ is a $(1,1)$--annulus, then $\T$ can be chosen such that
 every saddle connection in $\sigma$ is flippable.
 
\end{lem}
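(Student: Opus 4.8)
The plan is to first unpack the geometry of $T$ inside the cylinder $A$, then reduce the flippability of $a$, $b$, $c$ to the extension results already established, and finally treat $\partial A$ separately in the $(1,1)$--annulus case. After an $\SL(2,\R)$--deformation we may assume the core curve of $A$ is horizontal, so that $b$ and $c$ are transverse arcs of $A$ of height equal to the height $h$ of $A$. Since $b$ and $c$ are two sides of $T$ they share a vertex $v$, which lies on one boundary component of $A$, and the remaining two vertices of $T$ lie on the opposite component. Working in the universal cover of $A$ (an infinite horizontal strip of height $h$) one sees that the third side $a$ is a horizontal geodesic joining those two vertices; hence $a$ is a boundary saddle connection of $A$, so in fact $a \in \partial A \subseteq \sigma$, and moreover $A = T \cup T''$ is the union of $T$ with the complementary region $T''$ of $A - \{b,c\}$, glued to $T$ along $b$ and $c$, where $T''$ also has height $h$. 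When $A$ is a $(1,1)$--annulus, $T''$ is again a triangle, having the boundary component of $A$ opposite to $a$ as its base, and $\area(T) = \area(T'') = \tfrac12\area(A)$.

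For the flippability of $b$ and $c$ I would argue inside the cylinder $A$: one flips $b$ by replacing it with the transverse arc of $A$ obtained by winding $b$ once more around the core, and this is a genuine flip because the quadrilateral $T \cup_b T''$ (or, in the case $A$ is an $(m,1)$--annulus with $m\geq 2$, the union of $T$ with a suitably chosen triangle of a triangulation of $T''$ meeting $b$) can be made strictly convex: the two angles of $T$, and likewise of the outer triangle, at the endpoints of $b$ are base angles of a Euclidean triangle, hence each is strictly less than $\pi$. When $A$ is a $(1,1)$--annulus one may instead quote that $T$ is a major triangle, being $(1,1)$--annular, and apply \cref{prop:major_ext} with base $b$, and then once more with base $c$, the second application only modifying the triangulation of $S\setminus A$ and so permissible by the connectedness of the flip graph fixing $\partial A\cup\{b,c\}$ (\cref{thm:tahar}). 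Either way we obtain a triangulation of $A$ extending $\partial A \cup \{b,c\}$ in which both $b$ and $c$ are flippable; it remains to triangulate $S\setminus A$ and to make $a$ (and, in the $(1,1)$--annulus case, all of $\partial A$) flippable.

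The main obstacle is this last point, since the inner triangles meeting the boundary saddle connections of $A$ are forced — across $a$ the inner triangle is $T$, and in the $(1,1)$--annulus case the inner triangle across the other boundary saddle connection is $T''$ — so we only control the outer triangle. Here I would use that the two angles of $T$ (respectively $T''$) at the endpoints of the relevant boundary saddle connection sum to strictly less than $\pi$, hence each is less than $\pi$; thus it suffices to triangulate $S\setminus A$ so that the triangle meeting each such boundary saddle connection $d$ from outside makes, at each endpoint of $d$, an angle summing to less than $\pi$ with the corresponding inner base angle. Such an outer triangle of full height is produced by flowing into $S\setminus A$ transversally to $d$ and applying \cref{prop:tall} and \cref{prop:vis}. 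The remaining work is to check that the one or two outer triangles obtained in this way, together with $T$, $T''$, $b$, $c$, and the chosen winding diagonals, are pairwise disjoint — which is furnished by \cref{prop:vis} — so that they extend to a single triangulation of $S\setminus A$ by \cref{thm:tahar}, and to verify in each case that the resulting quadrilaterals across $a$, $b$, $c$ (and, when $A$ is a $(1,1)$--annulus, across both boundary saddle connections of $A$) are strictly convex, equivalently have link $\NG_2$ by \cref{cor:flip}. I expect the bookkeeping of these convexity conditions, and in particular running the two flows for the two boundary saddle connections compatibly in the $(1,1)$--annulus case, to be the bulk of the argument.
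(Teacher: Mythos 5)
Your overall plan is structurally close to the paper's: observe $b,c$ are automatically flippable, then try to make $a$ (and, in the $(1,1)$ case, the other boundary saddle connection $d$) flippable by flowing outward and appealing to the results of Section~5 and Section~6. However, there is a genuine gap at exactly the point you flag as ``the bulk of the argument.'' Producing an outer triangle of full height via \cref{prop:tall} and \cref{prop:vis} does \emph{not} by itself give a strictly convex quadrilateral across $a$: if the visible singularity found by \cref{prop:tall} lies along the trajectory from an endpoint of $a$, the outer triangle can have a right angle there, exactly matching the right angle of $T$ (between $a$ and $c$), and the union is then flat-cornered. Ruling out this bad case is the substance of the lemma, and you do not carry it out. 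The paper's mechanism is \cref{lem:awning}: the only obstruction to flipping $a$ in \emph{some} triangulation containing $\sigma$ is an awning, which would have to map to a tallest saddle connection of $\sigma$ of strictly negative slope; after normalising $a$ vertical, $c$ horizontal, and $b$ of positive slope (so that all tallest saddle connections of $\sigma$ are vertical or of positive slope), no such candidate exists, and the same slope accounting extends to $d$ once $\sigma$ is enlarged to $\sigma' = \sigma \cup \partial T''$. You have neither this argument nor a substitute for it, so the proposal does not yet prove the claim.

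Two secondary issues. First, your normalisation (core of $A$ horizontal) makes $a$ horizontal, hence of height zero with respect to a vertical flow emanating from $a$, so $a$ need not be a tallest saddle connection in $\sigma$; but being tallest is the standing hypothesis for \cref{prop:tall}, \cref{prop:vis}, and \cref{lem:awning}. The paper instead normalises $a$ vertical and $c$ horizontal, which simultaneously makes $a$ (and $b$, and the other boundary component) tallest and sets up the slope bookkeeping cleanly. Second, your justification that $b$ and $c$ are always flippable --- that ``the base angles are each strictly less than $\pi$'' --- does not establish strict convexity, which requires the \emph{sum} of the two contributions at each endpoint of $b$ to be $< \pi$; the correct observation is that these sums are controlled by the half-plane angle $\pi$ at the boundary singularity and in fact equal $\pi/2$, which the paper records simply as an observation. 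Your detour through \cref{prop:major_ext} in the $(1,1)$ case is also not needed and is slightly off, since that proposition only guarantees a triangulation containing $\partial T$, not all of $\sigma$.
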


\proof
Observe that $b$ and $c$ are flippable in any triangulation $\T \supseteq \sigma$.
As before, assume $a$ is vertical and $c$ is horizontal.
Without loss of generality, suppose~$b$ has positive slope.
Note that~$a$ and $b$ are tallest in $\sigma$.

Consider the horizontal straight-line flow emanating from
$a$ away from $A$.
By \cref{lem:awning}, the only obstruction for the flippability of $a$
in every triangulation $\T \supseteq \sigma$ is the presence of an awning $J \in \J$ for $a$, which maps to a tallest
saddle connection of $\sigma$ via $\varphi$. Such an awning must have negative slope,
but this does not hold for any saddle connection of $\sigma$.
Therefore, $a$ does not have an awning. Applying \cref{prop:extension},
there exists a triangle $T''$ of $\height(T'') = \height(a)$
that can be glued to $A$ along $a$ so that $a$ is flippable in
any triangulation containing $\sigma' = \sigma \cup \partial T''$.

Assume now that $A$ is a $(1,1)$--annulus and let $a$ and $d$ be the saddle connections forming~$\partial A$. Then $d$ is also a tallest saddle connection in $\sigma$.
As $a$ is flippable in $\sigma'$, $\partial T''$ cannot contain any saddle
connection of negative slope that is tallest in $\sigma'$.
Consider now the horizontal straight-line flow emanating from
$d$ away from $A$. As before, an awning for $d$ must have negative slope.
But such an awning cannot exist in $\sigma'$.
Therefore, there exists a triangulation $\T \supseteq \sigma'$ in
which $d$ is also flippable.
\endproof

\section{Flip pairs and convex quadrilateral boundaries} \label{sec:flip_pairs_bad_kites}

In this section, we attempt to detect the boundary of a strictly convex quadrilateral $Q$
given its diagonals $c$ and $d$, using only the combinatorial properties of $\A(S,q)$.
First, we define the sets of \emph{barriers} $\B(c,d)$ and of \emph{flippable barriers} $\FB(c,d)$ in $\A(S,q)$ satisfying 
$\FB(c,d) \subseteq \partial Q \subseteq \B(c,d)$. However, these inclusions may be strict as we will show in \cref{ex:oct}. The main result of this section is the careful
definition of a set $\KFB(c,d) \subseteq \partial Q$, which we call \emph{kite-or-flippable barriers},
that is guaranteed to contain at least three sides of $Q$ (see \cref{cor:kfb}).
This will play an important role in our proof of the Triangle Test in the following section.

\subsection{Barriers}

Recall from \cref{codim1} that a simplex $\sigma \in \A(S,q)$ has link
$\lk(\sigma) \cong \NG_2$
if and only if the unique non-triangular region of $S - \sigma$ is a strictly convex
quadrilateral. Moreover, the two diagonals of the quadrilateral are precisely
the vertices of $\lk(\sigma)$.

\begin{definition}[Flip pair] \label{def:fp}
 The set of \emph{flip pairs} of $\A(S,q)$ is
 \[\FP(\A(S,q)) \coloneqq \{\lk(\sigma) ~|~ \sigma \in \A(S,q), \lk(\sigma) \cong \NG_2 \}.\]
\end{definition}

Given a flip pair $\{c, d\}$, there exists a (unique) strictly convex quadrilateral
$Q = Q(c,d)$ in which they form the diagonals.
We would like to recover the simplex $\partial Q \in \A(S,q)$ of boundary saddle connections 
purely combinatorially from $\{c, d\}$. However, this is not so straightforward.
As a first approximation, we define the following.

\begin{definition}[Barrier] \label{def:barriers}
The set of \emph{barriers} of a flip pair $\{c,d\} \in \FP(\A(S,q))$ is
\[\B(c,d) \coloneqq \{\gamma \in \A(S,q) ~|~ \gamma \neq c,d \text{ and } \forall \delta \in \A(S,q) \text{ with } \delta \pitchfork \gamma \text{ we have } \delta \pitchfork c \textrm{ or } \delta \pitchfork d \}.\] 
\end{definition}

Observe that any saddle connection $\gamma$ intersecting $\partial Q$ transversely
must necessarily intersect at least one of its diagonals, and so
$\partial Q \subseteq \B(c, d)$.
However, the barriers do not always coincide with $\partial Q$; see \cref{ex:oct} below.
This disparity can be characterised as follows.

\begin{lem}[Cordons appear as barriers] \label{barriers}
The saddle connections in $\B(c, d) \setminus \partial Q$ are 
precisely the cordons of $\partial Q$.
Consequently, any triangulation $\T$ containing $\partial Q$
also contains $\B(c,d)$.
\end{lem}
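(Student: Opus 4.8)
The plan is to prove the displayed identity $\B(c,d)\setminus\partial Q=\{\text{cordons of }\partial Q\}$ by a double inclusion, and to read off the ``Consequently'' sentence from it. First I would assemble the basic structure of $Q$ and of $\lk(\partial Q)$. Since $\{c,d\}=\lk(\sigma)$ for a codimension--$1$ simplex $\sigma$, \cref{codim1} identifies $Q$ as the unique non-triangular region of $S-\sigma$, a strictly convex quadrilateral with $c,d$ as its two diagonals; the diagonals cross, so $c\neq d$ and $\{c,d\}$ is not a simplex. The sides of $Q$ form the simplex $\partial Q$, and since $c,d\in\lk(\partial Q)$ they are disjoint from $\partial Q$; in particular each side of $Q$ is disjoint from both $c$ and $d$, and $c,d\notin\partial Q$. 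A strictly convex quadrilateral has exactly its two diagonals as essential arcs, and both are realisable as saddle connections, so $\A(Q,q)=\{c,d\}$. Finally, by the saddle-connection analogue of \cref{link_iso}, $\lk(\partial Q)\cong\A(S-\partial Q,q)$, and since $S-\partial Q$ is the disjoint union of $Q$ with its other regions $R_1,\dots,R_m$, this is the join $\A(Q,q)\ast\A(R_1,q)\ast\cdots\ast\A(R_m,q)$. Two consequences will be used throughout: any vertex of $\lk(\partial Q)$ other than $c$ or $d$ lies in some $\A(R_i,q)$ with $R_i\neq Q$, hence is disjoint from both $c$ and $d$; and if two vertices of $\lk(\partial Q)$ cross then they lie in the same region. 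In particular neither $c$ nor $d$ is a cordon of $\partial Q$, since each crosses the other.

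For ``cordons $\subseteq\B(c,d)\setminus\partial Q$'', I would take a cordon $\gamma$ of $\partial Q$. Then $\gamma\in\lk(\partial Q)\setminus\{c,d\}$, so $\gamma\notin\partial Q$. If $\delta\in\A(S,q)$ crosses $\gamma$, then $\delta\notin\lk(\partial Q)$ (a cordon is disjoint from every other vertex of its link), so $\delta$ crosses some side of $Q$, and hence crosses $c$ or $d$ by the observation stated just before the lemma. Thus $\gamma\in\B(c,d)\setminus\partial Q$.

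For ``$\B(c,d)\setminus\partial Q\subseteq$ cordons'', let $\gamma\in\B(c,d)\setminus\partial Q$, so also $\gamma\neq c,d$. I claim first that $\gamma$ is disjoint from $\partial Q$: were $\gamma$ to cross a side $s$ of $Q$, then, because $s$ is disjoint from both $c$ and $d$, the saddle connection $\delta=s$ would violate $\gamma\in\B(c,d)$. Hence $\gamma\in\lk(\partial Q)\setminus\{c,d\}$, so $\gamma$ lies in some region $R_i\neq Q$ and is disjoint from $c$ and $d$. If $\gamma$ were not a cordon of $\partial Q$, then some $\gamma'\in\lk(\partial Q)\setminus\{\gamma\}$ crosses $\gamma$; by the second observation of the first paragraph $\gamma'$ lies in the same region $R_i\neq Q$, hence is disjoint from both $c$ and $d$, and then $\delta=\gamma'$ again contradicts $\gamma\in\B(c,d)$. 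Therefore $\gamma$ is a cordon of $\partial Q$.

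Combining the two inclusions with the already-noted inclusion $\partial Q\subseteq\B(c,d)$ gives $\B(c,d)=\partial Q\cup\{\text{cordons of }\partial Q\}$; since any triangulation $\T\supseteq\partial Q$ contains every cordon of $\partial Q$ (property (iii) in the definition of cordon), it contains $\B(c,d)$, which is the ``Consequently'' statement. The only mildly delicate inputs are the structural facts about $Q$ gathered in the first paragraph — chiefly that $\A(Q,q)=\{c,d\}$ and that the sides of $Q$ are disjoint from its diagonals — both of which are immediate from strict convexity together with $c,d\in\lk(\partial Q)$; once these and the join decomposition of $\lk(\partial Q)$ are in place, the argument is purely formal, so I do not expect a real obstacle here.
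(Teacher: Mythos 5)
Your proof is correct and takes essentially the same route as the paper, but carries out the converse inclusion more carefully. The paper's proof runs the equivalence chain: $\gamma\notin\partial Q$ is a cordon $\iff$ every $\delta$ crossing $\gamma$ must cross $\partial Q$, ``and hence'' $c$ or $d$, which by definition is membership in $\B(c,d)\setminus\partial Q$. This is terse about the step from ``$\delta$ crosses $c$ or $d$'' back to ``$\delta$ crosses $\partial Q$'' (the ``and hence'' is not automatically reversible, since $c$ and $d$ are themselves disjoint from $\partial Q$), and one does need a small geometric input to rule out $\gamma$ crossing $c$ or $d$. You handle this cleanly by first showing $\gamma$ is disjoint from $\partial Q$ (otherwise a side of $Q$ provides a $\delta$ violating $\B(c,d)$), then observing via $\A(Q,q)=\{c,d\}$ and the join decomposition $\lk(\partial Q)\cong\A(Q,q)\ast\A(R_1,q)\ast\cdots\ast\A(R_m,q)$ that $\gamma$ must live in a region $R_i\neq Q$, whence it is automatically disjoint from $c$ and $d$ and any $\gamma'$ crossing it lies in the same $R_i$. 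The paper never explicitly invokes the join decomposition here; your use of it makes the region-containment argument, and in particular the point that a barrier not in $\partial Q$ is automatically disjoint from the two diagonals, completely transparent. Both proofs are sound; yours is the more careful reading of the same idea.
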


\proof
Suppose $\gamma \notin \partial Q$ is a saddle connection.
Recall from \cref{def:cordon} that $\gamma$ is a cordon of~$\partial Q$
if and only if $\gamma \in \lk(\partial Q)$ and
$\gamma$ does not intersect any saddle connection in $\lk(\partial Q)$.
This is equivalent to saying that whenever some $\delta \in \A(S,q)$ intersects $\gamma$,
then $\delta$ also intersects some saddle connection of $\partial Q$, and hence
at least one of $c$ or $d$. By definition, this holds precisely when $\gamma \in B(c,d) \setminus \partial Q$.
\endproof

To exclude the cordons of $\partial Q$, we use the fact that they cannot
be flipped in any triangulation containing $\partial Q$. This motivates a
second approximation.

\begin{definition}[Flippable barrier] \label{def:flip_bar}
The set of \emph{flippable barriers} of a flip pair $\{c,d\}$ is
\[\FB(c, d) = \{\gamma \in B(c, d) ~|~ \exists \textrm{ a triangulation } \T\supseteq \B(c, d) \textrm{ in which } \gamma \textrm{ is flippable} \}. \]
\end{definition}

This set can be characterised purely combinatorially, as the flippability
condition can be restated as $\lk(\T \setminus \{\gamma\}) \cong \NG_2$.
Since a cordon for $\partial Q$ cannot be flipped in any triangulation
containing $\partial Q$, we deduce that $\FB(c,d) \subseteq \partial Q$.

\begin{lem}[Over--under]
Given any flip pair $\{c,d\}$, we have
$\FB(c,d) \subseteq \partial Q(c,d) \subseteq \B(c,d)$. $\hfill\square$
\end{lem}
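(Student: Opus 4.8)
The plan is to establish the two inclusions separately, each following almost immediately from the definitions and the preceding lemmas. Let $\{c,d\}$ be a flip pair, so by \cref{codim1} there is a codimension--$1$ simplex $\sigma$ with $\lk(\sigma) \cong \NG_2$, and $S - \sigma$ has a unique non-triangular region $Q = Q(c,d)$, a strictly convex quadrilateral with diagonals $c$ and $d$ and boundary $\partial Q = \sigma$.

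\emph{The inclusion $\partial Q(c,d) \subseteq \B(c,d)$.} First I would fix a side $\gamma \in \partial Q$ and verify the two defining conditions in \cref{def:barriers}. Since the only saddle connections disjoint from $\partial Q$ that lie in $Q$ are $c$ and $d$, while $\gamma$ forms part of $\partial Q$, we have $\gamma \neq c,d$. Now suppose $\delta \in \A(S,q)$ satisfies $\delta \pitchfork \gamma$. Then $\delta$ enters the interior of $Q$ through the side $\gamma$. A straight-line segment entering a (strictly convex) quadrilateral through one side must exit through another side or cross a diagonal; tracing $\delta$ from its first crossing with $\gamma$, it either crosses $c$, crosses $d$, or exits through another side of $\partial Q$. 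In the last case we repeat the argument; since $\delta$ is a finite segment and each ``chord'' of $Q$ that misses both diagonals can cross at most two sides, $\delta$ must eventually cross $c$ or $d$ (in fact any chord of a quadrilateral separating it into two pieces meets a diagonal). Hence $\delta \pitchfork c$ or $\delta \pitchfork d$, so $\gamma \in \B(c,d)$.

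\emph{The inclusion $\FB(c,d) \subseteq \partial Q(c,d)$.} This is exactly the content of the sentence following \cref{def:flip_bar}, which I would spell out. By \cref{barriers}, the elements of $\B(c,d) \setminus \partial Q$ are precisely the cordons of $\partial Q$. By \cref{def:cordon}(iii), a cordon of $\partial Q$ belongs to every saddle triangulation containing $\partial Q$ and is non-flippable in all of them. By \cref{barriers}, any triangulation $\T \supseteq \B(c,d)$ in particular contains $\partial Q$, so a cordon $\gamma$ of $\partial Q$ lying in $\B(c,d)$ is non-flippable in every such $\T$. Therefore no cordon of $\partial Q$ satisfies the defining condition of $\FB(c,d)$, which forces $\FB(c,d) \subseteq \B(c,d) \setminus \big(\B(c,d) \setminus \partial Q\big) = \B(c,d) \cap \partial Q \subseteq \partial Q$, as required.

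The only mildly delicate point is the elementary planar geometry in the first inclusion — verifying that a saddle connection crossing a side of a strictly convex quadrilateral must also cross one of its diagonals. I would phrase this via the observation that the two diagonals decompose $Q$ into four triangular pieces whose only sides not on a diagonal are (sub-segments of) the sides of $Q$; a geodesic segment meeting the interior of $Q$ but avoiding both diagonals would have to stay inside one of these four triangles, hence could only meet $\partial Q$ along a single side and could not be transverse to it in a way that also exits — contradicting that $\delta \pitchfork \gamma$ means $\delta$ properly crosses into and out of a neighbourhood of $\gamma$. This is routine, so I expect no real obstacle; the lemma is essentially a bookkeeping consequence of \cref{codim1}, \cref{barriers}, and the definitions.
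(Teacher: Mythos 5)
Your argument is correct and matches the paper's (implicit) proof: the two inclusions are exactly the observations the paper makes immediately after \cref{def:barriers} and \cref{def:flip_bar}, and you have spelled out the routine convex geometry behind the first (a chord of a strictly convex quadrilateral entering through the interior of a side must cross a diagonal) and the appeal to \cref{barriers} and \cref{def:cordon}(iii) behind the second. One small misattribution: the fact that any triangulation containing $\B(c,d)$ contains $\partial Q(c,d)$ follows from the inclusion $\partial Q(c,d) \subseteq \B(c,d)$ that you have just proved, not from \cref{barriers}, which gives the converse containment of triangulations.
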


Unfortunately, these inclusions may be strict, as the following example demonstrates.

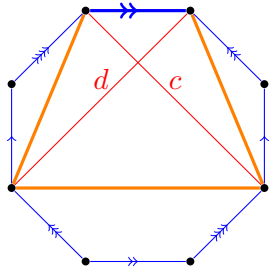
\begin{figure}
  \begin{center}
   \begin{tikzpicture}[scale=0.6]
    \foreach \x in {0,1,...,7} {
    \node[circle,fill=black,minimum size=3pt, inner sep=0pt] (A\x) at (22.5+45*\x:3) {};
    }
    \draw[red] (A1) -- node[above] {$d$} (A4);
    \draw[red] (A2) -- node[above] {$c$} (A7);
    \draw[orange,very thick, densely dashed] (A1) -- (A7) -- (A4) -- (A2);
    \draw[blue,middlearrow={>}] (A7) -- (A0);
    \draw[blue,middlearrow={>}] (A4) -- (A3);
    \draw[blue,middlearrow={>>}] (A2) -- (A1);
    \draw[blue,very thick] (A2) -- (A1);
    \draw[blue,middlearrow={>>}] (A5) -- (A6);
    \draw[blue,middlearrow={>>>}] (A0) -- (A1);
    \draw[blue,middlearrow={>>>}] (A5) -- (A4);
    \draw[blue,middlearrow={>>>>}] (A6) -- (A7);
     \draw[blue,middlearrow={>>>>}] (A3) -- (A2);
   \end{tikzpicture}
   \caption{The flip pair $\{c,d\}$ forms the diagonals of a strictly convex quadrilateral $Q(c,d)$,
   whose sides are indicated by thick line segments. The saddle connections in $\FB(c,d)$ are
   given as dashed orange lines, while those in $\B(c,d) \setminus \FB(c,d)$ are in blue.}
   \label{fig:octagon}
  \end{center}
 \end{figure}

 \begin{exa}[Non-flippable barriers] \label{ex:oct}
 \cref{fig:octagon} shows a genus $2$ translation surface formed by gluing opposite sides of a regular octagon. The flip pair $\{c,d\}$ forms the diagonals of a strictly convex quadrilateral $Q(c,d)$. The flippable barriers are indicated as dashed orange lines, while the non-flippable barriers are given in blue.
 There are three cordons of $\partial Q(c,d)$; these are the non-horizontal blue saddle connections.
 In this example, we have the strict inclusions $\FB(c,d) \subsetneq \partial Q(c,d) \subsetneq \B(c,d)$.
\end{exa}

\subsection{Cylindrical kites}
 
We now show that it is possible for a flip pair to only have two flippable barriers,
and prove that this happens precisely when the flip pair satisfies the following conditions.

\begin{definition}[Cylindrical kite]
 Suppose $Q$ is a strictly convex quadrilateral, with diagonals $c$ and~$d$,
 that can be made into a (non-rhombus)
 Euclidean kite under $\SL(2,\R)$--deformations; this occurs if and only
 if exactly one of its diagonals bisect the other. Without loss of generality,
 suppose that $c$ is horizontal and~$d$ is vertical, and that $c$ bisects $d$ (see \cref{fig:bad_kite}).
 Let $T$ and $T'$ be the triangles obtained by cutting $Q$ along $c$.
 We say $Q$ is a \emph{cylindrical kite} if there exists a horizontal Euclidean cylinder~$C$ such
 that:
 \begin{itemize}
  \item $C$ contains $T$ and $T'$, and thus all saddle connections 
  of $\partial Q$ are transverse arcs of~$C$,
  \item $C - \partial Q$ has exactly four regions, among them $T$ and $T'$, and 
  \item for each such region $R$ with $T \neq R \neq T'$, we require the two non-horizontal sides to be adjacent in $R$ and opposite in $Q$.
 \end{itemize}
 We also call a flip pair $\{c,d\}$ a \emph{cylindrical kite pair} if the quadrilateral
 $Q(c,d)$ is a cylindrical kite.
 \end{definition}
 
 \begin{rem}\label{rem:kite_pair}
  If $\{c,d\}$ is a cylindrical kite pair with a cylinder $C$ as above, then exactly one of~$c$ or $d$ belongs to $\partial C$.
 \end{rem}

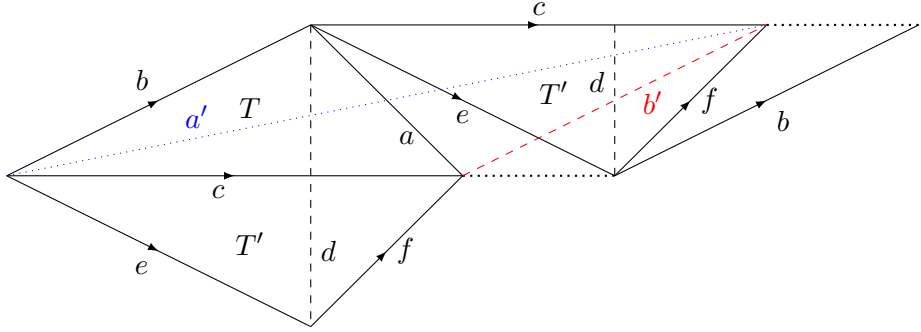
\begin{figure}
 \begin{center}
  \begin{tikzpicture}[scale=1.0]
   \draw[dotted, blue] (0,0) -- node[above] {$a'$} (5,1) -- (10,2);
   \draw[middlearrow={latex}] (0,0) -- node[above left] {$b$} (4,2);
   \draw[middlearrow={latex}] (8,0) -- node[below right] {$b$} (12,2);
   \draw[middlearrow={latex}] (0,0) -- node[below left] {$e$} (4,-2);
   \draw[middlearrow={latex}] (4,2) -- node[below] {$e$} (8,0);
   \draw[middlearrow={latex}] (0,0) -- node[below left] {$c$} (6,0);
   \draw[middlearrow={latex}] (4,2) -- node[above] {$c$} (10,2);
   \draw[middlearrow={latex}] (4,-2) -- node[right] {$f$} (6,0);
   \draw[middlearrow={latex}] (8,0) -- node[right] {$f$} (10,2);
   \draw (4,2) --(5,1) -- node[left] {$a$} (6,0);
   \draw[dashed] (4,2) -- (4,0) -- node[right] {$d$} (4,-2);
   \draw[dashed] (8,0) -- (8,0.5) -- node[left] {$d$} (8,2);
   \draw[thick, dotted] (6,0) -- (8,0);
   \draw[thick, dotted] (10,2) -- (12,2);
   \node (A) at (3.2,0.9) {$T$};
   \node (B) at (3.2,-0.9) {$T'$};
   \node (C) at (7.2,1.1) {$T'$};
   \draw[dashed, red] (6,0) -- (7, 0.5) -- node[below] {$b'$} (10,2);
  \end{tikzpicture}
  \caption{A cylindrical kite $Q(c,d)$ with sides $a$, $b$, $e$, and $f$.
  The triangles $T$ and $T'$ are obtained by cutting $Q(c,d)$ along $c$, and are
  contained in a horizontal cylinder~$C$.
  There may be singularities on the horizontal dotted lines
  lying on the boundary of~$C$. The saddle connections $b'$
  (in red, dashed) and $a'$ (in blue, dotted)
  appear in the proof of \cref{prop:kb}.}
  \label{fig:bad_kite}
 \end{center}
\end{figure}
 
 \begin{lem}[At most two flippable barriers in cylindrical kites] \label{lem:bad_kite}
  If $\{c,d\} \in \FP(\A(S,q))$ is a cylindrical kite pair then $\# \FB(c,d) \leq 2$.
 \end{lem}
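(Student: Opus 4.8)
The plan is to show that the bad kite $Q=Q(c,d)$ has two sides that are not flippable barriers; since $\FB(c,d)\subseteq\partial Q$ by the over--under lemma, this forces $\#\FB(c,d)\le 2$. After an $\SL(2,\R)$--deformation we may take $c$ horizontal, $d$ vertical, $c$ bisecting $d$, with a horizontal cylinder $C$ as in the definition of a bad kite; since the diagonals of a Euclidean kite are perpendicular, $c$ also bisects the two angles of $Q$ at the endpoints of $c$. Reading off the three defining conditions of a bad kite, each of the two regions of $C-\partial Q$ other than $T$ and $T'$ is bounded by a pair of opposite sides of $Q$ together with horizontal arcs of $\partial C$; in particular there are two sides $a,f$ of $Q$, each meeting a horizontal boundary arc of $C$ at an endpoint $p$ of $c$, and there (being horizontal and sharing the endpoint $p$) this arc is collinear with $c$. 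I claim $a,f\notin\FB(c,d)$.

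To prove this, fix a triangulation $\T\supseteq\B(c,d)$. Then $\T\supseteq\partial Q$, and since $Q$ is a strictly convex quadrilateral region of $S-\partial Q$ it has exactly two good diagonals, namely $c$ and $d$, so $\T$ contains exactly one of them. Consider $a$, and let $p$ be its endpoint as above, at which $Q$ has interior angle $\theta$ (bisected by $c$). The triangle of $\T$ meeting $a$ from inside $Q$ has $p$ as a vertex; its angle at $p$ is $\tfrac{1}{2}\theta$ if $c\in\T$ and $\theta$ if $d\in\T$. The triangle of $\T$ meeting $a$ from outside $Q$ lies, near $a$, in the region $R$ of $C-\partial Q$ with $a$ as a non-horizontal side, and (by the previous paragraph) its third vertex lies on the horizontal boundary arc of $C$ through $p$; since that arc is collinear with $c$ and $c$ meets $a$ at angle $\tfrac{1}{2}\theta$, the angle of this outer triangle at $p$ is $\pi-\tfrac{1}{2}\theta$. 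Hence the quadrilateral obtained by flipping $a$ has interior angle $\tfrac{1}{2}\theta+(\pi-\tfrac{1}{2}\theta)=\pi$ at $p$ when $c\in\T$, and $\theta+(\pi-\tfrac{1}{2}\theta)=\pi+\tfrac{1}{2}\theta>\pi$ at $p$ when $d\in\T$. In the first case the new diagonal is forced to pass through the singularity $p$, so it is not a saddle connection; in the second case the quadrilateral is not strictly convex. Either way $a$ is not flippable in $\T$, and the identical argument (with $T'$ in place of $T$) handles $f$. As $\T$ was arbitrary, $a,f\notin\FB(c,d)$.

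The delicate point---and the bulk of the actual proof---is extracting the local picture at $p$ rigorously from the three bullet conditions: that $c$ lies on $\partial C$; that the region $R$ really meets $a$ at an endpoint of $c$; that the relevant arc of $\partial C$ is collinear with $c$ there (using that a union of horizontal saddle connections can only go straight or reverse at a singularity); and, above all, that the third vertex of the outer triangle of $\T$ at $a$ lies on that horizontal arc even when $R$ is subdivided by extra singularities on $\partial C$ or when the region of $S-\partial Q$ on that side of $a$ is strictly larger than $R$. Pinning these down is where the care is needed; once they are in hand, the angle computation above gives the result immediately.
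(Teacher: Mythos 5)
Your proposal and the paper both isolate the same two sides ($a$ and $f$ in the paper's labelling) as non-flippable, and both exploit the defining feature of the bad-kite cylinder $C$: that the region $R$ of $C-\partial Q$ adjacent to $a$ is a Euclidean triangle with $a$ and $e$ as non-horizontal sides. But your proposal has a real gap exactly where you flag it, and it is not a cosmetic one.

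You reduce non-flippability of $a$ to the claim that in any triangulation $\T\supseteq\B(c,d)\supseteq\partial Q$, the third corner $v$ of the triangle of $\T$ on the outer side of $a$ lies on the horizontal boundary arc of $C$ at $p$, with the side of that triangle from $p$ running \emph{along} that arc. This is not forced by what you have established. The horizontal saddle connections making up $\partial C$ near $p$ are generally not in $\B(c,d)$, hence not in $\T$, so nothing a priori prevents the outer triangle from crossing $\partial C$ and having its apex outside $C$ altogether, or from having $\Im(v)$ strictly between $0$ and $h$. Even if $v$ does land on the arc, you still need to rule out intermediate singularities between $p$ and $v$ on that arc, since otherwise the segment $pv$ would pass through a singularity in its interior and could not be a side of the triangle. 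Establishing these facts is not a routine verification; it is essentially the content of the flow/visibility argument. The paper's proof (\cref{lem:bad_kite}) avoids the issue entirely by flowing out of $a$ across $R$: since $R$ is a Euclidean triangle, every trajectory from the interior of $a$ hits the interior of $e$, so by \cref{intervals} the side $e$ is the unique visible non-horizontal line segment, and because it shares the apex of $T$ with $a$ it is an \emph{awning} in the sense of \cref{lem:awning}. Condition (iii)$\Rightarrow$(i) of that lemma then gives non-flippability of $a$ in every triangulation containing $\partial Q$ without ever needing to locate the outer triangle's apex. If you want to push your approach through, you would in effect have to re-derive the relevant direction of the Awning Lemma; as written, the angle computation rests on an unproven geometric claim, so the argument is incomplete.
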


 \proof
 Let $Q = Q(c,d)$ be a cylindrical kite, and assume that $c$ is horizontal, $d$ is vertical,
 and that $c$ bisects $d$. Label the sides of $Q$ by $a,b,e,f$ in cyclic order
 so that $a,b,c$ bounds a triangle~$T$, the side $a$ has negative slope,
 and $\width(a) < \width (b)$ (see \cref{fig:bad_kite}).
 Let $T'$ be the triangle bounded by $c,e,f$. Let $C$ be a horizontal cylinder
 which fulfils the conditions for $Q$ to be a cylindrical kite.
 The triangles $T$ and $T'$ form exactly two of the regions of $C - \partial Q$.
 
 Let $R$ be the region of $C - \partial Q$ meeting $T$ along $a$. 
 Then $a$ and $e$ form the adjacent non-horizontal sides of $R$.
 It follows that all other sides of $R$ lie on one boundary component of~$C$.
 Since each region of $C - \partial Q$ is a Euclidean planar polygon,
 $R$ is isometric to a Euclidean triangle,
 with its horizontal ``side'' possibly subdivided into several saddle connections. 
 
 Now consider the horizontal straight-line flow emanating from $a$ away from $T$.
 Any trajectory starting from the interior of $a$ will cross $R$ and then hit
 the interior of $e$. Appealing to \cref{intervals}, we deduce that $e$ is the only
 visible line segment with respect to the flow and~$\partial Q$.
 Since $e$ shares an endpoint with $a$ at the corner of $T$ opposite~$c$,
 it follows that $e$ is an awning for $a$. Therefore, by \cref{lem:awning},
 $a$ cannot be flippable in any triangulation $\T \supset \partial Q$, hence
 $a \notin \FB(c,d)$.
 
 Using a similar argument, we deduce that $f\notin \FB(c,d)$, and so $\# \FB(c,d) \leq 2$.
\endproof

However, there will always be at least two flippable barriers, and at least three if we are not in the cylindrical kite case.

\begin{lem}[Flip pair boundaries]
 Let $\{c, d\} \in \FP(\A(S,q))$ be a flip pair. Then $\#\FB(c,d) \geq 2$.
 Furthermore, equality occurs if and only if $\{c,d\}$ is a cylindrical kite pair.
 
 \begin{proof}
  Apply an $\SL(2,\R)$--deformation to make $c$ horizontal and $d$ vertical.
  If $Q$ has a pair of opposite sides identified, then it forms a
  $(1,1)$--annulus by gluing those sides. Applying \cref{lem:ann_flip},
  there exists a triangulation containing $\partial Q$ in which every saddle
  connection of~$\partial Q$ is flippable.
  In this case, we have $\FB(c,d) = \partial Q$.
  Note that $\# \partial Q = 3$, as identifying the second pair of opposite sides would yield a torus which we do not consider here. 
  
  Let us now assume $Q$ has four distinct sides $a$, $b$,
  $e$, $f$ appearing in the given cyclic order, with $a,b,c$ bounding
  a triangle $T$ in $Q$. Suppose that $b,e,d$ bound a triangle $T''$ in~$Q$.
  Without loss of generality, assume that both $T$ and $T''$ take up at
  least half the area of $Q$ (and are hence major), and that
  $a$ and $e$ have negative slope, while $b$ and $f$ have positive slope.

  First, we show that $b$ must be a flippable barrier. If $b \not\in \FB(c,d)$,
  then by \cref{lem:awning} there exists an awning for $b$
  with respect to $\partial Q$ and
  the horizontal flow away from $T$. This awning starts at the corner where $a$ and $b$ meet and hence has positive slope, which means it must be $f$. It follows that
  $\width(f) > \width(b)$. But this implies that $T''$ takes up less
  than half the area of $Q$, a contradiction.
   
  Next, we prove that at least one of $a$ or $e$ is also a flippable barrier.
  Suppose $a \not\in \FB(c,d)$.
  By considering slopes, we deduce that $e$ is an awning for $a$ with
  respect to $\partial Q$ and the horizontal flow direction.
  It follows that $\width(e) > \width(a)$, and so $T''$ is a
  strictly major triangle. 
  Now consider the vertical flow emanating from $e$ away
  from $T''$. Any awning for $e$ with respect
  to~$\partial Q$ and the vertical flow must start at the corner where $b$ and $e$ meet. Hence, it must
  be a widest saddle connection of negative slope (and taller than $e$).
  But such a saddle connection cannot exist, since $\partial Q$
  has $b$ and $e$ as its widest saddle connections. 
  Therefore, by \cref{lem:awning},
  $e$~is flippable in some triangulation containing 
  $\partial Q$ and so $e \in \FB(c,d)$.
  Note that in this case,
  the heights of $a,b,e,f$ are all equal,
  and $Q$ is a kite.

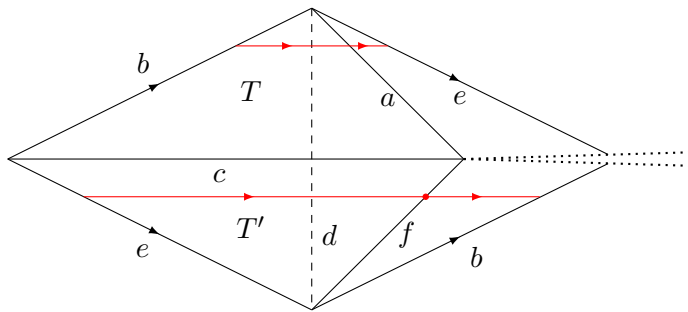
\begin{figure}
 \begin{center}
  \begin{tikzpicture}[scale=1.0]
   \draw[middlearrow={latex}] (0,0) -- node[above left] {$b$} (4,2);
   \draw[middlearrow={latex}] (4,-2) -- node[below right] {$b$} (7.9,-0.06);
   \draw[middlearrow={latex}] (0,0) -- node[below left] {$e$} (4,-2);
   \draw[middlearrow={latex}] (4,2) -- node[below] {$e$} (7.9,0.06);
   \draw (0,0) -- node[below left] {$c$} (6,0);
   \draw (4,-2) -- node[right] {$f$} (6,0);
   \draw (4,2) -- node[below] {$a$} (6,0);
   \draw[dashed] (4,2) -- (4,0) -- node[right] {$d$} (4,-2);
   \draw[thick, dotted] (6,0) -- (9,0.09);
   \draw[thick, dotted] (6,0) -- (9,-0.09);
   \node (A) at (3.2,0.9) {$T$};
   \node (B) at (3.2,-0.9) {$T'$};
   \draw[fill, color=red] (5.5,-0.5) circle (1pt);
   \draw[middlearrow={latex}, color=red] (5.5,-0.5) -- (7,-0.5);
   \draw[middlearrow={latex}, color=red] (3,1.5) -- (4.5,1.5);
   \draw[middlearrow={latex}, color=red] (4.5,1.5) -- (5,1.5);
   \draw[middlearrow={latex}, color=red] (1,-0.5) -- (5.5,-0.5);
  \end{tikzpicture}
  \caption{A kite $Q(c,d)$ in which only the sides $b$ and $e$ are flippable.
  Note that the copies~of~$b$ and $e$ on the right may not have the same endpoints
  (the dotted lines indicate~a~``cut'').}
  \label{fig:two_flippable_boundaries_kite}
 \end{center}
\end{figure}
  
  Finally, we characterise when there are only two flippable barriers.
  Assume that we are in the same setting as in the previous paragraph and suppose $a, f \not\in\FB(c,d)$. Let $T'$ be the triangle bounded by
  $c,e,f$.
  Then $b$ is an awning for $f$ with respect to $\partial Q$ and the
  horizontal flow away from $T'$. 
  Note that $Q$ cannot be a rhombus as $b$ and $f$ have different slopes.
  Each trajectory starting at an interior point of~$f$ will hit $b$ and
  continue across $T$ until it hits an interior point of $a$.
  The trajectory will then coincide with some trajectory starting from
  $a$ as described above (since it now flows away from~$T$);
  these in turn will hit $e$ and cross $T'$,
  until they reach the initial starting point on~$f$ (see \cref{fig:two_flippable_boundaries_kite}).
  Since this holds for every trajectory starting from the interior of $f$,
  the triangles~$T$ and~$T'$ are contained in a common horizontal cylinder $C$.
  Moreover, as $e$ is an awning for $a$, they share a common corner in some
  region $R \neq T,T'$ of $C - \partial Q$ and hence are the non-horizontal adjacent sides of $R$.
  Similarly, $b$ and $f$ are adjacent sides of the fourth region of $C - \partial Q$.
  It follows that $Q$ is a cylindrical kite.
  \end{proof}
\end{lem}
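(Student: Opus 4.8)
The plan is to normalise by an $\SL(2,\R)$--deformation so that the diagonals $c,d$ of the strictly convex quadrilateral $Q = Q(c,d)$ are horizontal and vertical respectively, and then to split into two cases. If some pair of opposite sides of $Q$ is identified, gluing that pair presents $Q$ as a $(1,1)$--annulus, and \cref{lem:ann_flip} produces a triangulation containing $\partial Q$ in which every side of $Q$ is flippable; hence $\FB(c,d) = \partial Q$. Since the torus is excluded, the remaining pair of opposite sides is not identified, so $\#\partial Q = 3 \geq 2$ and $\{c,d\}$ is not a bad kite pair, disposing of this case. The substance is therefore the case where $Q$ has four pairwise distinct sides $a,b,e,f$ in cyclic order, with $a,b,c$ bounding a triangle $T \subset Q$; cutting $Q$ along $c$ gives $T$ and $T'$ (with sides $c,e,f$), and cutting along $d$ gives a triangle $T''$ (with sides $b,e,d$) and its complement. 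After relabelling one may assume that both $T$ and $T''$ take up at least half of $\area(Q)$ (possible because the two triangles cut off by $c$, and likewise those cut off by $d$, have areas summing to $\area(Q)$), and that $a,e$ have negative slope while $b,f$ have positive slope.

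The engine of the argument is the Awning Lemma (\cref{lem:awning}), applied to each side of $Q$ in turn after the appropriate renormalisation: a side $\gamma$ of $Q$ fails to be a flippable barrier exactly when, for a suitable straight-line flow away from one of the triangles of $Q$ meeting $\gamma$, the saddle connection $\gamma$ has an awning, and an awning is forced to emanate from a prescribed corner, to have a prescribed sign of slope, and to be a widest (equivalently, after renormalisation, tallest) saddle connection of $\partial Q$. First I would show $b \in \FB(c,d)$: otherwise the flow away from $T$ yields an awning for $b$ starting at the corner where $a$ and $b$ meet and of positive slope, hence necessarily $f$, which forces $\width(f) > \width(b)$ and therefore $\area(T'') < \tfrac12\area(Q)$, contradicting the majority of $T''$. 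Next I would show that at least one of $a,e$ lies in $\FB(c,d)$: assuming $a \notin \FB(c,d)$, the slope pattern forces $e$ to be the awning for $a$ (for the flow away from $T$), so $\width(e) > \width(a)$, making $T''$ strictly major and, comparing heights, forcing $Q$ to be a kite with $a,b,e,f$ all of equal height; switching to the flow away from $T''$ emanating from $e$, an awning for $e$ would have to be a widest negative-slope saddle connection of $\partial Q$ taller than $e$, which cannot exist since the widest sides of $\partial Q$ are precisely $b$ and $e$. Hence $e \in \FB(c,d)$, and together with the previous step this gives $\#\FB(c,d) \geq 2$.

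For the equality characterisation, one implication is \cref{lem:bad_kite}, so it remains to deduce from $\#\FB(c,d) = 2$ that $\{c,d\}$ is a bad kite pair. Here $\FB(c,d) = \{b,e\}$ and $a,f \notin \FB(c,d)$. As above, $b$ is an awning for $f$ (flow away from the triangle $T'$ with sides $c,e,f$), and $Q$ is not a rhombus since $b$ and $f$ have different slopes. The plan is to follow the trajectory starting at an interior point of $f$: it crosses $T'$, then $b$, then $T$, exits through $a$, and then---now flowing away from $T$---crosses $e$ and $T'$ and returns to its starting point on $f$. Since this loop closes up for every such trajectory, $T$ and $T'$ lie in a common horizontal cylinder $C$, and because $e$ is an awning for $a$ and $b$ an awning for $f$, the pairs $\{a,e\}$ and $\{b,f\}$ occur as the adjacent non-horizontal sides of the two regions of $C - \partial Q$ other than $T$ and $T'$; this is precisely the defining configuration of a bad kite. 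The step I expect to be most delicate is exactly this bookkeeping of flows, corners, and slopes---keeping straight which flow direction and which triangle are relevant for each side, and translating the ``major''/``strictly major'' area bounds into the width and height inequalities that feed the Awning Lemma---whereas the $(1,1)$--annulus case and the appeal to \cref{lem:bad_kite} are routine.
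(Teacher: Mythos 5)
Your proposal follows the paper's proof essentially step-for-step: the same $\SL(2,\R)$--normalisation, the same split between the $(1,1)$--annulus case and the generic case, the same relabelling so that $T$ and $T''$ are both major, and the same repeated invocation of the Awning Lemma to show $b \in \FB(c,d)$, that at least one of $a,e$ lies in $\FB(c,d)$, and that failure of $a,f$ to be flippable barriers forces the bad-kite cylinder configuration. The only slip is a small one in the final trajectory bookkeeping: a trajectory starting on $f$ and flowing \emph{away} from $T'$ does not begin by crossing $T'$; rather, it crosses the intermediate region of $C - \partial Q$ in which $b$ and $f$ are adjacent before hitting $b$, and $T'$ is traversed only once (at the end of the loop), as in the paper -- but this does not affect the validity of the argument.
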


We have thus shown that $\#\FB(c,d) = 2$ if and only if $\{c,d\}$ is a cylindrical kite pair.
Moreover, when $c$ and $d$ are perpendicular, then $\FB(c,d)$ are the two longer
sides of $\partial Q(c,d)$.
Cylindrical kites only arise in a very particular set of circumstances
which we can use to our advantage.
We propose the following purely combinatorial procedure for recovering cylindrical kite boundaries.
As the two diagonals $c$ and $d$ of a cylindrical kite play different roles in the definition, we write $\kappa$ instead of $\{c,d\}$ and then, in the process of recovering the kite, determine which element of $\kappa$ plays the role of $c$ (or $d$).

\begin{prop}[Preparations to define kite barriers] \label{prop:kb}
Let $\kappa \in \FP(\A(S,q))$ be a cylindrical kite pair and let $\{b,e\} = \FB(\kappa)$ be the flippable barriers of~$\kappa$.
\begin{enumerate}
 \item There exists $\sigma \in \MIL(\A(S,q))$ satisfying $\FB(\kappa) \subset \lk(\sigma)$.
  Furthermore, $\kappa \cap \sigma$ consists of exactly one saddle connection. Call this saddle connection $c$ and call $d$ the other saddle connection in $\kappa$.
 \item There exists $\sigma' \in \MIL(\A(S,q))$ satisfying $\sigma' \supseteq (\sigma \cup \{b\}) \setminus \{c\}$.
  Furthermore, there is a unique saddle connection in $\lk(\sigma' \cup \{c\})$ disjoint from $d$.
  Call this saddle connection $a$.
 \item There exists $\sigma'' \in \MIL(\A(S,q))$ satisfying $\sigma'' \supseteq (\sigma \cup \{e\}) \setminus \{c\}$.
 Furthermore, there is a unique saddle connection in $\lk(\sigma'' \cup \{c\})$ disjoint from $d$.
 Call this saddle connection $f$.
 \item There exists a quadrilateral $Q \coloneqq Q(c,d)$ with diagonals $c$ and $d$. Moreover, the set $\{a,b,e,f\}$ is equal to $\partial Q(c,d)$. In particular, $\{a,b,e,f\}$ does not depend on the choices of $\sigma$, $\sigma'$, and $\sigma''$.
\end{enumerate}
\end{prop}

\begin{proof}
 By definition, there exists a quadrilateral $Q$ associated to $\kappa$. Let us assume, without loss of generality, that the diagonals of $Q$ are perpendicular, with the horizontal one bisecting the vertical one.
  The reader should refer to \cref{fig:bad_kite} as a guide throughout this proof,
  but imagine that there are initially no labels on the diagram.
  We recover $\FB(\kappa) = \{b,e\}$, yielding the two long sides of $Q$.
  For the sake of concreteness, choose $b$ to have positive slope, and $e$ negative.
  
  By definition, there exists a cylinder $C$ containing $\FB(\kappa) = \{b,e\}$ as transverse arcs. We claim that $C$ is the unique cylinder with this property and, furthermore, that $C$ is horizontal.
  
  If there exists a cylinder with slope $\theta\in\RP^1$
  having $b,e$ as transverse arcs, then every straight-line trajectory with slope $\theta$
  emanating from the interior of $b$ and $e$ will form closed (non-singular) loops.
  Consider the straight-line flows emanating from $b$ and $e$
  and pointing into the kite $Q$.
  For positive slopes, some trajectory emanating from the interior of $e$
  will hit a singularity which is an endpoint of the opposite side to $e$ in $Q$.
  Similarly, for negative slopes, some trajectory starting from
  the interior of $b$ will hit an endpoint of its opposite side.
  Finally, for the vertical slope, there is a downward trajectory starting from an interior point of $b$ that crosses $e$,
  and then hits the corner of $Q$ opposite the common corner of $b$ and $e$.
  Therefore, such a cylinder exists only for the horizontal slope, yielding the claim.
  
  Now we can choose $\sigma \in \MIL(\A(S,q))$ to be any triangulation away from $C$. By definition, it satisfies $\FB(\kappa) \subset \lk(\sigma)$.
  By \cref{rem:kite_pair}, exactly one of the saddle connections in $\kappa$
  belongs to $\partial C \subseteq \sigma$.
  Thus we have proven statement (i) and can assign $c$ to be the horizontal diagonal of~$Q$,
  and $d$ the vertical.
 
  Cutting $C$ along $b$ produces a polygonal region $R$.
  Exactly one of the two copies of $b$ on $\partial R$ has the two copies
  of $c$ as its adjacent sides (in \cref{fig:bad_kite}, this is
  the left copy of $b$).
  Let $b'$ be the topological arc in $R$ 
  connecting the endpoints of the two copies of $c$ that are not endpoints of $b$.
  Since $b'$ is a transverse arc of $C$, it can be realised as a saddle connection
  (it is shown in red in \cref{fig:bad_kite}).
  Thus, there is a quadrilateral $Q' \subset R$ bounded by $b,c,b',c$.
  Gluing $Q'$ along the two copies of $c$ yields a $(1,1)$--annulus $C'$, which
  must be a cylinder.
  
  We can choose $\sigma' \in \MIL(\A(S,q))$ to be any triangulation away from $C'$. Note that it satisfies $\sigma' \supseteq (\sigma \cup \{b\}) \setminus \{c\}$.
  The unique non-triangular region of $\sigma' \cup \{c\}$ is the parallelogram~$Q'$.
  Therefore $\lk(\sigma' \cup \{c\})$ has exactly two vertices:
  one that intersects $d$, and another that does not.
  The latter is the side of~$Q$ opposite $e$.
  Therefore, statement (ii) recovers the saddle connection~$a$ as in \cref{fig:bad_kite}
  (the other diagonal of this parallelogram appears as $a'$).
  
  Using a similar argument with $e$ taking the role of $b$, we can define a cylinder $C''$ and deduce that statement (iii) correctly recovers the saddle connection~$f$ as given in \cref{fig:bad_kite}.
  
  Finally, the cylinders $C$, $C'$, and $C''$ are uniquely defined
  for $\kappa$. Therefore, by \cref{sameMIL}, any choice of
  $\sigma$, $\sigma'$, and $\sigma'$ satisfying statements (i)--(iii)
  will produce the same set $\{a,b,e,f\}$.
\end{proof}

\begin{definition}[Kite barriers] \label{def:kb}
 Let $\kappa \in \FP(\A(S,q))$ be a cylindrical kite pair and choose $a,b,c,d,e,f$ as in \cref{prop:kb}. We then define the \emph{kite barriers} of $\kappa$ to be $\KB(\kappa) = \KB(c,d) \coloneqq \{a,b,e,f\}$.
\end{definition}

\begin{definition}[Kite-or-flippable barriers] \label{def:kfb}
Given a flip pair $\{c,d\}$, define its set of \emph{kite-or-flippable barriers} to be
${\KFB(c,d) \coloneqq \KB(c,d)}$ if $\{c,d\}$ is a cylindrical kite pair, and $\KFB(c,d) \coloneqq \FB(c,d)$ otherwise.
\end{definition}

The results of this section can be summarised as follows.

\begin{cor}[Three out of four ain't bad] \label{cor:kfb}
 If $\{c,d\}$ is a flip pair then $\#\KFB(c,d) \geq 3$ and $\KFB(c,d) \subseteq \partial Q(c,d)$.
 $\hfill\square$
\end{cor}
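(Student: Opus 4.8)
The plan is to read \cref{cor:kfb} off directly from the results accumulated earlier in this section, splitting into two cases according to whether $\{c,d\}$ is a bad kite pair. In both cases the inclusion $\KFB(c,d) \subseteq \partial Q(c,d)$ will be immediate, so the only substantive point is the cardinality bound $\#\KFB(c,d) \geq 3$.

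First I would dispose of the generic case, where $\{c,d\}$ is \emph{not} a bad kite pair. Here $\KFB(c,d) = \FB(c,d)$ by \cref{def:kfb}, and the inclusion $\FB(c,d) \subseteq \partial Q(c,d)$ is exactly the ``Over--under'' lemma. For the lower bound I would invoke the ``Flip pair boundaries'' lemma, which asserts $\#\FB(c,d) \geq 2$ with equality \emph{if and only if} $\{c,d\}$ is a bad kite pair; combined with the standing hypothesis of this case, this forces $\#\FB(c,d) \geq 3$.

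Next I would treat the case where $\kappa = \{c,d\}$ is a bad kite pair, so that $\KFB(c,d) = \KB(\kappa)$ by \cref{def:kfb}. Then \cref{prop:kb_detect} gives $\KB(\kappa) = \partial Q(c,d)$, which simultaneously yields the inclusion and the bound: the underlying quadrilateral $Q(c,d)$ of a bad kite has four distinct sides $a,b,e,f$ (as in \cref{def:kb} and \cref{fig:bad_kite}), so $\#\KFB(c,d) = \#\partial Q(c,d) = 4 \geq 3$.

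I do not expect a genuine obstacle, since every ingredient is already in place — the corollary is essentially bookkeeping. The one place to stay alert is the degenerate configuration where $Q(c,d)$ has a pair of opposite sides identified, so that $\partial Q$ is the boundary of a $(1,1)$--annulus with $\#\partial Q = 3$; this was already encountered in the proof of the ``Flip pair boundaries'' lemma, where one observes $\FB(c,d) = \partial Q$ and that such a $Q$ is not a bad kite. Thus it is covered by the first case with room to spare, and the two cases partition all flip pairs, so there is no overlap to reconcile.
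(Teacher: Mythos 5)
Your proposal is correct and follows exactly the route the paper intends for this $\hfill\square$-marked corollary: split according to whether $\{c,d\}$ is a bad kite pair, combine the ``Over--under'' inclusion with the ``Flip pair boundaries'' dichotomy in the generic case, and invoke \cref{prop:kb_detect} together with the four distinct sides of a bad kite in the other case. Your attention to the degenerate $(1,1)$--annulus configuration (where $\#\partial Q = 3$) is the right thing to check, and you correctly observe it lands in the non-kite case because there $\#\FB(c,d) = 3 \neq 2$.
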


\section{Detecting triangles}\label{sec:triangle}

 Throughout this section, assume
  $\tau = \{a_1, a_2, a_3\} \in \A(S,q)$ is a $2$--simplex.
 Our goal is to give a purely combinatorial criterion to detect
 whether $\tau$ bounds a triangle on $(S,q)$.
 This will provide an analogue of the Topological Triangle Test
 (\cref{top_tri_test}) for the saddle~connection~complex.
 
 The Triangle Test comprises three subtests: the First Triangle Test, the Bigon Test, and the Second Triangle Test. Each test enlarges the pool of triangles that can be conclusively detected; moreover, this procedure also simplifies the case analysis in each of the tests.

 \begin{prop}[First Triangle Test] \label{prop:first_triangle_test}
  Assume $\tau =\{a_1, a_2, a_3\} \in \A(S,q)$ is a {$2$--simplex}
  satisfying the following two conditions.
  \begin{enumerate}[(T1)]
   \item There is a triangulation $\T \supseteq \tau$ 
 such that
    \begin{enumerate}[(i)]
     \item $\lk(\T \setminus \{a_k\}) = \{a_k\} \sqcup \{b\}$
    for some $b \in \A(S,q)$,
     \item $a_i, a_j \in \KFB(a_k, b)$, and
     \item $\lk(\T \setminus \{a_i, a_j\})$ contains
     \begin{tikzpicture}[scale = 0.8]
   \tikzstyle{every node}=[draw,circle,fill=black,minimum size=4pt, inner sep=0pt]
    \draw (0,0) node (a) [label=above:$a_i$] {}
    -- (1,0) node (b) [label=above:$a_j$] {}
    -- (2,0) node (e)  {}
    -- (3,0) node (f) {};
    \end{tikzpicture}
     as an induced subgraph,
    \end{enumerate}
  for some choice of distinct $i,j, k\in\{1,2,3\}$, and
    \item If $\lk(\T' \setminus \tau)$ is infinite for some triangulation $\T' \supseteq \tau$,
    then $\lk(\T' \setminus \{a_i, a_j\})$ is also infinite for some choice of distinct $i,j \in \{1,2,3\}$.
  \end{enumerate}
  Then $\tau$ bounds a triangle on $(S,q)$.
  
  As a partial converse, if $T$ is a major triangle then $\partial T$
  satisfies Conditions (T1) and~(T2).
 \end{prop}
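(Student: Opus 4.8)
The plan is to prove the two directions of the proposition separately.

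\emph{Forward direction.} Suppose $\tau$ satisfies (T1) and (T2), and fix the witnesses $i,j,k$, the triangulation $\T$, and the saddle connection $b$ of (T1). By (T1)(i) and \cref{codim1}, $\{a_k,b\}$ is a flip pair: $S-(\T\setminus\{a_k\})$ has a strictly convex quadrilateral $Q=Q(a_k,b)$ as its unique non-triangular region, with $a_k$ and $b$ its diagonals and $\partial Q\subseteq\T$, and the diagonal $a_k$ cuts $Q$ into two triangular regions $T_1,T_2$ of $\T$. By (T1)(ii) and \cref{cor:kfb}, $a_i,a_j\in\KFB(a_k,b)\subseteq\partial Q$, so $a_i$ and $a_j$ are two of the (at most four) sides of $Q$. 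If they are the two non-diagonal sides of one of $T_1,T_2$, then $\tau$ bounds that triangle and we are done. The remaining task is to exclude every other configuration. In such a configuration $a_i$ and $a_j$ lie on opposite sides of the diagonal $a_k$ in $Q$; applying (T1)(iii) and \cref{detectable} produces a triangle $T''$ of $\T$ having both $a_i$ and $a_j$ among its sides. Since $a_i$ (resp.\ $a_j$) borders exactly two triangles of $\T$, one being $T_1$ (resp.\ $T_2$), the triangle $T''$ is either $T_1$ or $T_2$ — in which case $\tau$ already bounds a triangle — or is glued to $T_1$ across $a_i$ and to $T_2$ across $a_j$, giving a cyclic gluing of $T_1,T_2,T''$ along $a_i,a_j,a_k$. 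Using that $Q$ is embedded and strictly convex (\cref{poly_embed}), one shows such a cyclic gluing forces a self-identification of $\partial Q$ that is either incompatible with $\lk(\T\setminus\{a_k\})\cong\NG_2$ or else creates a Euclidean cylinder; in the latter case one produces a triangulation $\T'\supseteq\tau$ with $\lk(\T'\setminus\tau)$ infinite but $\lk(\T'\setminus\{a_i,a_j\})$ finite for every choice of $i,j$, contradicting (T2). Hence $\tau$ bounds a triangle.

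\emph{Partial converse.} Let $T$ be a major triangle with base $c$ and remaining sides $a,b$, and fix a strictly convex quadrilateral $Q\supseteq T$ with $c$ a diagonal and $\area(T)\ge\tfrac12\area(Q)$; write $d$ for the other diagonal of $Q$. If $T$ is not $(1,1)$-annular, \cref{prop:major_ext} (via \cref{prop:extension}) produces a triangulation $\T\supseteq\partial Q$ in which $a$ and $c$ are flippable and $\lk(\T\setminus\{a,b\})$ contains a $\PG_3$ with $a$ and $b$ among its vertices; if $T$ is $(1,1)$-annular, \cref{lem:ann_flip} gives a triangulation in which every side of $\tau$ is flippable, with an analogous link. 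Take $a_k=c$ and $\{a_i,a_j\}=\{a,b\}$. Flipping $c$ in $\T$ gives $\lk(\T\setminus\{c\})=\{c\}\sqcup\{d\}$ (the flip partner is $d$ because $\partial Q\subseteq\T$), which is (T1)(i); and (T1)(iii) follows from the explicit form of $\lk(\T\setminus\{a,b\})$ in \cref{prop:extension} and \cref{codim2} — in the one exceptional subcase, where $b$ is flippable and the auxiliary pentagon has a bad diagonal, $T$ is $(m,1)$-annular and one instead invokes \cref{lem:ann_flip}, relabelling the sides of $\tau$ so that the path in (T1)(iii) has $a_i$ at an endpoint. For (T1)(ii) one needs $a,b\in\KFB(c,d)$; both are sides of $Q$. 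If $\{c,d\}$ is a bad kite pair then $\KFB(c,d)=\KB(c,d)=\partial Q$ by \cref{prop:kb_detect}, so this is immediate; otherwise $\KFB(c,d)=\FB(c,d)$, and one argues with \cref{lem:awning}: since by \cref{barriers} the triangulations containing $\partial Q$ are exactly those containing the barriers, $a$ (resp.\ $b$) lies in $\FB(c,d)$ unless it has an awning, and after an $\SL(2,\R)$-deformation making $a$ (resp.\ $b$) vertical and $c$ horizontal, the hypothesis $\area(T)\ge\tfrac12\area(Q)$ forces every side of $Q\setminus T$ to be strictly shorter than $T$, hence not among the tallest saddle connections of $\partial Q$; a putative awning — necessarily a tallest saddle connection sharing the apex of $T$ opposite $c$ — could then only be $a$ or $b$, and slope considerations exclude both. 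Finally, (T2) holds because $\tau=\partial T$ for a genuine triangle: if $C$ is a cylinder region of $S-(\T'\setminus\tau)$ for a triangulation $\T'\supseteq\tau$, then $\partial C$ is disjoint from $\tau$, and since a positive-area triangle cannot lie inside a Euclidean cylinder, $\tau$ is disjoint from the interior of $C$; re-cutting along one arc of $\tau$ leaves $C$ a cylinder region, so $\lk(\T'\setminus\{a_i,a_j\})$ is infinite for every choice.

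\emph{Main obstacle.} The heart of the argument is the exclusion, in the forward direction, of the configurations where $a_i$ and $a_j$ lie on opposite sides of the diagonal $a_k$: this requires simultaneously controlling self-identifications of $\partial Q$, the possibility that the region found by \cref{detectable} is a $(1,1)$-annulus rather than a good pentagon, and the appearance of Euclidean cylinders, and it is precisely to close the last, cylindrical, possibilities that Condition (T2) is built into the hypotheses. A secondary, more routine, difficulty is the bookkeeping of slopes against the area inequality needed to place the non-base sides of a major triangle inside $\KFB$.
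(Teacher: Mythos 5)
Your overall strategy tracks the paper's, but there are three genuine gaps.

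\emph{Forward direction.} After localizing $a_i,a_j$ to $\partial Q$ and identifying the triangle $T''$ via (T1)(iii), you reduce to a single case (``cyclic gluing'') and then write that ``one shows such a cyclic gluing forces a self-identification of $\partial Q$ that is \dots\ or else creates a Euclidean cylinder.'' This sentence is exactly where the work lives, and it is hand-waved. The paper's argument is not about self-identifications of $\partial Q$ but of the non-triangular region $R$ of $\T\setminus\{a_i,a_j\}$; one must first split into the case where $R$ is a $(1,1)$--annulus (handled directly) and the case where $R$ is a good pentagon, and in the pentagon case show that if the third side $c$ of the central triangle is not $a_k$ then $a_k$ appears as two sides of $\partial R$; the adjacent subcase is excluded using (T1)(iii) (the crossing diagonal would be bad, forcing the wrong $\PG_3$), and in the non-adjacent subcase one constructs an explicit cylinder curve --- with separate arguments depending on whether the diagonal $e$ joining the two far corners is good or bad --- before invoking (T2). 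None of this is present in your proposal.

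\emph{Partial converse, Condition (T1)(ii).} You argue directly via \cref{lem:awning} that $a,b\in\FB(c,d)$ whenever $\{c,d\}$ is not a bad kite pair, claiming ``the hypothesis $\area(T)\ge\tfrac12\area(Q)$ forces every side of $Q\setminus T$ to be strictly shorter than $T$.'' This is false in the boundary case $\area(T)=\tfrac12\area(Q)$: then $\height(T')=\height(T)$ and the side of $Q$ opposite $a$ can be a tallest saddle connection of $\partial Q$ with negative slope --- precisely an awning for $a$. Indeed this is the mechanism in \cref{major_flip}, whose conclusion is \emph{not} that $a$ is always flippable, but that non-flippability forces a $(1,1)$--annulus. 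The paper sidesteps the issue by a prior case split (annular vs.\ not), using \cref{lem:ann_flip} in the annular case and \cref{prop:major_ext} otherwise, and only then deducing $a_i,a_j\in\FB(a_k,b)$ from the flippability already established.

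\emph{Partial converse, Condition (T2).} Your argument rests on the assertion that ``a positive-area triangle cannot lie inside a Euclidean cylinder.'' This is simply false: a $(m,1)$--annular triangle does lie inside a cylinder, has positive area, and is major, so it is squarely within the scope of the converse being proved. The paper avoids this trap by proving the contrapositive: assuming all three links $\lk(\T'\setminus\{a_i,a_j\})$ are finite, the three neighbouring triangles $T_1,T_2,T_3$ across $a_1,a_2,a_3$ are pairwise distinct and glue with $T$ to form a hexagon as the unique non-triangular region of $\T'\setminus\tau$, which has finitely many diagonals. Your direct argument does not survive the annular case.
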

 
 Note that if $\tau$ satisfies the First Triangle Test, then it bounds a triangle $T$
 contained in a triangulation in which at least two sides of $T$ are flippable.
 \cref{ex:bad_triangle} shows a triangle where at most one side is flippable, and so
 not all triangles can be detected by this test.
 Thus, more general tests are needed.

\begin{prop}[Bigon Test] \label{prop:bigon_test}
 Assume $\tau = \{a_1, a_2, a_3\} \in\A(S,q)$ is a $2$--simplex that does not satisfy the First Triangle Test.
 Assume furthermore that there exists a choice of distinct $i,j,k \in \{1,2,3\}$ and a triangulation $\T \supseteq \tau$ satisfying the following five conditions.
 \begin{enumerate}[(B1)]
  \item $\lk(\mathcal{T} \setminus \{a_k\}) = \{a_k\} \sqcup \{b\}$ for some $b \in \mathcal{A}(S,q)$,
  \item $\lk(\mathcal{T} \setminus \{a_i,a_j\}) = $
  \begin{tikzpicture}[scale = 0.8]
  \tikzstyle{every node}=[draw,circle,fill=black,minimum size=4pt, inner sep=0pt]
   \draw (0,0) node (a) [label=above:$a_i$] {}
   -- (1,0) node (b) [label=above:$a_j$] {};
 \end{tikzpicture}
  \item $\lk( (\mathcal{T} \setminus \tau) \cup \{b\} ) = $
  \begin{tikzpicture}[scale = 0.8]
   \tikzstyle{every node}=[draw,circle,fill=black,minimum size=4pt, inner sep=0pt]
    \draw (0,0) node (a) {}
    -- (1,0) node (b) [label=above:$a_i$] {}
    -- (2,0) node (e) [label=above:$a_j$] {}
    -- (3,0) node (f) {};
    \end{tikzpicture}, 
  \item $\lk(\mathcal{T} \setminus \tau) =$
  \begin{tikzpicture}[scale = 0.8, baseline=0]
  \tikzstyle{every node}=[draw,circle,fill=black,minimum size=4pt, inner sep=0pt]
   \draw (0,0) node {}
   -- (1,0) node (a) [label=above:$a_i$] {}
   -- (2,1) node (c) [label=below:$a_k$] {}
   -- (3,0) node (b) [label=above:$a_j$] {}
   -- (4,0) node {}
   -- (2,-1) node {}
   -- (0,0) {};
   \draw (3,0) node {} 
   -- (1,0) node {}
   -- (2,-1) node (c') [label=above:$b$] {}
   -- (3,0) node {};
    \end{tikzpicture}, and
  \item There does not exist a $2$--simplex $\tau' \subseteq \T$ that satisfies the First Triangle Test,
  and contains $\{a_i,a_k\}$ or $\{a_j,a_k\}$.
 \end{enumerate}
  Then $\tau$ bounds a triangle contained in a bigon with a simple pole in the interior.
  
  Conversely, if $T$ is a triangle contained in a bigon then $\partial T$ satisfies the First Triangle Test or Conditions (B1) -- (B5). In the latter case, the interior arcs of the bigon are $a_i$ and $a_j$.
 \end{prop}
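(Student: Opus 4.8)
The plan is to split into two directions: the forward direction (the combinatorial conditions imply $\tau$ bounds a triangle inside a bigon with a simple pole) and the converse (any triangle inside a bigon has $\partial T$ satisfying either the First Triangle Test or Conditions (B1)--(B5), with the interior arcs identified).

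For the \emph{forward direction}, I would start by unpacking the link conditions through the classification of low-codimensional simplices (\cref{codim1} and especially \cref{codim2}). Condition (B1) says $a_k$ is flippable in $\T$, with the flip diagonal $b$; Condition (B2) together with \cref{codim2}(vi) tells us $\T \setminus \{a_i,a_j\}$ has a very restricted non-triangular region -- a pentagon with three bad diagonals, two non-strictly-convex quadrilaterals, or a bigon with a simple pole. Condition (B3) asserts that after performing the flip $a_k \mapsto b$, the resulting link of $\{a_i,a_j\}$ jumps to $\PG_3$, so by \cref{codim2}(iii) the region becomes a good pentagon or a $(1,1)$--annulus; combined with (B4), which pins down the entire link of $\T \setminus \tau$ as a specific 6-vertex graph, we learn that $\T \setminus \tau$ has a unique non-triangular region $R$ that is a hexagon-like region with $a_i,a_j,a_k,b$ as four specified diagonals. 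The graph in (B4) is exactly the link of a folded quadrilateral (i.e.\ a quadrilateral region on $(S,q)$ with one pair of opposite sides identified to a simple pole) that contains $a_i,a_j$ as interior arcs with $a_k$ and $b$ each bounding a triangle on opposite sides -- this is precisely the configuration in \cref{fig:codim2_annulus} (right). So the work here is: show that the only way the link of $\T\setminus\tau$ is that specific graph, while $\lk(\T\setminus\{a_i,a_j\})$ is $\NG_2$ (not $\PG_\infty$, which is ruled out because (B2) gives $\NG_2$ rather than $\PG_\infty$), is that $R$ is a folded quadrilateral with a simple pole, i.e.\ a bigon with a simple pole in its interior, with $a_i,a_j$ as its two interior arcs and $\tau$ bounding a triangle inside it. Condition (B5) is then used to rule out the degenerate possibility that $\tau$ actually bounds a triangle detectable by the First Triangle Test (so that we genuinely land in the bigon case and the two outcomes in the converse are mutually exclusive as stated).

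For the \emph{converse}, suppose $T$ is a triangle contained in a bigon $B$ with a simple pole. The bigon $B$ has two interior arcs, say $a$ and $a'$, each connecting the pole to a boundary vertex (as in the right picture of \cref{fig:codim2_annulus}), and $T$ is one of the two triangles $B$ is cut into. Relabel so $\partial T = \{a_1,a_2,a_3\}$ with, say, $a_1 = a$, $a_2 = a'$ being the interior arcs and $a_3$ the ``base'' of $T$ appearing on $\partial B$. I would then produce a triangulation $\T \supseteq \partial T$ refining $B$ and its complement: extend $\partial B$ to a triangulation of $S - B$, keeping $a_1,a_2$ inside. First check whether $a_3$ is flippable in some such $\T$: if so, one can try to verify Conditions (T1)--(T2) using the extension machinery (\cref{prop:extension}, \cref{prop:major_ext}), and then $\partial T$ satisfies the First Triangle Test. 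If $a_3$ is \emph{not} flippable in any triangulation containing $\partial T$ -- the genuinely bigon-specific case -- then I claim Conditions (B1)--(B5) hold with $(i,j,k) = (1,2,3)$: take $b$ to be the flip of $a_3$ inside the complement (which exists because $a_3$, lying on $\partial B$, bounds a triangle outside $B$ and we can arrange a strictly convex quadrilateral there using \cref{prop:tall}/\cref{prop:vis}-style flowing arguments), verify (B1) directly, verify (B2) by noting $\T \setminus \{a_1,a_2\}$ recovers exactly the bigon $B$ (whose link is $\NG_2 \cong \PG_1$ by \cref{codim2}(vi)), verify (B3) and (B4) by computing the links after the $a_3\mapsto b$ flip and after removing all of $\tau$, and verify (B5) from the non-flippability assumption on $a_3$. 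The identification of the interior arcs as $a_i,a_j$ falls out of this bookkeeping.

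\textbf{Main obstacle.} I expect the hardest part to be the forward direction's ``rigidity'' step: showing that the very specific link graph in Condition (B4), together with the codimension-2 constraint from (B2), \emph{forces} the unique non-triangular region of $\T\setminus\tau$ to be a bigon with a simple pole (and not, say, some other hexagon-like region with an unfortunate diagonal pattern that happens to have the same link). This requires carefully enumerating how a 6-vertex region with four prescribed diagonals $a_i,a_j,a_k,b$ can sit on a half-translation surface, using that bad (non-realizable) diagonals are constrained by the angle conditions of \cref{poly_embed} and the area/angle-sum arguments in the proof of \cref{codim2}, and then checking that only the folded-quadrilateral-with-simple-pole configuration is consistent with $b$ being the $a_k$-flip, with $a_i,a_j$ being a flip pair after that flip, and with $a_i,a_j$ being a (degenerate) flip pair before it. Condition (B5) and the hypothesis ``$\tau$ does not satisfy the First Triangle Test'' are the tools that close off the remaining ambiguous configurations, so the delicate point is making sure the case analysis is genuinely exhaustive.
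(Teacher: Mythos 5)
Your converse direction contains a logical error. You propose a dichotomy based on whether $a_3$ is flippable in some triangulation containing $\partial T$, placing the First Triangle Test in the ``flippable'' branch and Conditions (B1)--(B5) in the ``not flippable'' branch. This cannot work: Condition (B1), by \cref{cor:flip}, says precisely that $a_k = a_3$ \emph{is} flippable in $\T$; and indeed, within the same sentence you construct $b$ as a flip of $a_3$ by producing a strictly convex quadrilateral with $a_3$ as diagonal -- i.e.\ by exhibiting a triangulation in which $a_3$ is flippable, contradicting your own hypothesis. The paper's split is on whether $\tau$ satisfies the First Triangle Test, not on flippability; in the remaining case it explicitly constructs (via \cref{prop:tall} and \cref{prop:vis}, flowing horizontally away from the bigon from $a_k$) a triangulation $\T \supseteq \tau \cup \{c_k\}$ in which $a_k$ is flippable with partner $b$, and then checks (B1)--(B5) directly. (You also repeatedly conflate $\PG_1$ with $\NG_2$; these graphs are not isomorphic, and the bigon region yields link $\PG_1$.)

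In the forward direction your plan is to classify the region of $\T\setminus\tau$ directly from the specific six-vertex graph in (B4). You correctly flag this as the main obstacle but do not resolve it, and your reading of (B3) is off: $\PG_3$ forces a pentagon with exactly one bad diagonal (\cref{codim2}(iii)), never a $(1,1)$--annulus (that would be $\PG_\infty$). The paper's argument is structured differently and is what makes the ``rigidity step'' tractable: flip $a_k$ to $b$ to get $\T'$, so that $(\T\setminus\tau)\cup\{b\} = \T'\setminus\{a_i,a_j\}$ has a good pentagon $P$ with one bad diagonal; then (B4) forces $b$ to appear as at least one side of $P$; then a case analysis on how $b$ sits in $\partial P$ (one side, two non-adjacent sides, or two adjacent sides) closes the argument. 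The one-side case is excluded using (B5) (a major sub-triangle would trigger the First Triangle Test) and (B2); the non-adjacent case is excluded by producing a cylinder curve contradicting (B4) together with \cref{IL}, or producing two bad diagonals; the adjacent case yields an angle-$\pi$ corner $p$ at which $a_i$ and $a_j$ both meet, hence a bigon with a simple pole. This concrete analysis is what your sketch is missing.
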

 
 Before stating the Second Triangle Test, we need the following definition.
 
 \begin{definition}[Compatible flip partner]\label{def:cfp}
 Define the \emph{$\tau$--compatible flip partners} of a saddle connection $a_i \in \tau$ to be
 \[\Ft(a_i) \coloneqq \{b \in \A(S,q) ~|~ \{a_i, b\} \in \FP(\A(S,q)),~
   b \textrm{ is disjoint from both } a_j, a_k\}. \]
 This is precisely the set of saddle connections that can be obtained by
 flipping $a_i$ in some triangulation containing $\tau$.
 \end{definition}
 
 \begin{prop}[Second Triangle Test]
  Assume $\tau = \{a_1, a_2, a_3\} \in\A(S,q)$ is a $2$--simplex that neither satisfies the conditions of the First Triangle Test nor of the Bigon Test. Then $\tau$ bounds a triangle on $(S,q)$ if and only
  if the following three conditions hold.
    \begin{enumerate}
    \item[(T3)] If $\tau' \in \A(S,q)$ is a $2$--simplex such that $\#(\tau \cap \tau') \geq 2$, 
    then $\tau'$ neither satisfies the conditions of the First Triangle Test nor of the Bigon Test,
    \item[(T4)] For all distinct $i, j, k\in\{1,2,3\}$, there exists a triangulation $\T_i \supseteq \tau$ such that ${\lk(\T_i \setminus \{a_j, a_k\})}$ contains $\PG_3$
 as an induced subgraph, and
    \item[(T5)]For all distinct $i, j, k\in\{1,2,3\}$ and $b\in\Ft(a_i)$, we have 
  $\KFB(a_i,b) \cap \{a_j, a_k\} \neq \emptyset$.
  \end{enumerate}
 \end{prop}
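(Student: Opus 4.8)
The plan is to prove both directions of the Second Triangle Test, using the First Triangle Test and the Bigon Test as black boxes together with the structural results of Sections \ref{sec:extend} and \ref{sec:flip_pairs_bad_kites}. For the forward direction, suppose $\tau$ bounds a triangle $T$ on $(S,q)$ which is not detected by the First Triangle Test nor by the Bigon Test; I must verify (T3), (T4), and (T5). Condition (T3) is immediate from the partial converses already established: if $\tau'$ shares two sides with $\tau$ and satisfied the First Triangle Test or the Bigon Test, one would recover a triangle or a bigon built from $\partial\tau'$; but since $\tau'$ and $\tau$ agree on two saddle connections and a triangle (resp.\ a bigon) is determined by two of its sides on a half-translation surface, this would force $\tau$ itself to be detected, contradicting the hypothesis. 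For (T4), I would apply \cref{prop:extension} to each choice of a side $a_i$ of $T$: it produces a triangulation $\T_i \supseteq \partial T$ in which $a_i$ is flippable and $\lk(\T_i \setminus \{a_j,a_k\})$ contains $\PG_3$ as an induced subgraph, using the triangle of $\T_i$ with $a_j,a_k$ as two of its sides (this is exactly the output of \cref{prop:extension} with the roles relabelled). For (T5), fix $i$ and $b \in \Ft(a_i)$, so flipping $a_i$ in some triangulation containing $\tau$ gives $b$; then $a_i$ and $b$ are the diagonals of a strictly convex quadrilateral $Q(a_i,b)$, and $a_j, a_k$ are two of its four sides (since $T$ glues to the triangle on the far side of $b$ across $a_i$, the quadrilateral $Q(a_i,b)$ has boundary containing $a_j$ and $a_k$). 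By \cref{cor:kfb}, $\KFB(a_i,b)$ contains at least three of the four sides of $Q(a_i,b)$, so it must contain at least one of $a_j, a_k$, giving (T5).

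For the converse, assume $\tau$ is not detected by the First Triangle Test or the Bigon Test but satisfies (T3)--(T5); I must show $\tau$ bounds a triangle. The strategy is to use (T4) to extract geometric structure: for each $i$, (T4) provides a triangulation $\T_i$ with $\lk(\T_i \setminus \{a_j,a_k\})$ containing $\PG_3$, which by \cref{detectable} means $\T_i \setminus \{a_j,a_k\}$ has either a good pentagon or a $(1,1)$--annulus as its unique non-triangular region, and that region contains a triangle of $\T_i$ having $a_j$ and $a_k$ as two of its sides. So for every pair of saddle connections in $\tau$, there is a triangle with those two as sides. Let $T$ be such a triangle with sides $a_j, a_k$ and third side $e$; the goal is to show $e \in \{a_1,a_2,a_3\}$, i.e.\ $e = a_i$. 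Suppose not. Then $\{a_j, a_k, e\}$ is a $2$--simplex with $\#(\tau \cap \{a_j,a_k,e\}) \geq 2$, and it bounds the triangle $T$. I would now argue that $T$ must be detected by the First Triangle Test or the Bigon Test — either directly, or via the partial converses: \cref{prop:first_triangle_test} says a major triangle satisfies (T1) and (T2), and \cref{prop:bigon_test} says a triangle contained in a bigon satisfies the First Triangle Test or (B1)--(B5); the remaining case is a triangle that is neither major nor bigon-contained, which should still be detectable once one knows it has a flippable side (the $\PG_3$ in its link witnesses a flip). This would contradict (T3) applied to $\tau' = \{a_j,a_k,e\}$. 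Hence $e = a_i$ and $\tau$ bounds the triangle $T$.

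The main obstacle I anticipate is the case analysis in the converse: after (T4) hands over a triangle $T = \{a_j, a_k, e\}$ with two sides in $\tau$, I need a clean argument that either $e \in \tau$ or $\tau' = \{a_j,a_k,e\}$ triggers one of the earlier tests. The partial converses in \cref{prop:first_triangle_test} and \cref{prop:bigon_test} only cover major triangles and bigon-contained triangles, so the subtle point is the \emph{general} triangle that is neither — I expect one must use the flippability coming from $\PG_3 \subseteq \lk(\T_i \setminus \{a_j,a_k\})$ (so at least one of $a_j, a_k$ is flippable in $\T_i$ by \cref{detectable}) to push $\tau'$ through Conditions (T1)--(T2), or to invoke condition (T5) to rule out pathologies such as $T$ sitting inside a bad kite. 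A second delicate point is ensuring the triangulations $\T_i$ furnished by (T4) can be taken compatible enough that the flip of a side of $\tau$ inside $\T_i$ produces a genuine witness, which is where \cref{prop:extension} and \cref{prop:major_ext} feed back in. Once this single triangle-vs-test dichotomy is pinned down, the rest of the argument is bookkeeping with the classification lemmas \cref{codim1}, \cref{codim2}, and \cref{detectable}.
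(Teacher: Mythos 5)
Your necessity argument for (T4) and (T5) matches the paper. For (T3), however, your claim that ``a triangle (resp.\ a bigon) is determined by two of its sides on a half-translation surface'' is false --- \cref{two_tri} exists precisely because two distinct Euclidean triangles \emph{can} share two common sides. The paper proves (T3) instead by noting that if $\tau'$ with $\#(\tau\cap\tau')\geq 2$ passed one of the earlier tests, it would bound a triangle $T'$, and then \cref{two_tri} (applied to $T'$ and the triangle bounded by $\tau$) forces $\tau$ to pass the First Triangle Test or the Bigon Test as well. So the conclusion you want is correct, but your route to it is broken.

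The sufficiency direction is where the real gap is. Your plan is: given the triangle $T=\{a_j,a_k,e\}$ produced by (T4), either $e=a_i$ or $\tau'=\{a_j,a_k,e\}$ ``must be detected by the First Triangle Test or the Bigon Test,'' which would contradict (T3). But that dichotomy does not hold: the entire reason the Second Triangle Test exists is that there are triangles (e.g.\ \cref{ex:bad_triangle}) that are detected by neither earlier test. You acknowledge this yourself and hope that flippability from $\PG_3$ can ``push $\tau'$ through (T1)--(T2),'' but the $\PG_3$ only gives flippability of one side, whereas (T1) requires two flippable sides lying in $\KFB(a_k,b)$, and the partial converse of \cref{prop:first_triangle_test} is stated only for major triangles. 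So (T3) cannot do the work you assign it. What (T3) actually buys in the paper's proof is much more limited: it rules out the $(1,1)$--annulus case in \cref{detectable} (forcing the three regions to be good pentagons $P_1,P_2,P_3$), and --- via \cref{two_tri} --- it establishes that $T_i$ is \emph{not major} and is the \emph{unique} triangle with $a_j,a_k$ as two sides. The paper then orients the $T_i$ consistently, splits into cases by whether the gluings along $a_k$ are concordant or discordant, pins down the picture with flowing/visibility arguments and an $\SL(2,\R)$--normalisation, constructs a strictly convex quadrilateral $Q$ with $a_i$ as diagonal and small height, and only at the very end invokes (T5) to obtain a height contradiction. None of that machinery appears in your sketch, and the ``triangle-or-test'' dichotomy you propose as a substitute is not available.

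In short: your (T4) and (T5) necessity are fine; your (T3) necessity has a fixable factual error; and your sufficiency argument is built on a dichotomy that fails, leaving the hard geometric core of the proof missing.
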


 \begin{cor}[Triangle Test] \label{tri_test}
   A $2$--simplex $\tau \in \A(S,q)$ bounds a triangle on $(S,q)$ if and only if
   it satisfies the First Triangle Test, the Bigon Test, or the Second Triangle Test. $\hfill\square$
 \end{cor}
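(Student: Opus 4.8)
\textbf{Proof proposal for \cref{tri_test}.}
The plan is to observe that this corollary follows immediately by combining the three preceding propositions---the First Triangle Test (\cref{prop:first_triangle_test}), the Bigon Test (\cref{prop:bigon_test}), and the Second Triangle Test---once we have verified that their hypotheses partition the space of $2$--simplices in a way that leaves no gaps. So first I would establish the forward direction: if $\tau$ satisfies any one of the three tests, then $\tau$ bounds a triangle on $(S,q)$. For the First Triangle Test this is exactly the conclusion of \cref{prop:first_triangle_test}. For the Bigon Test, \cref{prop:bigon_test} gives that $\tau$ bounds a triangle contained in a bigon with a simple pole, which in particular is a triangle on $(S,q)$. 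For the Second Triangle Test, its statement is phrased as an ``if and only if'' under the standing assumption that $\tau$ satisfies neither of the first two tests, so one direction of that biconditional directly yields that $\tau$ bounds a triangle. Hence in all three cases $\tau$ bounds a triangle.

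Next I would handle the converse: suppose $\tau$ bounds a triangle $T$ on $(S,q)$; we must show $\tau$ satisfies at least one of the three tests. The argument splits according to the geometric position of $T$. If $T$ is a major triangle, then by the partial converse in \cref{prop:first_triangle_test} its boundary $\partial T = \tau$ satisfies Conditions (T1) and (T2), i.e.\ it satisfies the First Triangle Test, and we are done. If $T$ is not major but is contained in a bigon, then by the converse part of \cref{prop:bigon_test}, $\partial T$ either satisfies the First Triangle Test or satisfies Conditions (B1)--(B5), i.e.\ the Bigon Test; either way we are done. The remaining case is that $T$ is neither major nor contained in a bigon. If $\tau$ happens to satisfy the First Triangle Test or the Bigon Test, we are again done; otherwise, $\tau$ bounds a triangle while satisfying neither of the first two tests, so by the ``only if'' direction of the Second Triangle Test, $\tau$ satisfies Conditions (T3)--(T5), hence passes the Second Triangle Test. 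In every case $\tau$ satisfies at least one of the three tests, completing the converse.

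The only subtle point---and the step I expect to require the most care---is the logical bookkeeping: the Bigon Test and the Second Triangle Test are each stated \emph{conditionally} on $\tau$ failing the earlier test(s), so one must make sure that when we invoke them we are genuinely in their hypothesised regime, and that the partial converses quoted above are strong enough to cover every triangle. In particular, the converse part of \cref{prop:bigon_test} must be read as: \emph{every} triangle contained in a bigon has boundary satisfying the First Triangle Test or (B1)--(B5); and one should check that a triangle $T$ which is neither major nor contained in a bigon, once we know it fails the first two tests, is indeed covered by the ``only if'' clause of the Second Triangle Test---which it is, since that clause asserts precisely that a triangle-bounding $2$--simplex failing the first two tests satisfies (T3)--(T5). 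Assembling these observations gives the claimed equivalence, so the corollary follows. $\hfill\square$
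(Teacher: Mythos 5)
Your proposal is correct and matches the (elided) reasoning behind the paper's one‑line corollary: the three propositions are stated so that their hypotheses interlock, and assembling them in the order FTT, then BT, then STT gives the equivalence. One small remark on economy: the geometric case split in your converse argument (major / in a bigon / neither) is redundant---your third case already subsumes the other two, since for any triangle‑bounding $\tau$ you can simply check whether FTT holds, then whether BT holds, and if neither does, invoke the ``only if'' direction of the Second Triangle Test, which by its very wording applies to any triangle‑bounding $2$--simplex failing the first two tests. The partial converse in \cref{prop:first_triangle_test} and the converse clause in \cref{prop:bigon_test} are what make (T3) and (B5) checkable in practice and are essential inside the \emph{proof} of the Second Triangle Test, but they are not needed to assemble the corollary from the three propositions' statements.
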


 The proofs of necessity for the three tests
 essentially combine results from the preceding sections.
 The main content of this section is in the proof of sufficiency.
 We show that the First Triangle Test can detect many triangles, including
 major triangles.
 The Bigon Test, as the name suggests, detects triangles contained in bigons such as in \cref{fig:codim2_annulus} on the right.
 Our general strategy for the Second Triangle Test is as follows.
 By \cref{detectable},
 Condition (T4) is equivalent to saying that
 for all distinct $i,j,k$, 
 there exists a triangulation $\T_k \supseteq \tau$ such that
 $\T_k \setminus \{a_i, a_j\}$ either has an almond pentagon or a (1,1)--annulus
 as its unique non-triangular region, in which $a_i$ and $a_j$ are disjoint
 saddle connections. This implies that there exists a triangle~$T_k$ of $\T_k$ having
 $a_i$ and $a_j$ as two sides. The goal is to use the other properties to
 prove that $T_i$, $T_j$, and $T_k$ are indeed the same triangle.
 The fact that major triangles and triangles in bigons can already be detected by the First Triangle Test and the Bigon test
 plays a key role in simplifying our case analysis.

 \subsection{Proof of the First Triangle Test}

 \subsubsection{Necessity for major triangles}
 Assume $\tau = \{a_1, a_2, a_3\}$ bounds a major triangle $T$ with base $a_3$.
 Depending on whether $T$ is annular or not, we choose a triangulation $\T \supseteq \tau$ to satisfy Condition (T1) as follows.
 
 If $T$ is an $(m,1)$--annular triangle with transverse arcs $a_1$ and $a_2$ for some $m\geq 1$, then by \cref{lem:ann_flip}, we may take $\T$ to be a triangulation in which $a_1, a_2, a_3$ are all flippable.
 This implies that (T1)(i) is fulfilled for every choice of $(i,j,k)$.
 If $m=1$, we choose $(i,j,k) = (1,2,3)$ and (T1)(iii) is fulfilled by \cref{codim2}.
 If $m\geq 2$, at least one of the unique non-triangular region of $\T - \{a_1, a_3\}$ and of $\T - \{a_2, a_3\}$ is a strictly convex pentagon. Therefore, (T1)(iii) is fulfilled for the corresponding choice of $(i,j,k)$. Let $Q$ be the unique non-triangular region of~$\T - \{a_k\}$.
 
 If $T$ is not $(m,1)$--annular with transverse arcs $a_1$ and $a_2$, consider a strictly convex quadrilateral $Q$ containing $T$, with $a_3$ as a diagonal, in which $T$ takes up at least half the area.
 By \cref{prop:major_ext}, we may choose a triangulation $\T \supseteq \partial Q \cup \{a_3\}$ such that $a_1$ and $a_3$ are flippable, and $\lk(\T \setminus \{a_1, a_2\})$ contains $\PG_3$ as an induced subgraph.
 Recall from \cref{rem:link_extension} that the subgraph is in fact 
 \begin{tikzpicture}[scale = 0.8]
   \tikzstyle{every node}=[draw,circle,fill=black,minimum size=4pt, inner sep=0pt]
    \draw (0,0) node (a) [label=above:$a_1$] {}
    -- (1,0) node (b) [label=above:$a_2$] {}
    -- (2,0) node (e)  {}
    -- (3,0) node (f) {};
 \end{tikzpicture}
 in our situation. Hence, (T1)(i) and (iii) are fulfilled for $(i,j,k) = (1,2,3)$.
 Another application of \cref{prop:major_ext} shows that there exists some triangulation (not necessarily equal to $\T$) containing $\partial Q \cup \{a_3\}$ in
 which $a_2$ and $a_3$ are~flippable.
 
 We show that (T1)(ii) is fulfilled for all of the previously discussed cases at once.
 Let~$b$ be the other diagonal of $Q$.
 If $\{a_k, b\}$ is a cylindrical kite pair then $$a_i, a_j \in \partial Q = \KB(a_k, b) = \KFB(a_k, b)$$ by \cref{prop:kb}.
 Otherwise, we have that $a_i$ and $a_j$ are flippable (in some triangulation containing $\partial Q \cup \{a_k\}$) and hence by definition
 $$a_i, a_j \in \FB(a_k,b) = \KFB(a_k, b)~.$$ 

 To verify the necessity of Condition (T2), let us assume that
 $\lk(\T' \setminus \{a_i, a_j\})$ is finite for all distinct $i,j\in\{1,2,3\}$.
 Let $T_i$ be the triangle of $\T'$ meeting $T$ along $a_i$.
 Then $\T' \setminus \{a_i, a_j\}$ has a pentagon, formed by gluing $T$ to $T_i$
 and $T_j$ along $a_i$ and $a_j$, as its unique non-triangular region.
 In particular, the triangles $T_1, T_2, T_3$ are distinct.
 Therefore, $\T' \setminus \tau$ has a hexagon formed by gluing $T$ to $T_1,T_2,T_3$
 along $a_1, a_2, a_3$ as its unique non-triangular region.
 This hexagon contains only finitely many diagonals, and so
 $\lk(\T' \setminus \tau)$ is finite.

 \subsubsection{Sufficiency}
 Assume $\tau$ satisfies Conditions (T1) and (T2).
 Without loss of generality, assume Condition (T1) holds for $(i,j,k) = (1,2,3)$.
 
 Let $Q_i$ be the unique quadrilateral region of $\T \setminus \{a_i\}$ for $i=1,2,3$.
 By Condition (T1)(i) and \cref{codim1}, $Q_3$ is strictly convex, with diagonals $a_3$ and $b$.
 By Condition (T1)(ii), we have $a_1, a_2 \in \KFB(a_3, b)$ and hence
 $a_1, a_2 \in \partial Q_3$ by \cref{cor:kfb}.
 
 If $\lk(\T \setminus \{a_i, a_3\})$ is infinite for some $i = 1,2$, then
 by \cref{codim2},
 the unique non-triangular region of $\T \setminus \{a_i, a_3\}$ is a
 $(1,1)$--annulus. This annulus is obtained by gluing~$Q_3$ along two copies of $a_i$,
 and so $a_i$ must form two opposite sides of $Q_3$.
 It follows that $\tau$ bounds a triangular region of $Q_3 - a_3$
 and we are done. We may henceforth assume 
 $\lk(\T \setminus \{a_i, a_3\})$ is finite for $i = 1,2$.
 
 By Condition (T1)(iii) and \cref{detectable},
 the unique non-triangular region $R$ of $\T \setminus \{a_1,a_2\}$
 is either an almond pentagon or a $(1,1)$--annulus;
 this region is respectively cut into three or two triangles by the saddle connections $a_1,a_2$.
 In particular, these are the only triangles
 of $\T$ meeting at least one of $a_1$ or $a_2$.

 Let us consider the case where $R$ is a $(1,1)$--annulus;
 this occurs precisely when $\lk(\T \setminus \{a_1, a_2\})$ is infinite.
 Since $a_1 \in \partial Q_3$, there is a triangle of $\T$
 obtained by cutting $Q_3$ along $a_3$ having $a_1$ and $a_3$
 as two of its sides. The only triangles of $\T$ that meet $a_1$
 are the two triangles obtained by cutting $R$ along $a_1$ and $a_2$,
 and so $a_3$ must form a side of $R$.
 It follows that $\tau$ bounds a $(1,1)$--annular triangle as desired.
 
 We shall henceforth assume that $R$ is an almond pentagon.
 In particular, $\lk(\T \setminus \{a_i, a_j\})$ is finite for all $i \neq j$,
 and so $\lk(\T \setminus \tau)$ is also finite by Condition (T2).
 Let $T$ be the triangle in $R$
 meeting both $a_1$ and $a_2$; and let $T'$ and $T''$ be the other triangles of $R$
 that meet only $a_1$ or $a_2$ respectively.
 The third side of $T$ is a saddle connection $c \in \partial R$; see \cref{fig:first_sufficient}.
 Our goal is to prove that $c = a_3$.
 If $T$ is contained in $Q_3$ then we are done,
 since~$a_3$ will form a side of $T$.

\begin{figure}
 \begin{center}
  \begin{tikzpicture}[scale=0.5]
   \draw (0,0) -- node[below] {$c$} (3,0) -- (5,3) -- (1.5,5)
    -- (-2,3) -- (0,0) -- node[above left] {$a_2$} (1.5,5) -- node[above right] {$a_1$}  (3,0);
   \node (A) at (1.5,1.5) {$T$};
   \node (B) at (3.2,2) {$T'$};
   \node (C) at (-0.2,2) {$T''$};
  \end{tikzpicture}
  \qquad
  \begin{tikzpicture}[scale=0.5]
   \draw (2,4) --  (5,5) -- (6,4);
   \draw[middlearrow={latex}] (0,0) -- node[left] {$a'_3$} (2,4);
   \draw[middlearrow={latex}]  (4,0) -- node[right] {$a_3$} (6,4);
   \draw[red] (2,4) node[left] {$p'$} -- node[below] {$e$} (6,4) node[right] {$p$};
   \draw (0,0) -- node[below] {$d$} (4,0);
   \draw[orange] (0.5,1) -- node[below] {$\gamma$} (4.5,1);
  \end{tikzpicture}
  \qquad
  \begin{tikzpicture}[scale=0.5]
   \draw (2,4) --  (4,3.5) -- (6,4);
   \draw[middlearrow={latex}] (0,0) -- node[left] {$a'_3$} (2,4);
   \draw[middlearrow={latex}]  (4,0) -- node[right] {$a_3$} (6,4);
   \draw (0,0) -- node[below] {$d$} (4,0);
   \draw[blue] (0,0) -- (6,4);
   \draw[blue] (2,4) -- (4,0);
   \draw[red, densely dashed] (2,4) node[left] {$p'$} to [bend right=40] node[pos=0.3, below] {$e$} (6,4) node[right] {$p$};
   \draw[orange] (0.5,1) -- node[below] {$\gamma$} (4.5,1);
  \end{tikzpicture}
  \caption{Left to right: an almond pentagon formed by gluing $T$ to $T'$ and $T''$
  along $a_1$ and $a_2$; the case where $e$ is a straight diagonal; the case where
  $e$ is a broken diagonal. In either case, there is a cylinder curve $\gamma$ (in orange).}
  \label{fig:first_sufficient}
 \end{center}
\end{figure}
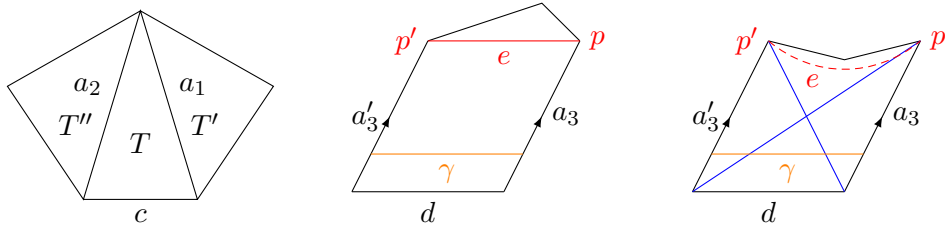 

 Let us suppose otherwise for a contradiction (this is equivalent to assuming $c \neq a_3$).
 Since $a_1, a_2 \in \partial Q_3$, and $T', T''$ are the only triangles of $\T$
 other than $T$ meeting $a_1$ or $a_2$, it follows that $T'$ and $T''$ are the
 two triangular regions of $Q_3 - a_3$.
 Therefore $a_3$ must form two sides of the pentagon $R$, one meeting $T'$ and one meeting $T''$.
 We rule out the case where these two sides are adjacent on the pentagon~$R$. If this were so, the angle between these two sides would be either $\pi$ or $2\pi$.
 In both cases, the diagonal that would cross both $a_1$ and $a_2$ is a broken diagonal. This implies that the link of $\mathcal{T} \setminus \{a_1,a_2\}$ is an induced subgraph of
\begin{tikzpicture}[scale = 0.8]
   \tikzstyle{every node}=[draw,circle,fill=black,minimum size=4pt, inner sep=0pt]
    \draw (0,0) node (a) {}
    -- (1,0) node (b) [label=above:$a_1$] {}
    -- (2,0) node (e)  [label=above:$a_2$] {}
    -- (3,0) node (f) {};
 \end{tikzpicture} which contradicts (T1)(iii).

 Thus, $a_3$ forms two non-adjacent sides of $R$ and $\T \setminus \tau$ has a topological $(2,1)$--annulus as its unique non-triangular region;
 it is formed by gluing $R$ along the two copies of~$a_3$.
 We claim that there is a cylinder curve contained in this annulus.
 This will imply that $\lk(\T \setminus \tau)$ is infinite, by \cref{IL},
 contradicting Condition (T2).
 
 Let $d \in \partial R$ be the side of $R$ adjacent to both copies
 of $a_3$. To avoid potential confusion, we shall
 label them $a_3$ and $a'_3$ respectively.
 Let $p$ and $p'$ respectively be the corners of $R$ that lie on
 an endpoint of $a_3$ and $a'_3$, but are not endpoints of $d$.
 Note that $p$ and $p'$ are not adjacent corners of $R$.
 Let $e$ be the diagonal of $R$ connecting $p$ and $p'$.
 
 If $e$ is a straight diagonal,
 then there is a quadrilateral $Q \subset R$ bounded by
 $a_3, d, a'_3, e$. Gluing~$Q$ to itself along the two copies of $a_3$ yields
 a $(1,1)$--annulus, which must be a cylinder. The core curve $\gamma$
 of this cylinder is shown in \cref{fig:first_sufficient}.
 
 We are left with the case where $e$ is a broken diagonal.
 Since $R$ is an almond pentagon, $e$ must be the only broken diagonal. 
 Therefore, there exists a triangle contained in $R$ with $a_3$ and~$d$ as
 two of its sides, and so $\angle(d, a_3) < \pi$. Similarly, we deduce
 $\angle(d, a'_3) < \pi$. Since~$a_3$ and~$a'_3$ are parallel,
 a small regular neighbourhood of $d$ within $R$ is a
 Euclidean parallelogram. This parallelogram forms a cylinder upon gluing
 the two copies of $a_3$ in $\partial R$; thus there is a cylinder curve $\gamma$ that is parallel to $d$ and contained in $R$ (see \cref{fig:first_sufficient}).
 
 This completes the proof of the First Triangle Test.

\subsection{Proof of the Bigon Test}\label{sec:bigon_test}

\subsubsection{Necessity for triangles contained in bigons}

 We shall begin with a useful lemma for triangles sharing a pair of sides.

 \begin{lem}[Triangles with two common sides] \label{two_tri}
  Let $T$ and $T'$ be distinct triangles with two sides in common.
  Then $T$ and $T'$ are contained in a common bigon, or $\partial T$ and $\partial T'$ both satisfy the First Triangle Test.
 \end{lem}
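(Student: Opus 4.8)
The plan is to analyse the two common sides, say $b,c \in \partial T \cap \partial T'$, and use the flowing machinery of \cref{sec:flow}. Label the third side of $T$ by $a$, and the third side of $T'$ by $a'$; since $T \neq T'$, we have $a \neq a'$ (if $a = a'$ then $T$ and $T'$ would be glued along all three sides, forcing a closed surface that is too small). Apply an $\SL(2,\R)$--deformation so that $c$ is horizontal and $b$ is vertical. Then $b$ and $c$ are tallest and widest among the relevant saddle connections, and $T$ and $T'$ glue along $c$ to form a (possibly degenerate) quadrilateral in local co-ordinates. First I would set up the locally isometric embedding $\varphi \colon T \cup T' \cup Y \to (S,q)$ as in \cref{subsec:major_ext}, identifying $T$ and $T'$ with Euclidean triangles in $\CC$ sharing the horizontal edge $c$.

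The key dichotomy is whether the quadrilateral $Q = T \cup T'$ (glued along $c$) is strictly convex or not. If $Q$ is strictly convex, then $c$ is a diagonal and at least one of $T,T'$ takes up half the area, so that triangle is major; by the partial converse in \cref{prop:first_triangle_test}, its boundary satisfies the First Triangle Test. But in fact, if $Q$ is strictly convex, then \emph{both} $T$ and $T'$ are major with base $c$ (each being obtained by cutting the strictly convex quadrilateral $Q$ along its diagonal $c$, and each having area at least half of... wait, only one can have area $\geq \frac12\area(Q)$ in general) — so I need to be more careful here: I would observe that \cref{prop:major_ext} together with \cref{prop:kb_detect} gives, for the one of $T,T'$ that is major with base $c$, all conditions of the First Triangle Test; for the other, I use that cutting $Q$ along the \emph{other} diagonal exhibits it inside a strictly convex quadrilateral as well, or I directly re-run the argument with roles swapped. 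In the remaining case, $Q$ is not strictly convex: then the two triangles fit together so that $c$ is not a good diagonal of their union, which (by examining the local picture) means the union is a folded quadrilateral with a corner of angle $\pi$ formed by the two copies of $c$ — exactly the situation in the right-hand side of \cref{fig:codim2_annulus}. Gluing along the two copies of $c$ then produces a bigon containing a simple pole, and $T,T'$ are the two triangular regions inside it.

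The main obstacle, I expect, is handling the case where $Q$ is degenerate but \emph{not} in the clean folded-quadrilateral configuration — for instance when $b$ and the third sides $a,a'$ have slopes arranged so that the union of $T$ and $T'$ in $\CC$ overlaps itself, or when one corner angle equals $2\pi$ rather than $\pi$. Here I would appeal to \cref{poly_embed}: each of $T$, $T'$ is embedded (except possibly at corners), so the failure of convexity is localised at a single shared corner, and the angle there is a multiple of $\pi$ strictly less than the cone angle. Ruling out the angle-$2\pi$ (or larger) possibility requires checking that such a configuration would force $T$ and $T'$ to overlap or would make $c$ non-realisable, contradicting that $c$ is a saddle connection bounding both triangles. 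Once the angle is pinned to exactly $\pi$, the simple pole appears and the bigon is identified. I would then double-check the boundary cases where $T$ or $T'$ meets $b$ in two copies (so that the ``quadrilateral'' $Q$ has sides identified), which either collapses into the $(1,1)$--annular situation already covered by the First Triangle Test via \cref{lem:ann_flip}, or again lands in the bigon case.
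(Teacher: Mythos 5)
Your proposal takes a genuinely different route than the paper, and unfortunately the route has a gap that is not fixable with the tools you invoke.

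The paper's proof keys its case analysis on \emph{which sides of the two shared saddle connections the triangles occupy}, rather than on convexity of a glued quadrilateral. Concretely, with the two common sides normalised to be perpendicular of equal length $h$, both triangles are right isosceles and meet at the right-angle corner; the two configurations are (i) $T,T'$ on opposite sides of both shared saddle connections, and (ii) opposite sides of one but the same side of the other. In Case (i), gluing along both shared sides literally gives a $(1,1)$--annulus or a bigon with a simple pole, and the argument ends immediately. In Case (ii), the two triangles \emph{overlap} in a local chart and do \emph{not} bound a bigon; the paper handles this by a flowing argument (\cref{prop:vis}) starting from the hypotenuse of $T$: the trajectory from the corner of $T$ opposite the hypotenuse runs along the other shared side and hits a visible singularity, producing a triangle $T''$ of height $h$ and width $\leq h$; gluing $T$ and $T''$ along the hypotenuse shows $T$ is major. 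The same argument applied to $T'$ shows $T'$ is major, and \cref{prop:first_triangle_test} finishes.

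The gap in your proposal is the claim that if $Q = T \cup T'$ (glued along one common side) fails to be strictly convex, then the union is a folded quadrilateral closing up into a bigon with a simple pole. This is false. In Case (ii), the second shared side, say $a$, appears on $\partial Q$ twice as two copies of the \emph{same side} of $a$ on $S$ (because $T$ and $T'$ meet $a$ from the same side), so those two boundary segments are \emph{not} glued to each other on $S$ and no bigon is produced; indeed $T$ and $T'$ overlap, so $Q$ is not even an embedded region of $S - \sigma$ for any multi-arc $\sigma$, making the ``strictly convex quadrilateral'' framing inapplicable. Moreover, your ``pin the angle to $\pi$'' step cannot separate this from the genuine bigon case: in \emph{both} the bigon configuration and Case (ii), the angle at the folded corner is exactly $\pi$ (each triangle contributes its right angle), so the angle alone carries no information. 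Finally, your fallback ``re-run the argument with roles swapped'' after cutting $Q$ along its other diagonal does not apply: in Case (ii) there is no strictly convex quadrilateral $Q$ to have an ``other diagonal.'' To repair your argument you would need to separate out Case (ii) by the side-configuration criterion and supply the flowing construction of \cref{prop:tall}/\cref{prop:vis} to exhibit a \emph{new} strictly convex quadrilateral (not $T \cup T'$) in which $T$, respectively $T'$, is the major half.
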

 
 \proof
 Suppose $a,b$ are the common sides of $T$ and $T'$.
 By applying an $\SL(2,\R)$--deformation, we may assume that $a$ and
 $b$ are respectively vertical and horizontal, and both have length $h > 0$.
 Then $T$ and $T'$ are both right-angled isosceles
 triangles. There are two possible configurations (up to swapping the roles of $a$ and $b$)
 depending on the sides of $a$ and $b$ on which the triangles~$T$ and~$T'$ appear.
 
 \begin{enumerate}
  \item $T$ and $T'$ appear on opposite sides of $a$, and on opposite sides of $b$;
  \item $T$ and $T'$ appear on the same side of $a$, and on opposite sides of $b$;
 \end{enumerate}
 Note that $T$ and $T'$ cannot appear on the same sides of both $a$ and $b$, for
 otherwise they would coincide.
 The different cases appear as shown in \cref{fig:triangles_with_two_sides_in_common} (up to reflection).

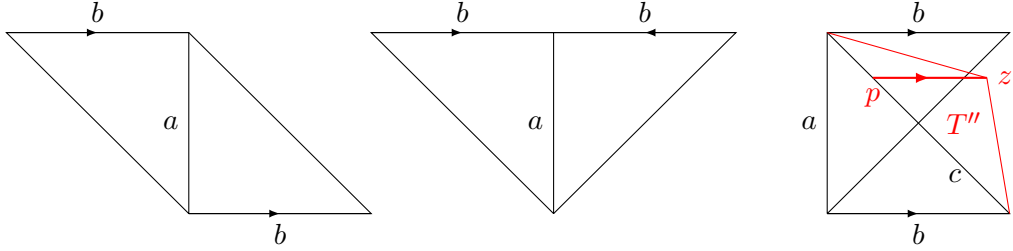
\begin{figure}[b]
 \begin{center}
  \begin{tikzpicture}[scale=0.6]
   \draw (0,0) -- node[left]{$a$} (0,4) -- node[above]{$b$} (-4,4) -- (0,0);
   \draw (0,0) -- node[below]{$b$} (4,0) -- (0,4);
   \draw[middlearrow={latex}] (-4,4) -- (0,4);
   \draw[middlearrow={latex}] (0,0) -- (4,0);
   
   \begin{scope}[xshift=8cm]
    \draw (0,0) -- node[left]{$a$} (0,4) -- node[above]{$b$} (-4,4) -- (0,0);
    \draw (0,4) -- node[above]{$b$} (4,4) -- (0,0);
    \draw[middlearrow={latex}] (-4,4) -- (0,4);
    \draw[middlearrow={latex}] (4,4) -- (0,4);
   \end{scope}
   
   \begin{scope}[xshift=14cm]
    \draw (0,0) -- node[left]{$a$} (0,4) -- node[above]{$b$} (4,4) -- (0,0);
    \draw (0,0) -- node[below]{$b$} (4,0) -- node[pos=0.3, below]{$c$} (0,4);
    \draw[middlearrow={latex}] (0,0) -- (4,0);
    \draw[middlearrow={latex}] (0,4) -- (4,4);
    \draw[color=red, middlearrow={latex}, thick] (1,3) node[below] {$p$} -- (3.5,3)
   node[right] {$z$};
    \draw[red] (0,4) -- (3.5,3) -- (4,0); 
    \node[red] (T) at (3,2) {$T''$};
   \end{scope}
  \end{tikzpicture}
  \caption{The first two configurations appear in Case (i) and the last in Case (ii) of the proof of \cref{two_tri}.}
  \label{fig:triangles_with_two_sides_in_common}
 \end{center}
\end{figure}
 
 In Case (i), the triangles $T$ and $T'$ can be glued along $a$ and $b$ to form either a $(1,1)$--annulus or a bigon. In either situation, the desired result holds.
 
 For Case (ii), let $c$ and $c'$ respectively be the third side of $T$ and $T'$.
 Let $\varphi^t$ be the horizontal unit-speed flow
 emanating from $c$ away from $T$ (strictly speaking, we are working in the universal cover).
 Consider the trajectory that begins at the common corner of $a$ and $c$ in $T$.
 This trajectory runs along  $b$ until its hits the other endpoint at time $t=h$.
 This endpoint is a visible singularity with respect to $\varphi^t$
 and $\partial T$. Applying \cref{prop:vis}, there exists a triangle~$T''$ meeting $c$ on the side from which the flow emanates, of $\height(T'') = h$,
 with a visible singularity $z = \varphi^{h'}(p)$ as its corner opposite~$c$,
 for some $0 < h' \leq h$ and a point $p \in c$. Then $T$ and $T''$ can be glued
 along~$c$ to form a strictly convex quadrilateral in which $T$ takes up at least
 half the area. Thus~$T$ is a major triangle. Using a similar flowing
 argument with $c'$, we prove that $T'$ is also major.
 In particular, by \cref{prop:first_triangle_test},
 $\partial T$ and $\partial T'$ both satisfy the First Triangle Test.
 \endproof

  Assume that $\tau = \{a_i,a_j,a_k\}$ bounds a triangle $T$ contained in a bigon with a simple pole in its interior and~$a_i$ and $a_j$ as interior arcs.
  We may assume that $\tau$ does not satisfy the First Triangle Test.
  Then~$a_k$ forms a side of this bigon; call the other side $c_k$.
  By applying an $\SL(2,\R)$--deformation, we can assume that $a_k$ is vertical and $c_k$ is horizontal as in  \cref{fig:triangle_in_bigon_vertical_horizontal}.
  Observe that $a_k$ is strictly tallest in $\sigma \coloneqq \tau \cup \{c_k\}$ and is not contained in a horizontal cylinder.
  Consider the horizontal flow emanating from $a_k$ away from $T$.
  By \cref{prop:tall} and \cref{prop:vis}, there exists an acute-angled triangle $T'$ with $a_k$ as one of its sides such that $\partial T' \cup \sigma$ spans a simplex in $\A(S,q)$;
  let $\T$ be any triangulation containing~$\partial T' \cup \sigma$.
  The unique non-triangular region of $\T \setminus \{a_i, a_j\}$ is a bigon containing a simple pole, and so Condition~(B2) holds by \cref{codim2}.
  Gluing $T$ to $T'$ along $a_k$ forms a strictly convex quadrilateral $Q$ with $a_i$ and $a_j$ appearing as adjacent sides.
  Setting $b$ to be the other diagonal of $Q$, we see that $\T$ satisfies Condition (B1).

\begin{figure}
 \begin{center}
  \begin{tikzpicture}
   \draw (0,0) -- node[below]{$c_k$} (3,0) -- node[right]{$a_k$} (3,3) -- node[above left]{$a_i$} (1.414,1.414) -- node[above left]{$a_i$} (0,0);
   \draw (1.414,1.414) -- node[left]{$a_j$} (3,0);
   \draw (2.5,1.5) node{$T$};
   
   \begin{scope}[xshift=6cm]
    \draw (0,0) -- node[below]{} (3,0) -- node[right]{} (3,3) -- node[above left]{} (1.414,1.414) -- node[above left]{} (0,0);
    \draw (1.414,1.414) -- node[below left]{} (3,0);
    \draw (2.5,1.5) node{};
    
    \draw (3,0) -- (7,2) -- (3,3);
    \draw[densely dashed] (1.414,1.414) -- node[]{$b$} (7,2);
    \draw[densely dashed] (7,2) -- (2.5,2.5);
    \draw[densely dashed] (0.5,0.5) -- (3,0);
    \draw[densely dashed] (7,2) -- (0,0);
   \end{scope}
  \end{tikzpicture}
  \caption{The situation for the bigon before and after flowing. In the right one, all diagonals of $\T \setminus \tau$ are drawn (dashed or full).}
  \label{fig:triangle_in_bigon_vertical_horizontal}
 \end{center}
 \end{figure}
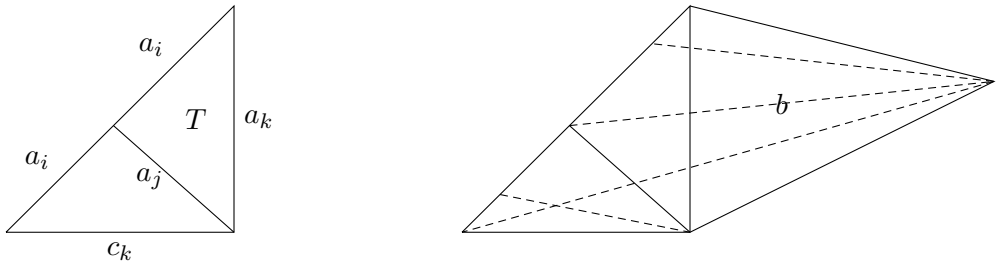
  
  Let $\T'$ be the triangulation obtained from $\T$ by flipping $a_k$.
  Then $(\T\setminus \tau) \cup \{b\} = \T' \setminus \{a_i,a_j\}$ has an almond pentagon with one corner having angle $\pi$ as its unique non-triangular region.
  Since the unique broken diagonal is the topological arc intersecting $a_i$ and $a_j$, it follows that Condition (B3) also holds.
  Observe that $\T \setminus \tau$ has a triangle containing a simple pole as its unique non-triangular region.
  All saddle connections belonging to $\lk(\T \setminus \tau)$ appear in \cref{fig:triangle_in_bigon_vertical_horizontal} and it can be checked there that $\lk( \T \setminus \tau)$ is as in Condition (B4).
  
  Finally, to check Condition (B5), note that there are exactly two triangles in $\mathcal{T}$ that contain~$a_i$ and exactly two triangles that contain $a_j$.
  These are precisely the triangles bounded by $\{a_i,a_j,a_k\}$ and $\{a_i,a_j,c_k\}$, respectively.
  Hence, Condition~(B5) has only to be checked for $\{a_i,a_j,a_k\}$ for which it is true by assumption.
  
  Thus all conditions hold and we have proven the statement.

\subsubsection{Sufficiency}

  Assume $\tau = \{a_1,a_2,a_3\}$ is a $2$--simplex that does not satisfy the First Triangle Test.
  Let $i,j,k$ be a permutation of $1,2,3$ and $\T \supseteq \tau$ be a triangulation satisfying Conditions (B1) -- (B5).
  Condition (B1) and \cref{cor:flip} imply that~$a_k$ can be flipped in $\T$, with flip partner $b$, to obtain a new triangulation~$\T'$.
  Condition (B3) and \cref{codim2} imply that $\T' \setminus \{a_i,a_j\} = (\T \setminus \tau) \cup \{b\}$ has a pentagon~$P$ with one broken diagonal as its unique non-triangular region.
  Moreover, the broken diagonal of $P$ is the arc intersecting both~$a_i$ and $a_j$ (which are themselves straight diagonals of $P$).
  Let $T'$ be the triangle contained in $P$ with $a_i$ and $a_j$ as two of its sides.
  Condition (B4) implies that $\T' \setminus \{a_i,a_j,b\} = \T \setminus \tau$ has exactly one non-triangular region $R$ and hence $b$ must appear as at least one side of $P$.
  We claim that~$b$ appears as a pair of adjacent sides of $P$ forming an angle of $\pi$.
  
  First, we rule out the case where $b$ appears as only one side. Suppose first that $b$ appears exactly as the third side of $T'$.
  Then $a_k$ and $b$ form the diagonals of a strictly convex quadrilateral having $a_i$ and $a_j$ as adjacent sides. This quadrilateral contains a major triangle $T''$ having $a_k$ and one of $a_i$ or $a_j$ among its sides. But then $\partial T'' \subseteq \T$ satisfies the First Triangle Test and contains $\{a_i,a_k\}$ or $\{a_j, a_k\}$, contradicting Condition (B5).
  Without loss of generality, we may thus assume that~$b$ appears exactly as a side of the triangle obtained by cutting $P$ along $a_i$. Then $a_k$ does not intersect the interior of the strictly convex quadrilateral obtained by cutting~$P$ along $a_i$. It follows that the non-triangular region of $\T \setminus \{a_i,a_j\}$ contains the flip partner of~$a_j$ in $\T'$ as straight diagonal. This contradicts Condition (B2).
    
  Suppose now that $b$ forms two non-adjacent parallel sides of $P$.
  If they are identified by a translation, then we can find a cylinder curve in $R$ parallel to the side of~$P$ adjacent to both copies of $b$.
  This contradicts Condition (B4) combined with \cref{IL}.
  If they are identified by a half-translation, then $P$ will have at least two broken diagonals as can be seen in the left of \cref{fig:pentagon_with_half_identified_sides}, a contradiction.
  
 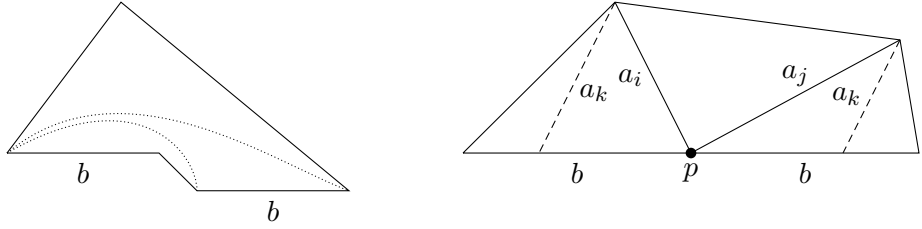
\begin{figure}
 \begin{center}
  \begin{tikzpicture}
  \begin{scope}
    \draw (0,0) -- node[below]{$b$} (2,0) -- (2.5,-0.5) -- node[below] {$b$} (4.5,-0.5) -- (1.5,2) -- (0,0);
   
    \draw[densely dotted] (0,0) .. controls +(30:2cm) and +(90:0.5cm) .. (2.5,-0.5);
    \draw[densely dotted] (0,0) .. controls +(40:2cm) and +(160:1cm) .. (4.5,-0.5);
   \end{scope}

  \begin{scope}[xshift=6cm]
    \draw (0,0) -- node[below]{$b$} (3,0) -- node[below] {$b$} (6,0) -- (5.75,1.5) -- (2,2) -- (0,0);
    \fill (3,0) circle (2pt) node[below]{$p$};
    
    \draw (3,0) -- node[left]{$a_i$} (2,2);
    \draw (3,0) -- node[above]{$a_j$} (5.75,1.5);
    
    \draw[densely dashed] (2,2) -- node[pos=0.6, right]{$a_k$} (1.0,0);
    \draw[densely dashed] (5.75,1.5) -- node[left]{$a_k$} (5.0,0);
   \end{scope}
  \end{tikzpicture}
  \caption{In these pentagons, the sides that are labelled by $b$ are identified by a half-translation. Left: The dotted arcs cannot be realised as saddle connections. Right: The corner $p$ is a simple pole and the dashed diagonal $a_k$ is the flip partner of $b$.}
  \label{fig:pentagon_with_half_identified_sides}
 \end{center}
 \end{figure}
 
  Thus, $b$ forms two adjacent parallel sides of $P$. Since $P$ is an almond pentagon, the corner $p$ formed by these two sides has angle $\pi$. Therefore the broken diagonal of $P$ cuts off a (topological) triangle with $b$ as two of its sides.
  Since $a_i$ and $a_j$ intersect the broken diagonal, they both have an endpoint at $p$. As $a_k$ is the flip partner of $b$, it is contained in $R$. It follows that $a_i,a_j,a_k$ form three sides of a Euclidean triangle $T$ (see \cref{fig:pentagon_with_half_identified_sides}).
  Gluing $T$ to $T'$ along $a_i$ and $a_j$ yields a bigon having $a_i$ and $a_j$ as interior arcs as~desired.

  This completes the proof of the Bigon Test.

\subsection{Proof of the Second Triangle Test}

 \subsubsection{Necessity}

 Assume $\tau = \{a_1, a_2, a_3\}$ bounds a triangle on $(S,q)$ that neither satisfies
 the First Triangle Test nor the Bigon Test. Condition (T4) is an immediate consequence of
 \cref{prop:extension}.
 
 To verify Condition (T3), suppose $\tau'\in\A(S,q)$ is a $2$--simplex
 sharing at least two vertices with $\tau$.
 If $\tau'$ satisfies the First Triangle Test or the Bigon Test, it must bound a triangle.
 Then by \cref{two_tri}, $\tau$ must also satisfy the First Triangle Test or the Bigon Test which is a contradiction.
 
 To verify Condition (T5), suppose $b$ is obtained by flipping
 $a_i$ in some triangulation $\T \supseteq \tau$. Then $a_j,a_k$ form
 two sides of the strictly convex quadrilateral $Q(a_i, b)$ with diagonals $a_i$ and $b$.
 By \cref{cor:kfb}, $\KFB(a_i, b)$ contains at least three sides of
 $Q(a_i, b)$. Hence, at least one of~$a_j$ and $a_k$ must belong to~$\KFB(a_i, b)$.

\subsubsection{Sufficiency}
 
  Assume $\tau = \{a_1, a_2, a_3\}$ satisfies the Second Triangle Test,
  but neither the First Triangle Test nor the Bigon Test.
  We shall use the indices $i,j,k$ to stand for a fixed permutation of $1,2,3$.
   
  \begin{lem}[(T3) and (T4) give three almond pentagons]
   For all distinct $i,j,k \in \{1,2,3\}$,
   $\T_i \setminus \{a_j, a_k\}$ has an almond pentagon $P_i$ as its unique non-triangular region.
  \end{lem}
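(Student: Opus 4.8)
The plan is to use Condition (T4) via \cref{detectable}, and then rule out the $(1,1)$--annulus case by appealing to the hypothesis that $\tau$ does not satisfy the First Triangle Test. First I would fix a permutation $i,j,k$ of $1,2,3$. By Condition (T4), there is a triangulation $\T_i \supseteq \tau$ such that $\lk(\T_i \setminus \{a_j, a_k\})$ contains $\PG_3$ as an induced subgraph. Applying \cref{detectable}, the unique non-triangular region $P_i$ of $\T_i \setminus \{a_j, a_k\}$ is either a good pentagon or a $(1,1)$--annulus, and in either case it contains a triangle of $\T_i$ having $a_j$ and $a_k$ as two of its sides, with at least one of $a_j, a_k$ flippable in $\T_i$.

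It remains to exclude the $(1,1)$--annulus case. Suppose, for a contradiction, that $P_i$ is a $(1,1)$--annulus. Then $P_i$ contains a triangle $T$ with $a_j$ and $a_k$ as transverse arcs; in particular $T$ is a $(1,1)$--annular triangle, and since $a_j, a_k$ are both transverse arcs of the annulus with $\partial P_i$ consisting of (horizontal, say) saddle connections, the third side of $T$ is the saddle connection $c$ forming one of the two boundary components of the annulus. Now I would observe that $\{a_j, a_k, c\}$ bounds a $(1,1)$--annular triangle, hence is a major triangle by the definition of major triangle (any $(1,1)$--annular triangle is major). By the partial converse in \cref{prop:first_triangle_test}, $\{a_j, a_k, c\}$ satisfies the First Triangle Test. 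But $\{a_j, a_k, c\}$ shares the two vertices $a_j, a_k$ with $\tau$, so $\#(\tau \cap \{a_j,a_k,c\}) \geq 2$, and Condition (T3) forbids such a $2$--simplex from satisfying the First Triangle Test — a contradiction. Therefore $P_i$ must be a good pentagon, as claimed.

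The one subtlety — and the step I expect to require the most care — is ruling out the degenerate possibility that $c$ coincides with $a_j$ or $a_k$, or more precisely checking that the resulting $2$--simplex $\{a_j, a_k, c\}$ is genuinely a $2$--simplex distinct from $\tau$ (so that Condition (T3) applies to it). In the $(1,1)$--annulus case, $T$ has three distinct sides $a_j, a_k, c$ since the three sides of a Euclidean triangle have distinct slopes, and $c$ lies on $\partial P_i$ while $a_j, a_k$ are transverse arcs, so $c \notin \{a_j, a_k\}$; thus $\{a_j, a_k, c\}$ is indeed a $2$--simplex. It could still happen that $\{a_j, a_k, c\} = \tau$, i.e.\ $c = a_i$ — but in that case $\tau = \partial T$ would itself bound a $(1,1)$--annular (hence major) triangle, so by \cref{prop:first_triangle_test} $\tau$ would satisfy the First Triangle Test, contrary to our standing assumption. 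Hence in all cases we reach a contradiction, and $P_i$ is a good pentagon. Running this argument for each of the three permutations $i,j,k$ completes the proof.
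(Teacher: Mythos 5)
Your proof is correct and follows essentially the same route as the paper: invoke Condition (T4) together with \cref{detectable} to conclude the region is a good pentagon or a $(1,1)$--annulus, then rule out the annular case by noting the resulting $(1,1)$--annular triangle $T$ is major, hence $\partial T$ passes the First Triangle Test, contradicting Condition (T3) since $\partial T$ shares $a_j, a_k$ with $\tau$. The extra subtlety you flag about $c = a_i$ is fine but unnecessary: Condition (T3) is stated for any $2$--simplex $\tau'$ with $\#(\tau \cap \tau') \geq 2$, which includes $\tau' = \tau$ itself, so the contradiction goes through uniformly without a separate appeal to the standing assumption.
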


  \proof
   By Condition (T4) and \cref{detectable}, ${\T_i \setminus \{a_j, a_k\}}$ has
   either a $(1,1)$--annulus or an almond pentagon as its unique non-triangular region.
   If this region is a $(1,1)$--annulus, then $a_j$ and $a_k$ form two
   sides of some $(1,1)$--annular triangle $T'$. But then $\partial T'$
   is a $2$--simplex sharing at least two vertices with~$\tau$, and
   satisfying the First Triangle Test. This contradicts Condition~(T3).
  \endproof
  
  The pentagon $P_i$ contains a triangle
  $T_i$ with $a_j$ and $a_k$ as two of its sides;
  let $c_i \in \partial P_i$ denote the third side of $T_i$.  
  Our goal is to prove that $c_i = a_i$ for some $i$.
  Note that $c_i \in \T_i$ and so it cannot intersect any of $a_i, a_j, a_k$ transversely.

  \begin{lem}[Properties of $T_i$] \label{lem:unique_tri}
  The triangle $T_i$ is not major.
  Moreover, $T_i$ is the only triangle on $(S,q)$¸ having $a_j$ and $a_k$ as two of its sides.
  \end{lem}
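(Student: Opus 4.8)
The plan is to deduce both statements from the partial converses of the First Triangle Test and the Bigon Test, together with Condition~(T3) and the standing assumption that $\tau$ satisfies neither the First Triangle Test nor the Bigon Test. The key observation driving everything is that $\partial T_i = \{a_j, a_k, c_i\}$ shares the two saddle connections $a_j$ and $a_k$ with $\tau$, so $\#(\tau \cap \partial T_i) \ge 2$. Hence, whenever $\partial T_i$ turns out to satisfy the First Triangle Test or the Bigon Test, we obtain a contradiction: with Condition~(T3) if $\partial T_i \neq \tau$, and with the standing assumption directly if $\partial T_i = \tau$ (that is, if $c_i = a_i$). I will invoke this dichotomy repeatedly.

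To see that $T_i$ is not major, suppose it were. By the partial converse in \cref{prop:first_triangle_test}, $\partial T_i$ satisfies Conditions~(T1) and~(T2), i.e.\ $\partial T_i$ satisfies the First Triangle Test. The dichotomy above then yields a contradiction, so $T_i$ cannot be major.

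For uniqueness, suppose for contradiction that $T' \neq T_i$ is another triangle on $(S,q)$ having $a_j$ and $a_k$ among its sides; since a Euclidean triangle has three sides of distinct slopes, these are two genuinely distinct sides of~$T'$. Then $T_i$ and $T'$ are distinct triangles with the two sides $a_j$ and $a_k$ in common, so \cref{two_tri} applies and gives one of two outcomes. If $\partial T_i$ (and $\partial T'$) satisfies the First Triangle Test, we conclude via the dichotomy as above. Otherwise, $T_i$ and $T'$ lie in a common bigon, so in particular $T_i$ is contained in a bigon, and the converse part of \cref{prop:bigon_test} shows that $\partial T_i$ either satisfies the First Triangle Test — handled already — or satisfies Conditions~(B1)--(B5), i.e.\ satisfies the Bigon Test. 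In the latter case the dichotomy again produces a contradiction. Hence no such $T'$ exists, and $T_i$ is the unique triangle on $(S,q)$ with $a_j$ and $a_k$ as two of its sides.

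The only delicate point I expect is the bookkeeping: one must treat the degenerate possibility $\partial T_i = \tau$ on the same footing as $\partial T_i \neq \tau$ (in the former case the contradiction comes straight from the standing hypothesis rather than from~(T3)), and one must invoke the partial converses in their precise forms — ``major triangle $\Rightarrow$ (T1) and~(T2)'' and ``triangle in a bigon $\Rightarrow$ First Triangle Test or~(B1)--(B5)''. Beyond this chaining of earlier results there is essentially no further geometric input.
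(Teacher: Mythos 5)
Your proposal is correct and follows essentially the same route as the paper: rule out majorness via the partial converse of the First Triangle Test combined with Condition (T3), and deduce uniqueness from \cref{two_tri} together with the converse part of the Bigon Test. The only cosmetic difference is your explicit case split on whether $\partial T_i = \tau$, which is already subsumed by (T3) since $\#(\tau\cap\tau) = 3 \ge 2$, so the paper simply invokes (T3) without the split.
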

  
  \proof
  Since $\partial T_i$ shares at least two vertices with $\tau$,
  it does not satisfy the First Triangle Test by Condition (T3).
  Therefore $T_i$ cannot be major by \cref{prop:first_triangle_test}.
  If $T' \neq T_i$ is another triangle having
  $a_j$ and $a_k$ as its sides, then, by \cref{two_tri}, $\partial T'$ and
  $\partial T_i$ both satisfy the First Triangle Test or the Bigon Test,
  a contradiction.
  \endproof
  
  Next, orient the triangles $T_i$ by choosing the cyclic order
  of the subscripts to agree with~$(1,2,3)$. To be explicit, set
   $\TV_1 = [c_1, a_2, a_3]$, $\TV_2 = [a_1, c_2, a_3]$, and
  $\TV_3 = [a_1, a_2, c_3]$.
  Our goal is to show that these triangles are consistently oriented on $(S,q)$.
  Before doing so, let us introduce some terminology.  
  In each $T_i$, direct the sides $a_j$ and $a_k$ so that they point
  towards their common corner. Say that $T_i$ and $T_j$ meet \emph{concordantly}
  along~$a_k$ if the directions on $a_k$ coming from $T_i$ and $T_j$ agree,
  otherwise we say that they meet \emph{discordantly}; see \cref{fig:4_cases}. 
  Write $\angle_{T_i}(a_j, a_k)$ for the angle between $a_j$ and $a_k$,
  measured inside the triangle $T_i$.
  
    \begin{figure}
 \begin{center}
    \begin{tikzpicture}[scale=0.5]
   \draw (0,0) -- node[left] {$c_i$} (-3,-5) -- node[below] {$a_j$} (0,-4) -- node[below left] {$a_k$} (0,0)
    -- node[right] {$c_j$} (3,-5) -- node[below] {$a_i$} (0,-4);
  \end{tikzpicture}
  \hspace*{0.5cm}
  \begin{tikzpicture}[scale=0.5]
   \draw (0,1) -- node[above left] {$a_i$} (0,-4) -- node[left] {$c_j$} (3,2) -- node[above left] {$a_k$} (0,1)
    -- node[right] {$c_i$} (5,-3) -- node[above right] {$a_j$} (3,2);
  \end{tikzpicture}
  \hspace*{0.5cm}
  \begin{tikzpicture}[scale=0.5]
   \draw (0,1) -- node[above left] {$a_i$} (0,-4) -- node[left] {$c_j$} (3,2) -- node[above left] {$a_k$} (0,1)
    -- node[right] {$a_j$} (5,-3) -- node[above right] {$c_i$} (3,2);
  \end{tikzpicture}
  \hspace*{0.5cm}
  \begin{tikzpicture}[scale=0.5]
   \draw (0,0) -- node[left] {$a_j$} (-3,-5) -- node[below] {$c_i$} (0,-4) -- node[below right] {$a_k$} (0,0)
    -- node[right] {$c_j$} (3,-5) -- node[below] {$a_i$} (0,-4);
    \draw[red, dashed] (-2.5,-1.5) -- node[above] {$b$} (1.5,-4.5);
  \end{tikzpicture}
  \caption{The four cases for gluing $T_i$ to $T_j$ along $a_k$. From left to right:
  concordant with same orientation; discordant with same orientation;
  concordant with opposite orientation; discordant with opposite orientation.}
  \label{fig:4_cases}
 \end{center}
\end{figure}
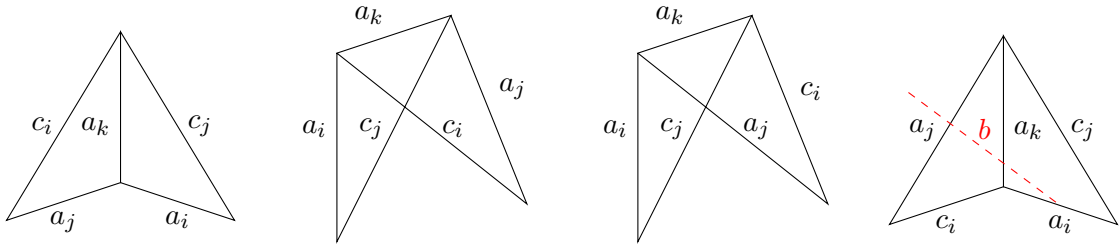
  
  \begin{lem}[Consistent orientation]
  The triangles $\TV_1$, $\TV_2$, and $\TV_3$ are consistently oriented on $(S,q)$.
  \end{lem}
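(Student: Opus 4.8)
The plan is to show that if two of the three glued triangles $\TV_i, \TV_j$ meet \emph{discordantly} along their common side $a_k$, then $\partial T_i$ would satisfy the First Triangle Test, contradicting Condition (T3); and if they meet concordantly but with inconsistent orientation, then $T_i$ and $T_j$ would coincide, contradicting \cref{lem:unique_tri}. Once these two cases are excluded, concordant-and-consistent is forced for every pair, which gives the conclusion.

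\proof
Fix distinct $i,j,k$. Since $a_k$ is a side of both $T_i$ and $T_j$, and $c_i, c_j$ cannot intersect $\tau$ transversely, the triangles $T_i$ and $T_j$ are glued along $a_k$ into a quadrilateral $Q_k$ with $a_i, a_j$ appearing as sides. There are four cases, depending on whether the gluing is concordant or discordant, and whether $\TV_i, \TV_j$ induce the same or opposite orientations on $Q_k$ (see \cref{fig:4_cases}).

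\textbf{Concordant, opposite orientation.} Here $T_i$ and $T_j$ would be reflections of one another across $a_k$ with $a_j$ on $T_i$ mapping to $a_j$ on $T_j$ and likewise for the third side; since $a_j$ is already a side of $T_i$, the two triangles have the pair $\{a_j, a_k\}$ as sides and are glued so that the angles $\angle_{T_i}(a_j,a_k)$ and $\angle_{T_j}(a_j,a_k)$ sit on opposite sides of $a_k$ with the \emph{same} third corner. But then $T_i = T_j$, contradicting the uniqueness statement of \cref{lem:unique_tri}. (In the degenerate subcase $\angle_{T_i}(a_j,a_k) = \angle_{T_j}(a_j,a_k) = \tfrac{\pi}{2}$ we instead observe $Q_k$ has a pair of identified opposite sides, hence contains a $(1,1)$--annular triangle with $a_j, a_k$ as sides, again contradicting \cref{lem:unique_tri} via the First Triangle Test.)

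\textbf{Discordant (either orientation).} When the gluing is discordant, the two copies of $a_k$ in $Q_k$ have the two corners at the tip of $T_i$ and the tip of $T_j$ on \emph{opposite} endpoints, so $a_i$ and $a_j$ together with $c_i$ (or $c_j$) span a strictly convex quadrilateral once we flip $a_k$ inside $Q_k$: concretely, $Q_k$ has diagonal $a_k$ and second diagonal some $b$ (drawn dashed in \cref{fig:4_cases}), and $a_i, a_j$ are sides of $Q_k$ with $\area(T_i) \geq \tfrac12 \area(Q_k)$ or $\area(T_j) \geq \tfrac12\area(Q_k)$. Whichever of $T_i, T_j$ is major then has $\partial$ satisfying the First Triangle Test by \cref{prop:first_triangle_test}, and it shares the two sides $a_i, a_j$ (and is adjacent along $a_k$) with $\tau$; in particular one of $\partial T_i, \partial T_j$ is a $2$--simplex sharing at least two vertices with $\tau$ and satisfying the First Triangle Test, contradicting Condition (T3). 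More carefully: if neither $T_i$ nor $T_j$ occupies at least half of $Q_k$ then $Q_k$ has area strictly less than $2\max(\area T_i, \area T_j)$, which is impossible; so at least one is major.

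Hence every pair $\TV_i, \TV_j$ meets concordantly along their common side with the \emph{same} induced orientation on the joining quadrilateral. Applying this to all three pairs $(1,2),(2,3),(3,1)$, the orientations of $\TV_1, \TV_2, \TV_3$ propagate consistently, so they give the same orientation on $(S,q)$, as claimed.
\endproof

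The main obstacle I expect is the bookkeeping in the discordant case: one must verify that discordance genuinely forces a strictly convex quadrilateral in which one of the two triangles is major (so that \cref{prop:first_triangle_test} applies), handling the possibility that the fourth corner lies on a singularity or that $Q_k$ is degenerate. The concordant-opposite case is comparatively easy once \cref{lem:unique_tri} is invoked, but the right-angled degenerate subcase needs the $(1,1)$--annular observation to close it off cleanly.
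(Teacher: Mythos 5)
Your plan over-reaches and fails at the key step. You aim to rule out \emph{all} discordant gluings along $a_k$, but discordant gluings with consistent orientation are perfectly possible and in fact occur in the subsequent analysis (see \cref{lem:parallel}, \cref{lem:wide_open}, and the middle and right configurations of \cref{fig:setting_end_if_sufficiency_second_triangle_test}). The lemma only asserts consistency of orientation, not concordance; your proposed conclusion would be false.

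The specific gap is in your discordant case. You claim one of $T_i,T_j$ is major by observing $\area(T_i) + \area(T_j) = \area(Q_k)$, so one of them occupies at least half of $Q_k$. But the definition of a major triangle requires a \emph{strictly convex} quadrilateral, and the quadrilateral $Q_k$ obtained by gluing $T_i$ and $T_j$ along $a_k$ need not be strictly convex in the discordant case. If it is not (which is exactly what happens when the gluing is discordant-but-consistently-oriented), there is no contradiction to derive. The paper handles this dichotomy explicitly: under the assumption that $\TV_i,\TV_j$ are \emph{not} consistently oriented, it argues that a strictly convex $Q$ would force one triangle to be major (impossible by \cref{lem:unique_tri}), while a non-convex $Q$ is ruled out by a separate argument using the good-pentagon structure of $P_j$ (a good diagonal of $P_j$ crossing $a_k$ would have to cross $a_j$, which is disjoint from the interior of $P_j$). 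You have no analogue of the second argument.

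Your concordant-opposite case is also confused: you claim $T_i$ and $T_j$ are reflections across $a_k$ with ``$a_j$ on $T_i$ mapping to $a_j$ on $T_j$'', but $a_j$ is not a side of $T_j$ (its sides are $a_i$, $c_j$, $a_k$), so the triangles are not related by a reflection in general and you cannot conclude $T_i = T_j$. The paper instead compares angles: under the inconsistent-orientation and concordant-gluing assumption, $\angle_{T_i}(a_j,a_k) \neq \angle_{T_j}(a_k,a_i)$, and whichever is smaller produces either an illegal transverse crossing of $a_j$ with $c_j$ or a strict containment of one triangle in the other. You would need to supply this angle comparison to close the concordant case.
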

  
  \proof
  Suppose $\TV_i$ and $\TV_j$ are not consistently oriented. We consider two cases,
  depending on whether they meet concordantly or discordantly along $a_k$.
  
  First, suppose $T_i$ and $T_j$ meet concordantly. Without loss of generality,
  we may suppose that $\angle_{T_i}(a_j, a_k) < \angle_{T_j}(a_k, a_i)$.
  (The angles cannot be equal, for otherwise $a_i = a_j$.)
  But then $a_j$ intersects $c_j$, or $T_j$ is strictly contained in $T_i$, which is both impossible.
  
  Next, suppose $T_i$ and $T_j$ meet discordantly. 
  Consider the quadrilateral $Q$ formed by $T_i$ and~$T_j$, glued along $a_k$.
  (Strictly speaking, this does not always result in a quadrilateral
  on $(S,q)$ with embedded interior, so we should really work in the universal cover.)
  If~$Q$ is strictly convex, then it has embedded interior on $(S,q)$ by \cref{poly_embed}.
  In this situation, at least one of $T_i$ or $T_j$ is major, which is ruled out by \cref{lem:unique_tri}.
  Thus, $Q$ is not strictly convex. Assume,
  without loss of generality, that $\angle_{T_i}(a_k, c_i) + \angle_{T_j}(a_i, a_k) \geq \pi$,
  as shown in the fourth case of \cref{fig:4_cases}.
  Recall that $P_j$ is a pentagon with at most one broken diagonal. Hence there exists at least one straight diagonal $b$ of $P_j$ that intersects $a_k$. Note that $b$ cannot intersect $c_j$, so it has to intersect~$a_j$. But $a_j$ is disjoint from the interior of $P_j$ and hence from $b$, a contradiction.
  \endproof

Next we provide a criterion for $\tau$ bounding a triangle in the case when we have at least one discordant gluing.

\begin{lem}[Parallel sides coincide] \label{lem:parallel}
   Assume $T_i$ and $T_j$ meet discordantly along $a_k$. If $a_i$ and~$c_i$ are parallel, they must coincide. Consequently,
   $\tau$ bounds a triangle.
  \end{lem}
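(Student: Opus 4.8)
\textbf{Plan for the proof of \cref{lem:parallel}.}
The setup is as follows: $T_i$ and $T_j$ are the triangles produced from the good pentagons $P_i$ and $P_j$, with $\partial T_i = \{c_i, a_j, a_k\}$ and $\partial T_j = \{a_i, c_j, a_k\}$, and they meet discordantly along their common side $a_k$. We already know from the preceding lemma that $\TV_1, \TV_2, \TV_3$ are consistently oriented, so in particular $T_i$ and $T_j$ induce the same orientation on $S$; combined with the discordant gluing, we are in the fourth configuration of \cref{fig:4_cases}. The plan is to first show $c_i = a_i$ under the parallel hypothesis, and then deduce that all three $c_\ell$ coincide with the corresponding $a_\ell$, whence $\tau = \partial T_1$ bounds a triangle.

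\textbf{Step 1: reduce to a local/universal cover picture.} Work in the universal cover $(\tilde S, \tilde q)$ and lift $T_i \cup T_j$ glued along $a_k$ to a Euclidean figure in $\CC$. Apply an $\SL(2,\R)$--deformation so that $a_k$ is vertical; then $a_i \parallel c_i$ means they are both line segments of some fixed non-vertical slope $\theta$. Place $a_k$ on the imaginary axis with $T_j$ on its left and $T_i$ on its right (matching the fourth case of \cref{fig:4_cases}), directing $a_j$ and $a_k$ so they point to the common corner of $T_i$, similarly $c_j$ and $a_k$ in $T_j$; discordance means the two induced directions on $a_k$ disagree, so the common corner of $T_i$ sits at one endpoint of $a_k$ and the common corner of $T_j$ at the other.

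\textbf{Step 2: exploit the angle inequality and the good pentagon $P_j$.} Since $Q = T_i \cup_{a_k} T_j$ is not strictly convex and $\TV_i, \TV_j$ are consistently oriented, after relabelling we have $\angle_{T_i}(a_k, c_i) + \angle_{T_j}(a_i, a_k) \geq \pi$, exactly as in the last lemma's proof. Now impose $c_i \parallel a_i$. I would argue that the geodesic segment of $P_j$ connecting the endpoint of $c_j$ not on $a_k$ to the endpoint of $c_i$ not on $a_k$ — i.e.\ the good diagonal $b$ of $P_j$ that the previous lemma exhibits crossing $a_k$ — is forced, by the parallelism and the reflex angle at the glued corner, to lie along the line through $a_i$ (equivalently through $c_i$). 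Concretely: with $a_k$ vertical and $a_i, c_i$ of common slope $\theta$, the lift $\tilde T_j$ has $a_i$ on one side; if $c_i \neq a_i$ then translating by the vector along $a_k$ (the deck transformation identifying the two copies of $a_k$) carries the line containing $c_i$ to a distinct parallel line, and the reflex condition forces the convex hull of the three relevant vertices to degenerate — either $c_i$ crosses $a_i$ or $c_j$ crosses $a_k$ (contradicting that these are sides of pentagons), or $T_i, T_j$ overlap. The clean way to phrase this: if $c_i \parallel a_i$ but $c_i \neq a_i$, then $T_i \cup_{a_k} T_j$ glues along the two copies of $a_k$ to a $(1,1)$--annulus (a parallelogram with $a_k$ on opposite sides), forcing $T_i$ (hence $\partial T_i = \{c_i, a_j, a_k\}$) to be $(1,1)$--annular, hence major, contradicting \cref{lem:unique_tri}. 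So $c_i = a_i$.

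\textbf{Step 3: conclude $\tau$ bounds a triangle.} Once $c_i = a_i$, the triangle $T_i$ has sides $\{a_i, a_j, a_k\} = \tau$, so $\tau$ bounds a triangle and we are done — this handles the case where some parallelism actually occurs. The main obstacle I anticipate is Step 2: making rigorous the claim that non-coincident parallel $a_i, c_i$ together with the reflex-angle constraint forces the annular/overlapping degeneracy, since one must carefully rule out the intermediate possibilities (e.g.\ $b$ crossing $a_j$ or $c_j$, or the pentagon $P_j$ being configured so that $a_i$ and $c_i$ are parallel but genuinely distinct with everything embedded). I would resolve this by a direct coordinate computation in $\CC$ using the fixed slope $\theta$ of $a_i, c_i$ and verticality of $a_k$: the vertices of $\tilde T_i \cup \tilde T_j$ are then determined by three real parameters (the lengths/heights), and the inequality $\angle_{T_i}(a_k,c_i) + \angle_{T_j}(a_i,a_k) \geq \pi$ pins the sign of one of them, after which $c_i \neq a_i$ visibly produces the parallelogram with $a_k$ on opposite sides.
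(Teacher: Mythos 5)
The proposal rests on a geometric misidentification that invalidates its key step. After the Consistent Orientation lemma, the triangles $T_i, T_j$ are known to be \emph{consistently} oriented, so the discordant gluing puts us in the \emph{second} configuration of \cref{fig:4_cases} (discordant, same orientation), not the fourth. In that configuration $T_i$ and $T_j$ lie on the \emph{same} side of $a_k$, sharing it as a boundary side, and therefore \emph{overlap}; there is no quadrilateral $T_i \cup_{a_k} T_j$, no reflex-angle inequality $\angle_{T_i}(a_k,c_i)+\angle_{T_j}(a_i,a_k)\geq\pi$ (that inequality is derived in the last lemma only in the case of \emph{opposite} orientation, for contradiction), and no pair of copies of $a_k$ to glue into a $(1,1)$--annulus. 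Your Step~2, which is exactly the part you flagged as the anticipated obstacle, therefore fails: the parallelogram-to-annulus degeneration you describe cannot arise because the two triangles share a single $a_k$ rather than sitting on opposite sides of it.

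The paper's argument is both shorter and takes the overlap seriously. It first observes that $\angle_{T_i}(a_k,c_i)\leq\angle_{T_j}(a_k,a_i)$ (otherwise the third vertex of $T_j$ would be trapped in the interior of $T_i$ or $a_i$ would cross a side of $\tau$, both impossible). This places an initial segment of $c_i$, emanating from the common corner of $a_i$, $c_i$, and $a_k$, inside the Euclidean triangle $T_j$. Since $a_i$ also emanates from that corner into $T_j$, the two segments make an angle $\angle_{T_j}(a_i,c_i)<\pi$ inside a single Euclidean triangle. Parallelism now forces this angle to be $0$, not $\pi$, so $a_i$ and $c_i$ have the same initial direction from the same singularity, hence coincide as saddle connections. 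Your Step~3 (deducing that $\tau$ bounds a triangle once $c_i=a_i$) is correct, but the route to $c_i=a_i$ needs to be the angle argument inside the overlap, not a parallelogram argument that presupposes the triangles are disjoint.
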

  
  \proof
  Observe that
  $\angle_{T_i}(a_k, c_i) \leq \angle_{T_j}(a_k, a_i)$,
  for otherwise either $a_i$ intersects $a_j$ transversely
  or $T_j$ is strictly contained in $T_i$, neither of which is possible.
  Therefore $T_j \cap c_i$ contains a segment of $c_i$ starting from
  the common corner of $a_i$ and $a_k$ in $T_j$. Since this segment and $a_i$
  are contained in a common triangle, we deduce that $\angle_{T_j}(a_i,c_i) < \pi$.
  Hence if $a_i$ and $c_i$ are parallel, we have $\angle_{T_j}(a_i,c_i) = 0$.
  It follows that $a_i$ and~$c_i$ are the same saddle connection.
  \endproof

For the rest of this section, we shall assume for a contradiction that $\tau$ does not bound a triangle.
Our strategy is to prove that the triangles $T_i$, $T_j$, and $T_k$ can be placed into one of three standard configurations as depicted in \cref{fig:setting_end_if_sufficiency_second_triangle_test}, with $a_i$ strictly taller than $a_j$ and $a_k$, by applying $\SL(2,\R)$--deformations, reflections, and permutations of the indices.

\begin{figure}
 \begin{center}
  \begin{tikzpicture}[scale=0.45]   
   \begin{scope}[xshift=-9cm]
    \draw (0,0) -- node[left] {$a_i$} (0,-5) -- node[right] {$c_j$} (3,2) -- node[left] {$a_k$} (0,0);
    \draw (3,2) -- node[above] {$c_i$} (-4,2);
    \draw (0,0) -- node[right] {$a_j$} (-4,2);
    \draw (-4,2) -- node[left] {$c_k$} (0,-5);
   \end{scope}
   
   \draw (0,0) -- node[left] {$a_i$} (0,-5) -- node[right] {$c_j$} (3,2) -- node[above left] {$a_k$} (0,0);
   \draw (0,0) -- node[below] {$c_i$} (7,0);
   \draw[middlearrow={latex}] (7,0) -- node[above right] {$a_j$} (3,2);
   \draw[middlearrow={latex reversed}]  (0,0) -- node[above right] {$a_j$} (-4,2);
   \draw (-4,2) -- node[left] {$c_k$} (0,-5);
   
   \begin{scope}[xshift=-1cm]
   \draw (10,0) -- node[left] {$a_i$} (10,-5) -- node[right] {$c_j$} (13,2) -- node[above left] {$a_k$} (10,0);
   \draw (10,0) -- node[below] {$c_i$} (17,0);
   \draw[middlearrow={latex}] (17,0) -- node[above right] {$a_j$} (13,2);
   \draw (10,0) -- node[right] {$c_k$} (14,-7);
   \draw[middlearrow={latex}] (14, -7) -- node[below left] {$a_j$} (10,-5);
   \end{scope}
  \end{tikzpicture}
  \caption{Three configurations for the triangles $T_i$, $T_j$, and $T_k$. From left to right: All triangles meet concordantly; $T_i$ and $T_j$ meet discordantly along $a_k$ and $T_j$ and $T_k$ meet concordantly along $a_i$; $T_i$ and $T_j$ meet discordantly along $a_k$ and $T_j$ and $T_k$ meet discordantly along $a_i$. Note that $\height(a_i) > \height(a_j) = \height(a_k)$ in all cases.}
  \label{fig:setting_end_if_sufficiency_second_triangle_test}
 \end{center}
\end{figure}

Let us first consider the case where all gluings are concordant.

  \begin{lem}[When all gluings are concordant] \label{lem:all_gluings_concordant}
   If all three gluings among $T_i$, $T_j$, and $T_k$ are concordant, then they glue together to form a triangle containing exactly one removable singularity which, moreover, forms a common endpoint of $a_i$, $a_j$, and $a_k$. Furthermore, no two saddle connections in $\{a_1, a_2, a_3, c_1, c_2, c_3\}$ are parallel.
  \end{lem}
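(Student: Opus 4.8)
The plan is to begin by unwinding the concordance hypothesis. Recall that, for a permutation $i,j,k$ of $1,2,3$, the triangle $T_i$ has sides $a_j,a_k,c_i$, and that $T_i$ and $T_j$ are glued along their common side $a_k$; by definition, gluing them concordantly along $a_k$ identifies the corner of $T_i$ at which $a_j$ and $a_k$ meet with the corner of $T_j$ at which $a_i$ and $a_k$ meet. Chasing this identification around all three pairs and calling $v$ the resulting point, I get that $v$ is an endpoint of each of $a_1,a_2,a_3$ and that $T_1,T_2,T_3$ fill the three wedges cut out at $v$ by consecutive arcs. Passing to the universal cover and using the consistency of the orientations of $\TV_1,\TV_2,\TV_3$ established above, the triangles then assemble into an embedded disk $D$ with $v$ as its only interior vertex and with $c_1\cup c_2\cup c_3$ as its boundary; in particular the cone angle of $(S,q)$ at $v$ equals $\angle_1+\angle_2+\angle_3$, where $\angle_\ell := \angle_{T_\ell}(a_i,a_j)$ denotes the angle of $T_\ell$ at $v$. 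Since each $\angle_\ell\in(0,\pi)$ and a cone angle is a positive integer multiple of $\pi$, this sum is either $\pi$ or $2\pi$.

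The crux is to exclude the value $\pi$. Write $w_\ell$ for the endpoint of $a_\ell$ other than $v$, and let $Q_\ell$ be the quadrilateral obtained by gluing the two triangles other than $T_\ell$ along their common side $a_\ell$; thus $a_\ell$ is a diagonal of $Q_\ell$ joining $v$ to $w_\ell$ and splitting $Q_\ell$ into those two triangles. If the cone angle at $v$ were $\pi$, the angle of $Q_\ell$ at $v$, being the sum of the two triangle angles there, would be $\pi-\angle_\ell<\pi$, while its angles at the two corners distinct from $v$ and $w_\ell$ are angles of individual Euclidean triangles, hence $<\pi$; so the only corner that could be reflex is $w_\ell$. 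Summing the angle at $w_\ell$ over $\ell=1,2,3$ and regrouping the resulting six triangle-angles by triangle gives $\sum_m(\pi-\angle_m)=3\pi-\pi=2\pi$, so for at least one $\ell$ this angle is at most $2\pi/3<\pi$; for that $\ell$ the quadrilateral $Q_\ell$ is strictly convex, hence embedded by \cref{poly_embed}. But its two halves are non-major by \cref{lem:unique_tri}, so each has area strictly less than $\tfrac12\area(Q_\ell)$, contradicting the fact that their areas add up to $\area(Q_\ell)$. Hence the cone angle at $v$ is $2\pi$, that is, $v$ is a removable singularity.

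Consequently $D$ is a flat disk with geodesic boundary whose three corner angles sum to $\pi$ by Gauss--Bonnet, so developing $D$ identifies it isometrically with a Euclidean triangle; since saddle connections avoid singularities, $v$ lies on none of $c_1,c_2,c_3$ and is therefore an interior point of this triangle, with the $a_\ell$ the segments from $v$ to the three vertices. The open triangles $T_\ell$ and the open saddle connections carry no singularities, so $v$ is the only singularity in the interior of the triangle; this is the first assertion. For the parallelism claim I would go through the pairs by type: $a_i\parallel a_j$ with $i\ne j$ would make $w_i,v,w_j$ collinear, since these arcs share the endpoint $v$, and so degenerate $T_k$; $c_i\parallel c_j$ would make $w_1,w_2,w_3$ collinear and so degenerate $D$; $a_i\parallel c_j$ with $j\ne i$ shares the vertex $w_i$ and hence reduces to the case $a_i\parallel a_k$; and $a_i\parallel c_i$ is impossible because, after normalising $c_i$ to be horizontal, $w_i$ is the unique highest point of the triangle $D$ while $v$, joined to $w_i$ by the now-horizontal $a_i$ and lying in the interior, would have the same, maximal, height.

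The main obstacle here is bookkeeping rather than geometry: one must keep careful track of which $a$'s are sides of which $T_\ell$, of the corners that a concordant gluing identifies, and --- most importantly --- of the fact that the three triangles genuinely assemble into an embedded disk, so that the cone angle at $v$ is exactly $\angle_1+\angle_2+\angle_3$ rather than merely bounded below by it. This is precisely where the universal-cover viewpoint and the previously established consistency of orientations do the work. Once the strictly convex quadrilateral $Q_\ell$ has been located, the area inequality ruling out the simple-pole case is immediate, and the parallelism statement then reduces to elementary observations about the Euclidean triangle $D$.
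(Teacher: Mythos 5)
Your proof is correct and follows essentially the same route as the paper: assemble the three triangles around the common vertex $v$, bound its cone angle below $3\pi$, rule out a simple pole by exhibiting a strictly convex quadrilateral formed from two of the $T_\ell$'s (contradicting that they are non-major by \cref{lem:unique_tri}), and read off the structure of the resulting Euclidean triangle. The only cosmetic difference is that you derive ``sum of the corner angles is $2\pi$'' by a direct triangle-angle bookkeeping rather than citing Gauss--Bonnet, and you spell out the parallelism case analysis that the paper merely asserts.
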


  \proof
  Suppose all gluings are concordant. Then $a_i$, $a_j$, and $a_k$
  will be directed towards a common singularity.
  Examining the cyclic gluing pattern of $T_i$, $T_j$, and $T_k$ along the sides $a_k$, $a_i$, and $a_j$, we see that a small neighbourhood of the common singularity is formed by gluing
  together one corner from each of $T_i, T_j, T_k$.
  By summing these corner angles, it follows that this
  singularity has cone angle strictly less than $3\pi$.
  
  Suppose this cone angle equals $\pi$. 
  Gluing $T_i$, $T_j$, $T_k$ along $a_k$, $a_i$, $a_j$ forms a triangle that contains a simple pole. By the Gauss--Bonnet theorem, the sum of the interior angles of this triangle is $2\pi$. In particular, there exists a corner of angle strictly less than $\pi$. Let $a_k$ be the saddle connection that connects this corner to the simple pole. Then $T_i$ and $T_j$ form a strictly convex quadrilateral which means that at least one of $T_i$ or $T_j$ is a major triangle.
  This contradicts \cref{lem:unique_tri}.
  
  Hence, the cone angle is $2\pi$.
  Gluing $T_i$, $T_j$, $T_k$ along $a_k$, $a_i$, $a_j$ forms a Euclidean triangle that contains a removable singularity. The sides of this triangle are $c_1$, $c_2$, and $c_3$ whereas $a_1$, $a_2$, and $a_3$ connect the corners of the triangle to the removable singularity.
  In particular, no two of these six saddle connections can be parallel.
  \endproof

  We continue first in the situation of the previous lemma, that is when all three gluings are concordant.
  Without loss of generality, we may suppose that $T_i$ has least area among the three triangles.
  Applying an $\SL(2,\R)$--deformation, we may arrange the triangles such that~$a_i$ is horizontal and $c_i$ is vertical.
  The triangles thus appear as in \cref{fig:setting_end_if_sufficiency_second_triangle_test} on the left (up to reflection).
  The least area assumption ensures that $\height(a_i) > \height(a_j) = \height(a_k)$ as~desired.
  
  We now turn to the second case, that is, where there is at least one discordant gluing.
  Without loss of generality, suppose that $T_i$ and $T_j$ meet discordantly along $a_k$.
  As $T_i$ and $T_j$ are consistently oriented and cannot be contained in one another, $c_i$ and $c_j$ must intersect.
  Apply an $\SL(2,\R)$--deformation to make $c_i$ horizontal and $c_j$ vertical.
  Note that $c_i$ and $c_j$ have a unique intersection point, for otherwise
  $T_i \cap c_j$ contains at least two distinct vertical line segments, and so
  $c_j$ would intersect $a_j$ or $a_k$ transversely.
  By \cref{lem:parallel} (and the assumption that $\tau$ does not bound a triangle),
  it follows that none of $a_i, a_j, a_k$ can be horizontal nor vertical.
  Therefore, the triangles $T_i$ and $T_j$ appear as in \cref{fig:wide_opening} (up to horizontal or vertical reflection).
  
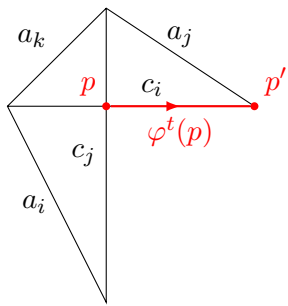
\begin{figure}
 \begin{center}
  \begin{tikzpicture}[scale=0.65]
   \draw (0,0) -- node[above right] {$c_i$} (5,0) -- node[above] {$a_j$} (2,2) -- node[above left] {$a_k$} (0,0)
   -- node[left] {$a_i$} (2,-4) -- node[left] {$c_j$} (2,2);
   \node[shape=circle,fill=red,inner sep=0pt,minimum width=3pt] (A) at (2,0) {};
   \node[shape=circle,fill=red,inner sep=0pt,minimum width=3pt] (B) at (5,0) {};
   \draw[color=red, middlearrow={latex}, thick] (A) node[above left] {$p$} -- node[below] {$\varphi^t(p)$} (B)
   node[above right] {$p'$};
   \end{tikzpicture}
  \caption{The triangles $T_i$ and $T_j$ as in the proof of \cref{lem:wide_open}.  \label{fig:wide_opening}
  The trajectory $\varphi^t(p)$ (in red) runs along $c_i$ from $p$ to $p'$. Note that $p$ is not
  a singularity.}
 \end{center}
\end{figure}

\begin{lem}[Heights and widths] \label{lem:wide_open}
 In the described situation, we have
 $\height(T_i) < \height(a_i)$ and $\width(T_j) < \width(a_j)$.
 Consequently, we have $\angle_{T_j}(a_i,a_k) + \angle_{T_i}(a_k, a_j) > \pi$.
\end{lem}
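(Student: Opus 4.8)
The plan is to work in the coordinates already fixed before the statement, reduce the two inequalities to an elementary Euclidean assertion, prove each one by constructing an auxiliary triangle via a flow argument and comparing areas, and then read off the angle bound by trigonometry.

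First I would pin down coordinates. Put $c_i$ on the real axis, with $v_0 := c_i \cap a_k$ at the origin (this is also the corner of $T_j$ where $a_i$ and $a_k$ meet, from which the subsegment $T_j \cap c_i$ found in the proof of \cref{lem:parallel} emanates), and let $u_i = (w,0)$, $w>0$, be the other endpoint of $c_i$. The configuration of \cref{fig:wide_opening} forces $a_k \cap c_j =: v_1 = (x_1,y_1)$ with $0 < x_1 < w$ and $y_1 > 0$, and the third vertex of $T_j$ to be $u_j = (x_1, y_j)$ with $y_j < 0$; thus $c_j = [v_1,u_j]$ is vertical and meets $c_i$ at the non-singular interior point $p = (x_1, 0)$. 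Here $\height(T_i) = y_1$, $\height(a_i) = |y_j|$, $\width(T_j) = x_1$, $\width(a_j) = w - x_1$, and $\height(c_j) = \height(a_k) + \height(a_i)$, $\width(c_i) = \width(a_k) + \width(a_j)$, so $c_j$ is strictly tallest in $\partial T_j$ for the horizontal flow and $c_i$ is strictly tallest in $\partial T_i$ for the vertical flow. It therefore suffices to prove $\width(T_j) < \width(a_j)$ and $\height(T_i) < \height(a_i)$.

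For $\width(T_j) < \width(a_j)$, I would run the horizontal flow $\varphi^t$ emanating from $c_j$ on the side opposite $T_j$. The trajectory from the interior point $p \in c_j$ runs \emph{along} $c_i$ and reaches the singularity $u_i$ at time $t_0 = w - x_1$ without crossing $\partial T_j = \{a_i, a_k, c_j\}$ transversely, so $u_i$ is visible with respect to $\partial T_j$. By \cref{prop:vis} there is a triangle $T'$, on the side of $c_j$ opposite $T_j$, having $c_j$ as a side, $\height(T') = \height(c_j)$, $\partial T'$ meeting $\partial T_j$ in no transverse point, and corner opposite $c_j$ equal to $\varphi^{t_1}(c_j(y'))$ with $t_1 \le t_0$ and $y'$ strictly between the heights of the two endpoints of $c_j$; hence $\width(T') = t_1 \le w - x_1$, and that corner lies strictly inside the horizontal strip spanned by $c_j$, so $T'$ makes an acute angle at each endpoint of $c_j$. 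As $T_j$ also makes acute angles $\arctan(x_1/y_1)$ and $\arctan(x_1/|y_j|)$ there, the quadrilateral $Q := T_j \cup T'$ glued along $c_j$ has all interior angles $< \pi$, hence is strictly convex and, by \cref{poly_embed}, embedded: a genuine strictly convex quadrilateral containing $T_j$ with $c_j$ as diagonal. Since $T_j$ is not major (\cref{lem:unique_tri}), $\area(T_j) < \tfrac12\area(Q)$, i.e. $\area(T_j) < \area(T')$; as both triangles have $c_j$ as a side of the same length, $\width(T_j) < \width(T') \le w - x_1 = \width(a_j)$. Interchanging the two coordinate directions and $T_i$ with $T_j$ gives $\height(T_i) < \height(a_i)$ by the same recipe: the vertical flow from $c_i$ away from $T_i$ has the trajectory from $p$ running down along $c_j$ to the singularity $u_j$ in time $|y_j|$; \cref{prop:vis} produces a triangle $T''$ on the far side of $c_i$ with corner opposite $c_i$ strictly inside the vertical strip over $c_i$ at depth $t_1 \le |y_j|$; gluing $T_i$ to $T''$ along $c_i$ yields a strictly convex, hence embedded, quadrilateral, and non-majorness of $T_i$ forces $\height(T_i) = y_1 < t_1 \le |y_j| = \height(a_i)$. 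Finally, multiplying the two inequalities gives $x_1 y_1 < (w - x_1)|y_j|$; expanding the triangle-angle sums and using $\arctan(u) + \arctan(1/u) = \tfrac{\pi}{2}$ one obtains $\angle_{T_j}(a_i,a_k) + \angle_{T_i}(a_k,a_j) = \tfrac{3\pi}{2} - \arctan(x_1/|y_j|) - \arctan(y_1/(w-x_1))$, which exceeds $\pi$ precisely when $x_1 y_1 < (w - x_1)|y_j|$, giving the stated inequality.

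I expect the crux to be the middle step: manufacturing $T'$ with width at most $w - x_1$ and checking that $T_j \cup T'$ is strictly convex, which is what allows the non-majorness of $T_j$ to be used. The points needing care are that the trajectory witnessing visibility of $u_i$ runs \emph{along} the saddle connection $c_i$ rather than crossing one transversely, and that the corner of $T'$ opposite $c_j$ lies strictly within the horizontal strip of $c_j$ — which is exactly what \cref{prop:vis} gives because the trajectory to $u_i$ starts at an interior height of $c_j$, and is exactly what makes the two relevant angles of $T'$ acute. The coordinate bookkeeping and the final trigonometric identity are routine.
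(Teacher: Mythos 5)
Your proof is correct and follows essentially the same strategy as the paper: flow horizontally from $c_j$ (resp.\ vertically from $c_i$) away from $T_j$ (resp.\ $T_i$), use the trajectory running along $c_i$ (resp.\ $c_j$) from $p$ to exhibit a visible singularity, produce a triangle across $c_j$ (resp.\ $c_i$) via \cref{prop:vis}, and invoke non-majorness of $T_j$ (resp.\ $T_i$) from \cref{lem:unique_tri} to bound $\width(T_j)$ (resp.\ $\height(T_i)$). The only differences are presentational: you argue directly via the contrapositive where the paper argues by contradiction, you make the coordinate bookkeeping and the strict convexity of the glued quadrilateral explicit, and you derive the angle bound from an $\arctan$ identity rather than the paper's decomposition into right triangles along $c_i$ and $c_j$ --- but the content is identical.
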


\proof
  Suppose $\width(a_j) \leq \width(T_j)$ for a contradiction. The saddle connection
  $c_j$ is vertical, and is tallest in the simplex $\tau \cup \{c_j\}$.
  Consider the horizontal unit-speed flow~$\varphi^t$ emanating from~$c_j$, away from $T_j$.
  Let $p$ be the intersection point of $c_i$ and $c_j$.
  The trajectory $\varphi^t(p)$ runs along $c_i$
  until it hits the corner $p'$ of $T_i$ formed by $c_i$ and $a_j$ at time $t = \width(a_j)$;
  see \cref{fig:wide_opening}.
  Since $c_i$ is disjoint from $\tau$, and $p$ is the only intersection point
  between $c_i$ and $c_j$, it follows that~$p'$ is visible with respect to $\varphi^t$ and $\tau \cup \{c_j\}$.
  Applying \cref{prop:vis}, there exists a visible triangle $T$
  with $c_j$ as one of its sides and satisfying
  \[\height(T) = \height(T_j) \quad \textrm{and} \quad \width(T) \leq \width(a_j) \leq \width(T_j).\]
  But then $T_j$ is a major triangle, since it has at least half the area of the 
  strictly convex quadrilateral formed by gluing $T_j$ and $T$ along $c_j$.
  This contradicts \cref{lem:unique_tri}.
  The proof of the other inequality follows in a similar manner.
  
  By considering the three right-angled triangles obtained by cutting $T_i \cup T_j$
  along $c_i$ and~$c_j$ (in the universal cover), together with the inequalities
  established above, we deduce the following.
  \begin{eqnarray*}
   \angle_{T_j}(a_i,a_k) + \angle_{T_i}(a_k, a_j) &=& \angle_{T_j}(a_i,c_i) + \angle_{T_j}(c_i,a_k) + \angle_{T_i}(a_k, c_j) + \angle_{T_i}(c_j, a_j)\\
   &>& 2\left(\angle_{T_j}(c_i,a_k) + \angle_{T_i}(a_k, c_j)\right)
   = 2 \cdot \frac{\pi}{2} = \pi.
  \end{eqnarray*}
  This completes the proof.
\endproof

 We continue in the previously described situation (as in \cref{fig:wide_opening}).
 By applying a horizontal shear to $(S,q)$, we now make $a_i$ vertical and $c_i$ horizontal,
 while maintaining the height of every saddle connection.
 Observe that the inequality $\angle_{T_j}(a_i,a_k) + \angle_{T_i}(a_k, a_j) > \pi$
 persists under $\SL(2,\R)$--deformations.
 This implies that exactly one of $a_j$ or~$a_k$ has positive slope, with the other having negative slope.
 Therefore, the triangles $T_i, T_j, T_k$ appear as in the  configuration in \cref{fig:setting_end_if_sufficiency_second_triangle_test} in the middle or on the right (up to horizontal or vertical reflections),
 depending on whether~$T_j$ and~$T_k$ are glued concordantly or discordantly along $a_i$.
  
 Summarising the discussion, we may thus assume that the triangles $T_i$, $T_j$, $T_k$ appear as one of the three possible configurations in \cref{fig:setting_end_if_sufficiency_second_triangle_test} up to horizontal or vertical reflections.
 For concreteness, let us assume that they appear exactly as shown, so that we may refer to top, bottom, left, and right.
 Observe that $\height(T_j) = \height(T_k) = \height(T_i) + \height(a_i)$.
  
\begin{lem}
 There exists a strictly convex quadrilateral $Q$ with $a_i$ as one of its diagonals, such that $\height(Q) = \height(a_i)$, and neither $a_j$ nor $a_k$ intersect $\partial Q$ transversely.
\end{lem}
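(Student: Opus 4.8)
The plan is to build $Q$ as the union of two triangles glued along $a_i$, one on each side of $a_i$, each of height exactly $\height(a_i)$; these triangles will be produced by the flowing machinery of \cref{sec:flow}.

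We work in the configuration of \cref{fig:setting_end_if_sufficiency_second_triangle_test}, so $a_i$ is vertical and strictly tallest in $\tau=\{a_1,a_2,a_3\}$, with $\height(a_i)>\height(a_j)=\height(a_k)>0$, and we continue to assume that $\tau$ does not bound a triangle; write $h=\height(a_i)$. The first step is to check that $a_i$ is not contained in a horizontal cylinder. When all three gluings are concordant, this follows from \cref{lem:all_gluings_concordant}: there $a_i$ ends at the removable singularity $p_0$ whose entire neighbourhood is covered by the corners of $T_i$, $T_j$, $T_k$, so the only saddle connections emanating from $p_0$ are $a_i,a_j,a_k$; none of these is horizontal, so $p_0$ cannot lie on the boundary of a horizontal cylinder, and hence $a_i$ (whose endpoint $p_0$ would have to be) is not a transverse arc of one. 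The two discordant configurations require an analogous local analysis at the endpoints of $a_i$, exploiting that $\height(T_j)=\height(T_k)=h+\height(a_j)>h$ forces the apices of $T_j$ and $T_k$ outside the vertical extent of $a_i$; I expect this to be the most delicate point of the proof.

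Granting this, equip $a_i$ with a unit-speed parameterisation and let $\varphi^t$ and $\psi^t$ be the horizontal unit-speed flows emanating from the two sides of $a_i$ respectively. Since $a_i$ is strictly tallest in $\tau$ and does not lie in a horizontal cylinder, \cref{prop:tall} provides a visible singularity with respect to $\tau$ and $\varphi^t$ along a trajectory \emph{starting in the interior of $a_i$}, and \cref{prop:vis} then yields a triangle $T^+$ on the corresponding side of $a_i$ with $a_i$ as a side, with $\height(T^+)=h$, with $\partial T^+$ having no transverse intersection with $\tau$, and---because the trajectory starts in the interior of $a_i$---with no horizontal sides. Running the same argument for $\psi^t$ gives an analogous triangle $T^-$ on the other side of $a_i$.

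It remains to verify that $Q:=T^+\cup_{a_i}T^-$ works. A Euclidean triangle cannot have a side appearing twice, so $T^+\ne T^-$; as they lie on opposite sides of $a_i$ in $(S,q)$, their union is a quadrilateral with $a_i$ as a diagonal. Lifting to the universal cover and choosing coordinates in which $a_i$ is the segment from $0$ to $\imagi h$ with $T^+$ to its right, the third vertex of $T^+$ has positive real part and imaginary part strictly between $0$ and $h$ (no horizontal side), so both angles of $T^+$ at the endpoints of $a_i$ are strictly less than $\pi/2$; the same holds for $T^-$ with negative real part. Hence each corner of $Q$ at an endpoint of $a_i$ has angle less than $\pi$, and the two remaining corners are the apices of $T^\pm$, which are also less than $\pi$; thus $Q$ is strictly convex, and by \cref{poly_embed} it has embedded interior. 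All four vertices of $Q$ have imaginary part in $[0,h]$, so $\height(Q)=h=\height(a_i)$, and $\partial Q=(\partial T^+\setminus\{a_i\})\cup(\partial T^-\setminus\{a_i\})$ has no transverse intersection with $a_j$ or $a_k$ because this already holds for $\partial T^+$ and $\partial T^-$ relative to all of $\tau$. Should ruling out the horizontal-cylinder case in some configuration turn out to be awkward, a useful fallback is that $Q$ still comes out strictly convex as long as $T^+$ and $T^-$ do not both acquire a horizontal side at the \emph{same} endpoint of $a_i$, which leaves room to choose the visible singularities (via \cref{intervals}) so as to avoid this.
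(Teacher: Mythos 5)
Your approach differs genuinely from the paper's. The paper constructs one triangle $T$ first (allowing it to have a horizontal side), and then, flowing the other way with respect to $\sigma = \tau \cup \partial T$, invokes the Awning Lemma (\cref{lem:awning}): the only obstruction to completing $T$ to a strictly convex quadrilateral is an awning, which must be a tallest saddle connection of $\sigma$ of a particular slope sign, and a quick slope check on the sides of $T$ rules this out. You instead try to force both triangles $T^\pm$ to have no horizontal sides by invoking the stronger form of \cref{prop:tall}, which requires $a_i$ to be strictly tallest \emph{and} not contained in a horizontal cylinder.

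The gap is exactly that cylinder claim. In the concordant case your argument asserts that ``the only saddle connections emanating from $p_0$ are $a_i,a_j,a_k$,'' but this does not follow from the neighbourhood of $p_0$ being covered by the corners of $T_i,T_j,T_k$. That covering only says that any other direction from $p_0$ immediately enters the interior of one of those triangles; the corresponding trajectory then simply crosses a side of the triangle transversely and continues on the surface, and there is nothing to stop it from eventually hitting a singularity and forming a saddle connection. In particular, a \emph{horizontal} saddle connection from $p_0$ through the interior of $T_j$ or $T_k$ is not excluded, so your conclusion that $p_0$ cannot lie on the boundary of a horizontal cylinder does not follow. You acknowledge that the two discordant configurations are left undone and ``the most delicate point,'' and your fallback --- choosing the visible singularities via \cref{intervals} so that $T^+$ and $T^-$ do not acquire horizontal sides at the same endpoint of $a_i$ --- is not available in general precisely when there is an awning, which is the case the Awning Lemma argument is designed to handle; so the fallback collapses to the paper's argument, undeveloped. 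One further small point: the conclusion is later used in the form ``$\partial Q \cup \tau$ forms a simplex,'' so the second triangle should be taken visible with respect to $\tau \cup \partial T^+$ rather than $\tau$ alone, as the paper does.
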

  
  \proof
  Consider the leftwards unit-speed flow emanating from $a_i$.
  Since $a_i$ is strictly taller than both $a_j$ and $a_k$, we may construct
  a triangle $T$ meeting $a_i$ that is visible with respect to $\tau$ (see \cref{fig:too_short}).
  Note that $\height(T) = \height(a_i)$.
  Now let $\sigma = \tau \cup \partial T$, and consider the rightwards
  unit-speed flow emanating from $a_i$. Since $a_i$ is also tallest in $\sigma$, we
  can construct a triangle~$T'$ meeting $a_i$ that is visible
  with respect to $\sigma$. We need to show that~$T'$ can be chosen so that
  the quadrilateral $Q$ obtained by gluing $T$ and $T'$ along $a_i$ is
  strictly convex. If no sides of $T$ are horizontal, then this is immediate.
    
  Suppose the side of $T$ meeting the bottom endpoint of $a_i$ is horizontal,
  as in \cref{fig:too_short}.
  By \cref{lem:awning}, the only obstruction for the existence of
  a triangle $T'$ so that $Q$ is strictly convex
  is the presence of an awning for $a_i$. Such an awning
  must be tallest in $\sigma$ and have negative slope.
  But this cannot occur, since the tallest saddle connections in $\sigma$
  are the two non-horizontal sides of $T$, neither of which have negative slope.
  Arguing similarly for the case where a horizontal side of $T$
  meets the top endpoint of $a_i$ completes the proof.
  \endproof

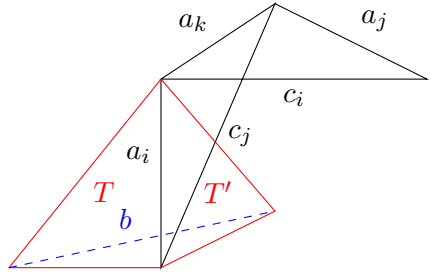
\begin{figure}
 \begin{center}
  \begin{tikzpicture}[scale=0.5]
   \draw (0,0) -- node[above left] {$a_i$} (0,-5) -- node[right] {$c_j$} (3,2) -- node[above left] {$a_k$} (0,0)
    -- node[below] {$c_i$} (7,0) -- node[above right] {$a_j$} (3,2);
   \draw[red]  (0,0) -- (-4,-5) -- (0,-5) -- (3,-3.5) --(0,0);
   \draw[blue, dashed] (-4,-5) -- node[above left] {$b$} (3,-3.5);
   \node[red] (A) at (-1.5,-3) {$T$};
   \node[red] (B) at (1.5,-3) {$T'$};
  \end{tikzpicture}
  \caption{A strictly convex quadrilateral $Q$ (in red), formed by gluing triangles~$T$ and~$T'$ along~$a_i$.
  The diagonals of $Q$ are $a_i$ and $b$ (in blue, dashed).}
  \label{fig:too_short}
 \end{center}
\end{figure}
  
  Let $Q$ be as in the above lemma, and $b$ be its diagonal
  obtained by flipping $a_i$.
  Since $\partial Q \cup \tau$ forms a simplex, it can be extended to a triangulation,
  and so $b\in\Ft(a_i)$.
  By Condition (T5), at least one of $a_j$ or $a_k$ belongs to
  $\KFB(a_i, b)$.
  Recall from \cref{cor:kfb} that $\KFB(a_i, b) \subseteq \partial Q$.
  Therefore, one of the triangles obtained by cutting $Q$ along $a_i$ must also have at least one of~$a_j$ or $a_k$ among its sides.
  By \cref{lem:unique_tri}, this triangle must coincide with $T_j$ or $T_k$.
  But this is impossible, since
  \[\height(T_j) = \height(T_k) = \height(T_i) + \height(a_i) > \height(Q).\]
  Hence we have a contradiction to the assumption that $\tau$ does not bound a triangle.
  
  This completes the proof of the Second Triangle Test.

\section{Orienting triangles}\label{sec:orient}

To recover the gluing pattern of a triangulation, we need
to know not only the triangles themselves but also their orientations on $(S,q)$. 
Recall that an oriented triangle is given by a
triple of sides $\TV = [a,b,c]$, considered up to cyclic permutation.
The main goal of this section is to develop a purely combinatorial test to detect whether two oriented triangles are consistently oriented, that is, when they determine the same orientation on~$S$.

\begin{prop}[Orientation Test] \label{prop:orient}
 Let $\TV$ and $\TV'$ be oriented triangles on $(S,q)$. Then $\TV$ and $\TV'$
 are consistently oriented if and only if there exists a sequence of
 distinct triangles
 \[\TV = \TV_0, \ldots, \TV_k = \TV'\]
 such that for each $0 \leq i < k$,
 the triangles $\TV_i$ and $\TV_{i+1}$ are consistently oriented and contained in a common strictly convex quadrilateral. Moreover, the property that $\TV$ and $\TV'$
 are consistently oriented can be detected purely combinatorially in $\A(S,q)$.
\end{prop}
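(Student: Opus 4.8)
The proof has two halves. First I would establish the geometric characterisation: two oriented triangles $\TV$ and $\TV'$ on $(S,q)$ are consistently oriented if and only if they can be joined by a chain of triangles, consecutive ones being consistently oriented and sharing a strictly convex quadrilateral. One direction is immediate: if such a chain exists, then at each step the two triangles, living in a common strictly convex quadrilateral with embedded interior (by \cref{poly_embed}), induce the same orientation on $S$; chaining these equalities gives that $\TV$ and $\TV'$ are consistently oriented. For the converse, I would use connectivity of the saddle flip graph $\F(S,q)$ (\cref{thm:tahar}) together with the Triangle Test machinery. Pick saddle triangulations $\T \supseteq \partial\vec T$ and $\T' \supseteq \partial\vec{T}'$ and a flip path between them. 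Along a single flip, replacing $a$ by $b$ inside a strictly convex quadrilateral $Q(a,b)$, the two triangles of $Q$ cut off by $a$ and the two cut off by $b$ are all consistently oriented (each consecutive pair shares the strictly convex quadrilateral $Q$, or a sub-quadrilateral of it); moreover every triangle of $\T$ not meeting $a$ survives to $\T'$. The subtlety is that within a fixed triangulation $\T$, any two of its triangles are consistently oriented with $S$ but need not pairwise share a strictly convex quadrilateral — so I must argue that the triangles of a single triangulation are all chain-connected. For this I would pass to the dual graph of $\T$ (triangles as vertices, adjacency along shared saddle connections) which is connected; if two adjacent triangles of $\T$ happen not to form a strictly convex quadrilateral, i.e. their union $Q$ is non-convex, I can instead flip a different nearby edge to route around, or use \cref{prop:extension}/\cref{prop:major_ext} to enlarge to a configuration where strictly convex quadrilaterals are available. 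Carrying this bookkeeping through the flip path yields the chain.

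**Detecting this combinatorially.** The key point for the final sentence is that \emph{every} ingredient above is already visible in $\A(S,q)$ as an abstract simplicial complex. By the Triangle Test (\cref{tri_test}) we can decide which $2$--simplices $\tau \in \A(S,q)$ bound triangles on $(S,q)$; call these the \emph{geometric $2$--simplices}. An oriented triangle is then just a geometric $2$--simplex together with a cyclic ordering of its three vertices, which is purely combinatorial data. By \cref{codim1}, two oriented triangles $\TV_i, \TV_{i+1}$ are contained in a common strictly convex quadrilateral $Q$ precisely when there is a simplex $\sigma \in \A(S,q)$ with $\lk(\sigma)\cong\NG_2$, one vertex $c$ of $\lk(\sigma)$ lying in both $\TV_i$ and $\TV_{i+1}$, and $\TV_i, \TV_{i+1}$ being the two triangular regions of $(S-\sigma) $ cut off by $c$ — equivalently, writing $\lk(\sigma)=\{c,d\}$, the vertex sets of $\TV_i$ and $\TV_{i+1}$ are two of the triples among the sides of $Q$ together with $c$. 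All of this is a finite check inside the link $\lk(\sigma)$.

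**Reading off the orientation.** What remains is to extract the ``consistently oriented'' relation for two triangles sharing a strictly convex quadrilateral $Q$ from the combinatorics alone. Here I would fix, once and for all, one geometric $2$--simplex and an orientation of it as a reference (an arbitrary choice of ``positive'' cyclic order), and propagate: given $Q$ with $\lk(\sigma)=\{c,d\}$ and its four boundary sides $a_1,a_2,a_3,a_4$ in cyclic order around $Q$ (the cyclic order being recoverable because $Q$ is a quadrilateral — its $1$--skeleton of diagonals-and-sides is determined by the adjacency data in $\lk(\sigma\cup\{c\})$ and $\lk(\sigma\cup\{d\})$), the two triangles cut off by $c$ are $\{c,a_1,a_2\}$ and $\{c,a_3,a_4\}$, and they are consistently oriented exactly when their cyclic orders induce opposite orientations on the shared edge $c$. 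This ``opposite orientation on the common edge'' condition is the combinatorial definition of consistency for adjacent triangles, and it is manifestly intrinsic to $\A(S,q)$. Declaring $\TV \sim \TV'$ if they are joined by a chain as in the proposition then defines a combinatorial equivalence relation on oriented geometric $2$--simplices, and by the geometric characterisation it has exactly two classes (swapped by reversing all orientations), which are the two orientations of $S$. I expect the main obstacle to be the purely topological lemma hidden in the first half: showing that any two triangles of a single saddle triangulation are chain-connected through strictly convex quadrilaterals, since adjacent triangles of $\T$ need not be convex-positioned; this requires combining flip-graph connectivity with the extension results of \cref{sec:extend} to guarantee enough strictly convex quadrilaterals along the way, and is where the bulk of the work lies.
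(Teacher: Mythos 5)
Your overall approach mirrors the paper's: define an auxiliary graph whose vertices are triangles and whose edges are pairs of triangles sharing a strictly convex quadrilateral (the paper calls it $\G(S,q)$), reduce the "only if" direction to connectivity of this graph, and then observe that every ingredient is detectable in $\A(S,q)$ via the Triangle Test, the flip-pair structure (\cref{codim1}), and \cref{lem:quad_boundaries}-style arguments. Your combinatorial-detection paragraph in particular matches the paper closely, including the trick of recovering the cyclic order of $\partial Q(c,d)$ by checking which pairs of sides form triangles with $c$ or $d$.

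The genuine gap is in the connectivity argument, which you yourself flag as "where the bulk of the work lies." Your phrase "flip a different nearby edge to route around" does not describe a proof, and the issue is not cosmetic: two adjacent triangles $T, T'$ of a saddle triangulation $\T$, meeting along $c$, genuinely may form a non-convex quadrilateral, and it is not clear that local rerouting in the dual graph fixes this. The paper's resolution is more specific and worth internalizing: fix the \emph{edge} $c$ rather than the pair of triangles, apply \cref{prop:extension} to produce a triangulation $\T'' \supseteq \partial T$ in which $c$ is flippable (so the triangle $T''$ across $c$ from $T$ in $\T''$ satisfies $T \sim T''$ in $\G(S,q)$), and then use \cref{thm:tahar} applied to the \emph{relative} flip graph $\F_{\partial T}(S,q)$ of triangulations containing $\partial T$ to obtain a flip path $\T = \T_0, \dots, \T_j = \T''$. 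Along this path one tracks $T_i$, the triangle of $\T_i$ across $c$ from $T$; every flip either leaves $T_i$ unchanged or replaces it with a triangle sharing the flip quadrilateral, so $T' = T_0$ and $T'' = T_j$ are $\G$-connected. This tracking-across-a-fixed-edge mechanism is the technical idea your sketch is missing, and without something like it the "route around" step does not close. Once this lemma is in place, the rest of your argument (adjacency in the dual graph of $\T$, then general flip-graph connectivity, using that consecutive triangulations share a triangle since we have excluded the one-marked-point torus) goes through as you describe.
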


Certainly, if such a sequence exists then $\TV$ and $\TV'$ are consistently
oriented. Our goal is to prove that there always exists such a sequence
between two consistently oriented triangles.

\begin{lem}[Detecting convex quadrilateral boundaries] \label{lem:quad_boundaries}
 Let $\{c,d\}$ be a flip pair, and $Q(c,d)$ be the strictly convex
 quadrilateral they span.
 Then $\partial Q(c,d)$ can be detected using only the combinatorial
 structure of $\A(S,q)$.
\end{lem}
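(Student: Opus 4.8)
\textbf{Proof plan for \cref{lem:quad_boundaries}.}

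The plan is to reconstruct $\partial Q(c,d)$ from the combinatorics of $\A(S,q)$ by splitting into two cases according to whether $\{c,d\}$ is a bad kite pair. The point is that each of the ingredients already developed in \cref{sec:flip_pairs_bad_kites} is visibly combinatorial: the set of flip pairs $\FP(\A(S,q))$ is defined by the isomorphism type of links (\cref{def:fp}); the barriers $\B(c,d)$ are defined purely in terms of transverse intersection (\cref{def:barriers}); the flippable barriers $\FB(c,d)$ are defined via the existence of a triangulation $\T \supseteq \B(c,d)$ in which a vertex can be flipped, and flippability is combinatorial by \cref{cor:flip}; and the set $\MIL(\A(S,q))$ and the link relation are manifestly combinatorial. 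So the whole of \cref{def:kb} is a combinatorial recipe. What I would do is verify, step by step, that \emph{being a bad kite pair} can itself be recognised combinatorially, and then invoke \cref{prop:kb_detect} and \cref{cor:kfb}.

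First I would record that $\{c,d\}$ being a bad kite pair is detected by the condition $\#\FB(c,d) = 2$: this is exactly the content of the last displayed assertion before \cref{def:kb} (``$\#\FB(c,d) = 2$ if and only if $\{c,d\}$ is a bad kite pair''), together with the fact that $\#\FB(c,d) \geq 2$ always (the Flip pair boundaries lemma). Since $\FB(c,d)$ is a combinatorially defined finite set, counting its elements is combinatorial, so we can decide which case we are in. In the non-bad-kite case, \cref{cor:kfb} gives $\KFB(c,d) = \FB(c,d) \subseteq \partial Q(c,d)$ with $\#\FB(c,d) \geq 3$; but since $\#\partial Q(c,d) \leq 4$, the only missing side (if any) is a cordon of $\partial Q(c,d)$, which by \cref{barriers} is exactly $\B(c,d) \setminus \partial Q(c,d)$ — and $\partial Q(c,d)$ consists of $\FB(c,d)$ together with those elements of $\B(c,d)$ that remain after removing the genuine cordons. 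More cleanly: I would argue that $\partial Q(c,d)$ is recovered as $\FB(c,d)$ when $\#\FB(c,d) = 4$, and when $\#\FB(c,d) = 3$ we must identify the fourth side among $\B(c,d) \setminus \FB(c,d)$; here I would use that the fourth side of $Q(c,d)$, being a side but a non-flippable one (a cordon), is characterised combinatorially by \cref{barriers} — it is the unique $\gamma \in \B(c,d) \setminus \FB(c,d)$ such that $\FB(c,d) \cup \{\gamma\}$ spans a simplex whose link is $\cong \NG_2$ with $\{c,d\}$ as its vertices. In the bad kite case, I simply run the procedure of \cref{def:kb} to get $\KB(c,d)$, and \cref{prop:kb_detect} guarantees $\KB(c,d) = \partial Q(c,d)$ and that the output is independent of the (combinatorial) choices of $\sigma,\sigma',\sigma''$ made along the way.

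The main obstacle I anticipate is the bookkeeping in the $\#\FB(c,d) = 3$ subcase of the non-bad-kite situation: one has $\FB(c,d) \subsetneq \partial Q(c,d) \subsetneq \B(c,d)$ (as in \cref{ex:oct}), so one must cleanly separate the genuine fourth side of $Q(c,d)$ from the other (potentially many) cordons and barriers, using only that it together with $\FB(c,d)$ cuts off a strictly convex quadrilateral whose diagonals are $c,d$ — i.e.\ that the corresponding link is $\NG_2 = \{c,d\}$. Once that characterisation is pinned down via \cref{codim1} and \cref{barriers}, the rest is routine assembly of the combinatorial descriptions already in hand. I would close by remarking that, combining this lemma with the Triangle Test (\cref{tri_test}), any simplicial isomorphism $\phi$ sends $\partial Q(c,d)$ to $\partial Q(\phi(c),\phi(d))$, which is the form in which the lemma will be used in the proof of \cref{prop:orient}.
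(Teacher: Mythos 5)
Your route is genuinely different from the paper's: you propose a case analysis on $\#\FB(c,d)$ together with the $\KB$ machinery from \cref{sec:flip_pairs_bad_kites}, whereas the paper completes $\B(c,d)\cup\{c\}$ to a triangulation $\T$, uses \cref{barriers} to argue that $Q(c,d)$ is the unique non-triangular region of $\T\setminus\{c\}$, applies the Triangle Test to locate the two triangles of $\T$ inside $Q(c,d)$, and reads off $\partial Q(c,d)$ as the union of their boundaries minus $c$.

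There is, however, a genuine gap in your $\#\FB(c,d)=3$ subcase. First, a conceptual error: you describe the missing fourth side as ``a non-flippable one (a cordon),'' but these are different things. By \cref{barriers}, the cordons of $\partial Q(c,d)$ are exactly $\B(c,d)\setminus\partial Q(c,d)$, i.e.\ they are \emph{not} sides of $Q(c,d)$. The saddle connection you are looking for lies in $\partial Q(c,d)\setminus\FB(c,d)$: a genuine side of the quadrilateral that happens to be non-flippable in every triangulation containing $\partial Q(c,d)$ (the proof of the Flip-pair-boundaries lemma shows this configuration occurs when exactly one side, say $a$ or $f$, has an awning). Second, the proposed detection rule does not work: you want the unique $\gamma\in\B(c,d)\setminus\FB(c,d)$ such that $\FB(c,d)\cup\{\gamma\}$ spans a simplex whose link is $\cong\NG_2$. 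But $\FB(c,d)\cup\{\gamma\}$ is a $3$--simplex, which has codimension far greater than one in general, so $\lk(\FB(c,d)\cup\{\gamma\})$ is a much larger complex than $\NG_2$; \cref{codim1} only applies to codimension--$1$ simplices. Third, $\#\FB(c,d)=3$ occurs in two distinct situations --- the $(1,1)$--annulus case where $\#\partial Q(c,d)=3$ and $\FB(c,d)=\partial Q(c,d)$ already, versus the four-distinct-sides case with one non-flippable side --- and you would also need a combinatorial criterion to tell these apart. The paper's use of the Triangle Test inside a triangulation extending $\B(c,d)\cup\{c\}$ avoids all three issues at once.
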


\proof
 Recall from \cref{sec:flip_pairs_bad_kites} that the set of barriers $\B(c,d)$
 contains $\partial Q(c,d)$, and can be defined purely combinatorially
 in terms of the pair of vertices $\{c,d\}$.
 Let $\T$ be any triangulation containing $\B(c,d) \cup \{c\}$.
 As $\B(c,d) \setminus \partial Q(c,d)$ consists exactly of the cordons of $Q(c,d)$ (see \cref{barriers}),
 we deduce that
 $Q(c,d)$ is the unique
 non-triangular region of $\T \setminus \{c\}$. Applying the Triangle Test,
 we can detect the two triangles $T$ and $T'$ of $\T$ that meet $c$.
 Then $\partial Q(c,d) = (\partial T \cup \partial T') \setminus \{c\}$.
\endproof

 Using the above lemma and the Triangle Test, we can consistently orient
 the four triangles contained in a strictly convex quadrilateral as follows. 
 Let $\{c,d\}$ be a flip pair.
 \begin{enumerate}
   \item Use \cref{lem:quad_boundaries} to detect the sides of $Q(c,d)$.
   \item Observe that two sides of $Q(c,d)$ (which may be the same saddle connection)
  are opposite one another if and only if
    they do not form a triangle together with neither~$c$ nor $d$.
    Use this observation along with the Triangle Test to cyclically order the sides of $Q(c,d)$.
  \item Suppose that $a,b,e,f$ form the sides
 of $Q(c,d)$ in the given cyclic order, and so that $\{a,b,c\}$ forms a triangle.
 Then $[a,b,c]$, $[e,f,c]$, $[b,e,d]$, and $[f,a,d]$ are triangles
 contained in $Q(c,d)$ that all have the same orientation.
  \end{enumerate}

We now define an auxiliary graph $\G(S,q)$ as follows:
The vertices of $\G(S,q)$ are the triangles on $(S,q)$ (in some triangulation), and
two triangles $T, T'$ are connected by an edge if and only if they
are contained in a common strictly convex quadrilateral.
(The triangles $T$ and~$T'$ can be any two of the four triangles in the strictly convex quadrilateral,
in particular, they are allowed to overlap.) 
Adjacent triangles in $\G(S,q)$ can be detected purely combinatorially
using the definition of flip pairs together with the above procedure. Moreover, if $T$ and $T'$ are adjacent then we
can also assign them consistent orientations using only combinatorial data.
Thus, it suffices to show that $\G(S,q)$ is connected in order to  prove \cref{prop:orient}.

Let us first focus on pairs of triangles that can be glued along a
common side to form a quadrilateral.
These triangles can be detected purely combinatorially as follows.

\begin{lem}[Gluable triangles] \label{gluable}
 Let $\tau, \tau' \in \A(S,q)$ be $2$--simplices bounding triangles $T,T'$.
 Then there exists a quadrilateral formed by gluing $T$ and $T'$ along a
 common side if and only if $\tau \cup \tau'$ is a simplex in $\A(S,q)$ and
 $1 \leq \#(\tau \cap \tau') \leq 2$.
\end{lem}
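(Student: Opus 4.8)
The statement is a biconditional, and both directions hinge on understanding what it means for two triangles to share sides. The forward direction is the easy one: if $Q$ is a quadrilateral formed by gluing $T$ and $T'$ along a common side $e$, then $\partial Q$ together with $e$ is a simplex in $\A(S,q)$ containing both $\partial T$ and $\partial T'$, so $\tau \cup \tau'$ is a simplex; and since $Q$ has four distinct sides plus the shared diagonal $e$, we have $\tau \cap \tau' = \{e\}$ when all four sides of $Q$ are distinct saddle connections, while $\#(\tau \cap \tau') = 2$ occurs precisely when a pair of opposite sides of $Q$ are identified (so $Q$ glues up to an annular picture). Either way $1 \leq \#(\tau \cap \tau') \leq 2$. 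I would dispatch this direction first with a short case analysis on how many of the boundary saddle connections of $Q$ coincide.

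\textbf{The converse.} For the reverse implication, assume $\tau \cup \tau'$ is a simplex with $1 \leq \#(\tau \cap \tau') \leq 2$. The plan is to cut $(S,q)$ along $\tau \cup \tau'$ and examine the complementary regions containing $T$ and $T'$. Since $T$ and $T'$ are bounded by the saddle connections in $\tau$ and $\tau'$ respectively, and these all lie in the simplex $\tau \cup \tau'$, each of $T$ and $T'$ is a region (or part of a region) of $S - (\tau \cup \tau')$; in fact $T$ and $T'$ are triangular regions there unless two of their own sides coincide, but triangles on $(S,q)$ cannot be folded (the three sides of a Euclidean triangle have distinct slopes), so $T$ and $T'$ are genuine triangular regions. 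Now I split into the two cases according to $\#(\tau \cap \tau')$. If $\#(\tau \cap \tau') = 1$, say $\tau \cap \tau' = \{e\}$, then $T$ and $T'$ are distinct triangular regions of $S - (\tau \cup \tau')$ that each meet $e$ as a side; gluing them along $e$ — i.e. passing to the region of $S - ((\tau \cup \tau') \setminus \{e\})$ containing both interiors — yields a quadrilateral, which is exactly what we want. If $\#(\tau \cap \tau') = 2$, say $\tau \cap \tau' = \{e, f\}$: then $T$ and $T'$ share two sides, and by \cref{two_tri} they are either contained in a common bigon or both satisfy the First Triangle Test; the point is that gluing $T$ to $T'$ along just one of $e$ or $f$ still produces a quadrilateral (in the bigon case one obtains a $(1,1)$-annulus or a bigon from gluing along \emph{both}, but gluing along only one side gives a quadrilateral), so the desired quadrilateral exists.

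\textbf{Main obstacle.} The subtle point — and the step I expect to require the most care — is verifying in the $\#(\tau \cap \tau') = 2$ case that gluing along a single common side genuinely produces a \emph{quadrilateral} and not some degenerate object, and that we correctly identify which side to glue along. One must check that $T \neq T'$ (which is forced: if $T = T'$ as triangular regions then $\tau = \tau'$, contradicting $\#(\tau \cap \tau') = 2 < 3$), and that the two triangles are positioned on opposite sides of the chosen common side in $(S,q)$, so that gluing along it is a genuine identification producing a $4$-gon rather than identifying $T$ with itself. Here the configurations analyzed in the proof of \cref{two_tri} (see \cref{fig:triangles_with_two_sides_in_common}) give exactly the needed classification: in every configuration listed there, at least one of the two shared sides is a side along which $T$ and $T'$ appear on opposite sides, and gluing there yields a quadrilateral. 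A second, more bookkeeping-level obstacle is handling the case where $T$ or $T'$ has two of its \emph{own} sides equal to the same saddle connection — but as noted, Euclidean triangles have no folded sides, so $\#\tau = \#\tau' = 3$ always, which removes this worry. Once these checks are in place, the lemma follows by assembling the two directions.
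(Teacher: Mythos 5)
Your proof is correct and takes essentially the same route as the paper's, which is quite terse: the paper merely observes that ``$\tau \cup \tau'$ is a simplex'' is equivalent to disjoint interiors, and ``$1 \leq \#(\tau\cap\tau') \leq 2$'' is equivalent to ``$T, T'$ distinct and sharing at least one side,'' and then leaves implicit that two distinct disjoint-interior triangles sharing a side glue to a quadrilateral. You fill in the details; the overall logic is the same.

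One remark on your ``main obstacle'' paragraph: you worry about whether, in the case $\#(\tau\cap\tau') = 2$, the triangles could sit on the same side of the chosen common edge, and you resolve this by appealing to \cref{two_tri} and its case analysis in \cref{fig:triangles_with_two_sides_in_common}. This is a heavier tool than needed, and the logic is slightly backwards: \cref{two_tri} classifies the resulting \emph{quadrilaterals}, so invoking it presupposes the very fact you are trying to establish. The cleaner argument is direct. Because saddle triangles are never folded (distinct slopes on the three sides), $T$ lies on exactly one side of any saddle connection $e$ appearing in $\partial T$, and likewise for $T'$; and because $\tau \cup \tau'$ is a simplex, $T$ and $T'$ have disjoint interiors. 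A Euclidean triangle has a full neighbourhood on its side of $e$, so two disjoint-interior triangles sharing $e$ must lie on opposite sides of it. Hence gluing along \emph{any} shared side is a genuine identification producing a 4-gon, with no case analysis required. With that streamlining your argument matches the paper's intent exactly.
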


\proof
 Observe that $\tau \cup \tau'$ is a simplex in $\A(S,q)$ if and only if
 no saddle connections of~$\tau$ intersect any saddle connection of
 $\tau'$. The condition $1 \leq \#(\tau \cap \tau') \leq 2$ is equivalent
 to saying that $T$ and $T'$ are distinct triangles that meet along 
 at least one common side.
\endproof

\begin{lem}[Connectedness of $\G(S,q)$]
 The graph $\G(S, q)$ is connected.
\end{lem}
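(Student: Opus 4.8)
The plan is to derive the connectedness of $\G(S,q)$ from that of the saddle flip graph $\F(S,q)$ (\cref{thm:tahar}), splitting the work into two claims. \textbf{Claim A:} if two saddle triangulations $\T, \T'$ differ by a single flip, then the two triangles that are destroyed and the two that are created all lie in a common strictly convex quadrilateral, and hence in a single component of $\G(S,q)$. \textbf{Claim B:} for every saddle triangulation $\T$, all triangles of $\T$ lie in a single component of $\G(S,q)$.

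Granting both claims, I would finish as follows. Fix a triangle $T_0$, lying in some saddle triangulation $\T_0$, and let $C$ be the component of $\G(S,q)$ containing $T_0$; by Claim B, every triangle of $\T_0$ lies in $C$. For an arbitrary saddle triangulation $\T$, choose a path $\T_0 = \T^{(0)}, \ldots, \T^{(n)} = \T$ in $\F(S,q)$ and show by induction on $k$ that every triangle of $\T^{(k)}$ lies in $C$. For the step $\T^{(k)} \to \T^{(k+1)}$: the triangles common to both triangulations are unchanged and so remain in $C$, while the two new triangles lie, by Claim A, in a common strictly convex quadrilateral with the two destroyed ones, which are in $C$ by the inductive hypothesis, so the new triangles are in $C$ as well. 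Since every triangle appears in some saddle triangulation, this shows that $C$ is all of $\G(S,q)$.

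Claim A is the easy part. If the flip replaces $a \in \T$ by $a'$, then $a$ is flippable in $\T$, so $\lk(\T \setminus \{a\}) \cong \NG_2$ by \cref{cor:flip}; by \cref{codim1}, the unique non-triangular region $Q$ of $\T \setminus \{a\}$ is a strictly convex quadrilateral whose two diagonals are $a$ and $a'$. The two triangles of $\T$ meeting $a$ and the two triangles of $\T'$ meeting $a'$ are precisely the four triangles obtained by bisecting $Q$ along its diagonals; all four are contained in the strictly convex quadrilateral $Q$, hence pairwise adjacent in $\G(S,q)$.

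Claim B is the crux. Since $S$ is connected, the dual graph of $\T$ --- with triangles of $\T$ as vertices and an edge whenever two triangles share a side --- is connected, so it suffices to connect two triangles $T_1, T_2$ of $\T$ that share a side $a$. If $a$ is flippable in $\T$, then \cref{cor:flip} and \cref{codim1} show that $T_1 \cup T_2$ is a strictly convex quadrilateral, so $T_1$ and $T_2$ are adjacent in $\G(S,q)$. The remaining case, where $a$ is non-flippable, is the main obstacle: then $Q := T_1 \cup T_2$ is a non-strictly-convex quadrilateral --- geometrically a Euclidean triangle in which the endpoint of $a$ opposite the apex lies in the interior of a side --- and $T_1, T_2$ are not joined directly through $Q$. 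To handle it I would flow off the sides of $T_1$ and of $T_2$ and invoke \cref{prop:extension} (and \cref{prop:major_ext}) to obtain triangulations in which those sides are flippable; this produces chains of strictly convex quadrilaterals that I would then stitch together into a $\G(S,q)$--path from $T_1$ to $T_2$. The delicate point will be to control the triangles produced by these flowing constructions --- in particular to ensure that the chain starting from $T_1$ meets the one starting from $T_2$ --- by examining the geometry of the degenerate quadrilateral $Q$ and of its neighbours.
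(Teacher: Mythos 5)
Your two-claim structure mirrors the paper's strategy, and Claim A together with the final stitching over the flip graph are both fine. The gap is entirely in Claim B, in the case you yourself flag as ``the main obstacle'': two triangles $T_1, T_2$ of $\T$ sharing a non-flippable side $a$. There you propose to ``flow off the sides of $T_1$ and of $T_2$'' and ``stitch together chains of strictly convex quadrilaterals,'' but you don't carry this out, and you correctly observe that controlling the triangles produced by these flows so that the two chains meet is delicate. As stated, this is a plan, not a proof.

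The idea you are missing is that \cref{thm:tahar} is stated for the \emph{relative} flip graph $\F_\sigma(S,q)$, not just for $\F(S,q)$. Take $\tau = \partial T_1$ and use \cref{prop:extension} (as you suggest) to produce a triangulation $\T' \supseteq \tau$ in which $a$ is flippable; then $T_1$ and the triangle $T''$ of $\T'$ on the other side of $a$ form a strictly convex quadrilateral, so they are adjacent in $\G(S,q)$. Now, because $\F_\tau(S,q)$ is connected, there is a flip sequence $\T = \T_0, \ldots, \T_j = \T'$ in which every $\T_i$ contains $\partial T_1$. Tracking the triangle $T_i$ of $\T_i$ across $a$ from $T_1$, one has $T_0 = T_2$ and $T_j = T''$, and whenever $T_i \neq T_{i+1}$ the corresponding flip takes place in a strictly convex quadrilateral containing both. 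This hands you the $\G(S,q)$--path from $T_2$ to $T''$, hence to $T_1$, with no need for the bespoke flowing chain you were trying to build. Without invoking the relative version of Tahar's theorem, your Claim B remains unproven.
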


\proof
 Suppose first that $\T$ is a triangulation of $(S,q)$, and suppose $T, T'$ are triangles 
 of $\T$ that share a common side. We show that there exists a path in $\G(S,q)$
 connecting $T$ to $T'$.
 Let~$c \in \partial T$ be the common side.
 By \cref{prop:extension}, there exists a triangulation
 $\T' \supseteq \partial T$ in which~$c$ is flippable.
 Thus, the two triangles $T, T''$ that meet $c$ in~$\T'$
 can be glued along $c$ to form a strictly convex quadrilateral.
 In particular, $T$ and~$T''$ are adjacent in $\G(S,q)$.
 
 By \cref{thm:tahar}, $\F_\tau(S,q)$ is connected, so there exists a sequence
 of triangulations
 \[\T = \T_0, \ldots, \T_j = \T' \]
 each containing $\tau$, where consecutive triangulations are related by
 a single flip. Let $T_i \neq T$ be the triangle of $\T_i$
 with $c$ as one of its sides. Note that $T_0 = T'$ and $T_j = T''$.
 If $T_i \neq T_{i+1}$ then $\T_i$ and $\T_{i+1}$ are related by a flip in a strictly convex
 quadrilateral that contains both $T_i$ and $T_{i+1}$.
 Thus, for each $0 \leq i < j$, the triangles $T_i$ and $T_{i+1}$
 either coincide or are adjacent in $\G(S,q)$. Therefore, there exists a path in
 $\G(S,q)$ connecting $T'$ to $T''$, and hence to $T$.
 
 Consequently, by connectedness of $(S,q)$, there exists a path in $\G(S,q)$
 connecting any two triangles of a given triangulation. If two triangulations
 $\T, \T'$ differ by a single flip then they contain at least one triangle
 in common (since we are assuming that $S$ is not a flat torus with exactly one marked point and hence contains at least three triangles in every triangulation).
 By \cref{thm:tahar}, $\F(S,q)$ is connected and so it follows that $\G(S,q)$ is connected.
\endproof

Note that all the steps and objects used in the previous proof can 
be stated using only the combinatorial
structure of $\A(S,q)$.
This completes the proof of \cref{prop:orient}.

\section{Rigidity}\label{sec:rigid}

We are now ready to prove our main theorem.
Let $(S,q)$ and $(S',q')$ be half-translation surfaces.
By the Triangle Test (\cref{tri_test}) and 
Orientation Test (\cref{prop:orient}), the combinatorial
structure of $\A(S,q)$ can be used to recover
the gluing pattern of any triangulation $\T$ on $(S,q)$, and
hence the underlying topological surface $(S,\P)$.
We can thus regard $\A(S,q)$ as a subcomplex of $\A(S,\P)$.
Similarly, $\A(S',q')$ can be regarded as a subcomplex of $\A(S', \P')$,
where $(S', \P')$ is the underlying topological surface
of $(S',q')$.

Given a homeomorphism $F \colon (S,\P) \rightarrow (S', \P')$,
write $F^\# \colon \A(S,\P) \to \A(S',\P')$ for the induced map
on the arc complexes. If $F$ is isotopic to an affine diffeomorphism
from $(S,q)$ to~$(S',q')$, then the restriction 
$F^\#|_{\A(S,q)} \colon \A(S,q) \to \A(S',q')$
is a simplicial isomorphism.
Our goal is to prove the converse.

\begin{restate}{thm:main}[Rigidity of the saddle connection complex]
 Let $(S,q)$ and $(S',q')$ be half-translation surfaces, neither of which are flat tori with exactly one removable singularity. Suppose $\phi : \A(S,q) \to \A(S',q')$ is a simplicial isomorphism.
 Then there exists a unique affine diffeomorphism $F: (S,q) \to (S',q')$ inducing $\phi$.
\end{restate}

Recall that in the case where at least one of $(S,q)$ or $(S',q')$ is a flat torus with one removable singularity, we have exactly two such affine diffeomorphisms as described after \cref{exa:scc_flat_torus}.
As we are not in this case, we can use the fact that triangles are uniquely determined by their sides in the following arguments.

Our strategy is to first define affine maps on individual triangles.
These maps can be used to define a piecewise affine diffeomorphisms on
$(S,q)$ associated to a given triangulation. 
We then show that all triangulations
give rise to the same piecewise affine diffeomorphism $F \colon (S,q) \to (S',q')$
yielding the map $\phi$ on the saddle connection complexes.
Finally, we use the Cylinder Rigidity Theorem (see \cref{thm:cyl_rigid}) to show that $F$ is affine.

The Triangle Test (\cref{tri_test}) uses only the combinatorial
structure of $\A(S,q)$.
Therefore, a $2$--simplex $\tau\in\A(S,q)$ bounds a triangle
on $(S,q)$ if and only if $\phi(\tau) \in \A(S',q')$ bounds a triangle
on $(S',q')$.
If $T$ is a triangle on $(S,q)$ with sides $\tau = \{a,b,c\}$ then
there is a unique affine map $F_T \colon T \to (S',q')$ such that
$F_T(a) = \phi(a)$, $F_T(b) = \phi(b)$, and $F_T(c) = \phi(c)$.
In particular, the image of $F_T$ is the unique triangle with sides
$\phi(\tau)$.

Suppose $\TV = [a,b,c]$ and $\TV' = [a',b',c']$
are oriented triangles on $(S,q)$. 
Since the Orientation Test (\cref{prop:orient})
only uses the combinatorial structure of $\A(S,q)$, it follows that
the oriented triangles $[\phi(a), \phi(b), \phi(c)]$ and 
$[\phi(a'), \phi(b'), \phi(c')]$ on $(S',q')$ are consistently oriented
if and only if $\TV, \TV'$ are consistently oriented.

We now want to define a piecewise affine diffeomorphism ${F_\T \colon (S,q) \to (S',q')}$ for a given triangulation $\T$ of $(S,q)$.

\begin{lem}[Candidate homeomorphisms] \label{tri_homeo}
 Let $\T$ be a triangulation of $(S,q)$ and define ${F_\T \colon (S,q) \to (S',q')}$ by declaring $F_\T|_T = \ F_T$ for every triangle $T$ of $\T$.
 Then $F_\T$ is a well-defined, piecewise affine diffeomorphism.
\end{lem}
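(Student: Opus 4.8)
The plan is to verify that the piecewise-defined map $F_\T$ is continuous along the saddle connections of $\T$ and at the singularities, and that it restricts to a homeomorphism on each triangle, hence is a homeomorphism of the closed surfaces; affineness on the interior of each triangle is built into the definition of $F_T$. The crux is the following gluing consistency statement: if $T$ and $T'$ are the two triangles of $\T$ sharing a side $a$ (possibly $T=T'$ when $a$ appears twice on a single triangle), then the affine maps $F_T$ and $F_{T'}$ agree on $a$. This is immediate from the construction, since both $F_T$ and $F_{T'}$ send $a$ to the saddle connection $\phi(a)$ on $(S',q')$ via the unique isometry-respecting affine identification; more precisely, an affine map of a Euclidean segment onto another is determined by its values at the two endpoints, so I only need to check that $F_T$ and $F_{T'}$ induce the \emph{same} parametrisation of $\phi(a)$, i.e. send each endpoint of $a$ to the same endpoint of $\phi(a)$. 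This endpoint-matching is where I will use the Orientation Test.

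First I would fix a triangle $T=[a,b,c]$ of $\T$, oriented consistently with the orientation of $(S,q)$. The affine map $F_T$ is determined by requiring $F_T(a)=\phi(a)$, $F_T(b)=\phi(b)$, $F_T(c)=\phi(c)$ \emph{as sets}, but to pin it down uniquely I should specify it sends the corner $a\cap b$ to the corner $\phi(a)\cap\phi(b)$, etc. Because $\phi$ is a simplicial isomorphism, the three saddle connections $\phi(a),\phi(b),\phi(c)$ are pairwise disjoint and bound a triangle $T^\dagger$ on $(S',q')$ (by the Triangle Test applied to $\phi(\tau)$), and their pairwise intersection pattern of endpoints is forced: two sides of a Euclidean triangle meet at exactly one corner. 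So $F_T$ is the unique affine homeomorphism $T\to T^\dagger$ respecting the correspondence of sides, and it respects corners automatically. Moreover, orienting $T^\dagger$ as $[\phi(a),\phi(b),\phi(c)]$, the Orientation Test guarantees that all the images $T^\dagger$ of the triangles of $\T$, oriented compatibly with $(S,q)$, are themselves consistently oriented on $(S',q')$; so after possibly replacing $\phi$'s induced orientation data by its mirror (or noting $(S',q')$ is oriented), we may take every $F_T$ to be orientation-preserving.

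Next I would check continuity across a shared side $a$ of $T,T'$. Orient $T$ and $T'$ compatibly with $(S,q)$; then $a$ inherits opposite orientations from $T$ and from $T'$ (it is traversed in opposite senses as one goes around the boundary of each triangle). The map $F_T|_a$ sends the $a$-endpoints to the endpoints of $\phi(a)$ in the order dictated by the corners $a\cap b$, $a\cap c$; similarly for $F_{T'}|_a$. Since the oriented triangles $[\ldots]$ and their images are consistently oriented (Orientation Test) and they share the side $\phi(a)$ with opposite induced orientations on $(S',q')$ as well, the two endpoint-correspondences coincide. An affine map of the segment $a$ onto $\phi(a)$ is determined by these two endpoint values, so $F_T|_a=F_{T'}|_a$. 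Hence $F_\T$ is well-defined and continuous on $S\setminus\P$ and extends continuously over $\P$: a small punctured neighbourhood of a singularity $p\in\P$ is an alternating union of triangle-corners and side-germs, and the maps $F_T$ glue around the cycle of triangles at $p$ to a local homeomorphism (the cone angle at $p$ equals that at $\phi(p)=F_\T(p)$ because $\phi$ being a bijection on saddle connections forces the number of triangle-corners at $p$ to equal that at $\phi(p)$, so the cyclic gluings close up). Finally, $F_\T$ is a bijection: it is injective on the open triangles (each $F_T$ is), the open triangles of $\T$ are disjoint, their images are the disjoint open triangles of the corresponding triangulation $\phi(\T)$ of $(S',q')$, and the side/vertex identifications match up under $\phi$ by the continuity just established; surjectivity follows since $\phi$ is onto. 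A continuous bijection of compact Hausdorff spaces is a homeomorphism, and it is a diffeomorphism away from $\P$ since it is locally affine there.

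\textbf{Main obstacle.} The genuinely substantive point — and the one I expect to be the main obstacle to write out carefully — is the claim that the endpoint-correspondences from $F_T|_a$ and $F_{T'}|_a$ agree, i.e.\ that consistently oriented triangles on $(S,q)$ go to consistently oriented triangles on $(S',q')$ in a way that is \emph{simultaneously compatible along every shared side}. This is exactly what the Orientation Test (\cref{prop:orient}) is designed to supply: it not only detects consistency of a single pair but, via connectedness of the auxiliary graph $\G(S,q)$ and the fact that $\phi$ preserves adjacency in $\G$, propagates a single coherent choice of orientation across all triangles at once. So the real work is to invoke \cref{prop:orient} correctly and note that a coherent orientation on the triangles of $(S,q)$ transports to a coherent orientation on those of $(S',q')$; once that is in hand the continuity checks are routine. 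The hypothesis that $(S,q)$ is not a once-punctured torus is used precisely here (and was flagged before the statement), since it ensures triangles are determined by their side-sets and the $\G$-connectedness argument has no degenerate exceptions.
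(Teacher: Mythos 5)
Your proof is correct and takes essentially the same approach as the paper: both identify the Orientation Test (\cref{prop:orient}) together with \cref{gluable} as the key input ensuring that the affine maps $F_T$ and $F_{T'}$ agree along a shared side $a$, which is the real content of the lemma. The paper handles the two remaining routine steps somewhat more slickly than you do: instead of the cyclic-gluing argument around each singularity, it observes that $F_\T$ is Lipschitz (finitely many affine pieces) and therefore extends uniquely and continuously to the metric completion $S$; and instead of invoking the compact-Hausdorff bijection theorem, it builds the explicit piecewise-affine inverse $G_{\phi(\T)}$ from $\phi^{-1}$ symmetrically. One caveat in your version: the parenthetical justifying ``the cyclic gluings close up'' is backwards — you cannot deduce that the number of triangle-corners at $p$ equals that at the image singularity merely from $\phi$ being a bijection on saddle connections; rather, the closure of the cycle follows directly from the edge-consistency you have already established (each consecutive pair of triangles in the vertex cycle glues compatibly, including the last to the first), and the matching of corner counts and cone angles is then a \emph{consequence}, not a premise. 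The Lipschitz route sidesteps this entirely.
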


\proof
Observe that $F_\T$ is well-defined on the interior of each triangle $T$ of $\T$.
We need to check that it is well-defined on the edges and vertices of $\T$.
Suppose $T,T'$ are distinct triangles of $\T$.
Since $\phi$ preserves disjointness of saddle connections,
$\partial F_\T(T)$ and $\partial F_\T(T')$ have no transverse intersections.
It follows that $F_\T(T), F_\T(T')$ have disjoint interiors.
By \cref{prop:orient} and \cref{gluable},
if $T, T'$ meet along a common side $a \in \T$,
then $F_\T(T), F_\T(T')$ meet along $\phi(a)$ with the same orientation.
Thus, $F_\T$ is well-defined on the complement of the singularities~$\P$ of~$(S,q)$.

Next, we check that $F_\T$ is well-defined on $\P$.
Note that $S$ and $S'$ are respectively the metric completions
of $S \setminus \P$ and  $S' \setminus \P'$.
Observe that each $F_T$ is a Lipschitz map.
Since $\T$ has finitely many triangles, $F_\T$ is also a Lipschitz map.
As Lipschitz maps send Cauchy sequences to Cauchy sequences, it follows that
$F_\T$ extends to a unique map between the respective metric completions.
It follows that $F_\T \colon (S,q) \to (S',q')$ is well-defined.

Finally, we show that $F_\T$ is a homeomorphism.
By construction, $F_\T$ is continuous and restricts to an affine map on each
triangle $T$ of~$\T$.
We can analogously define a piecewise affine map
$G_{\phi(\T)} \colon (S', q') \to (S,q)$ using $\phi^{-1}$ and the triangulation
$\phi(\T)$ on $(S',q')$.
By construction, $G_{\phi(\T)} \circ F_\T$ restricts to the identity map on each triangle~$T$ of~$\T$. Therefore, $F_\T$ and $G_{\phi(\T)}$ are inverses of one another.
It follows that $F_\T$ is a piecewise affine diffeomorphism.
\endproof

Observe that $F_\T$ acts as a bijection between the singularities
of $(S,q)$ and $(S',q')$. In fact, this bijection does not depend on
the choice of the triangulation $\T$.

As $F_\T$ is in particular a homeomorphism from $(S,\P)$ to $(S',\P')$, it induces a simplicial isomorphism $F_\T^\# \colon \A(S,\P) \to \A(S',\P')$.
We show first that $F_\T^\#$ is independent of the triangulation and then that it coincides with $\phi$ on the proper isotopy classes of arcs in $\A(S,\P)$ realisable as saddle connections on $(S,q)$.

\begin{lem}[Candidates are isotopic]
 For two triangulations $\T, \T'$ of $(S,q)$, we have $F_\T^\# = F_{\T'}^\#$.
\end{lem}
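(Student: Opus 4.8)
The plan is to reduce to the case where $\T$ and $\T'$ differ by a single flip, and then to verify directly that $F_\T$ and $F_{\T'}$ are isotopic rel $\P$. First I would invoke \cref{thm:tahar}: the saddle flip graph $\F(S,q)$ is connected, so there is a finite sequence of triangulations $\T = \T_0, \T_1, \ldots, \T_n = \T'$ in which consecutive triangulations differ by exactly one flip. Since $F_\T^\#$ being independent of $\T$ is a transitive relation on triangulations, it suffices to prove $F_{\T_i}^\# = F_{\T_{i+1}}^\#$ when $\T_i$ and $\T_{i+1}$ differ by a single flip. So assume $\T' = (\T \setminus \{a\}) \cup \{b\}$, where $a$ is flipped to $b$ inside the strictly convex quadrilateral $Q$ formed by the two triangles $T_1, T_2$ of $\T$ meeting $a$; let $T_1', T_2'$ be the two triangles of $\T'$ obtained by cutting $Q$ along $b$.

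Next I would observe that $F_\T$ and $F_{\T'}$ agree on every triangle of $\T$ other than $T_1$ and $T_2$, since those triangles are common to both triangulations and the maps $F_T$ are defined triangle-by-triangle. Hence $F_\T$ and $F_{\T'}$ agree outside of $Q$; in particular they agree on $\partial Q$, sending it to $\partial Q(\phi(a), \phi(b))$ via $\phi$, and they agree on the four corners of $Q$. By the Orientation Test (\cref{prop:orient}) and the procedure following \cref{lem:quad_boundaries}, the four triangles of $Q$ map under $\phi$ to the four triangles of $Q(\phi(a),\phi(b))$ with consistent orientations, so $F_\T(Q)$ and $F_{\T'}(Q)$ both equal the strictly convex quadrilateral $Q(\phi(a),\phi(b))$ on $(S',q')$, with matching boundary behaviour. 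Both restrictions $F_\T|_Q, F_{\T'}|_Q \colon Q \to Q(\phi(a),\phi(b))$ are homeomorphisms of a disc that agree on $\partial Q$; by the Alexander trick they are isotopic rel $\partial Q$, and this isotopy extends by the identity outside $Q$ to an isotopy between $F_\T$ and $F_{\T'}$ on all of $(S,q)$. Crucially, since $Q$ contains no singularities in its interior (it is strictly convex, hence embedded by \cref{poly_embed}), and the four corners of $Q$ are fixed throughout, this isotopy does not move any point of $\P$; thus it descends to an isotopy rel $\P$ between the induced homeomorphisms $(S,\P) \to (S',\P')$, giving $F_\T^\# = F_{\T'}^\#$.

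I expect the main subtlety to be the bookkeeping around orientations and the identification of $F_\T(Q)$ with $F_{\T'}(Q)$ as the \emph{same} region of $(S',q')$ — one must use the Orientation Test to rule out the possibility that the flip is realised by an orientation-reversing rearrangement on the target, and one must be slightly careful when $Q$ has a pair of opposite sides identified (so that $\partial Q$ has fewer than four distinct saddle connections, or $Q$ is a $(1,1)$--annulus rather than an honest embedded quadrilateral). In the latter degenerate cases one works in the universal cover, or equivalently passes to the cut surface $S - (\T \setminus \{a\})$, where the quadrilateral is genuinely embedded, applies the Alexander trick there, and then pushes the isotopy forward; this is routine but needs to be stated. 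Everything else — continuity, the Lipschitz/metric-completion extension to $\P$, and the fact that the maps are genuine diffeomorphisms on triangles — has already been established in \cref{tri_homeo}, so the argument is short once the single-flip reduction is in place.
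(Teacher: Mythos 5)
Your proof is correct and takes essentially the same route as the paper: reduce to a single flip via the connectedness of $\F(S,q)$, observe that $F_\T$ and $F_{\T'}$ agree outside the strictly convex quadrilateral $Q$ where the flip occurs, and apply the Alexander trick on $Q$ to produce an isotopy rel $\P$. Your extra care about orientation bookkeeping and the degenerate case where $Q$ is not embedded (working in the cut surface) fills in details the paper leaves implicit, but the argument is the same.
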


\proof
 By \cref{thm:tahar}, $\F(S,q)$ is connected, and so there is a finite
 sequence of flips turning~$\T$ into~$\T'$. So we may assume without loss of generality
 that $\T$ and $\T'$ differ by precisely one flip. Then $\T \cap \T'$ is a triangulation
 away from a strictly convex quadrilateral~$Q$ on $(S,q)$.
 Then $F_\T$ and~$F_{\T'}$ coincide outside the interior of $Q$.
 By Alexander's Trick, any two homeomorphisms defined on a disc that agree on the boundary
 must be isotopic. Therefore, $F_\T$ and $F_{\T'}$ are~isotopic, through isotopies fixing the singularities. Hence, $F_\T^\#(a) = F_{\T'}^\#(a)$ for every arc $a \in \A(S,\P)$.
\endproof

\begin{lem}[Candidates induce the correct isomorphism on $\A(S,q)$]
 Let $\T$ be a triangulation of $(S,q)$. Then  the restricted simplicial map $F_\T^\#|_{\A(S,q)}$ coincides with $\phi$.

\begin{proof}
 Let $a \in \A(S,q)$ be a saddle connection and let $\T'$ be a triangulation with $a \in \T'$.
 By definition, $F_{\T'}(a)$ is the unique geodesic representative of $\phi(a)$ in its isotopy class.
 But this implies $F_\T^\#(a) = F_{\T'}^\#(a) = \phi(a)$.
\end{proof}
\end{lem}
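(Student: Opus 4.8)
The plan is to reduce the statement to a single saddle connection and then exploit the preceding lemma (candidates are isotopic), which asserts that $F_\T^\#$ is independent of the chosen triangulation $\T$. So fix a saddle connection $a \in \A(S,q)$; since every simplex of $\A(S,q)$ extends to a maximal one, we may choose a (saddle) triangulation $\T'$ of $(S,q)$ with $a \in \T'$, and it suffices to show $F_{\T'}^\#(a) = \phi(a)$ (as proper homotopy classes of arcs on $(S',\P')$), because then $F_\T^\#(a) = F_{\T'}^\#(a) = \phi(a)$ by the preceding lemma. In particular this will also show that the class $\phi(a)$, viewed in $\A(S',\P')$, lies in the induced subcomplex $\A(S',q')$, as it must since $\phi$ has codomain $\A(S',q')$ by hypothesis.

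Next I would analyse the candidate homeomorphism $F_{\T'}$ near the edge $a$. Let $T$ and $T''$ be the (at most two) triangles of $\T'$ having $a$ as a side. By the construction in \cref{tri_homeo}, $F_{\T'}$ restricts to the affine maps $F_T$ and $F_{T''}$ on $T$ and $T''$, and by the defining property of $F_T$ and $F_{T''}$ each of these sends the straight segment $a$ linearly onto the side labelled $\phi(a)$ of the corresponding image triangle on $(S',q')$. The well-definedness of $F_{\T'}$ established in \cref{tri_homeo} — which uses the orientation compatibility supplied by \cref{prop:orient} together with \cref{gluable}, applied to the triangles meeting $a$ — guarantees that $F_T$ and $F_{T''}$ agree on $a$. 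Hence $F_{\T'}(a)$ is, as a subset of $(S',q')$, precisely the saddle connection $\phi(a)$; in particular $F_{\T'}(a)$ is a geodesic arc on $(S',q')$ (the two straight images concatenate into a single straight segment), and it is an honest representative of the proper homotopy class $F_{\T'}^\#(a)$.

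Finally I would conclude as follows. The arc $F_{\T'}(a)$ is a geodesic representative of the proper homotopy class $F_{\T'}^\#(a)$, and it coincides with the saddle connection $\phi(a)$; but a saddle connection is the unique geodesic representative of its proper homotopy class on $(S',q')$, so $F_{\T'}^\#(a)$ is exactly the proper homotopy class of $\phi(a)$, i.e.\ $F_{\T'}^\#(a) = \phi(a)$ in $\A(S',\P')$, and this class lies in $\A(S',q')$. Combining with the preceding lemma gives $F_\T^\#(a) = \phi(a)$ for every saddle connection $a$. Since $F_\T^\#$ is a simplicial isomorphism of arc complexes and preserves disjointness, it therefore carries the induced subcomplex $\A(S,q) \subseteq \A(S,\P)$ isomorphically onto $\A(S',q') \subseteq \A(S',\P')$, and it agrees with $\phi$ on vertices, hence on all simplices; that is, $F_\T^\#|_{\A(S,q)} = \phi$. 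The only real obstacle is the middle paragraph: one must be careful to verify that the piecewise-affine map $F_{\T'}$ sends the straight edge $a$ to the straight segment $\phi(a)$ itself, not merely to an arc properly homotopic to it — and this is exactly what the explicit affine definition of $F_T$ on each triangle, together with the well-definedness in \cref{tri_homeo}, provides.
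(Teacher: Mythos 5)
Your argument is correct and is essentially the paper's own proof: choose a triangulation $\T'$ containing $a$, observe that the affine construction of $F_{\T'}$ sends the edge $a$ onto the saddle connection $\phi(a)$ itself (hence $F_{\T'}^\#(a)=\phi(a)$ by uniqueness of geodesic representatives), and then invoke the preceding lemma that $F_\T^\#$ is independent of the triangulation. You have merely expanded the paper's terse justification ("by definition, $F_{\T'}(a)$ is the unique geodesic representative of $\phi(a)$") into explicit steps, with only a trivial slip of wording where you speak of the two triangle images ``concatenating'' along $a$ when in fact both affine maps send $a$ to the same segment $\phi(a)$.
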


It remains to show that $F_\T$ is an affine diffeomorphism.
For this, let us fix a choice of triangulation $\T$ and write $F = F_\T$.
Let $m = F^*(q')$ be the half-translation structure on $S$ obtained 
by pulling back $q'$ via $F$. Note that the singularities of $(S,m)$
are naturally identified with $\P$.
Now, consider the map $F' \coloneqq F \circ \id_S \colon (S,m) \to (S',q')$ 
where $\id_S \colon (S,m) \to (S,q)$ is the identity map on the underlying surface $(S,\P)$. 
By definition of $m$, $F'$ maps the charts corresponding to the half-translation structure $m$ to the charts corresponding to $q'$ and hence~$F'$ is an isometry.
Therefore, a topological arc $\alpha\in\A(S,\P)$ is realisable as a 
saddle connection on~$(S,m)$ if and only if $F'(\alpha)\in\A(S',\P')$ is realisable
as a saddle connection on $(S',q')$. By the above corollary, this occurs
precisely when $F^{-1}\circ F'(\alpha) = \id_S(\alpha) = \alpha$ is
realisable as a saddle connection on~$(S,q)$.
We have thus shown the following.

\begin{lem}[Subcomplexes coincide]
 The saddle connection complexes $\A(S,q)$ and $\A(S,m)$ coincide as subcomplexes of 
 the arc complex $\A(S,\P)$. $\hfill\square$
\end{lem}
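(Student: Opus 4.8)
The plan is to show that the subcomplexes $\A(S,q)$ and $\A(S,m)$ of $\A(S,\P)$ have exactly the same vertex set; since both are flag simplicial complexes defined as the \emph{induced} subcomplexes of $\A(S,\P)$ spanned by their vertices (arcs realisable as saddle connections), agreement on vertices immediately forces agreement as simplicial complexes. So the entire content reduces to the following equivalence: a proper isotopy class $\alpha \in \A(S,\P)$ is realisable as a saddle connection on $(S,q)$ if and only if it is realisable as a saddle connection on $(S,m)$.

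First I would recall the setup: $F = F_\T \colon (S,q) \to (S',q')$ is the piecewise affine diffeomorphism constructed above, $m = F^*(q')$ is the pullback half-translation structure on $S$, and $\id_S \colon (S,m) \to (S,q)$ denotes the identity on the underlying marked surface $(S,\P)$. The composite $F' = F \circ \id_S \colon (S,m) \to (S',q')$ agrees with $F$ on points but, crucially, by the very definition of the pullback structure $m$, it sends $m$-charts to $q'$-charts, hence $F'$ is an isometry of half-translation surfaces (in particular it carries saddle connections bijectively to saddle connections, preserving slopes and lengths). Therefore $\alpha \in \A(S,\P)$ is realisable as a saddle connection on $(S,m)$ precisely when $F'(\alpha)$ is realisable as a saddle connection on $(S',q')$. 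On the other hand, $F'(\alpha) = F(\alpha)$ as an arc on $(S',\P')$, and by the penultimate lemma ("Candidates induce the correct isomorphism on $\A(S,q)$"), for every saddle connection $\alpha$ on $(S,q)$ we have $F^\#_\T(\alpha) = \phi(\alpha) \in \A(S',q')$, so $F(\alpha)$ is realisable as a saddle connection on $(S',q')$; conversely, applying the analogous statement to $\phi^{-1}$ and the candidate homeomorphism $G_{\phi(\T)} = F^{-1}$, any arc $\beta$ realisable as a saddle connection on $(S',q')$ pulls back under $F^{-1}$ to an arc realisable as a saddle connection on $(S,q)$. Chaining these equivalences shows that $\alpha$ is a saddle connection class on $(S,m)$ if and only if it is a saddle connection class on $(S,q)$, which is the desired vertex-set equality.

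The one point requiring a little care — and the only place I expect any friction — is the bookkeeping between "arc realisable as a saddle connection" in the sense of proper homotopy classes versus the unique geodesic representative: the map $F^\#_\T$ is a simplicial map on $\A(S,\P)$, so it acts on proper isotopy classes, and one needs that $F$ (being a homeomorphism of $(S,\P)$) sends the proper isotopy class of $\alpha$ to the proper isotopy class of $F(\alpha)$, which is automatic, together with the observation that realisability as a saddle connection is a property of the proper isotopy class (this is exactly the content of the Minsky--Taylor result recalled earlier, that each class has a unique geodesic representative). With that in hand the argument is just the string of biconditionals above, so no further computation is needed; the statement follows, and hence $F = F_\T$ induces the identity on the subcomplexes, i.e. $\A(S,q) = \A(S,m)$ inside $\A(S,\P)$.
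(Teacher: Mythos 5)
Your proof is correct and follows essentially the same route as the paper: reduce to vertex sets, observe that $F' = F \circ \id_S$ is an isometry $(S,m) \to (S',q')$ (so saddle-connection classes on $(S,m)$ correspond precisely to saddle-connection classes on $(S',q')$ under $F'$), and then close the loop using the penultimate lemma that $F_\T^\#|_{\A(S,q)} = \phi$. The one small difference is that you make the reverse implication explicit via $G_{\phi(\T)}$ and $\phi^{-1}$, whereas the paper leaves it implicit in the bijectivity of $\phi$; these are equivalent formulations.
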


It follows that $\MIL(\A(S,q)) = \MIL(\A(S,m))$ as sets of simplices in $\A(S,\P)$.
Therefore, by \cref{MIL_cyl}, the sets of cylinder curves $\cyl(q)$ and $\cyl(m)$
coincide as sets of simple closed curves on $S$.
Applying the Cylinder Rigidity Theorem (\cref{thm:cyl_rigid}), we deduce that $m \in \GL(2,\R)\cdot q$.
Therefore $\id_S$ is an affine diffeomorphism.
Since $F'$ is an isometry, it follows that
$F = F' \circ \id_S^{-1}$ is also an affine diffeomorphism as desired.

Finally, we check that $F$ is the unique affine diffeomorphism with $F^\#|_{\A(S,q)} = \phi$.
Suppose $G \colon (S,q) \to (S',q')$ is an affine diffeomorphism
inducing the isomorphism $\phi \colon \A(S,q) \to \A(S',q')$. Then $G$ maps a triangle
$T$ on $(S,q)$ with sides $a,b,c$ to the triangle $G(T)$ on $(S',q')$ with sides
$\phi(a) = G(a)$, $\phi(b) = G(b)$, and $\phi(c) = G(c)$.
Since $F_T \colon T \rightarrow G(T)$ is the unique affine map from~$T$ to $G(T)$
which behaves correctly on the sides of $T$, it follows that $G|_T = F_T$.
As this holds for all triangles on $(S,q)$, we deduce that $G = F_\T$ for
every triangulation $\T$ of $(S,q)$. (In particular, $F_\T$ does not
depend on the choice of triangulation.) Therefore, $F = G$ and we are~done.

This completes the proof of \cref{thm:main}.

\clearpage
\appendix

\section{Classifying links of simplices of codimension \texorpdfstring{$1$ or $2$}{1 or 2}} \label{app:classification}

In the tables below, we classify the possible link types of a simplex $\sigma\in\A(S,q)$ of codimension~$1$ or $2$, together with the corresponding non-triangular regions of $S - \sigma$.
In the cases where $\lk(\sigma)$ decomposes as a non-trivial join, we indicate the factors by colouring the vertices red or blue.

\begin{center}
\begin{longtable}{>{\centering\arraybackslash} m{0.27\textwidth} >{\centering\arraybackslash} m{0.67\linewidth}} $\lk(\sigma)$ & Non-triangular regions of $S-\sigma$ \\ \hline
  \\
  % convex quadrilateral
  \begin{tikzpicture}[scale=0.8]
   \draw[fill, color=red] (0,0) circle (3pt);
   \draw[fill, color=red] (1,0) circle (3pt);
  \end{tikzpicture}
  &
  \begin{tikzpicture}[scale=0.7, baseline=(current bounding box.center)]
   \draw (0,0) -- (1,0.8) -- (-0.2,2) -- (-2.1,1.1) -- (0,0);
   \draw[color=red] (0,0) -- (-0.2,2);
   \draw[color=red] (1,0.8) -- (-2.1,1.1);
  \end{tikzpicture}
  \\[10mm]
  % non-convex quadrilaterals
  \begin{tikzpicture}[scale=0.8]
   \draw[fill, color=red] (0,0) circle (3pt);
  \end{tikzpicture}
  &
  \begin{tikzpicture}[scale=0.8, baseline=(current bounding box.center)]
    \draw (0,0) -- (1,0.6) -- (2.6,0.2) -- (1.5,1.8) -- (0,0);
    \draw[color=red] (1,0.6) -- (1.5,1.8);
  \end{tikzpicture}\\
   \caption{Classification of links of codimension--1 simplices.}
   \label{tab:codim1}
 \end{longtable}
\end{center}

\vspace{-35pt}

\begin{center}
\begin{longtable}{>{\centering\arraybackslash} m{0.27\textwidth} >{\centering\arraybackslash} m{0.67\linewidth}}
  $\lk(\sigma)$ & Non-triangular regions of $S-\sigma$ \\ \hline
  \\
  % annulus
  \begin{tikzpicture}[scale=0.8]
   \draw (-0.3,0) -- (0,0) -- (1,0) -- (2,0) -- (3,0) -- (4,0) -- (4.3,0);
   \draw[dotted] (-0.6,0) -- (-0.3,0);
   \draw[dotted] (4.3,0) -- (4.6,0);
   \draw[fill, color=red] (0,0) circle (3pt);
   \draw[fill, color=red] (1,0) circle (3pt);
   \draw[fill, color=red] (2,0) circle (3pt);
   \draw[fill, color=red] (3,0) circle (3pt);
   \draw[fill, color=red] (4,0) circle (3pt);
   \draw (2,-0.5) node{(bi-infinite path graph)};
  \end{tikzpicture}
  &
  \begin{tikzpicture}[rotate=10, xscale=0.75, yscale=0.5]
   \draw (0,0) -- (4,0) to[in=0, out=0, looseness=0.5] (4,3) -- (0,3) to[in=0, out=0, looseness=0.5] (0,0);
   \draw (0,3) to[in=180, out=180, looseness=0.5] (0,0);
   \draw[dashed] (4,0) to[in=180, out=180, looseness=0.5] (4,3);
   \draw[color=red] (0.42,1) -- (4.42,1);
   \draw[color=red] (0.42,1) to[out=-45, in=180, looseness=0.4] (1.4,0);
   \draw[color=red, dashed] (1.4,0) to[out=0, in=180, looseness=0.4] (3.4,3);
   \draw[color=red] (3.4,3) to[out=0, in=135, looseness=0.4] (4.42, 1);
   \draw[color=red] (0.42,1) to[out=-70, in=180, looseness=0.3] (1,0);
   \draw[color=red, dashed] (1,0) to[out=0, in=180, looseness=0.3] (1.8,3);
   \draw[color=red] (1.8,3) to[out=0, in=180, looseness=0.3] (3,0);
   \draw[color=red, dashed] (3,0) to[out=0, in=180, looseness=0.3] (3.7,3);
   \draw[color=red] (3.7,3) to[out=0, in=110, looseness=0.3] (4.42, 1);   
  \end{tikzpicture}
  \\[5mm]
  % convex pentagon
  \begin{tikzpicture}[scale=0.8]
   \draw (-90:1) -- (-18:1) -- (54:1) -- (126:1) -- (198:1) -- (-90:1);  
   \draw[fill, color=red] (-90:1) circle (3pt);
   \draw[fill, color=red] (-18:1) circle (3pt);
   \draw[fill, color=red] ( 54:1) circle (3pt);
   \draw[fill, color=red] (126:1) circle (3pt);
   \draw[fill, color=red] (198:1) circle (3pt);
  \end{tikzpicture}
  &
  \begin{tikzpicture}[scale=0.6]
   \draw (0,0) -- (3,0) -- (4,2) -- (2,2.3) -- (-1,1.2) -- (0,0);
   \draw[color=red] (0,0) -- (2,2.3);
   \draw[color=red] (3,0) -- (2,2.3);
   \draw[color=red] (0,0) -- (4,2);
   \draw[color=red] (3,0) -- (-1,1.2);
   \draw[color=red] (4,2) -- (-1,1.2);
  \end{tikzpicture}
  \\[5mm]
  % almond pentagon
  \begin{tikzpicture}[scale=0.8]
   \draw (0,0) -- (1,0) -- (2,0) -- (3,0);  
   \draw[fill, color=red] (0,0) circle (3pt);
   \draw[fill, color=red] (1,0) circle (3pt);
   \draw[fill, color=red] (2,0) circle (3pt);
   \draw[fill, color=red] (3,0) circle (3pt);
  \end{tikzpicture}
  &
  \begin{tikzpicture}[scale=0.6]
   \draw (0,0) -- (3,0) -- (4,2) -- (2,1.3) -- (-1,1.2) -- (0,0);
   \draw[color=red] (0,0) -- (2,1.3);
   \draw[color=red] (3,0) -- (2,1.3);
   \draw[color=red] (0,0) -- (4,2);
   \draw[color=red] (3,0) -- (-1,1.2);
  \end{tikzpicture}
  \\[5mm]
  % two convex quadrilaterals
  \begin{tikzpicture}[scale=0.8]
   \draw (45:1) -- (135:1) -- (-135:1) -- (-45:1) -- (45:1);  
   \draw[fill, color=blue] (45:1) circle (3pt);
   \draw[fill, color=red] (135:1) circle (3pt);
   \draw[fill, color=blue] (-135:1) circle (3pt);
   \draw[fill, color=red] (-45:1) circle (3pt);
  \end{tikzpicture}
  &
  \begin{tikzpicture}[scale=0.6]
   \draw (0,0) -- (1,0.8) -- (-0.2,2) -- (-2.1,1.1) -- (0,0);
   \draw[color=red] (0,0) -- (-0.2,2);
   \draw[color=red] (1,0.8) -- (-2.1,1.1);
   
   \begin{scope}[xshift=3cm, scale=1.0]
    \draw (0,0) -- (1,1.6) -- (-0.2,2) -- (-1.5,1.1) -- (0,0);
    \draw[color=blue] (0,0) -- (-0.2,2);
    \draw[color=blue] (1,1.6) -- (-1.5,1.1);
   \end{scope}
  \end{tikzpicture}
  \\[5mm]
  % bad pentagon or convex and non-convex quadrilateral
  \begin{tikzpicture}[scale=0.8]
   \draw (0,0) -- (1,0) -- (2,0);
   \draw[fill, color=red] (0,0) circle (3pt);
   \draw[fill, color=blue] (1,0) circle (3pt);
   \draw[fill, color=red] (2,0) circle (3pt);
  \end{tikzpicture}
  &
  \begin{tikzpicture}[scale=0.6, baseline=(current bounding box.center)]
   \draw (0,0) -- (3,0) -- (3.3,2) -- (2,0.5) -- (-1,1.1) -- (0,0);
   \draw[color=blue] (3,0) -- (2,0.5);
   \draw[color=red] (0,0) -- (2,0.5);
   \draw[color=red] (3,0) -- (-1,1.1);
  \end{tikzpicture}
  \quad or \quad
  \begin{tikzpicture}[scale=0.7, baseline=(current bounding box.center)]
   \draw (0,0) -- (1,0.8) -- (-0.2,2) -- (-2.1,1.1) -- (0,0);
   \draw[color=red] (0,0) -- (-0.2,2);
   \draw[color=red] (1,0.8) -- (-2.1,1.1);
   
   \begin{scope}[xshift=1cm]
    \draw (0,0) -- (1,0.6) -- (2.6,0.2) -- (1.2,2) -- (0,0);
    \draw[color=blue] (1,0.6) -- (1.2,2);
   \end{scope}
  \end{tikzpicture}
  \\[5mm]
  % very bad pentagon or two non-convex quadrilaterals or folded quadrilateral
  \begin{tikzpicture}[scale=0.8]
   \draw (0,0) -- (1,0);
   \draw[fill, color=red] (0,0) circle (3pt);
   \draw[fill, color=blue] (1,0) circle (3pt);
  \end{tikzpicture}
  &
  \adjustbox{margin=10pt}{\begin{tikzpicture}[scale=0.6, baseline=(current bounding box.center)]
   \draw (0,0) -- (3,0) -- (3.3,2) -- (1,0.5) -- (-1.3,1.4) -- (0,0);
   \draw[color=red] (3,0) -- (1,0.5);
   \draw[color=blue] (0,0) -- (1,0.5);
  \end{tikzpicture}
  \quad
  or\
  \begin{tikzpicture}[scale=0.7, baseline=(current bounding box.center)]
    \draw (0,0) -- (1,0.6) -- (2.6,0.2) -- (1.5,1.8) -- (0,0);
    \draw[color=red] (1,0.6) -- (1.5,1.8);
   
   \begin{scope}[xshift=2.8cm]
    \draw (0,0) -- (1,0.6) -- (2.6,0.2) -- (1.2,2) -- (0,0);
    \draw[color=blue] (1,0.6) -- (1.2,2);
   \end{scope}
  \end{tikzpicture}
  or\
  \begin{tikzpicture}[scale=0.7, baseline=(current bounding box.center)]
    \draw (0,0) to[bend left=25] (1.5,2) to[bend left=40] (0,0);
    \draw[color=blue] (0,0) -- (1,1.2);
    \draw[color=red] (1,1.2) -- (1.5,2);
  \end{tikzpicture}
  }
  \\
   \caption{Classification of links of codimension--2 simplices.}
   \label{tab:codim2}
 \end{longtable}
\end{center}

\section{List of notation}\label{app:notation}

Let $\K$ be a simplicial complex. The following sets of simplices in $\K$ are used in this paper.

\begin{itemize}
% \item $\lk(\sigma)$: link of a simplex $\sigma \in \K$
 \item $\IL(\K)$: simplices $\sigma\in\K$ with infinite link (\cref{defin:IL})
 \item $\MIL(\K)$: simplices $\sigma \in \IL(\K)$ that are maximal among those in $\IL(\K)$ (\cref{defin:IL})
 \item $\FP(\K)$: flip pairs $\kappa \cong \NG_2$ arising as links of codimension--1 simplices in $\K$ (\cref{def:fp})
 \item $\B(\kappa)$: barriers of a flip pair $\kappa$ (\cref{def:barriers})
 \item $\FB(\kappa)$: flippable barriers of a flip pair $\kappa$ (\cref{def:flip_bar})
 \item $\KB(\kappa)$: kite barriers of a cylindrical kite pair $\kappa$ (\cref{def:kb})
 \item $\KFB(\kappa)$: kite-or-flippable barriers of a flip pair $\kappa$ (\cref{def:kfb})
 \item $\Ft(a)$: $\tau$-compatible flip partners of a saddle connection $a$ (\cref{def:cfp})
\end{itemize}

The following notation for graphs is used in the paper.
\begin{itemize}
\item $\CG_k$: the cycle graph of length $k$
\item $\NG_k$: the edgeless graph on $k$ vertices
\item $\PG_k$: the path graph of length $k$
\item $\PG_\infty$: the bi-infinite path graph
\end{itemize}

\bibliographystyle{amsalpha}
\bibliography{literature_saddle_connection_graph}

\end{document}